\DeclareSymbolFont{symbols}{OMS}{cmsy}{m}{n}
\definecolor{labelkey}{rgb}{0,0,.75}
\definecolor{MyGreen}{rgb}{0,.6,.2}
\definecolor{MyDarkBlue}{rgb}{.1,.1,.75}
\definecolor{violet}{rgb}{0.56, 0.0, 1.0}
\newcommand{\optionaldesc}[2]{%
  \phantomsection
  #1:\protected@edef\@currentlabel{#1}\label{#2}%
}
\newtheorem*{rep@theorem}{\rep@title}
\newcommand{\newreptheorem}[2]{%
\newenvironment{rep#1}[1]{%
 \def\rep@title{#2 \ref{##1}}%
 \begin{rep@theorem}}%
 {\end{rep@theorem}}}
\numberwithin{equation}{section}
\newcommand{\Reals}{\mathbb R}
\newcommand{\Ints}{\mathbb Z}
\newcommand{\Nats}{\mathbb N}
\def\ip<#1,#2>{\left<#1,#2\right>}
\def\bignorm||#1||{\left|\left|#1\right|\right|}
\def\rsc#1{{\{#1\}}}
\def\rscr{\rsc{r}}
\def\rscq#1{{[#1]}}
\def\rscrq{\rscq{r}}
\def\Bv[#1,#2,#3]{B^{#1,#2}_{#3}}
\def\tildeBv[#1,#2,#3]{\widetilde B^{#1,#2}_{#3}}
\def\Bvloc[#1,#2,#3]{B^{#1,#2}_{#3,\rm{loc}}}
\def\Bvz[#1,#2,#3]{B^{#1,#2}_{#3,0}}
\theoremstyle{plain}
\newtheorem{theorem}{Theorem}
\newtheorem{lemma}[theorem]{Lemma}
\newtheorem{lemma-tp}[theorem]{Lemma (To Be Proved)}
\newtheorem{proposition}[theorem]{Proposition}
\newtheorem{corollary}[theorem]{Corollary}
\newtheorem{defn}[theorem]{Definition}
\newtheorem{assumption}[theorem]{Assumption}
\numberwithin{theorem}{section}
\numberwithin{equation}{section}
\newcommand\blfootnote[1]{%
  \begingroup
  \renewcommand\thefootnote{}\footnote{#1}%
  \addtocounter{footnote}{-1}%
  \endgroup
}
\newcounter{mnotecount}[section]
\let\oldmarginpar\marginpar
\renewcommand\marginpar[1]{\-\oldmarginpar[\raggedleft\footnotesize #1]%
{\raggedright\footnotesize #1}}
\title{A Scaling Approach to Elliptic Theory for Geometrically-Natural Differential Operators with Sobolev-Type Coefficients}
\author{Michael Holst, David Maxwell and  Gantumur Tsogtgerel}
\date{June 28, 2023}
\begin{document}
\maketitle
\blfootnote{2020 Mathematics Subject Classification. Primary: 35B65. Secondary: 35J47, 35J48, 46F10.}
\blfootnote{Keywords: elliptic regularity, Bessel potential, Sobolev-Slobodeckij, Triebel-Lizorkin, Besov,  multiplication}

\begin{abstract}
We develop local elliptic regularity for operators having coefficients
in a range of Sobolev-type function spaces (Bessel potential, Sobolev-Slobodeckij,
Triebel-Lizorkin, Besov) where the coefficients have a regularity structure typical
of operators in geometric analysis.  The proofs rely on a nonstandard technique using
rescaling estimates and apply to operators having coefficients with low regularity.
For each class of function space for an operator's coefficients, we exhibit a natural associated
range of function spaces of the same type for the domain of the operator and we provide
regularity inference along with interior estimates.  Additionally,
we present a unified set of multiplication results for the function spaces we 
consider.
\end{abstract}

\section{Introduction}

Elliptic differential operators associated with Riemannian metrics 
having limited regularity arise naturally in the construction of
initial data in general relativity.  Moreover, because 
of connections with the associated evolution problem, it is natural to work with metrics
having regularity measured in Sobolev-type scales, and with a non-integral
number of derivatives \cite{klainerman_rough_2005}\cite{smith_sharp_2005}\cite{maxwell_rough_2006}.
In this paper we develop a largely self-contained account
of the mapping properties and local elliptic regularity theory for differential
operators having coefficients in any one of a broad category of 
Sobolev-type spaces, including spaces with non-integral levels of differentiability, 
where the coefficients also admit a regularity structure 
typical of geometric differential operators. 
For each category of function spaces considered,
we allow for coefficients with low regularity, and our approach to 
local elliptic theory is apparently novel in this context,
relying on rescaling estimates for Sobolev-type spaces to reduce the problem to 
that of constant-coefficient operators.

Local elliptic regularity is a well-established subject, with a wealth of results available in 
a number of contexts, even in low-regularity settings.  For second-order scalar elliptic operators
in divergence form \cite{trudinger_linear_1973} treats a form of elliptic regularity assuming 
only that the coefficients are measurable, although only for a very limited set of (operator-dependent) function spaces.  
A related theory appears in the text \cite{gilbarg_elliptic_2001} that 
applies to a range of integer-based Sobolev spaces 
under progressively stronger hypotheses on the coefficients of the elliptic operator. See also
\cite{giaquinta_introduction_1993}, which contains analogous results that apply to systems of equations.
Spaces with a non-integral number of derivatives include the $L^2$-based 
spaces $H^s$ which appear naturally in hyperbolic problems as well as 
their generalizations: Bessel potential spaces $H^{s,p}$, Sobolev-Slobodeckij 
spaces $W^{s,p}$, Triebel-Lizorkin spaces $F^{s,p}_q$, and
Besov spaces $\Bv[s,p,q]$.
So long as the differential operators involved have smooth coefficients, elliptic theory for these spaces
can be found in \cite{triebel_theory_2010}.  For less regular coefficients, one is led to 
the theory of pseudodifferential operators with non-smooth symbols and paradifferential calculus.
See, e.g., \cite{taylor_pseudodifferential_1991} and \cite{marschall_pseudodifferential_1988}.  Nevertheless,
this theory is somewhat technical, and it can be difficult for non-practitioners to apply it immediately
to the specific class of questions addressed in the current work.

Within the  mathematical relativity literature one finds 
instead a sequence of custom-made regularity theorems for second-order operators associated with 
with a metric $g$ on a domain of dimension $n\ge 3$:
\begin{itemize}
	\item \cite{choquet-bruhat_elliptic_1981}: $g\in W^{k,2}$ with $k\in\Nats$, $k>n/2+1$ and hence possessing H\"older continuous derivatives,
	\item \cite{choquet-bruhat_einstein_2004}\cite{maxwell_solutions_2005}: $g\in W^{k,p}$ with $k\in\Nats$, $k\ge 2$ and $k>n/p$ and hence H\"older continuous,
	\item \cite{maxwell_rough_2006}: $g\in H^s$ with $s\in\Reals$, $s>n/2$ and hence H\"older continuous,
	\item \cite{holst_rough_2009} $g\in W^{s,p}$ with $s\in\Reals$, $s\ge 1$ and $s>n/p$ and hence H\"older continuous.
\end{itemize}
Although \cite{maxwell_rough_2006} was the first work in this context to treat spaces
with a fractional number of derivatives, its limited focus on the
$L^2$ setting meant that it did not recover the full set of earlier results.
By contrast, \cite{holst_rough_2009} recovers prior results fully, but its main
regularity result, Lemma 32, contains an error that is not straightforward to correct.
Moreover, although
Sobolev-Slobodeckij spaces $W^{s,p}$ are a reasonable choice for interpolating
between integer-based Sobolev spaces, Bessel potential spaces $H^{s,p}$
enjoy better interpolation and embedding properties and 
are a compelling alternative. Hence it would be desirable to 
extend the results above to other classes of function spaces, and indeed
our work here concerning Bessel potential spaces 
provides the elliptic theory used by the recent preprint \cite{allen_sobolevclass_2022},
which treats geometric operators on asymptotically hyperbolic manifolds.

Our main results concern
local elliptic regularity for differential
operators having coefficients in any of the Sobolev-type spaces $H^{s,p}$, 
$W^{s,p}$, $F^{s,p}_q$ and $\Bv[s,p,q]$ mentioned above.  
Although we use basic techniques from the
theory of paraproducts in the proofs of some of our work, 
we do so with a minimum of theoretical 
overhead, and Section \ref{secsec:TL-rescaling} contains a short survey
of the few tools needed.  Moreover, although Bessel potential spaces are
a special case of Triebel-Lizorkin spaces and could have been dealt with
as a consequence of the general theory, in Section \ref{s:Hsp} we present
a simplified approach in the Bessel potential context 
that is free from paraproduct methods.  This approach comes
at the expense of establishing a less-than-sharp intermediate result 
on rescaling (Proposition \ref{prop:poor-mans-bp} vs. Proposition \ref{prop:rescale-Fsp}),
but this has no impact on the final regularity theory.  Readers who
are only interested in the Bessel potential case can stop reading
at the end of Section \ref{s:Hsp}
without needing to move on to the relative complexities of the 
more general function spaces.

In addition to extending the scope of
\cite{maxwell_rough_2006} and \cite{holst_rough_2009} to a broader class of function spaces,
the results of this paper
strengthen our earlier work.  Rather than simply obtaining a-priori estimates 
for functions with a known level of regularity, we obtain full
regularity inference in the spirit of, e.g., \cite{gilbarg_elliptic_2001}
Theorem 8.8.  Additionally, we have extended the range
of parameters of the function spaces treated. This extension is only
marginal in the generic case, but substantially extends the range of
parameters whenever the operators
involved omit low-order terms; see the discussion following
Definition \ref{def:L-Hsp}.
Although we restrict our attention to interior regularity, 
the tools developed here are also sufficient to address boundary value problems.
We have omitted these considerations, in part for simplicity of exposition: 
boundary traces
generally lie in Besov spaces, which are among the most technical of the spaces we consider, 
and which we treat last.
We will address boundary regularity in followup work.

Principal applications of elliptic regularity only apply
to the range of Lebesgue exponents $1<p<\infty$. Motivated by this, and again
for the sake of simple exposition, we have avoided the edge 
cases of $F^{s,p}_q$ and $\Bv[s,p,q]$ where $p,q=1,\infty$, much less 
the quasi-normed spaces where $p,q<1$. 
We observe, however, that the multiplication rules of Theorems \ref{thm:mult-Fsp} and \ref{thm:mult-Besov}
and the rescaling estimates of Propositions \ref{prop:rescale-Fsp} and \ref{prop:rescale-Bsp}
are candidates that could benefit from extending beyond the parameter
ranges treated here.

\subsection{Coefficient regularity structure}\label{secsec:intro-structure}
Differential operators in 
geometric analysis admit a representation in local coordinates in terms 
of coefficients that are universal expressions involving the
values and derivatives of the coordinate representation $g_{ab}$ of 
a metric $g$.  The prototypical example is
the Laplacian $\Delta_g$ associated with $g$, which can be written
in terms of the inverse metric $g^{ab}$ and the determinant $\sqrt{g}$ as
\[
\Delta_g = g^{ab}\partial_a\partial_b + \sum_{a} \sqrt{g}\left(\partial_a\left(\frac1{\sqrt g} g^{ab}\right)\right)\partial_b.
\]
The leading order coefficients have the regularity of $g_{ab}$, whereas
the next order coefficients involve first derivatives of $g_{ab}$.
More generally, consider 
the conformal Laplacian $\mathcal C_g$ of $g$,
\[
\mathcal C_g=-c_n \Delta_g + R_g
\]
where $c_n=-4(n-1)/(n-2)$ and where $R_g$
is the scalar curvature of the metric. If 
the coefficients of the metric lie in $W^{1,p}_{\rm loc}$ with $p>n$, 
a computation using H\"older's inequality and Sobolev embedding shows that
$\mathcal C_g$ has the form
\[
\mathcal C_g = -c_n g^{ab}\partial_a\partial_b + \beta^a \partial_a +\gamma
\]
where
\begin{align*}
g^{ab}\in W^{1,p}_{\rm loc},\qquad
\beta^a\in W^{0,p}_{\rm loc}, \qquad
\gamma\in W^{-1,p}_{\rm loc}.
\end{align*}	
In particular, the leading order coefficients have the regularity
of the metric, and there is a loss of one derivative as we descend from
one order of coefficient to the next.
This leads us to consider elliptic $d^{\rm th}$-order operators of the form
\begin{equation}\label{eq:op-type}
L = \sum_{|\alpha|\le d} a^\alpha \partial_\alpha
\end{equation}
where the top-most coefficients lie in a space $J^s$ with $s$
derivatives and more generally where each $a^\alpha \in J^{s-d+|\alpha|}$.
While this category of operator is not the most general possible
in geometric analysis \cite{stredder_natural_1975},
it is sufficiently broad to include many operators of interest,
including Hodge Laplacians,
the Lichnerowicz Laplacian \cite{besse_einstein_1987},
the vector Laplacian \cite{isenberg_constant_1995}
and the conformal Laplacian, so long as $s\ge 1$ and 
so long as the underlying metric
lies in a space sufficiently regular so as to ensure H\"older continuity.
It also includes the class of geometric operators satisfying 
the hypotheses of Assumption P
of \cite{allen_sobolevclass_2022}.

Given an operator of the form \eqref{eq:op-type} with
leading order coefficients in some space $J^s$ with $s$ derivatives, one wants to
find compatible spaces $K^{\sigma}$ with $\sigma$
derivatives such that that $L: K^{\sigma}\to K^{\sigma-d}$ 
and such that local elliptic regularity holds: roughly that
if $u$ is regular enough that $L$ can act on it, and 
if $L u \in K^{\sigma-d}$, then locally $u\in K^\sigma$ along
with associated estimates.
We establish this theory for elliptic operators of the form
\eqref{eq:op-type} where the topmost coefficients come
from a space $J^s$ of one of the following types:
\begin{itemize}
	\item a Bessel potential space $H^{s,p}$, in which case
	$K^\sigma$ is another Bessel potential space $H^{\sigma,q}$ (Section \ref{s:Hsp}),
	\item a Triebel-Lizorkin space $F^{s,p}_q$, in which case
	$K^\sigma$ is another Triebel-Lizorkin space $F^{\sigma,a}_b$ (Section \ref{s:Fsp}),
	\item a Sobolev-Slobodeckij space $W^{s,p}$, in which case
	$K^\sigma$ is another Sobolev-Slobodeckij space $W^{\sigma,q}$ (Section \ref{s:Wsp}),
	\item a Besov space $\Bv[s,p,q]$, in which case
	$K^\sigma$ is another Besov space $\Bv[\sigma,a,b]$ (Section \ref{s:Bsp}).
\end{itemize}
In all these cases, the space $J^s$ is restricted to be suitably regular so that
its elements are H\"older continuous, and we give a careful description of the parameters
determining the allowable spaces $K^\sigma$.

\subsection{Rescaling estimates}
Our general approach is the same for all the function spaces considered,
and in the specific case of Bessel potential spaces $H^{s,p}$ the core ingredients are:
\begin{enumerate}
	\item Multiplication properties for $H^{s,p}$ spaces, Theorem \ref{thm:mult}.
	\item Mapping properties: given an operator $L$ of the form \eqref{eq:op-type} with leading coefficients
in $H^{s,p}$, for which spaces $H^{\sigma,q}$ does $L$ map $H^{\sigma,q}\to H^{\sigma-d,q}$?
This is the content of Proposition \ref{prop:mapping-Hsp}.
\item A rescaling estimate, described below.
\item\label{step:zoom}  A coefficient freezing/blowup argument which uses the rescaling estimate to establish ``regularity at a point'', 
Proposition \ref{prop:elliptic-zoom}.
\item A partition of unity decomposition and bootstrap to obtain the main regularity result, Theorem \ref{thm:interior-reg}.
\end{enumerate}

Coefficient freezing as used in step \ref{step:zoom} above is classical, but we 
use a nonstandard rescaling technique to manage the perturbations from the constant coefficient operator.
To motivate this technique, consider the simplest case of integer-based Sobolev spaces on
the unit ball $B_1\subset \Reals^n$, and let $u\in W^{k,p}(B_1)$, where $k\in\Nats$, $1<p<\infty$.
For $0<r\le 1$ we
define $u_\rscr(x)=u(rx)$, so
$u_\rscr$ rescales $u$ up from the ball of radius $r$ to the unit ball.  
Derivatives are damped under this rescaling operation, but the singularities 
permitted by $L^p$ spaces are enhanced, and a computation using 
Sobolev embedding and H\"older's inequality shows
\begin{equation}\label{eq:rescale-est}
||u_\rscr||_{W^{k,p}(B_1)} \lesssim r^{\alpha} ||u||_{W^{k,p}(B_1)}
\end{equation}
where $\alpha = \min(k-\frac{n}{p},0)$, except in the marginal case $k=n/p$,
in which case we can take $\alpha$ to be any negative number. The cap $\alpha\le0$
appears in this estimate because of the constants, which are invariant under rescaling.
However, if $k>n/p$ so that elements of $W^{k,p}(B_1)$ are H\"older continuous, 
and if $f(0)=0$, one can do better.  Now estimate \eqref{eq:rescale-est} holds
with $\alpha=\min(k-n/p,1)$, except in the marginal case $k=n/p+1$, in which case 
we can use any $\alpha<1$.  Regardless, if $k>n/p$ and if $f(0)=0$, 
estimate \eqref{eq:rescale-est} holds for some $\alpha>0$.

Now consider a differential 
operator $L=\sum_{|\beta|\le d} a^\beta \partial_\beta$ 
with coefficients $a^\beta \in W^{k-d+|\beta|,p}(B_1)$ and with $k>n/p$.
The leading order coefficients lie in $W^{k,p}(B_1)$ and are therefore H\"older continuous.
Hence we can define the principal part of $L$ at $0$, 
\[
	L_0 = \sum_{|\beta|=d} a^\beta(0)\partial_\beta.
\]
If $u$ is a distribution that is regular enough that $L$ can act on it, 
a computation shows that 
\begin{align*}
	r^d (Lu)_{\rscr} &=  L_0 u_\rscr + \underbrace{\sum_{|\beta|=d} (a^\beta-a^\beta(0))_\rscr \partial_\beta u_\rscr}_{:=B_{[r]}u_\rscr} 
	+ \underbrace{\sum_{|\beta|<d} r^{d-|\beta|} a^\beta_{\rscr}  \partial_\beta u_\rscr}_{:=C_{[r]}u_\rscr}.
\end{align*}
The aim at this point is to show that 
by taking $r$ sufficiently small, 
the coefficients of the perturbations $B_{[r]}$ and $C_{[r]}$ can be made as small as desired 
so that a parametrix for $L_0$ can be employed to deduce regularity properties
of $u_\rscr$, and this is where the rescaling estimate \eqref{eq:rescale-est} is needed.  
Using the structural hypothesis $a_\beta\in W^{k-d+|\beta|,p}(B_1)$
along with the rescaling estimate \eqref{eq:rescale-est} we find, except in marginal cases
where an unimportant adjustment is needed,
that the coefficients of $C_{[r]}$ satisfy
\[
||r^{d-|\beta|}(a_\beta)_\rscr||_{W^{k-d+|\beta|,p}(B_1)} \lesssim 
r^{d-|\beta|} r^{\min(k-d+|\beta|-n/p,0)} ||a_\beta||_{W^{k-d+|\beta|,p}(B_1)}.
\]
Since $|\beta|< d$ for each of these coefficients, 
and since $k>n/p$, we obtain
\[
r^{d-|\beta|} r^{\min(k-d+\beta-n/p,0)} = r^{\min(k-n/p,d-|\beta|)} = r^\epsilon
\]
for some $\epsilon>0$. Hence the coefficients of $C_{[r]}$ scale away as $r\to 0$.
On the other hand, the high order perturbation 
coefficients $a^{\beta}-a^{\beta}(0)$ lie in $W^{k,p}(B_1)$ 
with $k>n/p$ and vanish at $0$, so the improved variation of the scaling estimate 
\eqref{eq:rescale-est} again shows that the coefficients of $B_{[r]}$ vanish as 
$r\to 0$.

Propositions \ref{prop:rescale-Fsp} and
\ref{prop:rescale-Bsp} show that estimate \eqref{eq:rescale-est}
generalizes to Triebel-Lizorkin and Besov spaces respectively.  
For Triebel-Lizorkin spaces, the proof
requires elementary techniques from Littlewood-Paley theory and paramultiplication, 
and the necessary background is recalled in Section \ref{secsec:TL-rescaling}
prior to the proof of Proposition \ref{prop:rescale-Fsp}.  The analogous results
for Besov spaces follow from the Triebel-Lizorkin result and interpolation.
As mentioned above, in the interest of
approachability, for Bessel potential spaces we use an alternative approach with
a rescaling estimate, Proposition \ref{prop:poor-mans-bp}, that is not sharp, but which admits 
an elementary proof that is independent of Littlewood-Paley theory.

\subsection{Multiplication}
Mapping properties of differential operators with coefficients in 
Sobolev-type spaces depend on pointwise multiplication rules
that determine when a product of factors from two given function
spaces lies in a third space.  There is an extensive literature on this subject,
including \cite{palais_foundations_1968} \cite{zolesio_multiplication_1977}
 \cite{amann_multiplication_1991} \cite{sickel_holder_1995}
 \cite{runst_sobolev_1996} \cite{johnsen_pointwise_1995} \cite{behzadan_multiplication_2021} that contains individual pieces
of the theory we require. Chapter 4 of  \cite{runst_sobolev_1996} is especially comprehensive. 
Where these works overlap,
there is generally agreement on the hypotheses, but certain edge cases are treated,
or not, by different authors.  Rather than attempt to assemble these disparate 
pieces into a coherent whole, we include a self-contained proof of 
multiplication rules
for Triebel-Lizorkin spaces, Theorem \ref{thm:mult-Fsp}, and for Besov spaces, 
Theorem \ref{thm:mult-Besov}, in the appendices.  Corresponding rules for 
Bessel potential spaces and Sobolev-Slobodeckij spaces, Theorems \ref{thm:mult}
and \ref{thm:mult-Wsp} respectively, follow as corollaries.  The
proofs rely on the same elementary Littlewood-Paley/paramultiplication
techniques that we use
to obtain the rescaling estimates of Section \ref{secsec:TL-rescaling}.
Although we have limited our analysis to the region 
$1<p,q<\infty$ for the spaces
$F^{s,p}_q$ and $\Bv[s,p,q]$, in this restricted setting
we obtain a consistent set of hypotheses over all ranges of $s$ 
that are simpler in character, and that are at least as sharp, as what appears
currently in the literature.

\section{Coefficients in Bessel Potential Spaces}\label{s:Hsp}
In this section we prove interior elliptic estimates
for operators having coefficients in Bessel potential
spaces, with a goal of presenting the result using a minimum of technology.
The primary background requirements are:
\begin{itemize}
	\item standard facts about Sobolev spaces with integer orders of differentiability,
	\item embedding, interpolation and duality theory for Bessel potential spaces, 
	\item multiplication rules for Bessel potential spaces, which we recall below, and
	\item elementary tools from harmonic analysis needed to construct parametrices for elliptic operators with constant coefficients.
\end{itemize}
In particular, the approach is otherwise independent of Littlewood-Paley theory or 
the general theory of pseudodifferential operators, beyond what is required to define
the spaces themselves.

Let $\mathcal F$ denote the Fourier transform and 
for $s\in \Reals$ let $D^s$ be the pseudodifferential
operator given by
\[
\mathcal F[ D^{s} u](\xi) = (1+|\xi|^2)^{s/2} \mathcal F[u](\xi).
\]
Given
$1<p<\infty$ and $s\in\Reals$ a tempered distribution 
$u$ on $\Reals^n$ belongs to the Bessel potential
space $H^{s,p}(\Reals^n)$ if $D^{-s} u\in L^p(\Reals^n)$,
in which case
\[
||u||_{H^{s,p}(\Reals^n)} = || D^{-s} u||_{L^p(\Reals^n)}.
\]
We use the same notation for for distributions taking on values 
in a real vector space 
(e.g., $\Reals^k$ or $\Reals^{k\times k}$).
When $k\in \Ints$, then $H^{k,p}(\Reals^n)$ with this definition
coincides with standard Sobolev spaces of distributions having
derivatives laying in Lebesgue spaces.

Given an open set $\Omega\subseteq\Reals^n$ the space $H^{s,p}(\Omega)$
consists of restrictions of distributions in $H^{s,p}(\Reals^n)$
to $\Omega$ and is given the quotient norm.  That is,
\[
||u||_{H^{s,p}(\Omega)} = \inf \{ ||\hat u||_{H^{s,p}(\Reals^n)}:\hat u\in H^{s,p}(\Reals^n), \hat u|_{\Omega} = u\}.
\]
We say an open set $\Omega$ is a \textbf{$C^\infty$ domain} if each point in the boundary
admits an open ball centered at it and a diffeomorphism from it to an open subset of $\Reals^n$ such that
the image of the intersection of $\Omega$ with the ball is a simply connected subset of the upper half space
$\Reals^{n,+}$.  If $\Omega$ is a bounded $C^\infty$ domain and if $k\in\Ints$, then $H^{k,p}(\Omega)$
coincides with the usual integer-based Sobolev spaces.

We have the following embedding, interpolation, and duality properties of Bessel
potential spaces, which are special cases of the same results cited in Section 
\ref{s:Fsp} for the more general Triebel-Lizorkin spaces.
\begin{proposition}\label{prop:embedding-Hsp}
Assume $1<p,p_1,p_2<\infty$
and $s,s_1,s_2\in \Reals$, and suppose $\Omega$ is a bounded open set in $\Reals^n$.
\begin{enumerate}
	\item
	If $s_1>s_2$ then $H^{s_1,p}(\Reals^n)\hookrightarrow H^{s_2,p}(\Reals^n)$ and 
	$H^{s_1,p}(\Omega)\hookrightarrow H^{s_2,p}(\Omega)$.
	\item If $p_1 \ge p_2$ then $H^{s,p_1}(\Omega)\hookrightarrow H^{s,p_2}(\Omega)$.
	\item
	If $s_1>s_2$ and
	$\frac{1}{p_1}-\frac{s_1}{n} = \frac{1}{p_2}-\frac{s_2}{n}$ then
	$H^{s_1,p_1}(\Reals^n)\hookrightarrow H^{s_2,p_2}(\Reals^n)$.
	\item\ If $s_1>s_2$ and 
	$\frac{1}{p_1}-\frac{s_1}{n}\le \frac{1}{p_2}-\frac{s_2}{n}$
	then $H^{s_1,p_1}(\Omega)\hookrightarrow H^{s_2,p_2}(\Omega)$.
	\item If $0<\alpha<1$ then $H^{\frac{n}p+\alpha,p}(\Reals^n)\hookrightarrow C^{0,\alpha}(\Reals^n)$
	and $H^{\frac{n}p+\alpha,p}(\Omega)\hookrightarrow C^{0,\alpha}(\Omega)$.
\end{enumerate}
\end{proposition}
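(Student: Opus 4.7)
The plan is to obtain each item by classical Bessel-potential and Fourier-multiplier arguments. The $\Reals^n$-statements (1), (3), (5) go through directly; the $\Omega$-versions follow either from the quotient-norm definition of $H^{s,p}(\Omega)$ or by chaining against item (2); and (2) itself requires a localization argument that exploits the boundedness of $\Omega$.

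For (1), I would rewrite $\|u\|_{H^{s_2,p}} = \|D^{s_2-s_1} D^{s_1} u\|_{L^p}$ and observe that the Fourier symbol $(1+|\xi|^2)^{(s_2-s_1)/2}$ is a uniformly bounded Mikhlin multiplier when $s_2 \le s_1$, hence gives a bounded operator on $L^p(\Reals^n)$ for $1<p<\infty$; the $\Omega$-version is immediate from the quotient norm. For (3), I would first reduce to $s_2 = 0$ by composing with $D^{s_2}$ and then recognise the inclusion $H^{s_1,p_1}(\Reals^n)\hookrightarrow L^{p_2}(\Reals^n)$ at the scaling-critical exponent as the Hardy--Littlewood--Sobolev inequality applied to the representation $u = G_{s_1} * (D^{s_1} u)$, using the pointwise bound $G_{s_1}(x)\lesssim |x|^{s_1-n}$ near the origin and exponential decay at infinity. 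For (5), the same representation with $s=n/p+\alpha$ yields H\"older continuity of $u$ via the standard splitting of $|u(x)-u(y)|$ into near-field ($|z|\le|x-y|$) and far-field contributions and the decay estimates on $G_s$ and $\nabla G_s$.

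Item (4) then follows by interpolation of the preceding: choose $\tilde p$ with $\frac{1}{\tilde p} - \frac{s_2}{n} = \frac{1}{p_1} - \frac{s_1}{n}$, so that the hypothesis $\frac{1}{p_1} - \frac{s_1}{n} \le \frac{1}{p_2} - \frac{s_2}{n}$ forces $\tilde p \ge p_2$; then (3) gives $H^{s_1,p_1}(\Reals^n)\hookrightarrow H^{s_2,\tilde p}(\Reals^n)$, which descends to $\Omega$ by restriction, and (2) completes the chain $H^{s_2,\tilde p}(\Omega)\hookrightarrow H^{s_2,p_2}(\Omega)$.

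The main obstacle is (2), because the corresponding inclusion on all of $\Reals^n$ \emph{fails} whenever $p_1 > p_2$: the argument must use boundedness of $\Omega$ in an essential way. My plan is to fix a cutoff $\chi\in C_c^\infty(\Reals^n)$ that equals one on a neighborhood of $\overline\Omega$ and, given any extension $\hat u\in H^{s,p_1}(\Reals^n)$ of $u$, to show $\chi\hat u\in H^{s,p_2}(\Reals^n)$. For integer $s$ this reduces immediately to $L^{p_1}_{\mathrm{comp}}\hookrightarrow L^{p_2}_{\mathrm{comp}}$ applied to the derivatives of $\chi\hat u$ on the bounded support; for fractional $s$ one interpolates between integer cases using the complex interpolation identity for Bessel potential spaces. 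That identity is standard but is the one ingredient not wholly internal to the Bessel-potential definition, so for the present exposition it is probably cleanest to cite the Triebel--Lizorkin analogue of Section \ref{s:Fsp}, as the author indicates.
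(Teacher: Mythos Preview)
The paper does not prove this proposition at all: it is stated as a special case of the Triebel--Lizorkin embeddings (Proposition~\ref{prop:embedding-Fsp}), which are themselves simply cited from \cite{triebel_theory_2010} and \cite{triebel_interpolation_1978} without argument. Your proposal therefore goes well beyond what the paper does, supplying a direct and essentially self-contained proof via Mikhlin multipliers for (1), the Bessel-kernel representation together with Hardy--Littlewood--Sobolev for (3) and (5), and a cutoff-plus-interpolation argument for (2), with (4) assembled from (2) and (3). This is all correct and is the standard route if one wants to stay internal to the Bessel-potential setting rather than invoke the Triebel--Lizorkin machinery.

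One small gap worth flagging in your treatment of (2): the reduction ``for integer $s$ this reduces to $L^{p_1}_{\mathrm{comp}}\hookrightarrow L^{p_2}_{\mathrm{comp}}$ applied to the derivatives of $\chi\hat u$'' only covers nonnegative integers. To interpolate across all $s\in\Reals$ you also need the negative-integer endpoints, which follow by duality: boundedness of $\chi\cdot:H^{m,p_2^*}(\Reals^n)\to H^{m,p_1^*}(\Reals^n)$ for $m\ge 0$ (noting $p_2^*\ge p_1^*$) dualizes to $\chi\cdot:H^{-m,p_1}(\Reals^n)\to H^{-m,p_2}(\Reals^n)$. With that in hand your interpolation step is clean.
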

In the final embedding above, $C^{0,\alpha}(\Reals^n)$ denotes the H\"older space with
norm $||u||_{C^{0,\alpha}(\Reals^n)}=||u||_{L^\infty(\Reals^n)}+ \sup_{x\neq y}\left|\frac{f(x)-f(y)}{|x-y|^\alpha}\right|$,
with an analogous norm for functions defined on $\Omega$.
\begin{proposition}\label{prop:interp-Hsp}
	Assume $1<p_1,p_2<\infty$
	and $s_1,s_2\in \Reals$, and suppose $\Omega$ is either $\Reals^n$ or is
	a bounded $C^\infty$ domain in $\Reals^n$.
	For $0<\theta<1$,
	\[
		[H^{s_1,p_1}(\Omega),H^{s_2,p_2}(\Omega)]_\theta = H^{s,p}(\Omega)
	\]
	where
	\[
		s=(1-\theta)s_1+\theta s_2,\quad \frac{1}{p} = (1-\theta)\frac{1}{p_1}+\theta \frac{1}{p_2}
	\]
\end{proposition}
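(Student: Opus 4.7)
The plan is to reduce everything to the structural isomorphism $D^s\colon H^{s,p}(\Reals^n)\to L^p(\Reals^n)$ built into the definition, handle $\Omega=\Reals^n$ by complex interpolation of $L^p$ spaces together with Stein's analytic interpolation, and then transfer to bounded $C^\infty$ domains via a universal extension operator and the retraction theorem.

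For the Euclidean case, I would apply Stein's interpolation theorem to the analytic family
\[
T_z \;=\; D^{-((1-z)s_1 + zs_2)}, \qquad 0\le \mathrm{Re}\,z\le 1.
\]
At $\mathrm{Re}\,z=0$ and $\mathrm{Re}\,z=1$ one has $T_{it} = D^{-it(s_2-s_1)}D^{-s_1}$ and $T_{1+it}=D^{-it(s_2-s_1)}D^{-s_2}$, so these are compositions of the isometries $D^{-s_j}\colon L^{p_j}\to H^{s_j,p_j}$ with the purely imaginary power $D^{-it(s_2-s_1)}$, a Fourier multiplier of Hörmander--Mikhlin type whose norm on $L^{p_j}$ grows at most polynomially in $|t|$. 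Stein's theorem then yields a bounded map $T_\theta = D^{-s}\colon [L^{p_1},L^{p_2}]_\theta \to [H^{s_1,p_1},H^{s_2,p_2}]_\theta$. Since $[L^{p_1},L^{p_2}]_\theta = L^p$ by Calderón's identification of complex interpolation for Lebesgue spaces, and $D^{-s}$ is the defining isometry $L^p\to H^{s,p}$, this gives the continuous inclusion $H^{s,p}(\Reals^n)\subseteq [H^{s_1,p_1}(\Reals^n),H^{s_2,p_2}(\Reals^n)]_\theta$. The reverse inclusion is obtained in the same way from the inverse family $T_z^{-1} = D^{(1-z)s_1+zs_2}$.

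For a bounded $C^\infty$ domain $\Omega$, the strategy is to realize $H^{s,p}(\Omega)$ as a retract of $H^{s,p}(\Reals^n)$ uniformly in $(s,p)$. One can invoke a universal extension operator $E$ (of Rychkov or Stein type) which is simultaneously bounded $H^{s,p}(\Omega)\to H^{s,p}(\Reals^n)$ for the entire admissible parameter range, together with the restriction operator $R$ that is bounded $H^{s,p}(\Reals^n)\to H^{s,p}(\Omega)$ directly from the definition of the quotient norm, and satisfies $R\circ E = \mathrm{id}$ on $H^{s,p}(\Omega)$. The retract theorem for the complex interpolation functor (e.g., Bergh--Löfström, Theorem 6.4.2) then immediately produces
\[
[H^{s_1,p_1}(\Omega),H^{s_2,p_2}(\Omega)]_\theta \;=\; R\bigl([H^{s_1,p_1}(\Reals^n),H^{s_2,p_2}(\Reals^n)]_\theta\bigr) \;=\; R\bigl(H^{s,p}(\Reals^n)\bigr) \;=\; H^{s,p}(\Omega),
\]
with equivalence of norms.

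The main obstacle is the Stein interpolation step, specifically the verification that the imaginary powers $D^{it}$ satisfy polynomial-in-$|t|$ operator norm estimates on $L^p$ for $1<p<\infty$. This is not a deep fact but it is essential, and reduces to a direct differentiation of the multiplier $(1+|\xi|^2)^{it/2}$ to check Hörmander--Mikhlin conditions with constants controlled by $|t|$; everything else in the argument is a formal consequence of standard interpolation theory and the isometric shift structure of the Bessel potential scale.
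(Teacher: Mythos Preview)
Your argument is correct and complete. The paper, however, does not supply a proof of this proposition at all: it states the result and remarks that it is a special case of the corresponding complex interpolation property for Triebel-Lizorkin spaces, which in turn is simply cited from Triebel's monograph (Theorems 2.4.7 and 3.3.6 of \cite{triebel_theory_2010}). So your route---Stein analytic interpolation applied to the family $D^{-((1-z)s_1+zs_2)}$ to handle $\Reals^n$, followed by a universal extension operator and the retraction theorem to pass to bounded $C^\infty$ domains---is genuinely different in that it is a self-contained proof rather than a citation. Your approach has the advantage of being elementary and intrinsic to the Bessel potential scale (it uses only the defining isometry $D^s$ and Mikhlin bounds on imaginary powers), whereas the paper's route requires accepting the full Triebel-Lizorkin interpolation theory as a black box. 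The trade-off is that the paper's citation covers the more general $F^{s,p}_q$ scale in one stroke, which it needs later anyway.
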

\begin{proposition}\label{prop:dual-Hsp}
	Assume $1<p<\infty$ and $s\in \Reals$.
	The bilinear map $C^\infty(\Reals^n)\times C^\infty(\Reals^n)\to \Reals$
	given by $\left<f,g\right> := \int_\Omega fg$ extends to a continuous bilinear map
	$F^{s,p}_q(\Reals^n)\times F^{-s,p^*}_{q^*}(\Reals^n)\to \Reals$ 
	where $1/p^* = 1-1/p$ and $1/q^* =1-1/q$.  Moreover, 
	$f\mapsto \left<f,\cdot\right>$ is a continuous identification of $F^{s,p}_q(\Reals^n)$
	with $(F^{-s,p^*}_{q^*}(\Reals^n))^*$.
\end{proposition}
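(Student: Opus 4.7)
The plan is to reduce the duality of Bessel potential spaces to the classical $L^p$--$L^{p^*}$ duality via the isomorphism $D^s\colon H^{s,p}(\Reals^n)\to L^p(\Reals^n)$ that is built into the definition of $H^{s,p}$. (In the Bessel potential case $F^{s,p}_q=H^{s,p}$ with $q=q^*=2$, so the proposition reduces to an assertion about $H^{s,p}$ and $H^{-s,p^*}$; a proof valid for the general $F^{s,p}_q$ statement would instead replace $D^s$ with a Littlewood--Paley decomposition and invoke vector-valued $L^p$ duality, which is the approach to be taken in Section~\ref{s:Fsp}. The sketch below treats the Bessel potential case, which is all that the rest of Section~\ref{s:Hsp} requires.)

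First I would record two ingredients. (i) By construction $D^s$ and $D^{-s}$ give mutually inverse isometric isomorphisms between $H^{s,p}(\Reals^n)$ and $L^p(\Reals^n)$ for every $s\in\Reals$ and every $1<p<\infty$. (ii) Since the symbol $(1+|\xi|^2)^{s/2}$ is real, Plancherel yields the formal self-adjointness
\[
\int_{\Reals^n} (D^s f)\, g = \int_{\Reals^n} f\,(D^s g)
\]
for Schwartz $f,g$, and by density this persists whenever both sides make sense. Combining these gives the bilinear identity $\int fg = \int (D^s f)(D^{-s}g)$ for Schwartz $f,g$.

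Continuity of the pairing is then immediate from Hölder:
\[
\left|\int_{\Reals^n} fg\right|\le \|D^s f\|_{L^p}\,\|D^{-s}g\|_{L^{p^*}}=\|f\|_{H^{s,p}}\|g\|_{H^{-s,p^*}},
\]
and the density of $\mathcal S(\Reals^n)$ in each of the two spaces lets this extend uniquely to a separately continuous pairing on $H^{s,p}\times H^{-s,p^*}$. For the identification of $H^{s,p}$ with the dual, given $\ell\in(H^{-s,p^*}(\Reals^n))^*$ I would form $\tilde\ell:=\ell\circ D^s\in(L^{p^*}(\Reals^n))^*$ using that $D^s\colon L^{p^*}\to H^{-s,p^*}$ is an isomorphism. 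Classical $L^p$ duality produces $\phi\in L^p$ with $\tilde\ell(h)=\int \phi h$ for all $h\in L^{p^*}$ and $\|\phi\|_{L^p}=\|\tilde\ell\|$. Setting $F=D^{-s}\phi\in H^{s,p}$ and testing against $g=D^s h\in H^{-s,p^*}$ yields $\ell(g)=\int Fg$, with $\|F\|_{H^{s,p}}=\|\phi\|_{L^p}\lesssim\|\ell\|$; injectivity of $F\mapsto\langle F,\cdot\rangle$ follows from density of Schwartz functions.

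I expect no serious obstacle here: the argument is essentially mechanical once the isomorphism $D^s\colon H^{s,p}\to L^p$ and the self-adjointness of $D^s$ are in hand. The only points requiring care are (a) verifying that the identity $\int fg=\int(D^sf)(D^{-s}g)$ passes from the Schwartz setting to arbitrary $(f,g)\in H^{s,p}\times H^{-s,p^*}$ by continuity, so that the extended pairing is well-defined and independent of the chosen smooth approximations, and (b) keeping the mapping directions $D^s\colon L^{p^*}\to H^{-s,p^*}$ versus $D^s\colon H^{s,p}\to L^p$ straight when shuffling between primal and dual spaces. These are bookkeeping issues rather than genuine analytic difficulties.
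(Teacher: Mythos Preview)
Your argument is correct. The paper does not actually prove this proposition: it is stated without proof as a special case of the Triebel--Lizorkin duality (Proposition~\ref{prop:dual-Fsp}), which is itself simply cited from \cite{triebel_theory_2010} Theorem~2.11.2. So there is no ``paper's own proof'' to compare against beyond an external reference.

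Your route is more elementary and more in keeping with the spirit of Section~\ref{s:Hsp}, which explicitly aims to avoid Littlewood--Paley machinery. Reducing to $L^p$--$L^{p^*}$ duality via the Bessel potential isometry is the standard textbook argument and is entirely self-contained once one has $D^s$; the cited Triebel result, by contrast, is proved in the general $F^{s,p}_q$ setting using the Littlewood--Paley characterization and vector-valued duality for $L^p(\ell^q)$, which is overkill here. One small bookkeeping point: the paper's convention is $\|u\|_{H^{s,p}}=\|D^{-s}u\|_{L^p}$, so the isometry onto $L^p$ is $D^{-s}$ rather than $D^s$ as you wrote; this just swaps the roles of $s$ and $-s$ in your identity $\int fg=\int(D^sf)(D^{-s}g)$ and changes nothing of substance.
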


\subsection{Mapping properties}

The following definition encodes the regularity structure of the coefficients of differential
operators appearing frequently in geometric analysis.

\begin{defn}\label{def:L-Hsp}
	Consider a
	$d^{\rm th}$ order differential operator on an open set $\Omega\subseteq \Reals^n$,
	\[
	L = \sum_{|\alpha|\le d} a^\alpha \partial_\alpha
	\]
	where the coefficients are $\Reals^{k\times k}$-valued for a system of $k$ variables.
	We say that $L$ is of class $\mathcal L^d(H^{s,p};\Omega)$ 
	for some $s\in\Reals$ and $1<p<\infty$ if each
	\[
	a^\alpha \in H^{s+|\alpha|-d,p}(\Omega ).
	\]
	
	If $L$ omits terms of order lower than $d_0$ for some $0\le d_0 \le d$,
	i.e.,
	\[
	L = \sum_{d_0\le |\alpha|\le d } a^\alpha \partial_\alpha,
	\]
	we say $L\in \mathcal L_{d_0}^d(H^{s,p};\Omega)$.
	\end{defn}
	
	To motivate the roles of the pair of indices $d$ and $d_0$ in the previous definition,
	recall from the introduction the conformal Laplacian $\mathcal C_g = -c_n\Delta_g + R[g]$ 
	of a metric $g$ on a bounded $C^\infty$ domain $\Omega$. An elementary
	computation using Sobolev embedding shows
	that if $g\in H^{1,p}(\Omega)$ with $p>n$ then 
	$\mathcal C_g$ is of class $\mathcal L^2(H^{1,p};\Omega)$. The low-order term is the 
	scalar curvature $R[g]\in H^{-1,p}(\Omega)$ and its presence 
	restricts the set of spaces that $\mathcal C_g$ can act on: functions in these spaces
	must possess at least one derivative.  By contrast, the ordinary Laplacian for the same metric
	has no zero-order term and consequently is an operator of class $\mathcal L^2_1(H^{1,p};\Omega)$.
	Moreover, one can show that it acts on a broader class of spaces and, for example, 
	defines a map $L^p(\Omega)\to H^{-2,p}(\Omega)$.  Hence, in addition to the order $d$ of 
	the differential operator we also track the order $d_0$ of the term with the 
	lowest number of derivatives
	appearing in the operator. 	

Consider an operator $L$ of class $\mathcal L_{d_0}^d (H^{s,p};\Omega)$.  It
defines a map from $C^\infty(\overline \Omega)$, the set of smooth functions on $\Omega$
admitting smooth extensions to $\Reals^n$, to the set $D'(\Omega)$ of distributions on $\Omega$,
and we wish to establish finer-grained mapping properties.  Specifically,
we would like to determine the indices $(\sigma,q)\in \Reals\times (1,\infty)$ such that 
$L$ is continuous $H^{\sigma,q}(\Omega)\to H^{\sigma-d,q}(\Omega)$.

The following result on multiplication of Bessel potential spaces is the 
tool needed to establish these mapping properties. It can be readily proved for integral orders
of differentiability using only Sobolev embedding and duality arguments, and a slightly less sharp version
that would, in fact, be sufficient for our purposes can be proved with interpolation techniques (\cite{behzadan_multiplication_2021} Theorem 5.1).
See also \cite{palais_foundations_1968}, the original reference for multiplication of Bessel potential
spaces, which also considers the case of more than two factors.
The statement below is a special case of the
multiplication rules for the broader class of Triebel-Lizorkin spaces
proved in Appendix \ref{app:mult-Fsp}.
\begin{theorem}\label{thm:mult}
Let $\Omega$ be a bounded open subset of $\Reals^n$.
	Suppose $1<p_1,p_2,q<\infty$ and $s_1,s_2,\sigma\in\Reals$.  Let $r_1,r_2$ and $r$ be defined by
\[
\frac{1}{r_1} = \frac{1}{p_1} - \frac{s_1}{n},\qquad
\frac{1}{r_2} = \frac{1}{p_2} - \frac{s_2}{n},\quad\text{\rm and}\quad
\frac{1}{r} = \frac{1}{q} - \frac{\sigma}{n}.
\]
Pointwise multiplication of $C^\infty(\overline \Omega)$ 
functions extends to a continuous bilinear map 
$H^{s_1,p_1}(\Omega)\times H^{s_2,p_2}(\Omega)
\rightarrow H^{\sigma,q}(\Omega)$ so long as
\begin{align}
 s_1+s_2&\ge 0\label{eq:Hsp-mult-a}\\
 \min({s_1,s_2}) &\ge \sigma \label{eq:Hsp-mult-b}\\
\max\left(\frac{1}{r_1},\frac{1}{r_2}\right) & \le  \frac{1}{r}\label{eq:Hsp-mult-c}\\
\frac{1}{r_1} + \frac{1}{r_2} & \le 1\label{eq:Hsp-mult-d}\\
\frac{1}{r_1} + \frac{1}{r_2} & \le \frac{1}{r}\label{eq:Hsp-mult-lebesgue}
\end{align}
with inequality \eqref{eq:Hsp-mult-lebesgue} strict if $\min(1/r_1,1/r_2,1-1/r)=0$.
\end{theorem}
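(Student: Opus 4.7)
The plan is to reduce the claim to an analogous multiplication estimate on all of $\Reals^n$ and then to prove that estimate through a combination of Sobolev embedding, H\"older's inequality, duality, and complex interpolation. Given smooth $u,v$ on $\overline\Omega$, the restriction definition of $H^{s,p}(\Omega)$ provides extensions $\hat u, \hat v$ with comparable norms, and the restriction of $\hat u \hat v$ to $\Omega$ equals $uv$. So it suffices to prove
\[
\|uv\|_{H^{\sigma,q}(\Reals^n)} \lesssim \|u\|_{H^{s_1,p_1}(\Reals^n)}\|v\|_{H^{s_2,p_2}(\Reals^n)}
\]
for Schwartz $u,v$ under the stated hypotheses.

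First I would treat the corner case $s_1, s_2 \ge 0$ and $\sigma = 0$. Here Proposition \ref{prop:embedding-Hsp} gives $H^{s_i,p_i}(\Reals^n)\hookrightarrow L^{r_i}(\Reals^n)$ when $s_i<n/p_i$, an embedding into every $L^r$ with $r<\infty$ when $s_i=n/p_i$, and an embedding into $L^\infty$ (corresponding to the interpretation $1/r_i \le 0$) when $s_i>n/p_i$. H\"older's inequality with exponents $r_1,r_2,q$ is then justified by \eqref{eq:Hsp-mult-lebesgue} at $\sigma=0$, while \eqref{eq:Hsp-mult-d} ensures the Sobolev exponents are meaningful. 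The strictness stipulation absorbs the failure of the endpoint embedding at $s_i=n/p_i$ by permitting a slight adjustment of $r_i$.

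To pass to general $\sigma \ge 0$ with $\sigma \le \min(s_1,s_2)$, I would apply $D^\sigma$ to $uv$ and expand through a Leibniz-style identity that redistributes the $\sigma$ derivatives between the two factors, reducing to the $\sigma=0$ case with parameters shifted accordingly. For $\sigma<0$ or one of the $s_i$ negative, I would exploit the duality of Proposition \ref{prop:dual-Hsp}: the bilinear form $(u,v)\mapsto uv$ into $H^{\sigma,q}$ is controlled by the adjoint map $H^{s_2,p_2}\times H^{-\sigma,q^*}\to H^{-s_1,p_1^*}$, which under the symmetry of \eqref{eq:Hsp-mult-a}--\eqref{eq:Hsp-mult-lebesgue} falls into an already-treated region of parameters. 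Finally, complex interpolation (Proposition \ref{prop:interp-Hsp}) would fill in the remaining non-integer parameter values from the integer/corner base cases plus their duals.

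The main obstacle is the combinatorics of the parameter polytope cut out by \eqref{eq:Hsp-mult-a}--\eqref{eq:Hsp-mult-lebesgue}: one has to select a set of base-and-dual configurations whose convex hull under interpolation exhausts the entire allowed region in $(s_1,s_2,\sigma,1/p_1,1/p_2,1/q)$-space. The marginal Sobolev cases with $s_i=n/p_i$ or $1/r=0$ are the other technical nuisance and are precisely what force the strict inequality in \eqref{eq:Hsp-mult-lebesgue} when $\min(1/r_1,1/r_2,1-1/r)=0$; they are handled by approximating with a slightly worse $r_i$ and invoking Proposition \ref{prop:embedding-Hsp}(1). A much shorter derivation is of course available by invoking Theorem \ref{thm:mult-Fsp} through the identification $H^{s,p}=F^{s,p}_2$, but this would sacrifice the self-contained, Littlewood--Paley-free character that Section \ref{s:Hsp} is designed to maintain.
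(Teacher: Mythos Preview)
Your proposal has a genuine gap at the step ``apply $D^\sigma$ to $uv$ and expand through a Leibniz-style identity.'' For non-integer $\sigma$ there is no elementary Leibniz rule; the relevant substitute is a Kato--Ponce type fractional product estimate, and establishing that is itself essentially the content of the multiplication theorem you are trying to prove. Any rigorous version of this step that I am aware of requires paraproduct/Littlewood--Paley machinery or something of comparable depth, so as written the argument is circular at its core. The remaining ingredients you list (H\"older plus Sobolev embedding for $\sigma=0$, duality to reach negative indices, complex interpolation to fill in) are all correct and are in fact used by the paper, but without a replacement for the Leibniz step they do not cover the full parameter region: the paper itself notes that interpolation from the integer-order/duality base cases yields only a \emph{slightly less sharp} version of the theorem (citing \cite{behzadan_multiplication_2021}), not the sharp statement with equality permitted in \eqref{eq:Hsp-mult-a}--\eqref{eq:Hsp-mult-d}.

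You have also misread the paper's design. Section~\ref{s:Hsp} is Littlewood--Paley-free in its treatment of \emph{rescaling} and \emph{regularity}, but Theorem~\ref{thm:mult} is explicitly stated there as a special case of Theorem~\ref{thm:mult-Fsp} via $H^{s,p}=F^{s,p}_2$, and the proof in Appendix~\ref{app:mult-Fsp} is built on the Littlewood--Paley trichotomy (Lemmas~\ref{lem:mult-Fsp-same-s-high}--\ref{lem:mult-Fsp-same-s-low}). So the route you dismiss in your last sentence is exactly the route the paper takes; the ``self-contained, Littlewood--Paley-free'' philosophy applies to the elliptic theory, not to the sharp multiplication rules.
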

There are admittedly a large number of conditions in the previous result, but
they all have an easy interpretation. 
For a pair $(\sigma,q)\in \Reals\times(1,\infty)$ we call
\[
\frac{1}{r}=\frac{1}{q} -\frac{\sigma}{n}
\]
the \textbf{Lebesgue regularity} of the pair, the terminology
motivated by the observation that if $0<1/r<1$ then
Sobolev embedding implies $H^{\sigma,q}_{\rm loc}(\Reals^n)$ 
embeds in $L^r_{\rm loc}(\Reals^n)$. Using this vocabulary, 
the conditions \eqref{eq:Hsp-mult-a}--\eqref{eq:Hsp-mult-lebesgue} are, loosely:
\begin{itemize}
	\item[\eqref{eq:Hsp-mult-a}] If one factor has a negative number of derivatives, the remaining factor 
	must have at as many positive derivatives.
	\item[\eqref{eq:Hsp-mult-b}] The product will not have, in general, more derivatives than either factor.
	\item[\eqref{eq:Hsp-mult-c}] The product will not have, in general, better Lebesgue regularity than either factor.
	\item[\eqref{eq:Hsp-mult-d}] $L^1$ is a least-regular barrier for multiplication of $L^p$ spaces. Note that in light of inequality \eqref{eq:Hsp-mult-lebesgue} this condition only plays a role if $s<0$.
	\item[\eqref{eq:Hsp-mult-lebesgue}] The Lebesgue regularity of the product is consistent with multiplication of $L^p$ spaces. Strictness of this inequality is needed in edge cases because $L^\infty$ is not the right target space for borderline Sobolev embeddings.
\end{itemize}

Repeated applications of Theorem \ref{thm:mult} 
imply the following mapping result, where we emphasize the
new hypothesis $s>n/p$ which ensures, among other consequences, 
that the highest order coefficients of the differential operator are continuous.

\begin{proposition}\label{prop:mapping-Hsp} Let $\Omega$ be a 
	bounded open subset of $\Reals^n$.
Suppose $1<p,q<\infty$, $s>n/p$, $\sigma\in\Reals$ and 
$d,d_0\in\Ints_{\ge 0}$ with $d\ge d_0$.  
An operator of class $\mathcal L_{d_0}^d(H^{s,p};\Omega)$ 
extends from a map
$C^\infty(\overline \Omega)\mapsto \mathcal D'(\Omega)$ to 
a continuous linear map 
$H^{\sigma,q}(\Omega)\mapsto H^{\sigma-d,q}(\Omega)$
so long as
\begin{equation}\label{eq:S-conds}
\begin{aligned}
\sigma&\in [d-s,s+d_0]\\
\frac{1}{q} - \frac{\sigma}{n} &\in \left[
\frac{1}{p} - \frac{s+d_0}{n}, \frac{1}{p^*}-\frac{d-s}{n}
\right]
\end{aligned}
\end{equation}
where $1/p^* = 1-1/p$ is the conjugate Lebesgue exponent of $p$.
Moreover, the map $A$ between these spaces depends continuously
on its coefficients $a^\alpha \in H^{s-d+\alpha,p}(\Omega)$.
\end{proposition}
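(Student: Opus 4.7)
The plan is to reduce the mapping claim to a monomial-by-monomial verification via Theorem \ref{thm:mult}. Writing $L = \sum_{d_0\le|\alpha|\le d} a^\alpha \partial_\alpha$, I would note that $\partial_\alpha$ is continuous $H^{\sigma,q}(\Omega) \to H^{\sigma-|\alpha|,q}(\Omega)$, so it suffices to show, for each $\alpha$ with $d_0\le|\alpha|\le d$, that pointwise multiplication by $a^\alpha\in H^{s+|\alpha|-d,p}(\Omega)$ defines a continuous bilinear map
\[
H^{s+|\alpha|-d,p}(\Omega)\times H^{\sigma-|\alpha|,q}(\Omega)\longrightarrow H^{\sigma-d,q}(\Omega).
\]
Summing over $\alpha$ then yields continuity of $L$, and continuous dependence on the coefficients follows from the bilinearity of this map. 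Matrix-valued coefficients are handled componentwise.

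The main step is applying Theorem \ref{thm:mult} with $s_1 = s+|\alpha|-d$, $p_1 = p$, $s_2 = \sigma-|\alpha|$, $p_2 = q$, and target indices $\sigma - d$, $q$. The key observation is that the interval constraints in \eqref{eq:S-conds} are precisely the worst-case, over $|\alpha|\in[d_0,d]$, of conditions \eqref{eq:Hsp-mult-a}--\eqref{eq:Hsp-mult-lebesgue}. Specifically, \eqref{eq:Hsp-mult-a} collapses to $\sigma\ge d-s$, the left endpoint of the first interval; the binding half of \eqref{eq:Hsp-mult-b} occurs at $|\alpha|=d_0$ and produces $\sigma\le s+d_0$, the right endpoint; the other half of \eqref{eq:Hsp-mult-b} and the half of \eqref{eq:Hsp-mult-c} comparing $r_2$ to $r$ both reduce to the automatic $|\alpha|\le d$. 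The remaining half of \eqref{eq:Hsp-mult-c}, comparing $r_1$ to $r$, is tightest at $|\alpha|=d_0$ and gives the left endpoint of the second interval in \eqref{eq:S-conds}, while condition \eqref{eq:Hsp-mult-d} rearranges, independently of $\alpha$, to the right endpoint. Finally \eqref{eq:Hsp-mult-lebesgue} simplifies (again independently of $\alpha$) to $1/p - s/n\le 0$, which is \emph{strictly} negative by the hypothesis $s>n/p$; this strictness automatically discharges the edge-case proviso of Theorem \ref{thm:mult}.

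The only obstacle is the bookkeeping: there are many index conditions to juggle, and one must verify that the two intervals in \eqref{eq:S-conds} exactly record the worst case of each condition as $|\alpha|$ ranges over $[d_0,d]$. No analytic machinery beyond Theorem \ref{thm:mult} (and the boundedness of differentiation on Bessel potential scales) is required, so once the index arithmetic is set out cleanly the conclusion is immediate.
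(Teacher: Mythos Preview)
Your proposal is correct and is exactly the approach the paper takes: the paper's entire proof is the single sentence ``Repeated applications of Theorem \ref{thm:mult} imply the following mapping result,'' and you have supplied the bookkeeping that this sentence leaves implicit. Your reduction of each monomial $a^\alpha\partial_\alpha$ to an instance of Theorem \ref{thm:mult} and the identification of the worst-case $|\alpha|$ for each of conditions \eqref{eq:Hsp-mult-a}--\eqref{eq:Hsp-mult-lebesgue} are both correct, as is the observation that $s>n/p$ makes \eqref{eq:Hsp-mult-lebesgue} strict and thereby eliminates the edge-case proviso.
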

Conditions \eqref{eq:S-conds} on $\sigma$ and $q$
describe the natural Sobolev indices of spaces
for an operator of class
$\mathcal L_{d_0}^d(H^{s,p}; \Omega)$ to act on, 
and it is convenient to have notation for this set.
\begin{defn}\label{def:scriptS-Hsp}
Suppose $1<p<\infty$, $s\in\Reals$ and $d,d_0\in\Ints_{\ge 0}$
with $d\ge d_0$.
The \textbf{compatible Sobolev indices} 
for an operator of class $\mathcal L_{d_0}^d(H^{s,p};\Omega)$ acting
on a bounded open set $\Omega$ is the set
\[
\mathcal S_{d_0}^d(H^{s,p}) \subseteq \Reals\times (1,\infty)
\]
of tuples $(\sigma,q)\in \Reals\times(1,\infty)$ 
satisfying \eqref{eq:S-conds}.
\end{defn}
\begin{figure}
\begin{center}
\hskip -2.5cm\includegraphics{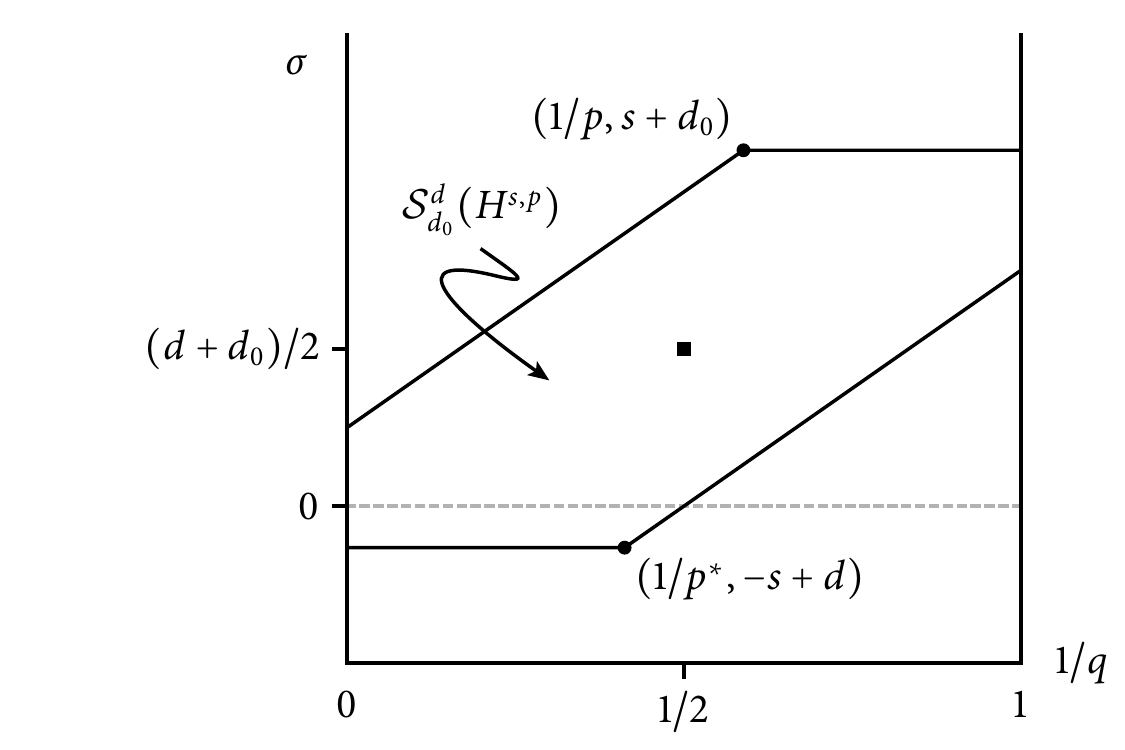}
\end{center}
\caption{
\label{fig:S}
The region $\mathcal S_{d_0}^d(H^{s,p})$ for $n=3$, $d=2$, $d_0=0$, $p=1.7$
and $s=\frac n p + \frac 12$.}
\end{figure}

The second condition of \eqref{eq:S-conds} is a restriction on the Lebesgue regularity
of $(\sigma,q)$ and it is helpful to visualize $\mathcal S^d_{d_0}(H^{s,p})$
after making the transformation $q\mapsto 1/q$
since sets of constant Lebesgue regularity
then appear as straight lines with slope $1/n$ as in Figure \ref{fig:S}.

For any particular collection of parameters, it may be
that $\mathcal S^{d}_{d_0}(H^{s,p})$ is empty.  The following result
establishes when the set is nonempty and hence when an operator of
class $\mathcal L_{d_0}^d(H^{s,p};\Omega)$ has a suitable collection of
Bessel potential spaces to act on.
\begin{lemma}\label{lem:S-members}
Suppose $1<p<\infty$, $s\in\Reals$ and $d,d_0\in \Ints_{\ge 0}$
with $d\ge d_0$.  Then $\mathcal S_{d_0}^d(H^{s,p})$ 
is nonempty if and only if
\begin{align}
\label{eq:S-p-1}
s &\ge (d-d_0)/2\text{, and}\\
\label{eq:S-p-2}
\frac{1}{p} - \frac{s}{n} &\le \frac{1}{2}-\frac{(d-d_0)/2}{n}.
\end{align}
If $S_{d_0}^d(H^{s,p})$ is non-empty then it contains $(s+d_0,p)$, $(d-s,p^*)$,
and $((d+d_0)/2,2)$.  Moreover, if $(\sigma,q)\in \mathcal S_{d_0}^d(H^{s,p})$, then
we have the continuous inclusions of Fr\'echet spaces
\begin{equation}\label{eq:S-include}
H^{s+d_0,p}_{\rm loc}(\Reals^n) \subseteq H^{\sigma,q}_{\rm loc}(\Reals^n)
\subseteq H^{d-s,p^*}_{\rm loc}(\Reals^n).
\end{equation}
\end{lemma}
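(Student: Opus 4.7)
The plan is to extract \eqref{eq:S-p-1}--\eqref{eq:S-p-2} by asking when the two intervals defining $\mathcal S_{d_0}^d(H^{s,p})$ in Definition \ref{def:scriptS-Hsp} have non-empty interior, to exhibit the three named tuples as extreme points and midpoint of the resulting rectangle in the $(\sigma,1/q)$-plane, and then to derive the inclusions \eqref{eq:S-include} from the two extreme tuples via Proposition \ref{prop:embedding-Hsp}.

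Unpacking the definition, $\mathcal S_{d_0}^d(H^{s,p})$ consists of pairs $(\sigma,q)$ with $\sigma\in[d-s,s+d_0]$ and $\tfrac{1}{q}-\tfrac{\sigma}{n}\in\bigl[\tfrac{1}{p}-\tfrac{s+d_0}{n},\tfrac{1}{p^*}-\tfrac{d-s}{n}\bigr]$. The first interval is non-empty iff $d-s\le s+d_0$, i.e.\ condition \eqref{eq:S-p-1}. Using $\tfrac{1}{p}+\tfrac{1}{p^*}=1$, the non-emptiness of the Lebesgue interval rearranges in a few lines to \eqref{eq:S-p-2}. This gives necessity. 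For sufficiency I verify the three named points: $(s+d_0,p)$ realizes the upper $\sigma$-endpoint and, on substitution, the lower Lebesgue endpoint; $(d-s,p^*)$ realizes the opposite pair by the symmetric computation; and $((d+d_0)/2,2)$ is the arithmetic midpoint of $[d-s,s+d_0]$ and has Lebesgue regularity $\tfrac{1}{2}-\tfrac{(d+d_0)/2}{n}$, which a short substitution confirms to be the midpoint of the Lebesgue interval, so it lies in both intervals under \eqref{eq:S-p-1}--\eqref{eq:S-p-2}.

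For the inclusions in \eqref{eq:S-include}, fix $(\sigma,q)\in\mathcal S_{d_0}^d(H^{s,p})$. The defining inequalities $s+d_0\ge\sigma$ and $\tfrac{1}{p}-\tfrac{s+d_0}{n}\le\tfrac{1}{q}-\tfrac{\sigma}{n}$ are precisely what is required to invoke part~(4) of Proposition \ref{prop:embedding-Hsp} in the case $s+d_0>\sigma$; in the borderline case $s+d_0=\sigma$ the Lebesgue constraint forces $p\ge q$, and part~(2) delivers the embedding $H^{s+d_0,p}(\Omega)\hookrightarrow H^{\sigma,q}(\Omega)$ on every bounded $C^\infty$ domain $\Omega$. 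Passing to local spaces yields the first inclusion of \eqref{eq:S-include}, with continuity because each defining seminorm of the target is controlled by a defining seminorm of the source. The second inclusion is symmetric, applied to the pair $(\sigma,q)$ versus $(d-s,p^*)$.

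There is no substantive obstacle here: the argument is essentially a geometric reading of Definition \ref{def:scriptS-Hsp}, and the only non-trivial bookkeeping is the branch between parts~(2) and~(4) of Proposition \ref{prop:embedding-Hsp} when $\sigma$ coincides with an endpoint of $[d-s,s+d_0]$.
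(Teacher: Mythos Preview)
Your proposal is correct and follows essentially the same approach as the paper: both arguments reduce non-emptiness to the non-emptiness of the two defining intervals, exhibit the three named tuples as the endpoints and midpoint, and deduce the inclusions from Sobolev embedding. Your treatment is slightly more careful in explicitly branching between parts~(2) and~(4) of Proposition~\ref{prop:embedding-Hsp} at the borderline $\sigma=s+d_0$ (and symmetrically $\sigma=d-s$), whereas the paper simply invokes the combined embedding criterion $s_1\ge s_2$ together with $\tfrac{1}{p_1}-\tfrac{s_1}{n}\le\tfrac{1}{p_2}-\tfrac{s_2}{n}$ without separating the cases.
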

\begin{proof}
The intervals in \eqref{eq:S-conds} defining $\mathcal S_{d_0}^d(H^{s,p})$
are non-empty if and only if $s\ge (d-d_0)/2$ and
\[
\frac{1}{p}-\frac{s+d_0}{n} \le 1 - \frac{1}{p}-\frac{d-s}{n},
\]
which is equivalent to inequality \eqref{eq:S-p-2}.  Moreover,
if these intervals are nonempty, then $(s+d_0,p)$ and $(d-s,p^*)$
evidently belong to $\mathcal S_{d_0}^d(H^{s,p})$. 
To show that $(\sigma,q)=((d+d_0)/2,2)\in \mathcal S_{d_0}^d(H^{s,p})$
whenever this set is nonempty, define
\[
\frac{1}{r_1} = \frac{1}{p}-\frac{s+d_0}{n},\qquad
\frac{1}{r_2} = \frac{1}{p^*}-\frac{d-s}{n},\quad
\text{and}\quad\frac{1}{r} = \frac{1}{2} -\frac{(d+d_0)/2}{n}.
\]
Then $((d+d_0)/2,2)\in  \mathcal S_{d_0}^d(H^{s,p})$ so long as
\begin{equation}\label{eq:s-compare}
d-s\le \frac{d+d_0}2\le s+d_0
\end{equation}
and
\begin{equation}\label{eq:r-compare}
\frac{1}{r_1} \le \frac{1}{r} \le \frac{1}{r_2}.
\end{equation}
Since $\mathcal S_{d_0}^d(H^{s,p})$ is nonempty,
$d-s\le s+d_0$ and $1/r_1\le 1/r_2$.  
Inequalities \eqref{eq:s-compare} and \eqref{eq:r-compare}
are then consequences of the observations
\[
\frac{d+d_0}{2}=\frac{1}{2}\left[ (d-s) + (s+d_0)\right]
\]
and
\[
\frac1r = \frac{1}{2}\left(\frac1r_1+\frac1r_2\right).
\]

Finally, recall
that $H^{s_1,p_1}_{\rm loc}(\Reals^n)$ embeds continuously
in  $H^{s_2,p_2}_{\rm loc}(\Reals^n)$ if $s_1\ge s_2$ and if
\[
\frac{1}{p_1}-\frac{s_1}{n} \le \frac{1}{p_2}-\frac{s_2}{n}.
\]
So conditions \eqref{eq:S-conds} ensure that
if $(\sigma,q)\in \mathcal S_{d_0}^d(H^{s,p})$ then we have
the continuous inclusions \eqref{eq:S-include}.
\end{proof}

The indices $(s+d_0,p)$ and $(d-s,p^*)$ correspond to the 
most regular and least regular spaces an operator in 
$\mathcal L_{d_0}^d(H^{s,p};\Omega)$ can naturally act on;
indices $(\sigma,q)\in \mathcal S_{d_0}^d(H^{s,p})$ yield spaces
$H^{\sigma,q}_{\rm loc}(\Reals^n)$ that lie intermediate between the
two extreme spaces $H^{s+d_0,p}_{\rm loc}(\Reals^n)$ and $H^{d-s,p^*}_{\rm loc}(\Reals^n)$.
Indeed, one interpretation of Lemma \ref{lem:S-members} is that 
$\mathcal S^d_{d_0}(H^{s,p})$ is nonempty exactly when
$H^{s+d_0,p}_{\rm loc}(\Reals^n)$ includes $H^{d-s,p^*}_{\rm loc}(\Reals^n)$.
Alternatively, inequalities \eqref{eq:S-p-1}--\eqref{eq:S-p-2} are exactly
the condition that $H^{s,p}_{\rm loc}$ includes
$H^{(d+d_0)/2,2}_{\rm loc}$, which highlights the importance
of this $L^2$-based space.  In effect, Lemma \ref{lem:S-members}
implies $\mathcal S_{d_0}^d(H^{s,p})$
is nonempty exactly when it contains $((d+d_0)/2,2)$.   
For example, consider the case of a
general second-order operator, so $d=2$ and $d_0=0$.  Then
$\mathcal S_0^2(H^{s,p})$ is non-empty
when $H^{s,p}_{\rm loc}(\Reals^n)$ contains $H^{1,2}_{\rm loc}(\Reals^n)$,
the natural $L^2$-based space for a weak existence theory.  See also Figure 
\ref{fig:S}, where the key $L^2$-based space appears as a small square.

In addition to the mapping properties of $L$ described in Proposition
\ref{prop:mapping-Hsp} we require an analogous 
result for the commutator of $L$ with a smooth cutoff function $\phi$.
\begin{lemma} \label{lem:commutator-Hsp}
Let $\Omega$ be a bounded open subset of $\Reals^n$.
	Suppose $1<p,q<\infty$, $s>n/p$, $\sigma\in\Reals$ and 
$d,d_0\in\Ints_{\ge 0}$ with $d\ge d_0$. Let
$L$ be an operator of class $\mathcal L_{d_0}^d(H^{s,p};\Omega)$
and let $\phi\in\mathcal D(\Omega)$.  
Then $[L,\phi]$ 
extends from a map 
$C^\infty(\overline \Omega)\mapsto \mathcal D'(\Omega)$ to 
a continuous linear map 
$H^{\sigma,q}(\Omega)\mapsto H^{\sigma-d+1,q}(\Omega)$
so long as  $(\sigma + 1,q) \in \mathcal S^{d}_{d_0}(H^{s,p})$. Moreover,
if $d_0=0$, the same result holds if $(\sigma,q)\in \mathcal S^{d}_{0}(H^{s,p})$.
\end{lemma}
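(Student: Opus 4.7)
The plan is to expand $[L,\phi]$ explicitly via the Leibniz rule and then apply the multiplication rule Theorem~\ref{thm:mult} term by term. For any multi-index $\alpha$ with $|\alpha|\ge 1$, Leibniz gives $[\partial_\alpha,\phi]u=\sum_{1\le|\beta|\le|\alpha|}\binom{\alpha}{\beta}(\partial_\beta\phi)\,\partial_{\alpha-\beta}u$, while the summand with $|\alpha|=0$ contributes nothing to the commutator.  Therefore
\[
[L,\phi]u \;=\;\sum_{\substack{\max(1,d_0)\le|\alpha|\le d\\ 1\le|\beta|\le|\alpha|}}\binom{\alpha}{\beta}\,(\partial_\beta\phi)\,a^\alpha\,\partial_{\alpha-\beta}u,
\]
and since each $\partial_\beta\phi\in\mathcal D(\Omega)$ is a harmless smooth multiplier, it suffices to show that every product $a^\alpha\cdot\partial_{\alpha-\beta}u$ is bilinearly continuous from $H^{s+|\alpha|-d,p}(\Omega)\times H^{\sigma,q}(\Omega)$ into $H^{\sigma-d+1,q}(\Omega)$, with constant depending only on the parameters.

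For each pair $(\alpha,\beta)$ in this sum I would invoke Theorem~\ref{thm:mult} with $s_1=s+|\alpha|-d$, $p_1=p$, $s_2=\sigma-|\alpha|+|\beta|$, $p_2=q$, and target indices $(\sigma-d+1,q)$.  Elementary arithmetic reduces the differentiability conditions \eqref{eq:Hsp-mult-a}--\eqref{eq:Hsp-mult-b} to
\[
\sigma \ge d-s-|\beta| \qquad\text{and}\qquad \sigma\le s+|\alpha|-1,
\]
and reduces the Lebesgue conditions \eqref{eq:Hsp-mult-c}--\eqref{eq:Hsp-mult-d} to analogous upper and lower bounds on $\tfrac{1}{q}-\tfrac{\sigma+1}{n}$.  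Since $|\beta|\ge 1$ and $|\alpha|\ge\max(1,d_0)$, the tightest versions of these four inequalities occur at $|\beta|=1$ and $|\alpha|=\max(1,d_0)$, and in that worst case they are implied by the four defining inequalities of $(\sigma+1,q)\in\mathcal S^{d}_{d_0}(H^{s,p})$ in Definition~\ref{def:scriptS-Hsp}.  The final condition \eqref{eq:Hsp-mult-lebesgue} collapses to $\tfrac{1}{p}-\tfrac{s}{n}\le\tfrac{|\beta|-1}{n}$, which holds \emph{strictly} by the standing hypothesis $s>n/p$ together with $|\beta|\ge 1$; this strictness automatically discharges the edge-case clause at the end of Theorem~\ref{thm:mult}.

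The refinement claimed for $d_0=0$ is free from the same computation: the effective lower bound on $|\alpha|$ in the sum above remains $1$, because the (possibly present) zeroth-order coefficient $a^{(0)}$ commutes with $\phi$.  Substituting $|\alpha|\ge 1$ and $|\beta|\ge 1$ back into the four inequalities and invoking instead the weaker hypothesis $(\sigma,q)\in\mathcal S^d_0(H^{s,p})$, which in particular provides $\sigma\in[d-s,s]$ and the corresponding Lebesgue bounds, one checks term by term that all hypotheses of Theorem~\ref{thm:mult} continue to hold, with \eqref{eq:Hsp-mult-lebesgue} again strictly satisfied via $s>n/p$.  The only real obstacle in the argument is the bookkeeping needed to identify which pair $(\alpha,\beta)$ realizes the worst case of each inequality; once that is done, continuous dependence of $[L,\phi]$ on the coefficients $a^\alpha$ follows immediately from the bilinearity of the multiplication estimate.
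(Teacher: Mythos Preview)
Your proof is correct and follows essentially the same route as the paper: expand $[L,\phi]$ via Leibniz and verify the hypotheses of Theorem~\ref{thm:mult} term by term, noting that the zero-order coefficient drops out of the commutator. For the $d_0=0$ refinement the paper packages the verification slightly differently---it applies the general case to $\hat L\in\mathcal L^d_1(H^{s,p};\Omega)$ and then uses the set-theoretic observation that $(\sigma+1,q)\in\mathcal S^d_1(H^{s,p})$ is equivalent to $(\sigma,q)\in\mathcal S^{d-1}_0(H^{s,p})\supset\mathcal S^d_0(H^{s,p})$---but your direct re-verification of the multiplication hypotheses under $(\sigma,q)\in\mathcal S^d_0(H^{s,p})$ amounts to the same computation.
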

\begin{proof}
Suppose $u\in H^{\sigma,q}(\Omega)$.  A term of $[L,\phi]u$
has the form
\[
a^\alpha \partial_\beta \phi \partial_\gamma u	
\]
where $a^\alpha\in H^{s-d+|\alpha|,p}$ and where
$\max(1,d_0)\le |\alpha|\le d$ and $|\gamma|\le |\alpha|-1$.
The result in the case of general $d_0$ follows from a direct computation using
these facts along with Theorem \ref{thm:mult}.  

If $d_0=0$, we can improve the result as follows.  Let $\hat L$ be $L$ with its zero-order term
removed, so $\hat L\in \mathcal L_{1}^{d}(H^{s,p};\Omega)$.  Then $[L,\phi]=[\hat L,\phi]$ and using
the result just proved we find that 
the commutator $[L,\phi]$ maps $H^{\sigma,q}(\Omega) \to H^{\sigma-d+1,a}(\Omega)$ so long as
$(\sigma+1,q)\in \mathcal S^{d}_{1}(H^{s,p})$.  But a routine computation shows that this condition is
equivalent to $(\sigma,q) \in \mathcal S^{d-1}_{0}(H^{s,p})$ and the claimed improvement follows
since $S^{d-1}_{0}(H^{s,p})\supset S^{d}_{0}(H^{s,p})$.
\end{proof}

\subsection{Rescaling estimates}\label{secsec:rescale-Hsp}

For a Schwartz function $u$ let
$u_\rscr(x) = u(rx)$.
This rescaling operation extends to general tempered distributions
by continuity and duality arguments, and we use the same notation
when $u$ is a distribution.  When $m$ is a non-negative integer,
it is easy to see that $u\mapsto u_\rscr$ is a 
continuous automorphism of $H^{m,p}(\Reals^n)$ for all $1<p<\infty$.
The same holds for $H^{s,p}(\Reals^n)$ for $s>0$ by interpolation,
and for $s<0$ by an elementary duality argument. 

In this section, we prove two principal estimates for the norms of rescaling
operators, with bounds depending on $r$ along with the parameters of the 
function space being acted on.
First, we have 
Proposition \ref{prop:rescale-int}, 
which establishes
the desired estimates for functions with integer-order differentiability.
Then, using interpolation, we generalize the estimates to non-integer
differentiability at the penalty of a mild loss of sharpness; this
is the content of Proposition \ref{prop:poor-mans-bp}, which is
the main result of this section.  In fact, one can recover the sharpness
using more sophisticated tools from Littlewood-Paley theory, and indeed
we do for the broader class of Triebel-Lizorkin spaces in Section 
\ref{secsec:TL-rescaling}.  Nevertheless, the less-than optimal estimates
of Proposition \ref{prop:poor-mans-bp} are sufficient to establish
local elliptic regularity in the context of Bessel potential function spaces, 
and it permits a proof using only a minimal set of tools.
\begin{proposition}\label{prop:rescale-int}
Suppose $1<p<\infty$, $m\in\Ints$ 
and that $\chi$ is a Schwartz function on $\Reals^n$.
There exists a constant $\alpha\in\Reals$ such that
for all $0<r\le 1$ and all $u\in H^{m,p}(\Reals^n)$
\begin{equation}\label{eq:Hsp-scale}
||\chi u_\rscr||_{H^{m,p}(\Reals^n)} \lesssim r^{\alpha}||u||_{H^{m,p}(\Reals^n)}.
\end{equation}
Specifically:
\begin{enumerate}
	\item\label{part:rescale-int-generic}
Inequality \eqref{eq:Hsp-scale} holds with
\[
\alpha = \min\left(m-\frac{n}{p},0\right)
\]
unless $m-n/p=0$, in which case it holds for any choice
of $\alpha<0$, with implicit constant depending on 
$\alpha$.
	\item\label{part:rescale-int-zero-center}
	If $m>n/p$ (in which case functions in $H^{m,p}(\Reals^n)$
	are H\"older continuous) and if $u(0)=0$, then 
	inequality \eqref{eq:Hsp-scale} holds with
	\[
\alpha = \min\left(m-\frac{n}{p},1\right)
	\]
	unless $m-n/p=1$, in which case it holds for any choice of
	$\alpha<1$, with implicit constant depending on $\alpha$.
\end{enumerate}
\end{proposition}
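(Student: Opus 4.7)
My plan is to combine the Leibniz rule with elementary change-of-variables and Sobolev embedding estimates, handling negative $m$ by duality.

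\textbf{Part (1), $m \ge 0$.} For $|\beta| \le m$, I expand
\begin{equation*}
\partial^\beta(\chi u_{\{r\}}) = \sum_{\gamma \le \beta} c_{\beta,\gamma}\, r^{|\gamma|}\, (\partial^{\beta-\gamma}\chi)\, (\partial^\gamma u)_{\{r\}}
\end{equation*}
via Leibniz and the identity $\partial^\gamma(u_{\{r\}}) = r^{|\gamma|}(\partial^\gamma u)_{\{r\}}$. I then bound each term's $L^p$ norm by first choosing $q \in [p, \infty]$ so that $W^{m-|\gamma|, p} \hookrightarrow L^q$ (Proposition \ref{prop:embedding-Hsp}): take $1/q = 1/p - (m-|\gamma|)/n$ when positive, $q = \infty$ when $m - |\gamma| > n/p$, or any finite $q$ at the critical equality $m - |\gamma| = n/p$. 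A Hölder split against the Schwartz factor $\partial^{\beta-\gamma}\chi$ together with $\|(\partial^\gamma u)_{\{r\}}\|_{L^q} = r^{-n/q}\|\partial^\gamma u\|_{L^q}$ yields a bound of size $r^{|\gamma| - n/q}\|u\|_{H^{m,p}}$. A direct calculation shows $|\gamma| - n/q = \min(|\gamma|, m - n/p)$ outside the marginal case, and minimising over $|\gamma|$ delivers exactly $\alpha = \min(m - n/p, 0)$. In the marginal case $m = n/p$, the $|\gamma| = 0$ term only admits $r^{-\varepsilon}$, explaining the stated $\varepsilon$-slack.

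\textbf{Part (1), $m < 0$.} I dualize via Proposition \ref{prop:dual-Hsp}: $H^{m,p} \cong (H^{-m,p^*})^*$, and the change of variables $y = rx$ yields
\begin{equation*}
\langle \chi u_{\{r\}}, \phi\rangle = r^{-n}\langle u, (\chi\phi)_{\{1/r\}}\rangle.
\end{equation*}
Because $\rho := 1/r \ge 1$, the ``zoom-out'' bound $\|f_{\{\rho\}}\|_{H^{-m, p^*}} \lesssim \rho^{-m - n/p^*}\|f\|_{H^{-m, p^*}}$ follows mechanically from $\partial^\beta f_{\{\rho\}} = \rho^{|\beta|}(\partial^\beta f)_{\{\rho\}}$ and change of variables, taking the largest-exponent term (possible precisely because $\rho \ge 1$). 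Combined with continuity of multiplication by Schwartz $\chi$ on $H^{-m, p^*}$ (valid since $-m \ge 0$), this gives $\|\chi u_{\{r\}}\|_{H^{m,p}} \lesssim r^{-n + n/p^* + m}\|u\|_{H^{m,p}} = r^{m - n/p}\|u\|_{H^{m,p}}$, matching $\min(m - n/p, 0) = m - n/p$.

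\textbf{Part (2).} I rerun the Leibniz expansion, refining only the $|\gamma| = 0$ term. Since $m > n/p$, Proposition \ref{prop:embedding-Hsp} places $u$ in $C^{0, \alpha_0}$ with $\alpha_0 = \min(m - n/p, 1)$ generically (or any $\alpha_0 < 1$ in the marginal case $m - n/p = 1$), and $u(0) = 0$ then yields $|u(rx)| \le \|u\|_{C^{0, \alpha_0}}\, r^{\alpha_0}|x|^{\alpha_0}$; Schwartz decay of $\partial^\beta \chi$ absorbs the factor $|x|^{\alpha_0}$, so the $|\gamma| = 0$ contribution scales as $r^{\alpha_0}$. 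For $|\gamma| \ge 1$, the Part~1 bound $r^{\min(|\gamma|, m - n/p)}$ is worst at $|\gamma| = 1$, giving $r^{\min(1, m - n/p)} = r^{\alpha_0}$. Combining all contributions gives $\alpha = \min(m - n/p, 1)$. The main recurring technical nuisance is the failure $W^{n/p, p} \not\hookrightarrow L^\infty$ and its Hölder analogue at Lipschitz regularity, which force the stated $\varepsilon$-slack at $m = n/p$ and $m - n/p = 1$; apart from these edge cases, the proof is bookkeeping of exponents.
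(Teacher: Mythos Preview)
Your proof is correct and follows essentially the same approach as the paper: Sobolev embedding plus H\"older to control the low-order contributions, exact scaling $r^{|\gamma|-n/p}$ for the derivative pieces, duality for negative $m$, and the H\"older-continuity upgrade when $u(0)=0$. The only organizational difference is that the paper first invokes Gagliardo--Nirenberg to reduce to the two extreme terms $\|\chi u_{\{r\}}\|_{L^p}+\|\nabla^m u_{\{r\}}\|_{L^p}$ before estimating, whereas you carry the full Leibniz expansion and bound every intermediate term directly; both routes produce the same exponent bookkeeping.
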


Proposition \ref{prop:rescale-int} is established in the following sequence
of elementary results which treat specific subcases and supporting lemmas.
Specifically, it is an immediate consequence of Corollary \ref{cor:scale-sigma-neg},
Lemma \ref{lem:int-case-general} and Corollary \ref{cor:holder-rescale} below.
In applications, $\chi$ in Proposition \ref{prop:rescale-int} will be
compactly supported and we are effectively interested in rescaling $u$ from 
a ball of radius $r<1$ up to a ball of fixed radius.  Derivatives are dampened
under this operation, and the role of $\chi$ is to control the 
zero-frequency terms; without the cutoff function, for all
$m>0$ the optimal scaling would be $\alpha=-n/p$ instead. 

We begin in the easier setting $m\le 0$, in which case it turns out that the
the cutoff function plays no role. When $m=0$, we have 
the following consequence of the change of variables formula
that the $L^p$ norm has straightforward scaling 
behavior for all $r>0$.
\begin{lemma}\label{lem:Lpscale}
Suppose $1<p<\infty$, $r>0$, and $u\in L^p(\Reals^n)$.
Then
\[
||u_\rscr||_{L^p(\Reals^n)} = ||u||_{L^p(\Reals^n)} r^{-n/p}.
\]
\end{lemma}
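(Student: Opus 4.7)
The plan is to prove this by a direct change of variables in the defining integral, which is exactly what the statement hints at: the scaling behavior of the $L^p$ norm under dilation is purely a consequence of the Jacobian of the map $x\mapsto rx$.

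More precisely, I would first reduce to the case where $u$ is a Schwartz function (or any representative in a dense subspace of $L^p(\Reals^n)$), so that $u_\rscr(x)=u(rx)$ is defined pointwise and the integrals below are elementary Lebesgue integrals rather than limits of duality pairings. Then I would compute
\[
\|u_\rscr\|_{L^p(\Reals^n)}^p = \int_{\Reals^n} |u(rx)|^p\,dx,
\]
and apply the change of variables $y=rx$, under which $dy = r^n\,dx$, to obtain
\[
\int_{\Reals^n} |u(rx)|^p\,dx = r^{-n}\int_{\Reals^n} |u(y)|^p\,dy = r^{-n}\|u\|_{L^p(\Reals^n)}^p.
\]
Taking $p$-th roots gives the claimed identity for Schwartz $u$, and by the density of Schwartz functions in $L^p(\Reals^n)$ (using $1<p<\infty$) together with the continuity of both sides as functions of $u\in L^p(\Reals^n)$, the identity extends to all $u\in L^p(\Reals^n)$.

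There is essentially no obstacle here; the only mild care needed is to confirm that the dilation operator $u\mapsto u_\rscr$, initially defined by duality on tempered distributions, agrees with the pointwise definition $u(rx)$ on the dense subspace where both make sense, so that the elementary change-of-variables calculation propagates to all of $L^p$. Note that the identity is stated for all $r>0$, not merely $0<r\le 1$, and the proof above is symmetric in this regard since $r^{-n}$ makes sense for every $r>0$.
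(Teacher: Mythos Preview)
Your proof is correct and is exactly the change-of-variables argument the paper invokes (the paper does not spell out a proof, merely calling the lemma a ``consequence of the change of variables formula''). The density reduction you add is harmless but unnecessary: for any $u\in L^p(\Reals^n)$ the function $|u(r\,\cdot)|^p$ is nonnegative and measurable, so the Lebesgue change-of-variables formula applies directly without first passing to a smooth dense subclass.
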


Turning to the case $m<0$ in Proposition \ref{prop:rescale-int},
the proof proceeds via a duality argument, for
which we need the following result
concerning rescaling $H^{k,p}(\Reals^n)$ for $k>0$ with $r>1$ (rather than $r\le 1$).  

\begin{lemma}\label{lem:scale-in}
Suppose $1<p<\infty$ and $k\in\Ints_{\ge 0}$.  
For all $r\ge 1$ and all $u\in H^{k,p}(\Reals^n)$,
\begin{equation}
||u_\rscr||_{H^{k,p}(\Reals^n)} \lesssim ||u||_{H^{k,p}(\Reals^n)} r^{m -\frac{n}{p}}.
\end{equation}
\end{lemma}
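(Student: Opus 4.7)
The plan is to exploit the fact that for nonnegative integer $k$ the Bessel potential space $H^{k,p}(\Reals^n)$ coincides with the classical Sobolev space, which has an equivalent norm given by the sum of $L^p$ norms of derivatives up to order $k$. This reduces the lemma to a routine application of the chain rule together with the $L^p$ rescaling identity of Lemma \ref{lem:Lpscale}.

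Concretely, I would first note the chain rule computation $\partial_\alpha(u_\rscr)(x) = r^{|\alpha|}(\partial_\alpha u)_\rscr(x)$ for each multi-index $\alpha$ with $|\alpha| \le k$. Applying Lemma \ref{lem:Lpscale} to each $\partial_\alpha u \in L^p(\Reals^n)$ then yields
\[
\|\partial_\alpha(u_\rscr)\|_{L^p(\Reals^n)} = r^{|\alpha| - n/p}\|\partial_\alpha u\|_{L^p(\Reals^n)}.
\]
Summing over $|\alpha|\le k$ and using the equivalence of norms on $H^{k,p}(\Reals^n) = W^{k,p}(\Reals^n)$ gives
\[
\|u_\rscr\|_{H^{k,p}(\Reals^n)} \lesssim \sum_{|\alpha|\le k} r^{|\alpha| - n/p}\|\partial_\alpha u\|_{L^p(\Reals^n)}.
\]

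The final step is to observe that since $r \ge 1$, the exponent $r^{|\alpha| - n/p}$ is monotone increasing in $|\alpha|$, so it is maximized at $|\alpha| = k$, giving $r^{|\alpha|-n/p} \le r^{k - n/p}$ for all relevant $\alpha$. Factoring this out produces the claimed estimate. There is no real obstacle here — the one subtlety is that we need $k \in \Ints_{\ge 0}$ precisely to use the classical Sobolev norm equivalence, and we need $r \ge 1$ (rather than $r \le 1$) to bound the lower-order terms by the top-order scaling factor; both hypotheses are exactly what the statement supplies.
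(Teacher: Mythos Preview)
Your proof is correct and follows essentially the same approach as the paper: use the equivalence of $H^{k,p}$ with the classical Sobolev space, apply the chain rule and Lemma~\ref{lem:Lpscale} to each derivative, and bound all terms by the top-order scaling factor $r^{k-n/p}$ using $r\ge 1$. The only cosmetic difference is that the paper invokes the Gagliardo--Nirenberg--Sobolev inequality to reduce the norm to just the $L^p$ term and the top-order derivatives $|\alpha|=k$, whereas you sum over all $|\alpha|\le k$; your version is slightly more direct and avoids that extra ingredient.
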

\begin{proof}
Lemma \ref{lem:Lpscale}, the Gagliardo-Nirenberg-Sobolev inequality 
and the fact that $r\ge 1$ imply
\[
\begin{aligned}
||u_\rscr||_{H^{k,p}(\Reals^n)} 
&\lesssim ||u_\rscr||_{L^p(\Reals^n)} + \sum_{|\alpha|=k} ||\nabla^\alpha u_\rscr||_{L^p(\Reals^n)}\\
&= r^{-\frac{n}{p}} ||u||_{L^p(\Reals^n)} + r^{k} 
\sum_{|\alpha|=k} ||(\nabla^\alpha u)_\rscr||_{L^p(\Reals^n)}\\
&=  r^{-\frac{n}{p}}  ||u||_{L^p(\Reals^n)} +  r^{k-\frac{n}{p}} 
\sum_{|\alpha|=k} ||(\nabla^\alpha u)||_{L^p(\Reals^n)})\\
&\lesssim r^{k-\frac{n}{p}} ||u||_{H^{k,p}(\Reals^n)}.
\end{aligned}
\]
\end{proof}
The following corollary follows from duality from Lemma \ref{lem:scale-in}.
\begin{corollary} 
\label{cor:scale-sigma-neg}
Suppose $1<p<\infty$ and $m\in\Ints_{\le 0}$. 
For all $0<r\le 1$ and all  $u\in H^{m,p}(\Reals^n)$
\[
||u_{\{r\}}||_{H^{m,p}(\Reals^n)} \lesssim  r^{m-\frac{n}{p}}||u||_{H^{m,p}(\Reals^n)}.
\]
\end{corollary}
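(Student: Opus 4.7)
The plan is to deduce the estimate by a straightforward duality argument, using Proposition \ref{prop:dual-Hsp} to pair $H^{m,p}(\Reals^n)$ with $H^{-m,p^*}(\Reals^n)$, together with the change-of-variables formula for the rescaling operator. Set $k = -m \ge 0$ and write $p^* = p/(p-1)$. Then by duality,
\[
\|u_{\{r\}}\|_{H^{m,p}(\Reals^n)} \simeq \sup_{0\ne v\in\mathcal S(\Reals^n)} \frac{|\langle u_{\{r\}},v\rangle|}{\|v\|_{H^{k,p^*}(\Reals^n)}},
\]
where I use that Schwartz functions are dense in $H^{k,p^*}(\Reals^n)$.

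The next step is to move the rescaling off of $u$ and onto $v$. For Schwartz $u$ and $v$, a change of variables $y = rx$ gives
\[
\langle u_{\{r\}}, v\rangle = \int_{\Reals^n} u(rx)\,v(x)\,dx = r^{-n}\int_{\Reals^n} u(y)\,v(y/r)\,dy = r^{-n}\langle u, v_{\{1/r\}}\rangle,
\]
and this identity extends to general $u\in H^{m,p}(\Reals^n)$ by density since rescaling is continuous on the Bessel potential scale. Hence
\[
|\langle u_{\{r\}}, v\rangle| \le r^{-n}\,\|u\|_{H^{m,p}(\Reals^n)}\,\|v_{\{1/r\}}\|_{H^{k,p^*}(\Reals^n)}.
\]

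Now apply Lemma \ref{lem:scale-in} to $v$ with scaling factor $1/r\ge 1$, which yields
\[
\|v_{\{1/r\}}\|_{H^{k,p^*}(\Reals^n)} \lesssim (1/r)^{k-\frac{n}{p^*}}\,\|v\|_{H^{k,p^*}(\Reals^n)}.
\]
Combining the previous displays, the supremum over $v$ is bounded by $r^{-n}\cdot r^{-k+n/p^*}\|u\|_{H^{m,p}(\Reals^n)}$, and using $-n+n/p^* = -n/p$ and $-k = m$ the exponent simplifies exactly to $m - n/p$, giving the claim.

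The argument is essentially mechanical; the only point requiring a moment of care is the transfer of the change-of-variables identity from Schwartz functions to arbitrary $u\in H^{m,p}(\Reals^n)$, which is handled by the continuity of rescaling on $H^{m,p}(\Reals^n)$ noted just before Proposition \ref{prop:rescale-int} together with the density of $\mathcal S(\Reals^n)$.
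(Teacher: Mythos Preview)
Your proof is correct and follows essentially the same duality argument as the paper: pair $u_{\{r\}}$ with a test function, move the rescaling onto the test function via change of variables picking up the factor $r^{-n}$, and then invoke Lemma~\ref{lem:scale-in} on the dual side with scale $1/r\ge 1$. The only cosmetic difference is that the paper first reduces to smooth compactly supported $u$ before computing the pairing, whereas you extend the change-of-variables identity by density and continuity of rescaling; both are fine.
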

\begin{proof}
Using a density argument it is enough to establish the 
result when $u$ is smooth and compactly supported.
For any test function $\phi$,
\begin{equation}
\begin{aligned}
|\ip<u_\rscr,\phi>| &= \left|\int_{\Reals^n} u(rx) \phi(x) \;dx\right|\\ 
&= r^{-n} \left|\int_{\Reals^n} u(y) \phi(y/r)\; dy\right| \\
&\le r^{-n} ||u||_{H^{m,p}(\Reals^n)} ||\phi_{\rsc{1/r}}||_{H^{-m,p^*}(\Reals^n)}.
\end{aligned}
\end{equation}
Since $-m\ge 0$ and since $1/r\ge 1$, Lemma \ref{lem:scale-in} implies 
\[
||\phi_{\rsc{1/r}}||_{H^{-m,p^*}(\Reals^n)} \lesssim r^{m +\frac{n}{p^*}}||\phi||_{H^{-m,p^*}(\Reals^n)} 
\]
where the implicit constant is independent of $r$ and $u$. Hence
\[
|\ip<u_\rscr,\phi>| \lesssim ||u||_{H^{m,p}(\Reals^n)} ||\phi||_{H^{-m,p^*}(\Reals^n)} 
r^{-n +m +\frac{n}{p^*}} = ||u||_{H^{m,p}(\Reals^n)} ||\phi||_{H^{-m ,p^*}(\Reals^n)}
r^{m -\frac{n}{p}},
\]
which concludes the proof, noting that $H^{m,p}(\Reals^n)$ 
is the dual space of $H^{-m,p^*}(\Reals^n)$.
\end{proof}

Corollary \ref{cor:scale-sigma-neg} implies 
Proposition \ref{prop:rescale-int}
in the case $m\le 0$, for if $\chi$ is a Schwartz function
$||\chi u||_{H^{m,p}(\Reals^n)} \lesssim ||u||_{H^{m,p}(\Reals^n)}.
$
We now turn to the more involved case of Proposition \ref{prop:rescale-int},
$m>0$.

\begin{lemma}\label{lem:int-case-general} 
Suppose $1<p<\infty$, $m\in\Nats$ and that $\chi$ is a Schwartz function on $\Reals^n$.
There exists $\alpha\in\Reals$ such that
for all $1<r\le1$ and all  $u\in H^{m,p}(\Reals^n)$
\begin{equation}\label{eq:rescale-m-pos}
||u_\rscr||_{H^{m,p}(\Reals^n)} \lesssim  r^{\alpha}||u||_{H^{m,p}(\Reals^n)}.
\end{equation}
Specifically, we can take $\alpha=\min(0,m-n/p)$ 
in inequality \eqref{eq:rescale-m-pos} unless $m-n/p=0$,
in which case inequality \eqref{eq:rescale-m-pos} holds for any 
choice of $\alpha<0$
and the implicit constant depends on $\alpha$.
\end{lemma}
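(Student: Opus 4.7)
The plan is to estimate $\|\chi u_\rscr\|_{H^{m,p}(\Reals^n)}$ using the Leibniz rule and then to control each resulting summand by combining H\"older's inequality with Sobolev embedding. Since $m\in\Nats$, the norm $\|v\|_{H^{m,p}(\Reals^n)}$ is equivalent to $\sum_{|\mu|\le m}\|\partial^\mu v\|_{L^p(\Reals^n)}$, and for each such $\mu$ the Leibniz rule together with the identity $\partial^\gamma(u_\rscr)=r^{|\gamma|}(\partial^\gamma u)_\rscr$ gives
\[
\partial^\mu(\chi u_\rscr) \;=\; \sum_{\nu\le\mu}\binom{\mu}{\nu}\,r^{|\mu-\nu|}\,(\partial^\nu\chi)\,(\partial^{\mu-\nu}u)_\rscr,
\]
so it suffices to bound each summand in $L^p(\Reals^n)$, with the reduction from general $u\in H^{m,p}(\Reals^n)$ to Schwartz $u$ following by density.

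For a fixed pair with $j:=|\mu-\nu|\le m$, the key step is to choose an exponent $t\in[p,\infty]$ into which $\partial^{\mu-\nu}u\in H^{m-j,p}(\Reals^n)$ embeds, namely $t=\infty$ when $(m-j)p>n$, $1/t=1/p-(m-j)/n$ when $(m-j)p<n$ (in particular $t=p$ when $j=m$), and any large finite $t$ when $(m-j)p=n$. Pairing this with H\"older's inequality at exponent $1/p=1/s+1/t$, together with Lemma \ref{lem:Lpscale} for the rescaling of the $L^t$ norm, yields
\[
\|(\partial^\nu\chi)(\partial^{\mu-\nu}u)_\rscr\|_{L^p(\Reals^n)} \;\le\; \|\partial^\nu\chi\|_{L^s(\Reals^n)}\,\|(\partial^{\mu-\nu}u)_\rscr\|_{L^t(\Reals^n)} \;\lesssim\; r^{-n/t}\,\|u\|_{H^{m,p}(\Reals^n)},
\]
where $\|\partial^\nu\chi\|_{L^s(\Reals^n)}$ is finite for every $s\in[1,\infty]$ because $\chi$ is Schwartz. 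Reinserting the $r^j$ factor from the Leibniz expansion, each summand carries the weight $r^{j-n/t}$, which in the three non-marginal cases simplifies to exactly $r^{\min(j,\,m-n/p)}$.

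Taking the minimum exponent over all admissible pairs with $|\mu|\le m$, and using $r\le 1$, the worst case is achieved at $\nu=\mu$, i.e.\ $j=0$, and equals $\min(0,\,m-n/p)$, which is the claimed exponent in the generic case. In the marginal case $m=n/p$, the only obstruction is again the $\nu=\mu$ term, and there $H^{m,p}(\Reals^n)$ embeds in every $L^t(\Reals^n)$ for $t<\infty$; taking $t\ge -n/\alpha$ for a prescribed $\alpha<0$ makes $r^{-n/t}\le r^\alpha$, with the implicit constant depending on $\alpha$ and blowing up as $\alpha\to 0^-$.

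The main obstacle is essentially bookkeeping: verifying that the H\"older exponent $s$ with $1/s=(m-j)/n$ lies in $[1,\infty]$ so that $\partial^\nu\chi\in L^s(\Reals^n)$, checking that the dominant contribution genuinely comes from the term in which all derivatives fall on $\chi$ rather than on $u_\rscr$, and carefully isolating the marginal $m=n/p$ case. Beyond the Gagliardo-Nirenberg-Sobolev inequality and the elementary rescaling identity of Lemma \ref{lem:Lpscale}, no further machinery is needed.
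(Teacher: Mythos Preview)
Your proof is correct. Both your argument and the paper's hinge on the same two estimates: H\"older plus Sobolev embedding for the term with no derivatives on $u$ (giving the exponent $\min(0,m-n/p)$, or any $\alpha<0$ in the marginal case), and the exact rescaling $\|\nabla^m u_\rscr\|_{L^p}=r^{m-n/p}\|\nabla^m u\|_{L^p}$ for the top-order term. The difference is only in how the intermediate derivatives are dispatched: the paper invokes the Gagliardo--Nirenberg interpolation inequality up front to collapse the full $H^{m,p}$ norm onto just $\|\chi u_\rscr\|_{L^p}+\|\nabla^m u_\rscr\|_{L^p}$ and then estimates only those two endpoint quantities, whereas you run the Leibniz expansion in full and bound every cross term individually by the same H\"older--Sobolev mechanism. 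Your route is slightly longer but entirely elementary, avoids the interpolation step, and makes explicit that each intermediate term ($0<j<m$) carries at least as good a power of $r$ as the $j=0$ term and so is never the bottleneck.
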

\begin{proof}
Repeated applications of the 
Gagliardo-Nirenberg-Sobolev inequality imply
\begin{equation}\label{eq:hard-case-base}
||\chi u_\rscr||_{H^{m,p}(\Reals^n)}
\lesssim ||\chi u_\rscr||_{L^{p}(\Reals^n)} + 
||\nabla^m u_\rscr||_{L^p(\Reals^n)}
\end{equation}
where the implicit constant depends on $\chi$; since $\chi$
is fixed the explicit dependence is unimportant.

The second term on the right-hand side of equation 
\eqref{eq:hard-case-base} is easy to estimate. 
Using the identity 
$\nabla^m u_\rscr = r^m (\nabla^m u)_\rscr$ Lemma \ref{lem:Lpscale}
we find
\[
||\nabla^m u_\rscr||_{L^p(\Reals^n)} \lesssim r^{m-n/p}||\nabla^m u||_{L^p(\Reals^m)} \le 
r^{m-n/p} ||u||_{H^{m,p}(\Reals^n)}.
\]

Turning to the low order term, first consider the case $m>n/p$.
Sobolev imbedding implies $u\in L^\infty(\Reals^n)$ and 
\[
\begin{aligned}
||\chi u_\rscr||_{L^p(\Reals^n)} \le
||\chi ||_{L^p(\Reals^n)} ||u_\rscr||_{L^\infty(\Reals^n)} 
&= ||\chi ||_{L^p(\Reals^n)} ||u||_{L^\infty(\Reals^n)}\\
&\lesssim  ||u||_{H^{m,p}(\Reals^n)}
= r^{\min(0,m-n/p)}||u||_{H^{m,p}(\Reals^n)}
\end{aligned}
\]
Now suppose $m<n/p$.  Then Sobolev embedding implies
$u\in L^q(\Reals^n)$ where
\[
\frac{1}{q}= \frac{1}{p} - \frac{m}{n}.
\]
H\"older's inequality, the fact that $\chi$ lies in every Lebesgue space,
and Lemma \ref{lem:Lpscale} imply
\[
\begin{aligned}
||\chi u_\rscr||_{L^p(\Reals^n))} \lesssim
||u_\rscr||_{L^q(\Reals^n)} &\le r^{-n/q} ||u||_{L^q(\Reals^n)}\\
&= r^{m-n/p} ||u||_{L^q(\Reals^n)} \lesssim 
r^{m-n/p}||u||_{H^{m,p}(\Reals^n)}
=r^{\min(m-n/p,0)}||u||_{H^{m,p}(\Reals^n)}.
\end{aligned}
\]
When
 $m=n/p$ we use the marginal case of Sobolev embedding and an argument
similar to the above to conclude
\[
||\chi u_\rscr||_{L^p(\Reals^n)} \lesssim r^{-n/q} ||u||_{H^{m,p}(\Reals^n)}
\]
for any $q\ge p$. Taking $q$ sufficiently large we find
inequality \eqref{eq:rescale-m-pos} holds for any choice of $\alpha<0$.
\end{proof}

Lemma \ref{lem:int-case-general} completes the proof
of part \eqref{part:rescale-int-generic} of Proposition \ref{prop:rescale-int}, and the following two results establish
part \eqref{part:rescale-int-zero-center}.
\begin{lemma}\label{lem:Lppart} 
Let $\chi$ be a Schwartz function on $\Reals^n$
and let $\alpha\in [0,1]$.
For all $0<r\le 1$
and all $u\in C^{0,\alpha}(\Reals^n)$ with $u(0)=0$,
\[
||\chi u_\rscr||_{L^p(\Reals^n)} \lesssim ||u||_{C^{0,\alpha}(\Reals^n)} r^\alpha.
\]
\end{lemma}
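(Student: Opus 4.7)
The plan is to exploit the H\"older regularity of $u$ at the origin directly. Since $u(0)=0$ and $u\in C^{0,\alpha}(\Reals^n)$, the H\"older seminorm gives the pointwise bound $|u(y)| \le ||u||_{C^{0,\alpha}(\Reals^n)}|y|^\alpha$ for every $y\in\Reals^n$. Substituting $y=rx$ and recalling that $u_\rscr(x)=u(rx)$, this becomes
\[
|u_\rscr(x)| \le ||u||_{C^{0,\alpha}(\Reals^n)} \, r^\alpha |x|^\alpha,
\]
which already exhibits the desired factor of $r^\alpha$ on the right-hand side.

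Next I would multiply through by $\chi$ and take $L^p$ norms, pulling out the constant $r^\alpha ||u||_{C^{0,\alpha}(\Reals^n)}$ to obtain
\[
||\chi u_\rscr||_{L^p(\Reals^n)} \le r^\alpha ||u||_{C^{0,\alpha}(\Reals^n)} \, \bigl\| \chi(x)\,|x|^\alpha \bigr\|_{L^p(\Reals^n)}.
\]
The remaining $L^p$ norm does not involve $r$ or $u$, so it merely contributes to the implicit constant.

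The only thing to verify is that $\chi(x)|x|^\alpha\in L^p(\Reals^n)$. This is immediate: $|x|^\alpha$ is locally bounded (since $\alpha\ge 0$) and grows only polynomially at infinity, while $\chi$ is Schwartz and hence decays faster than any polynomial, so the product is Schwartz-dominated and in particular lies in every Lebesgue space. No obstacle arises; the assumption $\alpha\le 1$ in the hypothesis is not even needed for this $L^p$ estimate, but it is consistent with the H\"older spaces the paper uses. The argument is thus essentially a one-line pointwise bound followed by integration.
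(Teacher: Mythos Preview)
Your argument is correct and in fact cleaner than the paper's. The paper proves the same lemma by decomposing $\Reals^n$ into three regions---the unit ball, the annulus $B_{1/r}(0)\setminus B_1(0)$, and the exterior $\Reals^n\setminus B_{1/r}(0)$---and estimates each piece separately, using the H\"older bound $|u_\rscr(x)|\lesssim (r|x|)^\alpha$ on the first two regions and the cruder $L^\infty$ bound $|u_\rscr|\le \|u\|_{C^{0,\alpha}}$ on the exterior, together with explicit polynomial decay of $\chi$. You instead observe that the single pointwise inequality $|u_\rscr(x)|\le \|u\|_{C^{0,\alpha}}\,r^\alpha|x|^\alpha$ holds globally, reducing the whole lemma to the finiteness of $\|\chi(\cdot)|\cdot|^\alpha\|_{L^p}$, which is immediate for Schwartz $\chi$. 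This is shorter and uses only the H\"older seminorm, not the $L^\infty$ part of the norm. The paper's three-region split is not needed here; it does, however, foreshadow the structure of the later Triebel--Lizorkin estimate in Proposition~\ref{prop:rescale-Fsp-u-zero}, where the relevant bound has the form $\min(r^\alpha(|x|^\alpha+1),1)$ and the $\min$ with $1$ genuinely requires separate treatment far from the origin.
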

\begin{proof}
We divide $\Reals^n$ into three regions: the ball $B_1(0)$, the annulus 
$A=B_{1/r}(0)\setminus B_1(0)$ and the exterior region $E=\Reals^n\setminus B_{1/r}(0)$.
On the unit ball, since $u(0)=0$, 
\[
||\chi u_\rscr||_{L^p(B_1(0))} \lesssim ||\chi||_{L^\infty(\Reals^n)}||u||_{C^{0,\alpha}(B_1(0))} r^{\alpha}.
\]
To obtain the remainder of the estimate, pick a constant $B>0$ 
such that $|\chi(x)|\le B |x|^{-(n+1)/p-\alpha}$
for all $x\in\Reals^n$ with $|x|\ge 1$. Then, letting
$C_n$ be the volume of the unit $n-1$ sphere, we find on the annulus $A$,
\begin{equation}
\begin{aligned}
\int_{A} \chi^p u_\rscr^p &\le C_n \int_1^{1/r} B^p s^{-n-1-\alpha p} ||u||_{C^{0,\alpha}(\Reals^n)}^p (rs)^{\alpha p} s^{n-1}\; ds \\
&\le C_n B^p ||u||_{C^{0,\alpha}(\Reals^n)}^p r^{p\alpha} \int_{1}^{1/r} s^{-2}\; ds \le 
C_n B^p||u||_{C^{0,\alpha}(\Reals^n)}^p r^{p\alpha}.
\end{aligned}
\end{equation}
Taking $p^{\rm th}$
roots establishes the desired estimate on $A$.

Finally, for the exterior region we have
\[
\int_{E} |\chi u_\rscr|^p \le C_n B^p ||u||_{C_{0,\alpha}(\Reals^n)}^p 
\int_{1/r}^\infty s^{-n-1-\alpha p }s^{n-1} \; ds = 
C_n B^p ||u||_{C^{0,\alpha}(\Reals^n)}^p \frac{1}{1+\alpha p} r^{1 +\alpha p}
\]
which completes the proof.
\end{proof}

\begin{corollary}\label{cor:holder-rescale}
Let $\chi$ be a Schwartz function on $\Reals^n$. Suppose $1<p<\infty$ and $m\in\Nats$ with $m>n/p$.  
There exists $\alpha\in\Reals$ such that
for all $0<r\le 1$ and all  $u\in H^{m,p}(\Reals^n)$
with $u(0)=0$
\begin{equation}\label{eq:rescale-m-pos-zeromid}
||\chi u_\rscr||_{H^{m,p}(\Reals^n)} \lesssim  r^{\alpha}||u||_{H^{m,p}(\Reals^n)}.
\end{equation}
Specifically, we can take $\alpha=\min(1,m-n/p)$ 
in inequality \eqref{eq:rescale-m-pos-zeromid} unless $m-n/p=1$,
in which case inequality \eqref{eq:rescale-m-pos-zeromid} holds for any 
choice of $\alpha<1$
and the implicit constant depends on $\alpha$.
\end{corollary}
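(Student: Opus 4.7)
The plan is to mimic the proof of Lemma \ref{lem:int-case-general}, but to exploit the hypothesis $u(0)=0$ in treating the low-order term. As before, repeated application of the Gagliardo-Nirenberg-Sobolev inequality (together with the product rule, absorbing derivatives of $\chi$ into lower-order terms) reduces the estimate to
\[
||\chi u_\rscr||_{H^{m,p}(\Reals^n)} \lesssim ||\chi u_\rscr||_{L^p(\Reals^n)} + ||\nabla^m u_\rscr||_{L^p(\Reals^n)}.
\]
The top-order piece is handled exactly as in Lemma \ref{lem:int-case-general}: using $\nabla^m u_\rscr = r^m (\nabla^m u)_\rscr$ together with Lemma \ref{lem:Lpscale} gives $||\nabla^m u_\rscr||_{L^p(\Reals^n)} \lesssim r^{m-n/p}||u||_{H^{m,p}(\Reals^n)}$, which contributes an $r^{m-n/p}$ factor — always at least as good as $r^{\min(1,m-n/p)}$ for $0<r\le 1$.

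The new ingredient is needed for the low-order term $||\chi u_\rscr||_{L^p(\Reals^n)}$. Here I invoke the H\"older embedding in the last part of Proposition \ref{prop:embedding-Hsp}: since $m>n/p$, if $m-n/p<1$ we have $H^{m,p}(\Reals^n)\hookrightarrow C^{0,m-n/p}(\Reals^n)$, while if $m-n/p>1$ the function $u$ is Lipschitz and so lies in $C^{0,1}(\Reals^n)$ (by Morrey-type embedding, since then $H^{m,p}\hookrightarrow H^{n/p+1-\eps,p'}$ for some larger $p'$ yielding a Lipschitz bound). In either case, setting $\beta=\min(1,m-n/p)$, we have $||u||_{C^{0,\beta}(\Reals^n)}\lesssim ||u||_{H^{m,p}(\Reals^n)}$. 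Since $u(0)=0$, Lemma \ref{lem:Lppart} then applies and gives
\[
||\chi u_\rscr||_{L^p(\Reals^n)} \lesssim ||u||_{C^{0,\beta}(\Reals^n)} r^{\beta} \lesssim ||u||_{H^{m,p}(\Reals^n)} r^{\min(1,m-n/p)}.
\]
Combining the two estimates and noting that $m-n/p\ge\min(1,m-n/p)$ yields inequality \eqref{eq:rescale-m-pos-zeromid} with the advertised exponent.

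The one subtlety is the marginal case $m-n/p=1$, where the H\"older embedding into $C^{0,1}$ fails. In this situation I would instead embed $H^{m,p}(\Reals^n)\hookrightarrow C^{0,\alpha}(\Reals^n)$ for any $\alpha<1$ (using a slightly reduced exponent via Proposition \ref{prop:embedding-Hsp}), apply Lemma \ref{lem:Lppart} with that $\alpha$, and accept the resulting $r^\alpha$ with an implicit constant depending on $\alpha$. This is the only place where sharpness is lost, and it is precisely the marginal exception recorded in the statement. The main obstacle in this proof is not any single estimate but the bookkeeping in the marginal regime $m-n/p=1$; everything else is a direct union of the Sobolev-to-H\"older embedding with the weighted $L^p$ bound of Lemma \ref{lem:Lppart}.
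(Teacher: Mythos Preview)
Your proposal is correct and follows essentially the same approach as the paper's proof: both reduce to controlling $||\chi u_\rscr||_{L^p}$ and $||\nabla^m u_\rscr||_{L^p}$ separately, handle the top-order term via Lemma~\ref{lem:Lpscale}, and treat the low-order term by combining the H\"older embedding $H^{m,p}\hookrightarrow C^{0,\beta}$ (with $\beta=\min(1,m-n/p)$) with Lemma~\ref{lem:Lppart}, then deal with the marginal case $m-n/p=1$ by embedding into $C^{0,\alpha}$ for arbitrary $\alpha<1$.
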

\begin{proof}
Following the argument of the beginning of the proof of Lemma \ref{lem:int-case-general}
we know that
\[
||\chi u_\rscr||_{H^{m,p}(\Reals^n)}
\lesssim ||\chi u_\rscr||_{L^{p}(\Reals^n)} + 
||\nabla^m u_\rscr||_{L^p(\Reals^n)}
\]
and that $||\nabla^m u_\rscr||_{L^p(\Reals^n)}\lesssim r^{m-n/p}||u||_{H^{s,p}(\Reals^n)}$.
Hence it suffices to show
\[
||\chi u_\rscr||_{L^p(\Reals^n)} \lesssim r^{\min(1,m-n/p)} ||u||_{H^{m,p}(\Reals^n)} .
\]

Suppose first that $0<m-n/p<1$.  Then $u\in C^{0,\alpha}(\Reals^n)$ with
$\alpha = m-n/p$.
Since $u(0)=0$, Lemma \ref{lem:Lppart} implies
\[
||\chi u_\rscr||_{L^p(\Reals^n)} \lesssim ||u||_{C^{0,\alpha}(\Reals^n)} r^\alpha
\lesssim ||u||_{H^{m,p}(\Reals^n)} r^{m-n/p}
= ||u||_{H^{m,p}(\Reals^n)} r^{\min(1,m-n/p)}.
\]
On the other hand, if $m-n/p>1$ then $u$ lies in $C^{0,1}(\Reals^n)$
and the same argument shows
\[
||\chi u_\rscr||_{L^p(\Reals^n)} \lesssim ||u||_{H^{m,p}(\Reals^n)} r^{1}
=||u||_{H^{m,p}(\Reals^n)} r^{\min(1,m-n/p)}
\]

The result in the marginal case $m-n/p=1$ follows from
a similar argument using the fact that $H^{m,p}(\Reals^n)$
embeds into $C^{0,\alpha}(\Reals^n)$ for any $\alpha\in(0,1)$.
\end{proof}

\begin{figure}
	\begin{center}
	\includegraphics{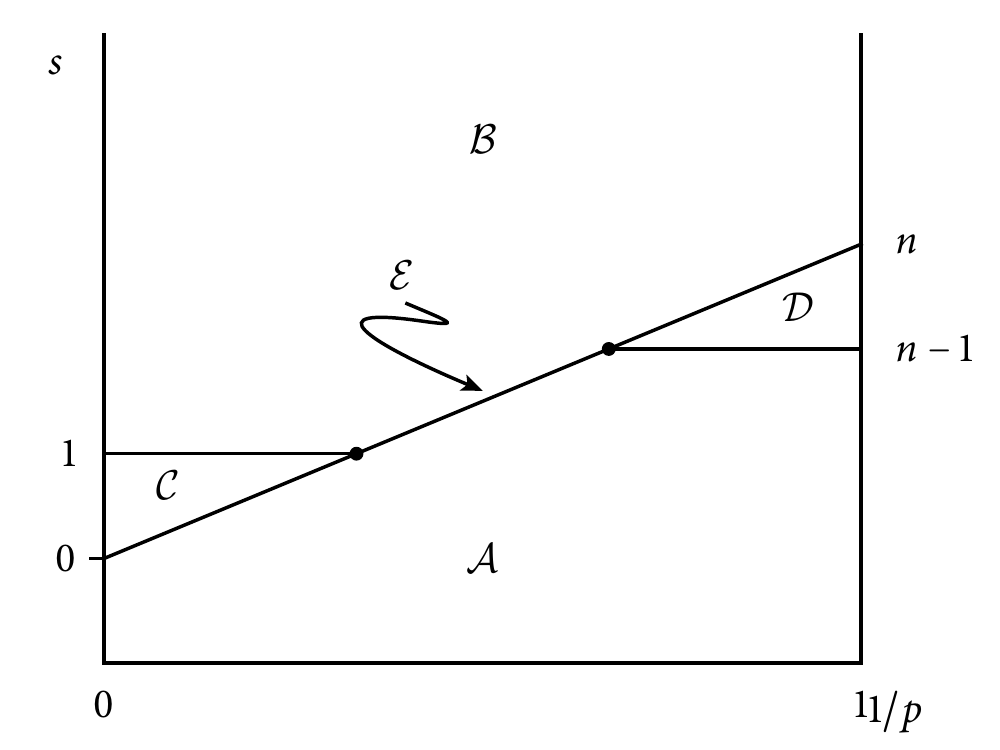}
	\end{center}
\caption{
\label{fig:interp}
Interpolation regions in Proposition \ref{prop:poor-mans-bp}.}
\end{figure}

Having now established Proposition \ref{prop:rescale-int}, we 
turn to its generalization to non-integer orders of differentiability.
In fact, one can show that the statement of
Proposition \ref{prop:rescale-int} generalizes without change, other 
than replacing $m\in\Ints$ with $s\in\Reals$; see Proposition
\ref{prop:rescale-Fsp} which establishes an extension of 
this fact to the broader
class of Triebel-Lizorkin spaces.  The following result is
mildly weaker, but is easier to prove and is sufficient for
our application establishing local elliptic regularity.  The
key difference is that
the equal sign in the definition of the exponent $\alpha$ 
in parts \eqref{part:rescale-int-generic}
and \eqref{part:rescale-int-zero-center}
 of Proposition \ref{prop:rescale-int}
 has been replaced with
a strict inequality.

\begin{proposition}\label{prop:poor-mans-bp}
Suppose $1<p<\infty$, $s\in\Reals$ and $\chi$ is a Schwartz
function on $\Reals^n$.  There exists
$\alpha\in \Reals$ such that for all $0<r\le 1$
and all $u\in H^{s,p}(\Reals^n)$,
\begin{equation}\label{eq:Hsp-scale-poor-bp}
||\chi u_{\{r\}}||_{H^{s,p}(\Reals^n)} \lesssim r^{\alpha}||u||_{H^{s,p}(\Reals^n)}.
\end{equation}
In particular:
\begin{enumerate}
	\item\label{part:rescale-poor-bp-generic}
Inequality \eqref{eq:Hsp-scale-poor-bp} holds with any
\[
\alpha < \min\left(s-\frac{n}{p},0\right)
\]
with implicit constant depending on  $\alpha$.
	\item\label{part:rescale-poor-bp-zero-center}
If $s>n/p$ and $u(0)=0$ then 
inequality \eqref{eq:Hsp-scale-poor-bp} holds with any
\[
\alpha < \min\left(s-\frac{n}{p},1\right)
\]
with implicit constant depending on  $\alpha$.
\end{enumerate}
\end{proposition}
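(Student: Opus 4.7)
The plan is to obtain both parts from the integer-order estimates of Proposition \ref{prop:rescale-int} by complex interpolation (Proposition \ref{prop:interp-Hsp}). The operator $T_r : u \mapsto \chi u_{\{r\}}$ is linear; if it is bounded on $H^{m_i,p_i}(\Reals^n)$ with operator norm at most $C_i r^{\alpha_i}$ for $i = 1,2$, then because $H^{s,p}(\Reals^n) = [H^{m_1,p_1}(\Reals^n), H^{m_2,p_2}(\Reals^n)]_\theta$ whenever $s = (1-\theta)m_1 + \theta m_2$ and $1/p = (1-\theta)/p_1 + \theta/p_2$, $T_r$ is bounded on $H^{s,p}(\Reals^n)$ with norm at most $C_1^{1-\theta} C_2^\theta \, r^{(1-\theta)\alpha_1 + \theta\alpha_2}$. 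The task therefore reduces to selecting integer endpoints so that the interpolated exponent reaches the prescribed target.

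For part \eqref{part:rescale-poor-bp-generic}, given any target $\alpha < \min(s-n/p, 0)$, I would choose integer orders $m_1 \in \Ints_{\le 0}$ and $m_2 \in \Ints_{>0}$ together with Lebesgue exponents $p_1, p_2 \in (1,\infty)$ so that $(s, 1/p)$ lies on the segment joining $(m_1, 1/p_1)$ and $(m_2, 1/p_2)$ in the $(m, 1/p)$ plane. The endpoint scaling function $(m,p) \mapsto \min(m - n/p, 0)$ supplied by Proposition \ref{prop:rescale-int}\eqref{part:rescale-int-generic} is concave and piecewise affine in $(m, 1/p)$ with its kink along the critical line $m - n/p = 0$; straddling that line forces the weighted-average endpoint exponent to be strictly less than $\min(s - n/p, 0)$, and pushing the endpoints suitably far into opposite sides of the kink drives this convex combination arbitrarily close to $\min(s - n/p, 0)$ from below. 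Marginal endpoint cases where $m_i - n/p_i = 0$ pose no obstruction because Proposition \ref{prop:rescale-int} then allows any $\alpha < 0$ on that endpoint, which is more than enough slack for the interpolation.

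For part \eqref{part:rescale-poor-bp-zero-center}, the vanishing hypothesis $u(0) = 0$ does not survive naive interpolation, since point evaluation at $0$ is discontinuous on generic $H^{m,p}(\Reals^n)$. I would replace $T_r$ by the auxiliary linear operator $\tilde T_r u := \chi (u - u(0)\eta)_{\{r\}}$, where $\eta$ is a fixed Schwartz function with $\eta(0) = 1$; this is well-defined and bounded on $H^{m,p}(\Reals^n)$ whenever $m > n/p$, and because $u - u(0)\eta$ always vanishes at the origin, Proposition \ref{prop:rescale-int}\eqref{part:rescale-int-zero-center} yields $||\tilde T_r u||_{H^{m,p}(\Reals^n)} \lesssim r^{\min(m - n/p, 1)} ||u||_{H^{m,p}(\Reals^n)}$. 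Taking integer endpoints in the open half-plane $m > n/p$ (which is convex and contains the target $(s, p)$) and straddling the new critical line $m - n/p = 1$, the same interpolation argument as in part \eqref{part:rescale-poor-bp-generic}, now applied to the concave function $\min(\cdot, 1)$, gives the desired bound on $\tilde T_r : H^{s,p}(\Reals^n) \to H^{s,p}(\Reals^n)$. Since $\tilde T_r u = T_r u$ whenever $u(0) = 0$, the conclusion follows. The principal technical obstacle in both parts is the parameter-counting exercise needed to realize any given sub-critical target $\alpha$ by a legitimate choice of integer orders $m_i$, exponents $p_i \in (1,\infty)$, and weight $\theta \in (0,1)$ simultaneously satisfying the interpolation identity, the straddling condition, and the prescribed convex combination of endpoint exponents.
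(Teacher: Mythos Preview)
Your overall strategy—complex interpolation of the linear map $u\mapsto\chi u_{\{r\}}$ (respectively $\tilde T_r$) between the integer-order bounds of Proposition~\ref{prop:rescale-int}—is exactly what the paper does, and your device $\tilde T_r u = \chi(u-u(0)\eta)_{\{r\}}$ for part~\eqref{part:rescale-poor-bp-zero-center} is both correct and clean; the paper uses the same retraction idea elsewhere (end of Proposition~\ref{prop:rescale-Fsp-u-zero}).

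Two specific claims, however, do not hold. First, ``pushing the endpoints suitably far into opposite sides of the kink'' does \emph{not} drive the interpolated exponent toward the target: for the concave piecewise-affine map $x\mapsto\min(x,0)$, the defect between chord and graph at a fixed interpolated point does not vanish as either endpoint is sent to infinity along a line. The correct move is the opposite one—take an endpoint \emph{close} to the kink—or, better, place both endpoints on the same affine piece as $(s,1/p)$, which recovers the exponent with no loss. Second, and this is a genuine gap, your integer endpoints for part~\eqref{part:rescale-poor-bp-zero-center} must satisfy $m_i>n/p_i>0$, hence $m_i\ge1$; a segment between two such lattice points has first coordinate $\ge1$ and so cannot contain $(s,1/p)$ when $n/p<s<1$, a range that is nonempty whenever $p>n$. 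An analogous obstruction hits your part~\eqref{part:rescale-poor-bp-generic} recipe $m_1\le0<m_2$ when $s<0$ is close to zero and $n/p$ is small: then $m_1\le-1$ is forced and one checks the interpolated exponent is bounded away from $s-n/p$. The paper handles precisely these awkward strips—its regions $\mathcal C$, $\mathcal D$, $\mathcal E$—by a two-pass argument: first establish the estimate on the easy regions via direct integer interpolation, then for the hard strips interpolate between an already-established \emph{non-integer} point taken arbitrarily close to the critical line and a single accessible integer endpoint.
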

\begin{proof}
We divide the tuples $(s,1/p)$ in $\Reals\times(0,1)$
into the following regions (see Figure \ref{fig:interp}):
\begin{enumerate}
	\item[$\mathcal A$:] $s\le n-1$, $0<1/p<1$, $s-n/p<0$,
	\item[$\mathcal B$:] $s\ge 1$, $0<1/p<1$, $s-n/p>0$,
	\item[$\mathcal C$:] $s < 1$, $s-n/p\ge 0$,
	\item[$\mathcal D$:] $s > n-1$, $s-n/p\le 0$,
	\item[$\mathcal E$:] $1\le s\le n-1$, $s-n/p=0$.
\end{enumerate}

Suppose $(s,1/p)\in\mathcal A$ and for the moment assume $s\ge 0$.  
If $s$ is an integer,
the result follows from Proposition \ref{prop:rescale-int}
so we can assume $0 <s < n-1$.  There exist
$1/p_0,1/p_1\in(0,1)$  such that $(s,1/p)$
lies on the line joining $(0,1/p_0)$ and $(n-1,1/p_1)$.  That is,
\[
(s,1/p) = (1-\theta) (0,1/p_0) + \theta (n-1,1/p_1) 
\]
for some $\theta\in(0,1)$.  Since 
$H^{s,p}(\Reals^n) = [H^{0,p_0}(\Reals^n),H^{n-1,p_1}(\Reals^n)]_\theta$ we conclude from Proposition \ref{prop:rescale-int} and interpolation applied to the map
$u\mapsto \chi u_\rscr$ that
\[
||\chi u_\rscr||_{H^{s,p}(\Reals^n)} \lesssim 
(r^{-n/p_0})^{1-\theta} (r^{n-1-n/p_1})^\theta||u||_{H^{s,p}(\Reals^n)}
= r^{s-n/p}||u||_{H^{s,p}(\Reals^n)}.
\]
Replacing Proposition \ref{prop:rescale-int} with Corollary \ref{cor:scale-sigma-neg},
the same technique works in region $\mathcal A$ if $s<0$ and indeed
the argument is simpler since we can select $p_0=p_1=p$.

If $(s,1/p)\in \mathcal B$ and $s$ is an integer then 
Proposition \ref{prop:rescale-int} implies 
inequality \eqref{eq:Hsp-scale} holds with $\alpha =0$.
When $s$ is not an integer we can interpolate between
$(\lfloor s\rfloor,1/p_1)$ and $(\lceil s\rceil,1/p_2)$ for appropriate
choices of $p_1$ and $p_2$ to obtain the same result.

Next, suppose $(s,1/p)$ lies in the triangular region $\mathcal C$,
so $n/p<s<1$.  Consider any $\sigma$ with $0<\sigma<n/p$, so $\sigma<s<1$
as well.
Then 
\[
H^{s,p}(\Reals^n) = [H^{\sigma,p}(\Reals^n),H^{1,p}(\Reals^n)]_\theta
\]
with $\theta=(s-\sigma)/(1-\sigma)\in (0,1)$.
From interpolation we find
\[
||\chi u_\rscr||_{H^{s,p}(\Reals^n)} \lesssim (r^{\sigma-n/p})^{1-\theta} ||u||_{H^{s,p}(\Reals^n)}.
\]
Noting that $1-\theta\to (1-s)/(1-n/p)<\infty$ as $\sigma\to n/p$ we
can take $\sigma$ as close to $n/p$ from below as we please to conclude
that inequality \eqref{eq:Hsp-scale-poor-bp} 
holds with any fixed choice of $\alpha<0$. Note that this interpolation,
and the one to follow for region $\mathcal D$, is the source of
the loss of sharpness of the current proposition.

In the region $\mathcal D$ the argument is similar to the argument
for region $\mathcal C$; we now interpolate between a point with $s=n-1$
and a point with $s>n/p$ in region $\mathcal A$ taken arbitrarily close
to the line $s=n/p$. On the line segment
$\mathcal E$ the proof follows by interpolating between a point in region $\mathcal A$ and a point region $\mathcal B$ taken arbitrarily
close to the line segment. This completes the proof of part \eqref{part:rescale-poor-bp-generic}.

The proof of part \eqref{part:rescale-poor-bp-zero-center} is proved in a completely analogous way with the 
main division now occurring on the line $s=n/p+1$. Given the careful proof of part \eqref{part:rescale-poor-bp-generic} we omit the details.
\end{proof}

\subsection{Interior elliptic estimates}

This section contains our principal elliptic regularity results, which
are established in two steps.  First, 
Proposition \ref{prop:elliptic-zoom} shows that if an operator is elliptic
at a single point, then elliptic regularity can be established for functions that are supported in a sufficiently
small neighborhood near the point.  The rescaling estimates of the previous section, along with
a parametrix construction, are the key tools
needed at this first stage. Theorem \ref{thm:interior-reg-Fsp} 
then builds on Proposition \ref{prop:elliptic-zoom}
to obtain full interior regularity for elliptic operators using a partition of unity argument 
along with a bootstrap. The commutator estimates of Lemma \ref{lem:commutator-Hsp} are the key
technical used at this second stage.

\begin{defn}\label{def:L-elliptic-Hsp}
Let $\Omega$ be an open subset of $\Reals^n$. Suppose $1<p<\infty$ and $s>n/p$.  An operator
\[
\sum_{|\alpha|\le d} a^\alpha \partial_\alpha
\]
of class $\mathcal L_{d_0}^d(H^{s,p};\Omega)$
is \textbf{elliptic} at $x_0\in\Omega$ if for every $\xi\in \Reals^n\setminus\{0\}$
\[
\sum_{|\alpha|=d} a^\alpha(x_0) \xi^\alpha \in \Reals^{k\times k}
\]
is non-singular, where $\xi^{\alpha}=\xi^{\alpha_1}\cdots\xi^{\alpha_d}$.
\end{defn}

We have the following standard parametrix construction for homogeneous, constant coefficient 
elliptic operators.
\begin{lemma}\label{lem:parametrix} Suppose $L=\sum_{|\alpha|=d} a^\alpha\partial_\alpha$ is a constant
coefficient elliptic differential operator.  
There exists maps $Q$ and $T$ acting on tempered distributions supported on $B_R(0)$ such that 
\begin{itemize}
	\item $Q:H^{s-d,p}(\Reals^n)\to H^{s,p}(\Reals^n)$ is 
is continuous for all $s\in\Reals$ and $1<p<\infty$,
	\item $T:H^{s_1,p}(\Reals^n)\to H^{s_2,p}(\Reals^n)$ is continuous for all $s_1,s_2\in\Reals$ and $1<p<\infty$,
	\item $QLu = u + Tu$ for all tempered distributions $u$.
\end{itemize}
\end{lemma}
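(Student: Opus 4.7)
The plan is to construct $Q$ and $T$ as Fourier multipliers built from the matrix symbol
\[
L(\xi) = \sum_{|\alpha| = d} a^\alpha (i\xi)^\alpha
\]
of $L$. Since $L$ has constant coefficients, ellipticity means that $L(\xi)$ is invertible for every $\xi \in \Reals^n \setminus \{0\}$, so $L(\xi)^{-1}$ is smooth on the complement of the origin and positively homogeneous of degree $-d$. Fix a smooth cutoff $\chi$ with $\chi(\xi) = 0$ for $|\xi|\le 1$ and $\chi(\xi) = 1$ for $|\xi| \ge 2$, and define $Q$ and $T$ as the Fourier multipliers with symbols
\[
m_Q(\xi) = \chi(\xi)L(\xi)^{-1},\qquad m_T(\xi) = \chi(\xi)-1.
\]
The Fourier-side identity $m_Q(\xi)L(\xi) = \chi(\xi) = 1 + m_T(\xi)$ then immediately yields $QLu = u + Tu$ for every tempered distribution $u$.

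For the mapping $Q\colon H^{s-d,p}(\Reals^n)\to H^{s,p}(\Reals^n)$, the assertion reduces, by conjugating by the appropriate Bessel factors, to $L^p$-boundedness of the Fourier multiplier with symbol $\chi(\xi)L(\xi)^{-1}(1+|\xi|^2)^{d/2}$. This function is smooth on all of $\Reals^n$ because the cutoff removes the singularity of $L^{-1}$ at the origin, and on the support of $\chi$ the homogeneity of $L^{-1}$ of degree $-d$ gives $|\partial^\beta L(\xi)^{-1}|\lesssim |\xi|^{-d-|\beta|}$. Combined with the standard bound $|\partial^\gamma (1+|\xi|^2)^{d/2}|\lesssim (1+|\xi|)^{d-|\gamma|}$ and a Leibniz expansion, one obtains the Mikhlin-type estimate
\[
\bigl|\partial^\alpha\bigl[\chi(\xi)L(\xi)^{-1}(1+|\xi|^2)^{d/2}\bigr]\bigr|\lesssim (1+|\xi|)^{-|\alpha|}
\]
for every multi-index $\alpha$, and the Mikhlin--H\"ormander multiplier theorem then supplies the required $L^p$ bound for every $1 < p < \infty$.

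For $T$, the symbol $m_T = \chi - 1$ is smooth and supported in $\{|\xi|\le 2\}$, hence Schwartz. The analogous reduction for $T\colon H^{s_1,p}(\Reals^n)\to H^{s_2,p}(\Reals^n)$ leaves the multiplier symbol $(1+|\xi|^2)^{(s_2-s_1)/2}(\chi(\xi)-1)$, which is again compactly supported and smooth, so its inverse Fourier transform is a Schwartz function, in particular an element of $L^1(\Reals^n)$. Convolution with it is therefore bounded on $L^p$ by Young's inequality, giving continuity of $T\colon H^{s_1,p}(\Reals^n)\to H^{s_2,p}(\Reals^n)$ for all real $s_1, s_2$ and $1 < p < \infty$.

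The only substantive analytic input is the Mikhlin--H\"ormander multiplier theorem for $Q$; the bound for $T$ is comparatively easy because its symbol is Schwartz, which allows a direct appeal to Young's inequality. The support condition on $u$ in $B_R(0)$ does not enter the construction itself --- both $Q$ and $T$ are globally defined Fourier multipliers --- but it is a natural setting for how the lemma is subsequently combined with cutoff and rescaling arguments.
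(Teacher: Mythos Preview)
Your proof is correct and follows essentially the same approach as the paper: both construct $Q$ and $T$ as Fourier multipliers using a cutoff near the origin (your $\chi$ is the paper's $1-\chi$), invoke the Mikhlin multiplier theorem for the continuity of $Q$ after factoring out $\langle\xi\rangle^{-d}$, and use the compact support of the symbol of $T$ for its smoothing property. The only cosmetic differences are that you spell out the Mikhlin verification via homogeneity in a bit more detail and appeal to Young's inequality explicitly for $T$, whereas the paper simply cites the same argument used for $Q$.
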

\begin{proof}
Let $\chi$ be a smooth, compactly supported cutoff function that equals 1 in a neighborhood of zero.
Define the parametrix $Q$ on tempered distributions 
by $Qu =  \mathcal F^{-1} (1-\chi(\xi))(a^\alpha \xi^\alpha)^{-1}\mathcal F$, where $\mathcal F$ is
the Fourier transform. Similarly, let $T$ be the smoothing map
$Tu = \mathcal F^{-1} \chi(\xi) \mathcal F$.  A computation shows $Q Lu = u + T u$ for all tempered distributions.

Setting $\left<\xi\right>=\sqrt{1+|\xi|^2}$, 
the claimed continuity properties of $Q$ follows from factoring the multiplier as
\[
\left[(1-\chi(\xi))(a^\alpha \xi^\alpha)^{-1} \left<\xi\right>^{d}\right]\left<\xi\right>^{-d} 
\]
The continuity of the multiplier operator determined by the the first factor
follows from the Mikhlin multiplier theorem whereas the second factor is handled by the 
definition of Bessel potential spaces.

The smoothing map $T$ has a compactly supported multiplier
and its continuity properties follow from the same arguments as above, without restriction on the gain
in derivatives.
\end{proof}

We now establish Proposition \ref{prop:elliptic-zoom}, the regularity result
for functions supported in a sufficiently small region near a point where an operator is elliptic.  The 
statement of this proposition requires notation for function spaces associated with compactly supported
functions on a bounded open set $\Omega$, and there are two
natural classes of spaces one can work with.  The first is 
the closure of $\mathcal D(\Omega)$ in $H^{s,p}(\Omega)$.
We find it more convenient to take the closure of
$\mathcal D(\Omega)$ in $H^{s,p}(\Reals^n)$ instead;
following the notation of \cite{mclean_strongly_2000} we denote this 
latter space by  $\widetilde H^{s,p}(\Omega)$.
An element in $\widetilde H^{s,p}(\Omega)$ is, by definition, an element of $H^{s,p}(\Reals^n)$
and it is easy to see that it has support in $\overline{\Omega}$.   Moreover, if 
$u\in H^{s,p}(\Omega)$ has support contained in a compact 
set $V\subseteq \Omega$, an easy argument
using a cutoff function that equals 1 on $V$ and vanishes outside $\Omega$ shows that there exists 
a unique 
$\tilde u \in \widetilde H^{s,p}(\Omega)$ with $\tilde u|_{\Omega} = u$, and indeed one 
has the estimate $||\tilde u||_{\tilde H^{s,p}(\Omega)} \lesssim ||u||_{H^{s,p}(\Omega)}$
with implicit constant depending on $V$. Following standard practice we informally identify
$u$ with its zero extension $\tilde u$.

\begin{proposition}\label{prop:elliptic-zoom} 
Let $\Omega\subset \Reals^n$ be a bounded open set.  
Suppose $s\in\Reals$, $1<p<\infty$, $d,d_0\in\Ints_{\ge 0}$ with $d_0\le d$, that $s>n/p$,
and that these parameters satisfy inequalities \eqref{eq:S-p-1}--\eqref{eq:S-p-2}
of Lemma \ref{lem:S-members} and hence 
$\mathcal S_{d_0}^d(H^{s,p})\neq \emptyset$.
Suppose additionally that 
$L=\sum_{|\alpha|\le d} a^\alpha \partial_\alpha$ 
is a differential operator of class $\mathcal L_{d_0}^d(H^{s,p};\Omega)$
and that for some $x\in \Omega$ that
\[
L_0 = \sum_{|\alpha|=m} a^\alpha(x)\partial_\alpha
\]
is elliptic.  
Given $(\sigma,q)\in \mathcal S_{d_0}^d(H^{s,p})$
there exists $r>0$ such that $\overline{B_r(x)}\subset \Omega$
and such that if
\begin{align*}
u&\in \widetilde H^{d-s,p^*}( B_{r}(x) )\quad \text{and}\\
Lu&\in H^{\sigma-d,q}(\Omega)
\end{align*}
then $u\in H^{\sigma,q}(\Omega)$ and
\begin{equation}\label{eq:pointwise-est-Hsp}
||u||_{H^{\sigma,q}(\Omega)} \lesssim
||L u||_{H^{\sigma-d,q}(\Omega)}
+ ||u||_{H^{d-s-1,p^*}(\Omega)}		
\end{equation}
with implicit constant independent of $u$ but depending on all other parameters.
\end{proposition}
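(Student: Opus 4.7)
The plan is to implement the coefficient-freezing/rescaling scheme sketched in the introduction, combining the parametrix of Lemma \ref{lem:parametrix} for the frozen principal part with a Neumann-series argument to invert the perturbation. Translating, we may take $x=0$, and using a cutoff together with a Sobolev extension we may treat each $a^\alpha$ as a compactly-supported element of $H^{s-d+|\alpha|,p}(\Reals^n)$. Define $L_0 = \sum_{|\alpha|=d} a^\alpha(0)\partial_\alpha$ (elliptic by hypothesis, so Lemma \ref{lem:parametrix} applies) and split $L = L_0 + \tilde B + \tilde C$ with $\tilde B = \sum_{|\alpha|=d}(a^\alpha-a^\alpha(0))\partial_\alpha$ and $\tilde C = \sum_{d_0\le|\alpha|<d} a^\alpha \partial_\alpha$. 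For $0<r\le 1$ and $v = u_\rscr$, a direct computation gives
\[
r^d(Lu)_\rscr = L_0 v + B_{[r]} v + C_{[r]} v,
\]
where $B_{[r]}v = \sum_{|\alpha|=d}(a^\alpha-a^\alpha(0))_\rscr \partial_\alpha v$ and $C_{[r]}v = \sum_{d_0\le|\alpha|<d} r^{d-|\alpha|}(a^\alpha)_\rscr \partial_\alpha v$. Applying $Q$ and using $QL_0 = I + T$ yields the master identity
\[
(I+K_{[r]})v = -Tv + r^d Q(Lu)_\rscr, \qquad K_{[r]} := QB_{[r]}+QC_{[r]}.
\]

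The core analytical step is to prove that, for some $\epsilon>0$, $K_{[r]}$ has operator norm $O(r^\epsilon)$ on every $H^{\sigma',q'}(\Reals^n)$ with $(\sigma',q')\in\mathcal S_{d_0}^d(H^{s,p})$. For $B_{[r]}$, the top-order perturbation coefficients lie in $H^{s,p}$ with $s>n/p$ and vanish at $0$, so part \ref{part:rescale-poor-bp-zero-center} of Proposition \ref{prop:poor-mans-bp}, applied with a fixed Schwartz cutoff $\chi\equiv 1$ on $B_1$, gives $||\chi(a^\alpha-a^\alpha(0))_\rscr||_{H^{s,p}} \lesssim r^{\alpha'}$ for some $\alpha'>0$. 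For $C_{[r]}$, part \ref{part:rescale-poor-bp-generic} of the same proposition together with the prefactor $r^{d-|\alpha|}$ gives $r^{d-|\alpha|}||\chi(a^\alpha)_\rscr||_{H^{s-d+|\alpha|,p}} \lesssim r^{\min(s-n/p,\,d-|\alpha|)-\delta}$, whose exponent is strictly positive for small $\delta$ because $s>n/p$ and $|\alpha|<d$. Since $v$ is supported in $B_1$, inserting $\chi$ does not alter the products $B_{[r]}v$ and $C_{[r]}v$, so Theorem \ref{thm:mult} (whose hypotheses are exactly those encoded into the definition of $\mathcal S_{d_0}^d(H^{s,p})$ via Proposition \ref{prop:mapping-Hsp}) converts the small coefficient norms into $||B_{[r]}v||_{H^{\sigma'-d,q'}}+||C_{[r]}v||_{H^{\sigma'-d,q'}} \lesssim r^\epsilon||v||_{H^{\sigma',q'}}$. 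Composing with the continuous map $Q:H^{\sigma'-d,q'}\to H^{\sigma',q'}$ from Lemma \ref{lem:parametrix} completes the estimate on $K_{[r]}$.

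Now fix $r$ small enough that $K_{[r]}$ has operator norm at most $1/2$ on both $X = H^{d-s,p^*}(\Reals^n)$ and $Y = H^{\sigma,q}(\Reals^n)$, so $(I+K_{[r]})$ is invertible on each by Neumann series. The right-hand side $g := -Tv + r^d Q(Lu)_\rscr$ of the master identity lies in $Y$: the smoothing term is bounded in any Sobolev norm by $||v||_{H^{d-s-1,p^*}}$, and $r^d Q(Lu)_\rscr \in Y$ since $Lu\in H^{\sigma-d,q}(\Omega)$ by hypothesis and $Q$ gains $d$ derivatives. The distribution $v$ a priori belongs to $X$. Using the continuous embedding $Y \hookrightarrow X$ from \eqref{eq:S-include} of Lemma \ref{lem:S-members}, the $Y$-valued Neumann-series solution $w$ of $(I+K_{[r]})w = g$ is also an $X$-valued solution, and uniqueness on $X$ forces $v = w \in Y$, with $||v||_Y \le 2||g||_Y \lesssim ||v||_{H^{d-s-1,p^*}} + ||Lu||_{H^{\sigma-d,q}(\Omega)}$. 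Unscaling (with $r$ now fixed) yields \eqref{eq:pointwise-est-Hsp}. The main obstacle is the scaling bookkeeping in the middle paragraph, namely verifying that a single uniformly-positive exponent $\epsilon$ works across all $(\sigma',q')\in\mathcal S_{d_0}^d(H^{s,p})$ and that the hypotheses of Theorem \ref{thm:mult} hold for every relevant product; both issues are governed by the assumption $s>n/p$ together with the constraints defining $\mathcal S_{d_0}^d(H^{s,p})$.
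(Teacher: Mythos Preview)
Your approach is the same as the paper's: freeze the principal part at $0$, rescale, apply the parametrix from Lemma \ref{lem:parametrix}, and use Proposition \ref{prop:poor-mans-bp} to make the perturbation $K_{[r]}$ small enough for a Neumann series.

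There is, however, a gap in your uniqueness step. You invoke the embedding $Y = H^{\sigma,q}(\Reals^n) \hookrightarrow X = H^{d-s,p^*}(\Reals^n)$ citing \eqref{eq:S-include}, but that inclusion is stated only for \emph{local} spaces. On all of $\Reals^n$, Proposition \ref{prop:embedding-Hsp} gives $H^{\sigma,q}(\Reals^n)\hookrightarrow H^{d-s,p^*}(\Reals^n)$ only when $\frac1q-\frac{\sigma}{n} = \frac1{p^*}-\frac{d-s}{n}$ exactly, whereas membership in $\mathcal S_{d_0}^d(H^{s,p})$ only imposes the inequality. So your Neumann-series solution $w\in Y$ need not lie in $X$, and the argument that $v=w$ fails as written. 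A related issue: your claim that $Tv$ is bounded in $Y$ by $\|v\|_{H^{d-s-1,p^*}}$ is not a direct consequence of Lemma \ref{lem:parametrix}, which preserves the Lebesgue exponent; here you need the compact support of $v$, which makes $Tv$ Schwartz.

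The repair is short. Show $g\in X\cap Y$ directly: $(Lu)_\rscr$ is supported in $\overline{B_1}$, so the local inclusion $H^{\sigma-d,q}_{\rm loc}\hookrightarrow H^{-s,p^*}_{\rm loc}$ (shift \eqref{eq:S-include} by $-d$) plus zero-extension gives $(Lu)_\rscr\in H^{-s,p^*}(\Reals^n)$ and hence $Q(Lu)_\rscr\in X$; and $Tv\in X\cap Y$ by the Schwartz argument just mentioned. Then the Neumann series converges in both $X$ and $Y$ to a common distributional limit, and uniqueness on $X$ gives $v=w\in Y$. The paper organizes this slightly differently: it post-composes both $Q$ and $T$ with the cutoff $\chi$, so that $T'=\chi T$ genuinely maps between Sobolev spaces with arbitrary exponents, and then writes $u_\rscr$ explicitly as the inverse $U_{[r]}$ applied to the right-hand side, reading off $u_\rscr\in H^{\sigma,q}$ from the formula without any separate uniqueness step.
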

\begin{proof}
It suffices to prove the result assuming $x=0$ and that $B_1(0)\subset\Omega$.
From the definition of Bessel potential spaces on bounded domains, we can further assume that 
the coefficients of $L$ have been extended to all of $\Reals^n$.

For each $r\in(0,1]$ define
\[
L_\rscrq = L_0 + B_\rscrq + C_\rscrq
\]
where
\begin{equation}
\begin{aligned}
B_\rscrq &= \sum_{|\alpha|=d} (a^\alpha-a^\alpha(0))_\rscr\partial_\alpha \\
C_\rscrq &= \sum_{|\alpha|<d} r^{d-|\alpha|} (a^{\alpha})_\rscr  \partial_\alpha.
\end{aligned}
\end{equation}

Suppose $u\in \widetilde H^{d-s,p^*}(B_r(0))$ 
for some $r< 1$ and that $Lu\in H^{\sigma-d,q}(\Omega)$. 
Recall that by definition $u\in H^{d-s,p^*}(\Reals^n)$.  Moreover, 
$Lu$ is compactly supported in $\Omega$ and hence defines an element
of $\widetilde H^{\sigma-d,q}(\Omega)\subset  H^{\sigma-d,q}(\Reals^n)\subset H^{-s,p^*}(\Reals^n)$.
In particular, we can treat $u$ and $Lu$ as distributions on $\Reals^n$ and 
a short computation shows $u_\rscr$ satisfies
\[
L_0 u_\rscr + B_\rscrq u_\rscr + C_\rscrq u_\rscr = r^{d} (Lu)_\rscr
\]
as an equation in $H^{-s,p^*}(\Reals^n)$.

Pick a cutoff function $\chi$ that equals $1$ 
on a neighborhood of $\overline{B_1(0)}$.  
Since $u_\rscr$ is supported on $\overline{B_1(0)}$,
\begin{equation}\label{eq:pre-parametrix}
L_0 u_\rscr + \chi B_\rscrq u_\rscr + \chi C_\rscrq u_\rscr = r^{d} (Lu)_\rscr	
\end{equation}
as well.

Let $Q$ and $T$ be the parametrix and smoothing operator for $L_0$ from Lemma \ref{lem:parametrix}.
Applying $\chi Q$ to equation \eqref{eq:pre-parametrix} we have
\begin{equation}\label{eq:post-parametrix}
u_\rscr + \chi T u_\rscr + \chi Q\circ(\chi (B_\rscrq+C_\rscrq)) u_\rscr = r^{d} \chi Q (Lu)_\rscr.
\end{equation}
It will be convenient to define $Q'=\chi Q$ and $T'=\chi T$, in which case $Q'$ has the
same continuity properties as in Lemma \ref{lem:parametrix} and, using the compact support of $\chi$,
$T'$ is a continuous map 
$H^{s_1,p_1}(\Reals^n)\to H^{s_2,p_2}(\Reals^n)$ for all choices of $s_1,s_2,p_1,p_2$.

Consider a coefficient of $C_\rscrq$, 
$c^\alpha = r^{d-|\alpha|} (a^\alpha)_\rscr\in H^{s-d+|\alpha|,p}(\Reals^n)$.
From Proposition \ref{prop:poor-mans-bp}, for any $\epsilon>0$
\begin{equation}
	\label{eq:c-coeef-est-Hsp}
	||\chi c^\alpha ||_{H^{s-d+|\alpha|,p}(\Reals^n)} \lesssim 
r^{d-|\alpha|} r^{\min\left(s-d+|\alpha|-\frac{n}{p},0\right)-\epsilon} ||a^\alpha||_{H^{s+d-|\alpha|,p}(\Reals^n)}
= r^{\min\left(s-\frac{n}{p},d-|\alpha|\right)-\epsilon} ||a^\alpha||_{H^{s+d-|\alpha|,p}(\Reals^n)}.
\end{equation}
Similarly, consider
a coefficient $b^\alpha = (a^{\alpha}-a^{\alpha}(0))_\rscr$ of $B_\rscrq$.
Since $a^\alpha\in H^{s,p}(\Reals^n)$ when $|\alpha|=d$, and since
$b^\alpha(0)=0$, Proposition \ref{prop:poor-mans-bp} implies
\begin{equation}\label{eq:b-coeff-est-Hsp}
||\chi b^\alpha||_{H^{s,p}(\Reals^n)} \lesssim r^{\min\left(s-\frac{n}{p},1\right)-\epsilon} ||b^\alpha||_{H^{s,p}(\Reals^n)}.
\end{equation}
Pick $\epsilon>0$ with  $\epsilon < \min\left(s-\frac{n}{p},1\right)$.
Since $d-|\alpha|\ge 1$ in estimate \eqref{eq:c-coeef-est-Hsp} it follows
that the coefficients of
$\chi B_\rscrq$ and $\chi C_{\rscrq}$ converge to zero in the norms indicated on the
left-hand sides of inequalities \eqref{eq:c-coeef-est-Hsp} and \eqref{eq:b-coeff-est-Hsp}
as $r\to 0$.
Using the fact that these coefficients are compactly supported in a common bounded open set,
a computation using 
Theorem \ref{thm:mult} shows that $\chi (B_\rscrq+C_\rscrq)$ converges
to zero as an operator $H^{\sigma',q'}(\Reals^n)\to H^{\sigma'-d,q'}(\Reals^n)$
as $r\to0$ for any choice of $(\sigma',q')\in \mathcal S^{d}_{d_0}(H^{s,p})$.
Hence we may take $r$
sufficiently small so that $I + Q'\circ (\chi(B_\rscrq+C_\rscrq))$ has a 
continuous
inverse $U_\rscrq:H^{s-d,p^*}(\Reals^n)\rightarrow 
H^{s-d,p^*}(\Reals^n)$ that also maps
$H^{s+d_0,p}(\Reals^n)\rightarrow 
H^{s+d_0,p}(\Reals^n)$ and 
$H^{\sigma,q}(\Reals^n)\rightarrow 
H^{\sigma,q}(\Reals^n)$.
Applying $U_\rscrq$ to equation \eqref{eq:post-parametrix}
we conclude
\[
u_\rscr  + U_\rscrq T' u_\rscr = r^{d} U_\rscrq Q' (Lu)_\rscr
\]
and consequently
\begin{equation}
	\begin{aligned}
||u_\rscr||_{H^{\sigma,q}(\Reals^n)} &\lesssim ||U_\rscrq T' u_\rscr||_{H^{\sigma,q}(\Reals^n)} + 
 || U_\rscrq Q' (Lu)_\rscr ||_{H^{\sigma,q}(\Reals^n)}\\
 & \lesssim ||U_\rscrq T' u_\rscr||_{H^{s+d_0,p}(\Reals^n)} + 
|| Q' (Lu)_\rscr ||_{H^{\sigma,q}(\Reals^n)}\\
& \lesssim ||u_\rscr||_{H^{d-s-1,p^*}(\Reals^n)} + 
||(Lu)_\rscr||_{H^{\sigma-d,q}(\Reals^n)}.
\end{aligned}
\end{equation}
Note that the implicit constants above depend on $r$, but this dependence is unimportant
since the smallness of $r$ has already be chosen.  Since rescaling $v\mapsto v_\rscr$ with fixed $r$
is a continuous automorphism of any Bessel potential space on $\Reals^n$ we conclude
\begin{equation}\label{eq:pointwise-nearly-done-Hsp}
||u||_{H^{\sigma,q}(\Reals^n)} 
\lesssim 
||Lu||_{H^{\sigma-d,q}(\Reals^n)}
+||u||_{H^{d-s-1,p^*}(\Reals^n)}. 
\end{equation}
Estimate \eqref{eq:pointwise-est-Hsp} follows from inequality
\eqref{eq:pointwise-nearly-done-Hsp} along with the fact that 
if $v\in H^{\sigma',q'}(\Reals^n)$ for some $(\sigma',q')$
has support on a fixed $\overline{B_r(0)}\subset \Omega$,
then $||v||_{H^{\sigma',q'}(\Omega)} \sim ||v||_{H^{\sigma',q'}(\Reals^n)}$
with implicit constants depending on $r$.
\end{proof}
We now arrive at our main regularity result.
\begin{theorem}\label{thm:interior-reg} 
Let $\Omega$ be a bounded open set in $\Reals^n$ and suppose $s,p, d_0$ and $d$ are parameters
as in Lemma \ref{lem:S-members} such that
$s>n/p$ and such that inequalities \eqref{eq:S-p-1}--\eqref{eq:S-p-2}
are satisfied so $\mathcal S_{d_0}^d(H^{s,p})\neq \emptyset$.  
Suppose $L$ is of class $\mathcal L_{d_0}^d(H^{s,p};\Omega)$
and is elliptic on $\Omega$.  If $u\in H^{d-s,p^*}(\Omega)$
and $Lu\in H^{\sigma-d,q}(\Omega)$ for some 
$(\sigma,q)\in \mathcal S_{d_0}^d(H^{s,p})$
then for any open set $U$ with $\overline U\subseteq \Omega$, $u\in H^{\sigma,q}(U)$ and 
\begin{equation}\label{eq:int-reg}
||u||_{H^{\sigma,q}(U)} \lesssim 
||Lu||_{H^{\sigma-d,q}(\Omega)}+
||u||_{H^{d-s-1,p^*}(\Omega)}.
\end{equation}
\end{theorem}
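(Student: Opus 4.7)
The plan is to combine Proposition \ref{prop:elliptic-zoom} with a partition of unity and a bootstrap in the regularity index. Since $L$ is elliptic throughout $\Omega$, Proposition \ref{prop:elliptic-zoom} applies near every point of $\Omega$, and since both $(d-s,p^*)$ and $(\sigma,q)$ lie in $\mathcal S_{d_0}^d(H^{s,p})$ by Lemma \ref{lem:S-members} and the hypothesis on $(\sigma,q)$, we can connect them by a finite chain inside this set along which the regularity index is raised by at most one derivative at a time. The commutator bound of Lemma \ref{lem:commutator-Hsp} is precisely what is needed to absorb the error introduced each time we insert a cutoff.

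Concretely, I would first choose a finite chain $\{(\sigma_k,q_k)\}_{k=0}^K \subset \mathcal S_{d_0}^d(H^{s,p})$ with $(\sigma_0,q_0)=(d-s,p^*)$, $(\sigma_K,q_K)=(\sigma,q)$, and step size $\sigma_{k+1}\le \sigma_k+1$, arranged so that (i) both $H^{\sigma_k-d+1,q_k}$ and $H^{\sigma-d,q}$ embed continuously into $H^{\sigma_{k+1}-d,q_{k+1}}$ and (ii) $(\sigma_k+1,q_k)\in\mathcal S_{d_0}^d(H^{s,p})$ so that Lemma \ref{lem:commutator-Hsp} is applicable at each step; when $d_0=0$ the refined part of Lemma \ref{lem:commutator-Hsp} dispenses with the extra derivative requirement in (ii). Because $\mathcal S_{d_0}^d(H^{s,p})$ is convex in the $(\sigma,1/q)$-picture of Figure \ref{fig:S}, such a chain can be built with $K=\lceil \sigma-(d-s)\rceil$ steps, interpolating the Lebesgue regularities monotonically. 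I would also choose nested open sets $\overline U \subset V_K$ with $\overline{V_{k+1}}\subset V_k$ and $V_0\subset\Omega$.

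The inductive step then proceeds as follows: assuming $u\in H^{\sigma_k,q_k}(V_k)$, cover the compact set $\overline{V_{k+1}}$ by finitely many balls $B_{r_j}(x_j)$ with $\overline{B_{r_j}(x_j)}\subset V_k$, where each radius $r_j$ is the one supplied by Proposition \ref{prop:elliptic-zoom} at $x_j$ for the target index $(\sigma_{k+1},q_{k+1})$, and pick cutoffs $\phi_j\in\mathcal D(B_{r_j}(x_j))$ summing to $1$ on a neighborhood of $\overline{V_{k+1}}$. For each $j$,
\[
L(\phi_j u) = \phi_j Lu + [L,\phi_j]u,
\]
and both summands lie in $H^{\sigma_{k+1}-d,q_{k+1}}$: the first because $\phi_j Lu\in \widetilde H^{\sigma-d,q}(B_{r_j}(x_j))$ and (i) gives the needed embedding, while the second lies in $H^{\sigma_k-d+1,q_k}$ by Lemma \ref{lem:commutator-Hsp} and again embeds by (i). Since $\phi_j u\in \widetilde H^{d-s,p^*}(B_{r_j}(x_j))$, Proposition \ref{prop:elliptic-zoom} yields $\phi_j u\in H^{\sigma_{k+1},q_{k+1}}(\Omega)$ together with the pointwise estimate \eqref{eq:pointwise-est-Hsp}; summing over $j$ produces $u\in H^{\sigma_{k+1},q_{k+1}}(V_{k+1})$ with an estimate of the required form.

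The main technical issue is bookkeeping the estimate: each iteration produces an inequality whose right-hand side contains the stronger intermediate norm $\|u\|_{H^{\sigma_k,q_k}(V_k)}$, not the $H^{d-s-1,p^*}(\Omega)$ norm demanded by \eqref{eq:int-reg}. To close the induction I would chain the estimates recursively, at each level invoking the bound already proved at the previous level to replace the intermediate norm, so that after $K$ iterations the only surviving terms are $\|Lu\|_{H^{\sigma-d,q}(\Omega)}$ and $\|u\|_{H^{d-s-1,p^*}(\Omega)}$, which is exactly \eqref{eq:int-reg}.
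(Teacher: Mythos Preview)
Your outline matches the paper's strategy: bootstrap through a finite chain in $\mathcal S_{d_0}^d(H^{s,p})$ using a partition of unity, Proposition \ref{prop:elliptic-zoom}, and Lemma \ref{lem:commutator-Hsp}, with nested open sets shrinking toward $U$. Two points, however, need more care.

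First and more substantively, for $d_0>0$ your condition (ii) that $(\sigma_k+1,q_k)\in\mathcal S_{d_0}^d(H^{s,p})$ is not secured by convexity of the region: convexity constrains only the chain points $(\sigma_k,q_k)$, not their shifts by one derivative. At your starting point $(d-s,p^*)$, condition (ii) demands $d-s+1\le s+d_0$, i.e.\ $s\ge(d-d_0+1)/2$, which is strictly stronger than the standing hypothesis $s\ge(d-d_0)/2$; when $(d-d_0)/2\le s<(d-d_0+1)/2$ no chain satisfying (ii) exists at all. The paper resolves this by first running the $d_0=0$ bootstrap---where, as you note, the refined form of Lemma \ref{lem:commutator-Hsp} needs only $(\sigma_k,q_k)\in\mathcal S_0^d(H^{s,p})$---to reach a point $(\sigma',q')\in\mathcal S_0^d(H^{s,p})$ within one step of the target, then observing that $(\sigma'+1,q')\in\mathcal S_1^d(H^{s,p})$ so the general commutator lemma permits one further step into $\mathcal S_1^d(H^{s,p})\setminus\mathcal S_0^d(H^{s,p})$; this is iterated to reach the given $d_0$.

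Second, your chain starts at $(\sigma_0,q_0)=(d-s,p^*)$, so the commutator contribution at the base step leaves $\|u\|_{H^{d-s,p^*}(V_0)}$ on the right-hand side after telescoping, not the weaker $\|u\|_{H^{d-s-1,p^*}(\Omega)}$ required in \eqref{eq:int-reg}; since $H^{d-s,p^*}\hookrightarrow H^{d-s-1,p^*}$ the former cannot be absorbed into the latter. The paper fixes this by prepending one step that starts from $(\sigma_0,q_0)=(d-s-1,p^*)$, a point outside $\mathcal S_0^d(H^{s,p})$ but with $(\sigma_0+1,q_0)=(d-s,p^*)\in\mathcal S_0^d(H^{s,p})$, so Lemma \ref{lem:commutator-Hsp} still applies and its output is precisely $\|u\|_{H^{d-s-1,p^*}(\Omega)}$.
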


\begin{proof}
The proof is a bootstrap that relies on the following main step.  We have initially assumed
that $u\in H^{d-s,p^*}(\Omega)$ so that $L$ can be applied to it,
and that $Lu\in H^{\sigma-d,q}(\Omega)$ for some 
$(\sigma,q)\in \mathcal S^{d}_{d_0}(H^{s,p})$.  Suppose we know additionally
that on some open set $\Omega_A$ with $\overline U\subset \Omega_A\subset\Omega$
that
$u\in H^{\sigma_A,q_A}(\Omega_A)$ for some pair $(\sigma_A,q_A)$ such
that the commutator result Lemma \ref{lem:commutator-Hsp} applies.
Now consider
a target level of regularity $(\sigma_B,q_B)$ satisfying the following:
\begin{enumerate}
	\item[\optionaldesc{H1}{cond:hard-deriv}] $\sigma_B \le \sigma$,
	\item[\optionaldesc{H2}{cond:hard-lebesgue}] $\displaystyle \frac{1}{q}-\frac{\sigma}{n} \le \frac{1}{q_B}-\frac{\sigma_B}{n}$,
	\item[\optionaldesc{H3}{cond:soft-deriv}]$\sigma_B\le \sigma_A + 1$,
	\item[\optionaldesc{H4}{cond:soft-lebesgue}] $\displaystyle \frac{1}{q_A}-\frac{\sigma_A+1}{n}\le \frac{1}{q_B}-\frac{\sigma_B}{n}$.
\end{enumerate}
The first two conditions ensure via Sobolev embedding 
that $H^{\sigma,q}(\Omega)$ is contained in $H^{\sigma_B,q_B}(\Omega)$
and form a hard limit on the target regularity.  The second two conditions ensure
$H^{\sigma_B,q_B}(\Omega_A) \supset H^{\sigma_A+1,q_A}(\Omega_A)$ and limit
the improvement in regularity that can be achieved on a single step of the bootstrap.  We
claim that under these hypotheses that $u\in H^{\sigma_B,q_B}(\Omega_B)$
for some open set $\Omega_B\subset \Omega_A$
such that $\overline U\subset \Omega_B$ and that we have the estimate
\begin{equation}\label{eq:bootstrap-main}
	||u||_{H^{\sigma_B,q_B}(\Omega_{B})} \lesssim 
	||Lu||_{H^{\sigma-d,q}(\Omega)} +
	||u||_{H^{\sigma_A,q_{A}}(\Omega_{A})}
\end{equation}
with implicit constant independent of $u$.

To establish inequality \eqref{eq:bootstrap-main} we first select
an open set $\Omega_B$ with 
$\overline U\subseteq \Omega_{B}\subseteq\overline \Omega_{B}\subseteq \Omega_{A}$.
Since $\overline{\Omega_B}$ is compact we can select finitely many balls 
$B_i=B_{r_i}(x_i)\subset \Omega_0$
that cover $\Omega_B$ and such that the conclusion of Proposition
\ref{prop:elliptic-zoom} holds for the pair $(\sigma_{B},q_{B})$.
Using a partition 
of unity subordinate to the balls $B_{i}$ and 
$\Omega_{A}\setminus \overline \Omega_{B}$ we can find non-negative
smooth functions $\phi_i$ compactly supported in $B_i$ such that
$\sum \phi_i = 1$ on $\Omega_{B}$.

Consider
\begin{equation}\label{eq:L-split}
	L (\phi_i u) = \phi_i L u + [L,\phi_i] u.	
\end{equation}
From conditions \eqref{cond:hard-deriv}--\eqref{cond:hard-lebesgue} and Sobolev embedding
we know
\begin{equation}\label{eq:L-split-first}
	||\phi_i L u||_{H^{\sigma_B-d,q_B}(\Omega_B)} \lesssim ||L u||_{H^{\sigma_B-d,q_B}(\Omega_B)}
	\lesssim ||L u||_{H^{\sigma-d,q}(\Omega)}.
\end{equation}
Conditions \eqref{cond:soft-deriv}--\eqref{cond:soft-lebesgue} allow us to apply Sobolev embedding
to the commutator term from equation \eqref{eq:L-split} and we have
\begin{equation}\label{eq:L-split-second-a}
	||[L,\phi_i] u||_{H^{\sigma_{B}-d,q_{B}}(B_i)} \lesssim 
	||[L,\phi_i] u||_{H^{\sigma_{A}-d+1,q_{A}}(B_i)}.
\end{equation}
Since we have assumed that $(\sigma_A,q_A)$ satisfies the conditions of 
Lemma \ref{lem:commutator-Hsp}
(i.e., either $(\sigma_A+1,q_A)\in \mathcal S^{d}_{d_0}(H^{s,p})$ or $d_0=0$ and 
$(\sigma_A,q_A)\in \mathcal S^{d}_{d_0}(H^{s,p})$), we  have
\begin{equation}\label{eq:L-split-second-b}
	||[L,\phi_i] u||_{H^{\sigma_{A}-d+1,q_A}(B_i)} \lesssim 
	||u||_{H^{\sigma_{A},q_A}(B_i)} \lesssim 
	||u||_{H^{\sigma_{A},q_A}(\Omega_A)}.
\end{equation}
Combining inequalities equalities \eqref{eq:L-split-first}, 
\eqref{eq:L-split-second-a} and \eqref{eq:L-split-second-a} 
we find $L  (\phi_i u) \in H^{\sigma_{B}-d,q_{B}}(B_i)$
and we conclude from 
Proposition \ref{prop:elliptic-zoom} that
$\phi_i u\in H^{\sigma_{B},q_{B}}(\Reals^n)$
and additionally
\[
||\phi_i u||_{H^{\sigma_{B},q_{B}}(\Omega_A)} \lesssim
||Lu||_{H^{\sigma,q}(\Omega)} +
||u||_{H^{\sigma_{A},q_A}(\Omega_A)}.
\]
Inequality \eqref{eq:bootstrap-main} now follows from 
the observation 
$||\phi_i u||_{H^{\sigma_{B},q_{B}}(\Omega_B)}\lesssim ||\phi_i u||_{H^{\sigma_{B},q_{B}}(\Omega_A)}$
and summing on $i$.

We now describe the bootstrap in the easier case when $d_0=0$, where 
Lemma \ref{lem:commutator-Hsp} has the fewest restrictions.
The argument begins
with $(\sigma_0,q_0)=(d-s-1,p^*)$ and $(\sigma_1,q_1)=(d-s,p^*)$.  Conditions
\eqref{cond:hard-deriv}--\eqref{cond:hard-lebesgue} are an immediate consequence of
the definition of the region $\mathcal S^{d}_0(H^{s,p})$ and conditions
\eqref{cond:soft-deriv}--\eqref{cond:soft-lebesgue} are obvious.  Moreover,
since $(\sigma_0+1,q_0)=(d-s,p^*)\in \mathcal S^{d}_0(H^{s,p})$, Lemma \ref{lem:commutator-Hsp}
applies.  Hence all the conditions of the bootstrap step are met and we conclude
there is an open set $\Omega_1$ with $\overline U \subset \Omega_1\subset \Omega$
such that
\begin{equation}\label{eq:bootstrap-start}
	||u||_{H^{d-s,p^*}(\Omega_1)} \lesssim
	||Lu||_{H^{\sigma,q}(\Omega)} +
	||u||_{H^{d-s-1,p^*}(\Omega)}.
\end{equation}
We now iteratively apply the bootstrap step through a finite sequence 
$(\sigma_k,q_k)$ in $S^{d}_0(H^{s,p})$ described below that starts at $(s-d,p^*)$ 
and terminates at $(\sigma,q)$. At each step we ensure conditions
\eqref{cond:hard-deriv}--\eqref{cond:soft-lebesgue} hold
and obtain inequalities
\begin{equation}\label{eq:bootstrap-middle}
	||u||_{H^{\sigma_{k+1},q_{k+1}}(\Omega_{k+1})} \lesssim
	||Lu||_{H^{\sigma,q}(\Omega)} +
	||u||_{H^{\sigma_k,q_k}(\Omega_k)}
\end{equation}
for a sequence nested open sets
$\overline U \subset \Omega_{k+1}\subset\Omega_k \subset \Omega$.  Because
we have assumed that $d_0=0$, and since each $(\sigma_k,q_k)\in \mathcal S^{d}_0(H^{s,p})$,
we are assured that at each step we can use the commutator estimate from Lemma \eqref{lem:commutator-Hsp}.
Inequality \eqref{eq:int-reg} follows from chaining together the initial estimate
\eqref{eq:bootstrap-start} with the estimates \eqref{eq:bootstrap-middle} obtained
along the bootstrap.

The specific sequence $(\sigma_k,q_k)$ can be achieved as follows, starting from 
$(\sigma_1,q_1)=(d-s,p^*)$, in two cases depending on whether $1/q\le 1/p^*$ or not
as depicted in Figure \ref{fig:bootstrap}.

If $1/q \le 1/p^*$ we first lower $1/q_k$ by steps of at most
$1/n$ until it has been lowered to $1/q$.  At this point we raise $\sigma_k$ by
steps of at most $1$ until it has been raised to $q$.  Note that at each step $k$,
\[
\frac{1}{p}-\frac{s}{n} \le 
\frac{1}{q}-\frac{\sigma}{n} \le \frac{1}{q_k}-\frac{\sigma_k}{n} \le \frac{1}{q_1} -\frac{\sigma_1}{n}
=\frac{1}{p^*}-\frac{d-s}{n}
\]
and hence the sequence remains in $\mathcal S^{d}_0(H^{s,p})$ as is required to apply
Lemma \ref{lem:commutator-Hsp}.  These same
inequalities also show that conditions \eqref{cond:hard-deriv}--\eqref{cond:hard-lebesgue}
are maintained at each step.  Moreover,  
conditions \eqref{cond:soft-deriv}--\eqref{cond:soft-lebesgue} hold because
we either fix $\sigma_k$ and lower $q_k$ by at most $1/n$ or we fix $q_k$ and raise $\sigma_k$
by at most $1$.

Now suppose $1/q>1/p^*$.  Since $1/q-\sigma/n\le 1/p^*-(d-s)/n$, we can lower $\sigma$ to
a value $\sigma'$ such that the inequality becomes an equality.  We now start the bootstrap
by raising $\sigma_k$ to $\min(\sigma_k+1,\sigma')$ 
at each step while simultaneously raising 
$1/q_k$ so that the value $1/q_k-\sigma_k/n=1/p^*-(d-s)/n$ remains invariant. This
stage ends when $(\sigma_k,q_k)=(\sigma',q)$.  We then increase $\sigma_k$ while leaving $q_k$
fixed as in the earlier argument when $1/q<1/p^*$.  The first stage of the sequence 
has $d-s\le \sigma_k \le \sigma'\le \sigma\le s$ and 
\[
\frac{1}{p}-\frac{s}{n}\le \frac{1}{q}-\frac{\sigma}{n} \le \frac{1}{p^*}-\frac{d-s}{n} 
= \frac{1}{q_k}-\frac{\sigma_k}{n}.
\]
Hence for this first part of the sequence the terms $(\sigma_k,q_k)$ lie 
in $\mathcal S^{d}_0(H^{s,p})$ and additionally
conditions \eqref{cond:hard-deriv}--\eqref{cond:hard-lebesgue} are maintained.
Moreover, condition \eqref{cond:soft-deriv} is enforced because we raise $\sigma_k$
by at most $1$, and condition \eqref{cond:soft-lebesgue} is satisfied because
it is an equality during this first stage where we lower $1/q_k$.  The same argument
as in the case when $1/q\le 1/p^*$ then shows that conditions 
\eqref{cond:hard-deriv}--\eqref{cond:soft-lebesgue}
hold in the second stage when we raise $\sigma_k$ while leaving $q_k$ fixed.
The proof is now complete in event that $d_0=0$.

\begin{figure}
	\begin{center}
		\includegraphics{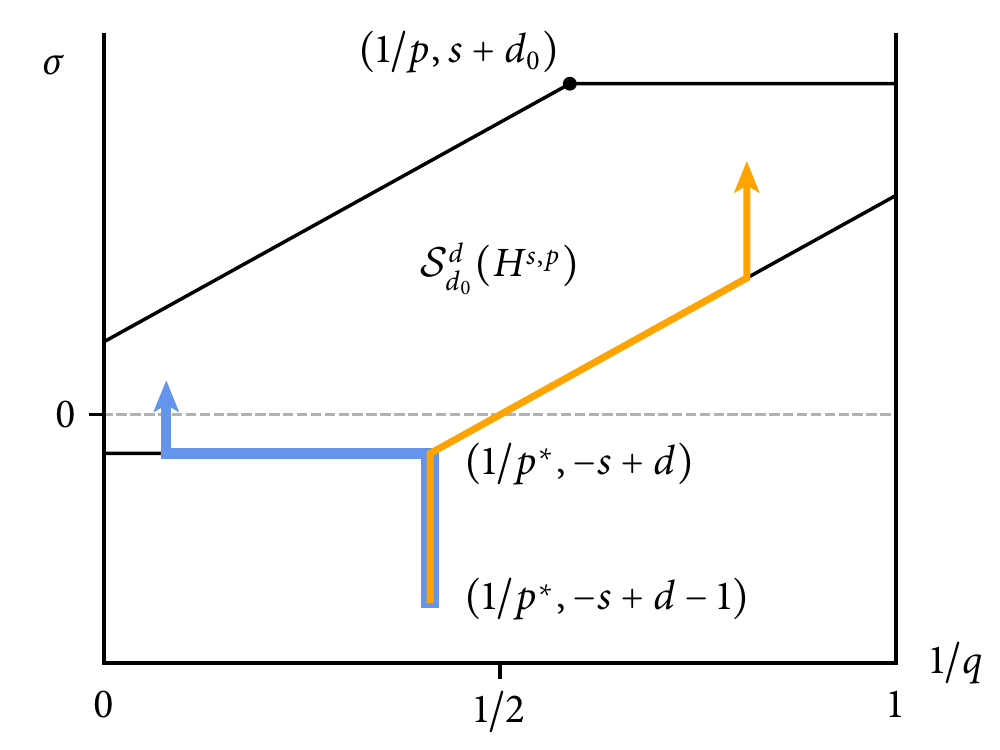}
	\end{center}
	\caption{
	\label{fig:bootstrap}
Two possible bootstrap paths.}
\end{figure}
	
We now turn to the case $d_0>0$ where the bootstrap requires more care 
because the hypotheses of Lemma \ref{lem:commutator-Hsp} are more
restrictive.
Consider the case $d_0=1$.  If $(\sigma,q)\in \mathcal S^{d}_0(H^{s,p})$, we can
simply apply the earlier result, so it suffices to assume  
$(\sigma,q)\in\mathcal S^{d}_1(H^{s,p}))$ but $(\sigma,q)\not \in\mathcal S^{d}_0(H^{s,p})$.
Because $d_0=1$ we can define a point $(\sigma',q')$
in $\mathcal S^{d}_0(H^{s,p})$  determined by $(\sigma,q)$ and the following rules:
\begin{enumerate}
	\item If $\sigma<s$, leave $\sigma'=\sigma$ fixed but raise $1/q$ by at most $1/n$ to $1/q'$
	such that $1/q'-\sigma/n=1/p-s/n$.
	\item If $\sigma\ge s$ and $1/p-s/n \le 1/q-\sigma/n$, lower $\sigma$ by at most 1 to $s$ while simultaneously 
	lowering $1/q$ by at most $1/n$ so that the Lebesgue regularity $1/q-\sigma/n=1/q'-\sigma'/n$ is unchanged.
	\item Otherwise, $(\sigma,q)$ satisfies $s\le \sigma\le \sigma+1$ and 
	\[
		\frac{1}{p}-\frac{s+1}{n} \le \frac{1}{q}-\frac{\sigma}{n} < \frac{1}{p}-\frac{s}{n}
	\]
	and we set $(\sigma',q')=(s,p)$.
\end{enumerate}
In each of these cases $H^{s,q}(\Omega)$ is contained in $H^{s',q'}(\Omega)$ by Sobolev embedding,
so we can apply the original bootstrap for $d_0=0$ to get to $(\sigma',q')$. 
Since $(\sigma',q')\in \mathcal S^{d}_0(H^{s,p})$ a computation shows $(\sigma'+1,q')\in \mathcal S^{d_0}_1(H^{s,p})$
and hence the commutator result Lemma \ref{lem:commutator-Hsp} can be applied at $(\sigma',q')$.
One verifies that in all three cases listed above,
the target regularity $(\sigma,q)$ satisfies conditions \eqref{cond:hard-deriv}-\eqref{cond:soft-lebesgue}
when starting from $(\sigma',q')$ and 
we can apply the bootstrap step exactly once to arrive at $(\sigma,q)$.  This proves
the result when $d_0=1$, and iterating this argument obtains the proof for any value of $d_0$.
\end{proof}
\section{Coefficients in Triebel-Lizorkin Spaces}
\label{s:Fsp}

This section generalizes the results of Section \ref{s:Hsp} to operators having
coefficients in Triebel-Lizorkin spaces, 
which are defined defined in terms of Littlewood-Paley projection operators.
Let $\phi$ be a smooth radial bump function that equals
$1$ on $B_1$ and vanishes outside of $B_2$ and
let $\psi(\xi) = \phi(\xi)-\phi(2\xi)$, so
$\psi(\xi)=0$ if $|\xi|>2$ or $|\xi|<1/2$.
For $k\in \Ints$ we define the Littlewood-Paley
projections $P_k$ and $P_{\le k}$
\[
\begin{aligned}
\mathcal F[ P_k u](\xi) &= \psi(2^{-k}\xi) \mathcal F[u](\xi) \\
\mathcal F[ P_{\le k} u](\xi) &= \phi(2^{-k}\xi) \mathcal F[u](\xi).
\end{aligned}
\]
We also use the notation $P_{a\le \cdot\le b}$ for $\sum_{j=a}^b P_j$.

Let $s\in\Reals$ and $1<p,q<\infty$.
Given a tempered distribution $u$, each $P_k u$ is an 
analytic function on $\Reals^n$ and 
a tempered distribution belongs to the Triebel-Lizorkin
space $F^{s,p}_q(\Reals)$ if 
\[
||u||_{F^{s,p}_q(\Reals^n)} = ||P_{\le 0}u||_{L^p(\Reals^n)}+\left|\left|\,\,
\left[ \sum_{k=1}^\infty ( 2^{sk}|P_k u|)^q\,\,\right]^{1/q}\right|\right|_{L^p(\Reals^n)}
<\infty.
\]
We call $p$ the \textbf{Lebesgue parameter}, whereas $q$ 
is the \textbf{fine parameter}.  
Bessel potential spaces are 
Triebel-Lizorkin spaces with fine parameter $q=2$ and, as recalled in the
following section, Sobolev-Slobodeckij
spaces $W^{s,p}(\Reals^n)$ are 
also special cases of Triebel-Lizorkin spaces with either $q=2$
or $q=p$.
Just as for Bessel potential spaces, on an open set 
$\Omega\subseteq \Reals^n$, $F^{s,p}_q(\Omega)$
consists of the restrictions to $\Omega$ of distributions in
$F^{s,p}_q(\Reals^n)$ to $\Omega$ and is given the quotient norm.
The text \cite{triebel_theory_2010} contains a 
comprehensive description of Triebel-Lizorkin spaces, and indeed considers a
wider set of parameters than those we employ here.

Embedding properties for Triebel-Lizorkin spaces follow those
for Bessel potential spaces with the following rule 
of thumb: when a loss of derivatives is involved,
the fine parameter has no role. Specifically, we 
recall the following results summarizing \cite{triebel_theory_2010} Proposition 2.3.2/2 
and Theorems 2.7.1 and 3.3.1 along with \cite{triebel_interpolation_1978} Theorems 2.8.1 and 4.6.1; the notation $A\hookrightarrow B$ denotes a continuous inclusion of
space $A$ into space $B$.  Note that here and elsewhere when we quote established results,
we restrict the range
of the parameters to $1<p,q<\infty$ even when they apply with greater generality.

\begin{proposition}\label{prop:embedding-Fsp}
Assume $1<p,p_1,p_2,q,q_1,q_2< \infty$
and $s,s_1,s_2\in \Reals$, and suppose $\Omega$ is a bounded
open set in $\Reals^n$.
\begin{enumerate}
	\item
	If $s_1>s_2$ then $F^{s_1,p}_{q_1}(\Reals^n)\hookrightarrow F^{s_2,p}_{q_2}(\Reals^n)$ and 
	$F^{s_1,p}_{q_1}(\Omega)\hookrightarrow F^{s_2,p}_{q_2}(\Omega)$.
	\item If $q_1\le q_2$ then $F^{s,p}_{q_1}(\Reals^n)\hookrightarrow F^{s,p}_{q_2}(\Reals^n)$ and 
	$F^{s_1,p}_{q_1}(\Omega)\hookrightarrow F^{s_2,p}_{q_2}(\Omega)$.
	\item If $p_1 \ge p_2$ then $F^{s,p_1}_{q}(\Omega)\hookrightarrow F^{s,p_2}_q(\Omega)$.
	\item
	If $s_1>s_2$ and
	$\frac{1}{p_1}-\frac{s_1}{n} = \frac{1}{p_2}-\frac{s_2}{n}$ then
	$F^{s_1,p_1}_{q_1}(\Reals^n)\hookrightarrow F^{s_2,p_2}_{q_2}(\Reals^n)$.
	\item\ If $s_1>s_2$ and 
	$\frac{1}{p_1}-\frac{s_1}{n}\le \frac{1}{p_2}-\frac{s_2}{n}$
	then $F^{s_1,p_1}_{q_1}(\Omega)\hookrightarrow F^{s_2,p_2}_{q_2}(\Omega)$.
	\item If $0<\alpha<1$ then $F^{\frac{n}p+\alpha,p}_q(\Reals^n)\hookrightarrow C^{0,\alpha}(\Reals^n)$
	and $F^{\frac{n}p+\alpha,p}_q(\Omega)\hookrightarrow C^{0,\alpha}(\Omega)$.
\end{enumerate}
\end{proposition}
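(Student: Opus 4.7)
The plan is to derive each item either as a direct consequence of the Littlewood-Paley definition or by invoking the standard references named just before the statement, \cite{triebel_theory_2010} and \cite{triebel_interpolation_1978}. Most of the work is bookkeeping; the actual analytic content is concentrated in the Sobolev-type estimate for dyadic blocks.

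For the ``soft'' items (1)--(3), I would argue directly from the defining norm. Part (2) is the embedding $\ell^{q_1}\hookrightarrow \ell^{q_2}$ for $q_1\le q_2$ applied pointwise to the Littlewood-Paley square function inside the outer $L^p$-norm, and the bounded-domain version is inherited through the quotient-norm definition. Part (3) is H\"older's inequality on $\Omega$, again applied pointwise to the square function, and then taking infimum over extensions. Part (1) is immediate when $q_1\le q_2$ since $2^{s_2 k}\le 2^{s_1 k}$ for $k\ge 0$; the case $q_1>q_2$ is handled by first using the strict inequality $s_1>s_2$ to lose a tiny amount of differentiability in exchange for reducing the fine parameter, then appealing to part (2). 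The $\Omega$-versions follow from the $\Reals^n$-versions via the quotient norm.

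For (4) and (5), the plan is to reduce to the canonical Triebel-Lizorkin Sobolev embedding (Theorem 2.7.1 of \cite{triebel_theory_2010}), which is proved in Triebel's monograph by combining Bernstein's inequality on each Littlewood-Paley block with vector-valued Fefferman-Stein maximal inequalities. Part (4) is exactly this theorem. For part (5) on a bounded $\Omega$, I would pick an intermediate pair $(s',p_2)$ with $s_2\le s'\le s_1$ satisfying the equality $\tfrac{1}{p_1}-\tfrac{s_1}{n}=\tfrac{1}{p_2}-\tfrac{s'}{n}$ (possible by the hypothesis $\tfrac{1}{p_1}-\tfrac{s_1}{n}\le \tfrac{1}{p_2}-\tfrac{s_2}{n}$), then factor the embedding as $F^{s_1,p_1}_{q_1}(\Omega)\hookrightarrow F^{s',p_2}_{q_1}(\Omega)\hookrightarrow F^{s_2,p_2}_{q_2}(\Omega)$, using part (4) (or part (3) if $p_1\ge p_2$ and the equality case collapses) for the first step and part (1) for the second.

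For part (6), the plan is to first embed $F^{n/p+\alpha,p}_q(\Reals^n)$ into the Besov space $\Bv[\alpha,\infty,\infty]$ via the standard ``loss of fine-parameter for gain of Lebesgue parameter'' step from Theorem 2.7.1 of \cite{triebel_theory_2010}, and then identify $\Bv[\alpha,\infty,\infty]$ with $C^{0,\alpha}$ for $0<\alpha<1$ through the classical Littlewood-Paley characterization of H\"older classes. The $\Omega$-version is then routine through extension and restriction. The main obstacle throughout is the underlying Triebel block-estimate, but we invoke it as a black box, so the effort in the present proof is organizational rather than analytic.
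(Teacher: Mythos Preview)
The paper does not give its own proof of this proposition: it is stated as a summary of results from \cite{triebel_theory_2010} Proposition 2.3.2/2 and Theorems 2.7.1, 3.3.1 and \cite{triebel_interpolation_1978} Theorems 2.8.1, 4.6.1, with the remark immediately afterward that the references prove the bounded-domain statements only for $C^\infty$ domains and that the general bounded case follows from the quotient-norm definition. Your plan is in the same spirit for items (4)--(6), and your sketches for (1) and (2) are fine and give a bit more detail than the paper bothers with.

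Your argument for (3), however, does not work as written. The $F^{s,p_2}_q(\Omega)$ norm is the infimum over \emph{extensions to $\Reals^n$} of the $F^{s,p_2}_q(\Reals^n)$ norm, and the Littlewood-Paley square function of such an extension is not supported in $\Omega$, nor in any fixed compact set. So ``H\"older's inequality on $\Omega$, applied pointwise to the square function, and then taking infimum over extensions'' does not produce a bound on any $F^{s,p_2}_q(\Reals^n)$ norm. One can rescue a direct argument by multiplying an extension by a cutoff and then proving that the square function of a compactly supported distribution has rapidly decaying tails outside its support, but this is genuine work, not a one-line H\"older step. The cleaner route, and the one the paper points to, is simply to invoke the cited results for smooth bounded domains (where extension operators or intrinsic characterizations are available) and then pass to arbitrary bounded open sets via the quotient definition, exactly as the paper's post-proposition remark indicates.
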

Note that for bounded open sets $\Omega$, \cite{triebel_theory_2010} and \cite{triebel_interpolation_1978}
proves these 
embedding properties under the the addition hypothesis that $\Omega$ is a $C^\infty$ domain.
The results for arbitrary bounded open sets follow as a corollary using the quotient
space definition of the relevant spaces.

Complex interpolation of Triebel-Lizorkin spaces
is described in \cite{triebel_theory_2010} Theorems 2.4.7 and 3.3.6.
\begin{proposition}
	Assume $1<p_1,p_2,q_1,q_2<\infty$
	and $s_1,s_2\in \Reals$, and suppose $\Omega$ is either $\Reals^n$ or is
	a bounded $C^\infty$ domain in $\Reals^n$.
	For $0<\theta<1$,
	\[
		[F^{s_1,p_1}_{q_1}(\Omega),F^{s_2,p_2}_{q_2}(\Omega)]_\theta = F^{s,p}_q(\Omega)
	\]
	where
	\[
		s=(1-\theta)s_1+\theta s_2,\quad \frac{1}{p} = (1-\theta)\frac{1}{p_1}+\theta \frac{1}{p_2},\quad
		(1-\theta)\frac{1}{q_1}+\theta \frac{1}{q_2}.
	\]
\end{proposition}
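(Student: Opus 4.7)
The plan is to proceed via the retraction-coretraction method, reducing the interpolation identity to the classical complex interpolation of weighted vector-valued Lebesgue spaces. First I would treat the case $\Omega=\Reals^n$. Using the Littlewood-Paley partition of unity already introduced, define the coretraction $S: u\mapsto (P_{\le 0}u, P_1 u, P_2 u,\ldots)$. By the very definition of the Triebel-Lizorkin norm, $S$ embeds $F^{s,p}_q(\Reals^n)$ isometrically into the weighted vector-valued Lebesgue space $L^p(\Reals^n;\ell^q_s)$, where $\ell^q_s$ is weighted by $2^{sk}$ for $k\ge 1$. Introduce slightly widened Littlewood-Paley multipliers $\widetilde P_k$ satisfying $\widetilde P_k P_k = P_k$ and set $R(f_k) = \widetilde P_{\le 0}f_0 + \sum_{k\ge 1}\widetilde P_k f_k$. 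A combination of the Fefferman-Stein vector-valued maximal inequality with standard multiplier estimates shows that $R: L^p(\Reals^n;\ell^q_s)\to F^{s,p}_q(\Reals^n)$ is continuous and $R\circ S=\id$, exhibiting $F^{s,p}_q(\Reals^n)$ as a complemented subspace of $L^p(\Reals^n;\ell^q_s)$ via the bounded projection $S\circ R$.

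Next I would invoke Calder\'on's interpolation theorem for weighted vector-valued Lebesgue spaces, which gives $[L^{p_1}(\Reals^n;\ell^{q_1}_{s_1}), L^{p_2}(\Reals^n;\ell^{q_2}_{s_2})]_\theta = L^p(\Reals^n;\ell^q_s)$ with the convexity relations on $s$, $1/p$, and $1/q$ stated in the proposition. Because $(R,S)$ is a retraction-coretraction pair with operator norms controlled at each endpoint, the standard transfer principle for complex interpolation of complemented subspaces (Bergh-L\"ofstr\"om, Theorem 6.4.2) then yields $[F^{s_1,p_1}_{q_1}(\Reals^n), F^{s_2,p_2}_{q_2}(\Reals^n)]_\theta = F^{s,p}_q(\Reals^n)$.

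For a bounded $C^\infty$ domain $\Omega$, the additional ingredient is a universal extension operator $E: F^{\sigma,a}_b(\Omega)\to F^{\sigma,a}_b(\Reals^n)$ that is simultaneously bounded for every admissible triple $(\sigma,a,b)$, together with the natural restriction operator $r_\Omega$ furnished by the quotient-space definition of $F^{\sigma,a}_b(\Omega)$. Given such a pair with $r_\Omega\circ E = \id$, each $F^{s,p}_q(\Omega)$ becomes a complemented subspace of $F^{s,p}_q(\Reals^n)$ via the bounded projection $E\circ r_\Omega$, and a second application of the retraction-coretraction principle transfers the interpolation identity from $\Reals^n$ to $\Omega$.

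The main obstacle is the construction of the universal extension operator $E$: for a single parameter triple one could use any Stein-type extension, but these generally depend on the parameters, so interpolation would not see the same $E$ at both endpoints. This difficulty is resolved by Rychkov's extension construction, which uses a local reflection adapted to the dyadic Littlewood-Paley decomposition and yields a single operator bounded on $F^{\sigma,a}_b(\Omega)$ (and on $\Bv[\sigma,a,b](\Omega)$) for every admissible choice of parameters at once. The remaining steps reduce to routine vector-valued harmonic analysis.
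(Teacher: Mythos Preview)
The paper does not supply its own proof of this proposition: it merely cites Triebel, \emph{Theory of Function Spaces}, Theorems 2.4.7 and 3.3.6, as the source of the complex interpolation identity. Your outline is correct and is in fact the standard argument behind those theorems. The $\Reals^n$ case in Triebel's Theorem 2.4.7 proceeds exactly by the retraction--coretraction method you describe, identifying $F^{s,p}_q(\Reals^n)$ as a complemented subspace of $L^p(\Reals^n;\ell^q_s)$ and then invoking Calder\'on's theorem for vector-valued $L^p$ spaces. For bounded $C^\infty$ domains, Triebel's Theorem 3.3.6 likewise uses an extension operator together with restriction as a retraction pair; your choice of Rychkov's universal extension is a clean way to ensure a single operator works simultaneously at both endpoints (Triebel's 1983 argument predates Rychkov and uses a different construction, but the logic is the same). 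There is no gap in your plan.
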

Duality of spaces of functions on $\Reals^n$ follows from \cite{triebel_theory_2010} Theorem 2.11.2.  Duality 
for Lipschitz bounded domains can be found in \cite{triebel_function_2002}, but the theory is more complex and
we have avoided its use in our arguments; see, e.g., Proposition \ref{prop:mult-Fsp-s2-neg}.
\begin{proposition}\label{prop:dual-Fsp}
	Assume $1<p,q<\infty$
	and $s\in \Reals$.
	The bilinear map $\mathcal D(\Reals^n)\times \mathcal D(\Reals^n)\to \Reals$
	given by $\left<f,g\right> := \int_\Omega fg$ extends to a continuous bilinear map
	$F^{s,p}_q(\Reals^n)\times F^{-s,p^*}_{q^*}(\Reals^n)\to \Reals$ 
	where $1/p^* = 1-1/p$ and $1/q^*=1-1/q$.  Moreover, 
	$f\mapsto \left<f,\cdot\right>$ is a continuous identification of $F^{s,p}_q(\Reals^n)$
	with $(F^{-s,p^*}_{q^*}(\Reals^n))^*$.
\end{proposition}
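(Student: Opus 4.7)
The plan is to reduce the proposition to the standard Bochner-type duality $(L^{p^*}(\Reals^n;\ell^{q^*}))^* = L^p(\Reals^n;\ell^q)$ via the Littlewood--Paley decomposition, exploiting the fact that $F^{s,p}_q(\Reals^n)$ is a retract of a vector-valued Lebesgue space.

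First I would establish boundedness of the pairing. The Fourier supports of $P_j f$ and $P_k g$ together lie in a thin annulus, so $\int P_j f\cdot P_k g = 0$ unless $|j-k|$ is small; hence for Schwartz $f,g$,
\[
\langle f,g\rangle = \langle P_{\le 0}f, \tilde P_{\le 0}g\rangle + \sum_{k\ge 1}\langle P_k f, \tilde P_k g\rangle,
\]
where $\tilde P_k := P_{k-1}+P_k+P_{k+1}$. Writing $\langle P_k f,\tilde P_k g\rangle = \int (2^{sk}P_k f)(2^{-sk}\tilde P_k g)$ and applying H\"older in the counting measure with exponents $q,q^*$ followed by H\"older in $L^p$, one obtains
\[
|\langle f,g\rangle| \lesssim \|f\|_{F^{s,p}_q(\Reals^n)}\,\|g\|_{F^{-s,p^*}_{q^*}(\Reals^n)}.
\]
Density of Schwartz functions then extends the pairing continuously to the full product space.

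For the identification $F^{s,p}_q(\Reals^n) \simeq (F^{-s,p^*}_{q^*}(\Reals^n))^*$, the structural tool is a retraction/coretraction pair. The map $\Phi: F^{s,p}_q(\Reals^n) \to L^p(\Reals^n;\ell^q)$ defined by $\Phi(f) = (P_{\le 0}f, 2^{s}P_1 f, 2^{2s}P_2 f,\ldots)$ is an isometric embedding by the very definition of the Triebel--Lizorkin norm. A left inverse $\Psi$ is constructed by summation with the widened projections $\tilde P_k$ satisfying $\tilde P_k P_k = P_k$; boundedness of $\Psi : L^p(\Reals^n;\ell^q) \to F^{s,p}_q(\Reals^n)$ rests on the Fefferman--Stein vector-valued maximal inequality, the main harmonic-analysis input, which requires $1<p,q<\infty$. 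An analogous pair realizes $F^{-s,p^*}_{q^*}(\Reals^n)$ as a complemented subspace of $L^{p^*}(\Reals^n;\ell^{q^*})$.

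Given $\Lambda \in (F^{-s,p^*}_{q^*}(\Reals^n))^*$, the composition $\Lambda \circ \tilde\Psi$ lies in $(L^{p^*}(\Reals^n;\ell^{q^*}))^*$ and is represented, by the Bochner-type duality of vector-valued Lebesgue spaces (valid because $\ell^{q^*}$ is reflexive for $1<q<\infty$), by some $h \in L^p(\Reals^n;\ell^q)$. Setting $f := \Psi(h) \in F^{s,p}_q(\Reals^n)$ and unwinding the definitions shows $\Lambda(g) = \langle f, g\rangle$ for all Schwartz $g$, hence on all of $F^{-s,p^*}_{q^*}(\Reals^n)$ by density; injectivity is immediate because $\langle f,\cdot\rangle$ determines $f$ as a distribution. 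The main obstacles are the Fefferman--Stein inequality needed for boundedness of $\Psi$, and the careful bookkeeping required to verify that $\Phi\circ\Psi$ projects genuinely onto the range of $\Phi$ so that the retract structure passes cleanly to duals.
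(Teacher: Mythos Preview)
The paper does not give its own proof of this proposition; it simply quotes the result from \cite{triebel_theory_2010}, Theorem~2.11.2, as background material. Your outline is essentially the standard argument found there: realize $F^{s,p}_q(\Reals^n)$ as a complemented subspace (retract) of $L^p(\Reals^n;\ell^q)$ via the Littlewood--Paley map and a Fefferman--Stein--based left inverse, then transport the vector-valued duality $(L^{p^*}(\ell^{q^*}))^* = L^p(\ell^q)$. So there is nothing to compare against in this paper, and your approach is correct and matches the cited reference.

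One small quibble: in your boundedness argument you write that $\int P_jf\cdot P_kg$ vanishes unless $|j-k|$ is small; strictly this needs $|j-k|\le 2$ or so depending on the cutoff, and the low-frequency block $P_{\le 0}$ interacts with a few of the $P_k$ as well, but this is cosmetic. The substance of your argument is sound.
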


\subsection{Mapping properties}

Operators with coefficients in Triebel-Lizorkin spaces
are defined analogously to those of Definition \ref{def:L-Hsp}.
\begin{defn}\label{def:L-Fsp}
Suppose $d_0,d\in \Ints_{\ge 0}$ with $d_0\le d$.
A differential operator on 
an open set $\Omega\subseteq \Reals^n$ of the form
\[
\sum_{d_0\le |\alpha|\le d} a^\alpha \partial_\alpha
\]
is of class $\mathcal L_{d_0}^d(F^{s,p}_q;\Omega)$ 
for some $s\in\Reals$, and $1<p,q<\infty$ if each
\[
a^\alpha \in F^{s+|\alpha|-d,p}_q(\Omega ).
\]
\end{defn}

We have the following multiplication rules for Triebel-Lizorkin
spaces that generalize the rules found in Theorem \ref{thm:mult}.
A self-contained proof is given in Appendix \ref{app:mult-Fsp}.

\begin{theorem}\label{thm:mult-Fsp}
Let $\Omega$ be a bounded open subset of $\Reals^n$.
Suppose $1<p_1,p_2,p,q_1,q_2,q<\infty$ and $s_1,s_2,s\in\Reals$.  Let $r_1,r_2$ and $r$ be defined by
\[
\frac{1}{r_1} = \frac{1}{p_1} - \frac{s_1}{n},\qquad
\frac{1}{r_2} = \frac{1}{p_2} - \frac{s_2}{n},\quad\text{\rm and}\quad
\frac{1}{r} = \frac{1}{p} - \frac{s}{n}.
\]
Pointwise multiplication of $C^\infty(\overline \Omega)$ 
functions extends to a continuous bilinear map 
$F^{s_1,p_1}_{q_1}(\Omega)\times F^{s_2,p_2}_{q_2}(\Omega)
\rightarrow F^{s,p}_{q}(\Omega)$ so long as
\begin{align}
s_1+s_2&\ge 0\\
\min({s_1,s_2})&\ge s\\
\max\left(\frac{1}{r_1},\frac{1}{r_2}\right) & \le  \frac{1}{r}\\
\frac{1}{r_1} + \frac{1}{r_2} & \le 1\\
\label{eq:Fsp-lebesgue-mult}
\frac{1}{r_1} + \frac{1}{r_2} & \le \frac{1}{r}
\end{align}
with the the following caveats:
\begin{itemize}
	\item Inequality \eqref{eq:Fsp-lebesgue-mult} is 
	strict if $\min(1/r_1,1/r_2,1-1/r)=0$.
	\item If $s_i=s$ for some $i$ then $1/q\le 1/q_i$.
	\item If $s_1+s_2=0$ then $\frac{1}{q_1}+\frac{1}{q_2}\ge 1$.
	\item If $s_1=s_2=s=0$ then $\displaystyle \frac1q\le \frac{1}{2}\le \frac1{q_i}$ for $i=1,2$.
\end{itemize}
\end{theorem}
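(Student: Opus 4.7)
The plan is to reduce from the bounded domain $\Omega$ to $\Reals^n$, then attack the global problem with paraproducts. Given $f\in F^{s_1,p_1}_{q_1}(\Omega)$ and $g\in F^{s_2,p_2}_{q_2}(\Omega)$, pick extensions $\hat f, \hat g$ to $\Reals^n$ with norms comparable to the quotient norms, multiply $\hat f \hat g$ by a Schwartz cutoff equal to $1$ on $\Omega$ (which is continuous on every $F^{s,p}_q(\Reals^n)$ by standard multiplier results), and note that the resulting distribution restricts to $fg$ on $\Omega$. Thus it suffices to prove the theorem on $\Reals^n$, and density of Schwartz functions handles the extension of pointwise multiplication from $C^\infty(\overline\Omega)$.

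On $\Reals^n$, decompose
\[
fg = \Pi(f,g) + \Pi(g,f) + R(f,g),\qquad \Pi(f,g)=\sum_{j} (P_{\le j-3}f)(P_j g),\qquad R(f,g)=\sum_{|j-k|\le 2}(P_j f)(P_k g).
\]
The low-high paraproduct $\Pi(f,g)$ has the crucial property that each summand has Fourier support in an annulus of radius $\sim 2^j$, so $\|\Pi(f,g)\|_{F^{s,p}_q}$ is controlled by $\bigl\|\bigl(\sum_j 2^{sjq}|(P_{\le j-3}f)(P_j g)|^q\bigr)^{1/q}\bigr\|_{L^p}$. I would estimate $|P_{\le j-3}f|\lesssim \mathcal M f$ pointwise, pull this factor out, and invoke Fefferman--Stein together with the Sobolev embedding $F^{s_1,p_1}_{q_1}\hookrightarrow L^{r_1}$ (from Proposition \ref{prop:embedding-Fsp}) and $F^{s_2,p_2}_{q_2}\hookrightarrow F^{s,a}_{q_2}$ with $1/a=1/r-1/r_1$ so that H\"older in the outer $L^p$ norm yields the required bound. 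The conditions \eqref{eq:Fsp-lebesgue-mult}, $\max(1/r_1,1/r_2)\le 1/r$, and $\min(s_1,s_2)\ge s$ are exactly what make this H\"older/embedding step work, while the caveat $1/q\le 1/q_i$ whenever $s_i=s$ reflects the only case where the fine parameter is not free (here $q_2=q$ is forced, with $1/q\le 1/q_2$ through the embedding step). The high-low term $\Pi(g,f)$ is symmetric.

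The main obstacle is the resonant term $R(f,g)$, whose summands are supported in balls of radius $\sim 2^j$ rather than annuli. For this term one writes
\[
P_\ell R(f,g) = P_\ell \sum_{j\ge \ell - C} \sum_{|j-k|\le 2}(P_j f)(P_k g),
\]
so each frequency $\ell$ receives contributions from all higher frequencies. I would write $2^{s\ell}=2^{(s-s_1-s_2)\ell}\,2^{s_1 j}2^{s_2 k}\cdot 2^{-s_1(j-\ell)}2^{-s_2(k-\ell)}$, sum in $j$ via Young's inequality on $\ell^q$ (requiring $s_1+s_2\ge s$, automatic from $s_1+s_2\ge 0$ combined with the other hypotheses or handled separately when $s_1+s_2=0$), and then apply H\"older in the outer $L^p$ and vector Fefferman--Stein. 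The edge cases $s_1+s_2=0$, the Bernstein-type strict inequality in \eqref{eq:Fsp-lebesgue-mult}, and $s_1=s_2=s=0$ force extra care on the fine parameters: the condition $1/q_1+1/q_2\ge 1$ lets the two $\ell^{q_i}$ factors combine by H\"older into an $\ell^1$ sum, and $1/q\le 1/2\le 1/q_i$ together with $L^p$-boundedness of the maximal function handle the purely Lebesgue case.

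Finally, I would package the four caveats as separate subcases. The hardest subcase is $s_1+s_2=0$ within $R(f,g)$, where the gain $2^{(s_1+s_2-s)j}$ disappears and one must exploit the Nikolskii inequality on the ball-supported pieces, together with the fine-parameter hypothesis $1/q_1+1/q_2\ge 1$; this is where the sharpness of the statement and the limitation to $1<p,q<\infty$ really bite. The strict inequality cases in \eqref{eq:Fsp-lebesgue-mult} are addressed by replacing the failing endpoint embedding $F^{s_i,p_i}_{q_i}\hookrightarrow L^\infty$ (which is false at $1/r_i=0$) with $F^{s_i,p_i}_{q_i}\hookrightarrow L^{r'}$ for arbitrarily large $r'<\infty$, which is sufficient once \eqref{eq:Fsp-lebesgue-mult} is strict.
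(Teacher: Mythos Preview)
Your paraproduct decomposition $fg = \Pi(f,g)+\Pi(g,f)+R(f,g)$ is essentially the Littlewood--Paley trichotomy that the paper also uses as its technical core (Lemmas~\ref{lem:mult-Fsp-same-s-high}--\ref{lem:mult-Fsp-same-s-low}). The paper, however, only runs that analysis in the restricted setting $s_1=s_2=s>0$ with common fine parameter, and then reaches the general statement by a chain of reductions: Sobolev embedding to allow $s_1,s_2>s>0$ (Proposition~\ref{prop:mult-Fsp-s-pos}), further embedding for $s\le 0$ (Proposition~\ref{prop:mult-Fsp-pos-pos-neg}), then \emph{duality} for $\min(s_1,s_2)<0$ (Proposition~\ref{prop:mult-Fsp-s2-neg}), and finally duality plus interpolation for $\min(s_1,s_2)=0$ (Proposition~\ref{prop:mult-Fsp-s2-zero}). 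You instead attempt to estimate all three paraproduct pieces directly in full generality.

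The gap in your sketch is the case $\min(s_1,s_2)\le 0$. Your low--high estimate for $\Pi(f,g)$ relies on the pointwise bound $|P_{\le j-3}f|\lesssim Mf$ and on the embedding $F^{s_1,p_1}_{q_1}\hookrightarrow L^{r_1}$; by your stated symmetry the term $\Pi(g,f)$ needs the same with the roles reversed. But if, say, $s_2<0$, then $g$ is only a tempered distribution, $Mg$ is undefined, and $F^{s_2,p_2}_{q_2}\hookrightarrow L^{r_2}$ is false. Your outline offers no alternative mechanism here. The paper supplies one by first proving the result when all differentiability indices are positive and then invoking duality: continuity of $F^{s_1,p_1}_{q_1}\times F^{s_2,p_2}_{q_2}\to F^{s,p}_q$ with $s_2<0$ is deduced from continuity of $F^{s_1,p_1}_{q_1}\times F^{-s,p^*}_{q^*}\to F^{-s_2,p_2^*}_{q_2^*}$, whose indices $s_1,-s,-s_2$ are all positive; the borderline $\min(s_1,s_2)=0$ is then reached by interpolating between a positive and a negative endpoint. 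A smaller but related issue: your reduction to $\Reals^n$ cuts off the product $\hat f\hat g$ rather than the individual factors, so when you later appeal to embeddings of the form $F^{s_i,p_i}_{q_i}\hookrightarrow L^{t}$ with $1/t>1/r_i$ (needed to absorb the slack in \eqref{eq:Fsp-lebesgue-mult}), these are only available for compactly supported functions; the paper handles this by extending and cutting off each factor separately so that the base lemmas can assume support in a fixed ball.
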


Repeated applications of Theorem \ref{thm:mult-Fsp}
lead to the following analogue of Proposition \ref{prop:mapping-Hsp}.
\begin{proposition}\label{prop:mapping-Fsp}
Let $\Omega$ be a bounded open set in $\Reals^n$.
Suppose $1<p,q,a,b<\infty$, 
$s>n/p$, $\sigma\in\Reals$ and 
$d,d_0\in\Ints_{\ge 0}$ with $d\ge d_0$.  
An operator of class $\mathcal L_{d_0}^d(F^{s,p}_q;\Omega)$
extends from a map 
$C^\infty(\overline \Omega)\mapsto \mathcal D'(\Omega)$ to 
a continuous linear map 
$F^{\sigma,a}_b(\Omega)\mapsto F^{\sigma-d,a}_b(\Omega)$
so long as
\begin{equation}\label{eq:S-conds-Fsp}
\begin{aligned}
\sigma&\in [d-s,s+d_0]\\
\frac{1}{a} - \frac{\sigma}{n} &\in \left[
\frac{1}{p} - \frac{s+d_0}{n}, \frac{1}{p^*}-\frac{d-s}{n}
\right]
\end{aligned}
\end{equation}
and so long as:
\begin{itemize}
	\item If $\sigma=s+d_0$ then $\frac{1}{q} \ge \frac{1}{b}$.
	\item If $\sigma=d-s$ then $\frac{1}{b} \ge \frac{1}{q^*} $. 
\end{itemize}
Moreover, operators in $\mathcal L_{d_0}^d(F^{s,p}_q;\Omega)$ 
depend continuously
on their coefficients $a^\alpha \in F^{s-d+\alpha,p}_q(\Omega)$.
\end{proposition}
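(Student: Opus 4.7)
The plan is to reduce to the multiplication theorem for Triebel-Lizorkin spaces, Theorem \ref{thm:mult-Fsp}, applied term by term. Writing $Lu = \sum_{d_0 \le |\alpha| \le d} a^\alpha \partial_\alpha u$, each term is the product of a coefficient $a^\alpha \in F^{s+|\alpha|-d,p}_q(\Omega)$ with a derivative $\partial_\alpha u \in F^{\sigma-|\alpha|,a}_b(\Omega)$, and we want the product to lie in $F^{\sigma-d,a}_b(\Omega)$. So in the notation of Theorem \ref{thm:mult-Fsp} we take
\[
(s_1,p_1,q_1) = (s+|\alpha|-d, p, q), \quad (s_2,p_2,q_2) = (\sigma-|\alpha|,a,b), \quad (s,p,q) = (\sigma-d,a,b),
\]
and verify the hypotheses for every $\alpha$ with $d_0 \le |\alpha| \le d$.

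For the four inequalities on derivative and Lebesgue regularity, I would check: $s_1+s_2 = s + \sigma - d \ge 0$ is exactly $\sigma \ge d-s$; $\min(s_1,s_2) \ge \sigma-d$ reduces to $|\alpha|\le d$ and $s+|\alpha| \ge \sigma$, the latter being worst at $|\alpha|=d_0$ where it becomes $\sigma \le s+d_0$; the inequality $\max(1/r_1,1/r_2)\le 1/r$ reduces to $|\alpha|\le d$ and $\frac{1}{p}-\frac{s+|\alpha|}{n}\le \frac{1}{a}-\frac{\sigma}{n}$, again worst at $|\alpha|=d_0$ and giving the left endpoint of the Lebesgue interval in \eqref{eq:S-conds-Fsp}; the inequality $\frac{1}{r_1}+\frac{1}{r_2}\le 1$ is equivalent to the right endpoint $\frac{1}{a}-\frac{\sigma}{n}\le \frac{1}{p^*}-\frac{d-s}{n}$; and finally $\frac{1}{r_1}+\frac{1}{r_2}\le \frac{1}{r}$ collapses to $\frac{1}{p}-\frac{s}{n}\le 0$, which is provided by the hypothesis $s>n/p$. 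The strictness caveat on this last inequality never triggers because $s>n/p$ gives strict inequality automatically.

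The main delicate point, and where I expect to spend the most care, is verifying the fine-parameter bullets of Theorem \ref{thm:mult-Fsp} against the fine-parameter bullets in the statement. For a given $\alpha$, $s_1 = \sigma-d$ happens exactly when $|\alpha| = \sigma - s$, and if $\sigma = s + d_0$ this is realized at $|\alpha|=d_0$, forcing the condition $1/b \le 1/q$, which is precisely the first bullet of the proposition; similarly $s_2=\sigma-d$ corresponds to $|\alpha|=d$ and yields the trivial condition $1/b\le 1/b$. The condition $s_1+s_2 = 0$ arises at $|\alpha|=d$ precisely when $\sigma = d-s$, demanding $1/q+1/b \ge 1$, i.e., $1/b \ge 1/q^*$, which matches the second bullet. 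The triple-zero edge case $s_1=s_2=\sigma-d=0$ would require $\sigma=d-s=s+d_0$, forcing $s=(d-d_0)/2$ and thus $s = s_1 = s_2$ simultaneously at the single exponent $|\alpha|=d$; since our hypotheses then give $1/b\ge 1/q^*$ and $1/q\ge 1/b$, one checks $1/b\le 1/2 \le 1/q, 1/b$ as required, noting that $s>n/p\ge 0$ combined with $s=(d-d_0)/2$ is consistent only with $d>d_0$, and the single problematic term is handled by the coincident constraints.

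With each multiplication bounded, one obtains $\|a^\alpha \partial_\alpha u\|_{F^{\sigma-d,a}_b(\Omega)} \lesssim \|a^\alpha\|_{F^{s+|\alpha|-d,p}_q(\Omega)} \|u\|_{F^{\sigma,a}_b(\Omega)}$, and summing over $\alpha$ gives the desired continuous map together with continuous dependence on the coefficients (which is immediate from the bilinear estimate). To extend from $C^\infty(\overline\Omega)$ to $F^{\sigma,a}_b(\Omega)$, I would use the standard density of restrictions of Schwartz functions in $F^{\sigma,a}_b(\Omega)$ (available since $1<a,b<\infty$) and the now-established continuity estimate on the dense subspace, obtaining a unique continuous extension that agrees with the pointwise differential operator on smooth functions.
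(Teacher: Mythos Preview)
Your approach is correct and is exactly what the paper does: it states that Proposition \ref{prop:mapping-Fsp} follows from ``repeated applications of Theorem \ref{thm:mult-Fsp}'' and gives no further detail, so your term-by-term verification is a faithful (and more explicit) execution of the intended argument. Two small slips in your edge-case bookkeeping are harmless: the condition $s_1+s_2=0$ (i.e.\ $\sigma=d-s$) is independent of $|\alpha|$, not specific to $|\alpha|=d$, but the required inequality $1/q+1/b\ge 1$ is the same for every term and is exactly your second bullet; and the triple-zero case $s_1=s_2=s_{\mathrm{target}}=0$ actually forces $|\alpha|=d$, $\sigma=d$, and $s=0$, which is vacuous since $s>n/p>0$, so your somewhat tangled derivation there can simply be replaced by that one-line observation.
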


Note that in the Bessel potential case, $q=b=2$ in Proposition \ref{prop:mapping-Fsp} and the supplemental
conditions at $\sigma=s+d_0$ and $\sigma=d-s$ are always satisfied.
We have the following
generalization of Definition \ref{def:scriptS-Hsp}.
\begin{defn}\label{def:scriptS-Fsp}
Suppose $1<p,q<\infty$, $s\in\Reals$ and $d,d_0\in\Ints_{\ge 0}$ with $d\ge d_0$.
The \textbf{compatible Sobolev indices} 
for an operator of class $\mathcal L_{d_0}^d(F^{s,p}_q;\Omega)$ is the set
\[
\mathcal S_{d_0}^d(F^{s,p}_q) \subseteq \Reals\times (1,\infty)\times(1,\infty)
\]
of tuples $(\sigma,a,b)$ 
satisfying \eqref{eq:S-conds-Fsp} along with the additional 
conditions at the end of Proposition \ref{prop:mapping-Fsp}
when $\sigma=s+d_0$ or $\sigma=d-s$.
\end{defn}

The proof of Lemma \ref{lem:S-members} generalizes
to the Triebel-Lizorkin context with minimal changes, using
Proposition \ref{prop:embedding-Fsp} for facts about
Sobolev embedding.  The only interesting difference is that
the marginal conditions at the end of Proposition
\ref{prop:mapping-Fsp} adds an additional condition
when $s$ is the smallest possible value such that
$\mathcal S_{d_0}^d(F^{s,p}_q)$  is nonempty.
\begin{lemma}\label{lem:S-members-Fsp}
Suppose $1<p,q<\infty$, $s\in\Reals$ and $d,d_0\in \Ints_{\ge 0}$
with $d\ge d_0$.  Then $\mathcal S_{d_0}^d(F^{s,p}_q)$ 
is nonempty if and only if
\begin{align}
\label{eq:S-p-1-Fsp}
s &\ge (d-d_0)/2\text{, and}\\
\label{eq:S-p-2-Fsp}
\frac{1}{p} - \frac{s}{n} &\le \frac{1}{2}-\frac{(d-d_0)/2}{n}
\end{align}
with the additional condition $q\le 2$ in the marginal case
$s=(d-d_0)/2$.
If $S_{d_0}^d(F^{s,p}_q)$ is non-empty then it contains $(s+d_0,p,q)$, $(d-s,p^*,q^*)$,
and $((d+d_0)/2,2,2)$.  Moreover, if $(\sigma,a,b)\in \mathcal S_{d_0}^d(F^{s,p}_q)$, then
we have the continuous inclusions of Fr\'echet spaces
\begin{equation}\label{eq:S-include-Fsp}
F^{s+d_0,p}_{q,\rm loc}(\Reals^n) \subseteq F^{\sigma,a}_{b,\rm loc}(\Reals^n)
\subseteq F^{d-s,p^*}_{q^*,\rm loc}(\Reals^n).
\end{equation}
\end{lemma}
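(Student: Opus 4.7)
The plan is to mirror the proof of Lemma \ref{lem:S-members} with additional bookkeeping for the fine parameters. First I would unpack the definition of $\mathcal S_{d_0}^d(F^{s,p}_q)$: a tuple $(\sigma,a,b)$ belongs to this set exactly when $\sigma\in[d-s,s+d_0]$, the Lebesgue regularity $1/a-\sigma/n$ lies in $[1/p-(s+d_0)/n,\;1/p^*-(d-s)/n]$, and the fine-parameter caveats from Proposition \ref{prop:mapping-Fsp} hold at the endpoints $\sigma=s+d_0$ and $\sigma=d-s$. The algebraic manipulation used in the Bessel potential case then shows that these two intervals are non-degenerate iff \eqref{eq:S-p-1-Fsp}--\eqref{eq:S-p-2-Fsp} hold; moreover the containment of the three listed tuples (together with the inclusions \eqref{eq:S-include-Fsp}) follows by the same midpoint/endpoint reasoning as before, using Proposition \ref{prop:embedding-Fsp} in place of the Bessel-potential embeddings.

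The one novel feature, and the main obstacle, is the marginal case $s=(d-d_0)/2$, in which the $\sigma$-interval collapses to the single point $(d+d_0)/2$.  Any admissible tuple then has $\sigma = s+d_0 = d-s$, so both fine-parameter caveats in Definition \ref{def:scriptS-Fsp} must be imposed simultaneously: $1/q\ge 1/b$ and $1/b\ge 1/q^*$.  A feasible $b\in(1,\infty)$ exists iff $1/q\ge 1/q^*$, that is, $q\le 2$, which is exactly the extra marginal hypothesis in the statement.  The same condition $q\le 2$ also guarantees containment of the three listed tuples in this degenerate regime: the secondary endpoint caveat reduces to $q\le 2$ for $(s+d_0,p,q)$ and $(d-s,p^*,q^*)$, and to the pair $1/q\ge 1/2$, $1/2\ge 1/q^*$ for $((d+d_0)/2,2,2)$, both of which follow from $q\le 2$.

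Finally, the embedding chain \eqref{eq:S-include-Fsp} follows from Proposition \ref{prop:embedding-Fsp}: whenever $\sigma$ is strictly smaller than $s+d_0$ the fine parameter plays no role and parts (4)--(5) apply directly, while at the endpoint $\sigma=s+d_0$ the caveat $1/q\ge 1/b$ yields $q\le b$ so that part (2) closes the gap. The lower inclusion at $\sigma=d-s$ is handled symmetrically using the caveat $1/b\ge 1/q^*$. Beyond the marginal case, the work is essentially notational: one checks the extreme tuples $(s+d_0,p,q)$ and $(d-s,p^*,q^*)$ by inspection against \eqref{eq:S-conds-Fsp}, and checks the midpoint using that $(d+d_0)/2$ and $1/2-(d+d_0)/(2n)$ are the arithmetic means of the respective admissible interval endpoints.
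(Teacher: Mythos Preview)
Your proposal is correct and follows essentially the same approach as the paper, which simply states that the proof of Lemma \ref{lem:S-members} generalizes with minimal changes using Proposition \ref{prop:embedding-Fsp}, the only novelty being the fine-parameter constraint in the marginal case $s=(d-d_0)/2$. Your treatment of that marginal case---observing that both endpoint caveats force $1/q^*\le 1/b\le 1/q$ and hence $q\le 2$---is exactly the mechanism the paper alludes to; one small addition is that at the endpoint $\sigma=s+d_0$ the embedding $F^{s+d_0,p}_{q,\rm loc}\hookrightarrow F^{\sigma,a}_{b,\rm loc}$ also uses part (3) of Proposition \ref{prop:embedding-Fsp} (to pass from $p$ to $a\le p$) in combination with part (2).
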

Just as in the Bessel potential case, if an operator
of class $\mathcal L_{d_0}^d(F^{s,p}_q;\Omega)$ is compatible
with any indices at all, it acts on a compatible $L^2$-based 
\textit{Bessel potential} space, $F^{(d-d_0)/2, 2}_{2}(\Omega)=H^{(d-d_0)/2, 2}(\Omega)$.

The following commutator result generalizes Lemma \ref{lem:commutator-Hsp}.
\begin{lemma} \label{lem:commutator-Fsp}
Suppose $1<p,q,a,b<\infty$, $s>n/p$, $\sigma\in\Reals$ and 
$d,d_0\in\Ints_{\ge 0}$ with $d\ge d_0$. Let
$L$ be an operator of class $\mathcal L_{d_0}^d(F^{s,p}_q;\Omega)$
and let $\phi\in\mathcal D(\Omega)$.  
Then $[L,\phi]$ 
extends from a map 
$C^\infty(\overline \Omega)\mapsto \mathcal D'(\Omega)$ to 
a continuous linear map 
$F^{\sigma,a}_b(\Omega)\mapsto F^{\sigma-d+1,a}_b(\Omega)$
so long as  $(\sigma + 1,a,b) \in \mathcal S^{d}_{d_0}(F^{s,p}_q)$. Moreover,
if $d_0=0$, the same result holds if $(\sigma,a,b)\in \mathcal S^{d}_{0}(H^{s,p}_q)$.
\end{lemma}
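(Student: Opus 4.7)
The proof will follow the same template as Lemma \ref{lem:commutator-Hsp}, with Theorem \ref{thm:mult-Fsp} replacing Theorem \ref{thm:mult}. The plan is to expand the commutator via Leibniz's rule, view each resulting term as a product of two factors lying in suitable Triebel--Lizorkin spaces, and apply the multiplication theorem. The ``$+1$'' gain in differentiability comes from the fact that in $[L,\phi]u$, at least one derivative must fall on $\phi$ rather than on $u$.

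Concretely, for $u \in F^{\sigma,a}_b(\Omega)$, a typical term of $[L,\phi]u$ has the form $a^\alpha (\partial_\beta \phi)(\partial_\gamma u)$ with $\max(1,d_0) \le |\alpha| \le d$, $|\beta| + |\gamma| = |\alpha|$, and $|\beta| \ge 1$, so $|\gamma| \le |\alpha| - 1$. The coefficient $a^\alpha \partial_\beta \phi$ inherits the regularity $F^{s-d+|\alpha|,p}_q(\Omega)$ from $a^\alpha$, while $\partial_\gamma u \in F^{\sigma - |\gamma|, a}_b(\Omega)$. I would then invoke Theorem \ref{thm:mult-Fsp} to conclude that each such product lies in $F^{\sigma - d + 1, a}_b(\Omega)$. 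Structurally this is exactly the computation underlying Proposition \ref{prop:mapping-Fsp}, except that the operator being considered has order $d-1$ (because $|\gamma| \le |\alpha| - 1$) and omits terms of order below $\max(d_0 - 1, 0)$. The hypothesis $(\sigma+1, a, b) \in \mathcal{S}^d_{d_0}(F^{s,p}_q)$ is precisely what is required so that the shifted target $(\sigma - d + 1, a, b)$, regarded as the output regularity of an order-$(d-1)$ operator acting on an input of regularity $(\sigma, a, b)$, satisfies the hypotheses of Theorem \ref{thm:mult-Fsp}, including the edge-case conditions on the fine parameters $q,b$.

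For the improvement when $d_0 = 0$, I would argue exactly as in the Bessel potential case: writing $L = L_0 + \hat L$ where $L_0$ is the zero-order part and $\hat L \in \mathcal{L}^d_1(F^{s,p}_q;\Omega)$, we have $[L,\phi] = [\hat L,\phi]$ since $[L_0,\phi] = 0$. Applying the result already established, $[\hat L,\phi]$ is continuous $F^{\sigma,a}_b(\Omega) \to F^{\sigma-d+1,a}_b(\Omega)$ whenever $(\sigma + 1, a, b) \in \mathcal{S}^d_1(F^{s,p}_q)$. A direct comparison of the defining inequalities (and the fine-parameter constraints at $\sigma = s+1$ and $\sigma = d-s$) shows that this is equivalent to $(\sigma, a, b) \in \mathcal{S}^{d-1}_0(F^{s,p}_q)$, and the inclusion $\mathcal{S}^d_0(F^{s,p}_q) \subseteq \mathcal{S}^{d-1}_0(F^{s,p}_q)$ then yields the claimed improvement.

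The main technical obstacle, compared to the Bessel potential case, is the bookkeeping of the fine-parameter conditions in Theorem \ref{thm:mult-Fsp}, in particular the marginal-case requirements that appear when $s_i = s$ or $s_1 + s_2 = 0$. These are the same conditions already verified in the proof of Proposition \ref{prop:mapping-Fsp}, but they must be rechecked with the shifted parameters $(\sigma+1, a, b)$ playing the role of the domain indices and with the effective operator having order $d-1$ rather than $d$; the definition of $\mathcal{S}^d_{d_0}(F^{s,p}_q)$ has been set up precisely to make this bookkeeping go through without further conditions.
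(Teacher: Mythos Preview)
Your proposal is correct and follows essentially the same approach as the paper: expand the commutator via Leibniz, apply Theorem~\ref{thm:mult-Fsp} term by term, and for the $d_0=0$ improvement remove the zero-order part and reduce to the $d_0=1$ case. The paper additionally makes explicit that the caveat $s_1=s_2=s=0$ of Theorem~\ref{thm:mult-Fsp} never arises under the hypotheses, and in the $d_0=0$ step it checks the implication $(\sigma,a,b)\in\mathcal S^d_0(F^{s,p}_q)\Rightarrow(\sigma+1,a,b)\in\mathcal S^d_1(F^{s,p}_q)$ directly rather than passing through the equivalent set $\mathcal S^{d-1}_0(F^{s,p}_q)$ as you do; these are minor presentational differences, not substantive ones.
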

\begin{proof}
The proof in the case of general $d_0$ is a computation that follows
the analogous part of the proof of Lemma \ref{lem:commutator-Hsp}.  The 
only difference is that there are fine parameter restrictions that need 
to be checked. In the notation of Theorem \ref{thm:mult-Fsp},
nontrivial restrictions could happen when $s_i=s$, when $s_1+s_2=0$ and when
$s_1=s_2=s=0$.  One readily verifies that under the given hypotheses
that $s_1=s_2=s=0$ cannot happen and that in the remaining cases 
the fine parameter restrictions are always met.

If $d_0=0$ the proof again follows the strategy of Lemma \ref{lem:commutator-Hsp}.
First we observe that $[L,\phi]=[\hat L,\phi]$ where $\hat L$ is $L$ with its
constant term eliminated.  Using the case $d_0=1$ already proved we have continuity 
so long as $(\sigma+1,a,b)\in \mathcal S^d_1(F^{s,p}_q)$, and computation shows that
if $(\sigma,a,b)\in \mathcal S^{d}_0(F^{s,p}_q)$ then $(\sigma+1,a,b)\in \mathcal S^d_1(F^{s,p}_q)$.
\end{proof}

\subsection{Rescaling estimates}\label{secsec:TL-rescaling}

In this section we show that the rescaling estimates of Proposition \ref{prop:rescale-int}
carry over to Triebel-Lizorkin spaces.

\begin{proposition}\label{prop:rescale-Fsp}
Suppose $1<p,q<\infty$,
	$s\in\Reals$
and that $\chi$ is a Schwartz function on $\Reals^n$.
There exists a constant $\alpha\in\Reals$ such that
for all $0<r\le 1$ and all $u\in F^{s,p}_q(\Reals^n)$
\begin{equation}\label{eq:Fsp-scale}
||\chi u_{\{r\}}||_{F^{s,p}_q(\Reals^n)} \lesssim r^{\alpha}||u||_{F^{s,p}_q(\Reals^n)}.
\end{equation}
Specifically:
\begin{enumerate}
	\item\label{part:rescale-Fsp-generic}
Inequality \eqref{eq:Fsp-scale} holds with
\[
\alpha = \min\left(s-\frac{n}{p},0\right)
\]
unless $s-n/p=0$, in which case it holds for any choice
of $\alpha<0$, with implicit constant depending on 
$\alpha$.
	\item\label{part:rescale-Fsp-zero-center}
	If $s>n/p$ (in which case functions in $F^{s,p}_q(\Reals^n)$
	are H\"older continuous) and if $u\in F^{s,p}_q(\Reals^n)$
	with $u(0)=0$, then inequality holds with
	\[
\alpha = \min\left(s-\frac{n}{p},1\right)
	\]
	unless $s-n/p=1$, in which case it holds for any choice of
	$\alpha<1$, with implicit constant depending on $\alpha$.
\end{enumerate}
\end{proposition}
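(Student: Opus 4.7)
The strategy is to exploit the clean scaling of the homogeneous Triebel-Lizorkin seminorm under dilation and to transfer it to the inhomogeneous norm through a paraproduct decomposition in which the Schwartz cutoff $\chi$ handles the low-frequency contributions that the homogeneous norm ignores. After reducing to Schwartz $u$ by density, I would first establish the homogeneous scaling
\[
\|u_{\{r\}}\|_{\dot F^{s,p}_q(\Reals^n)} \sim r^{s-n/p}\|u\|_{\dot F^{s,p}_q(\Reals^n)}
\]
by a direct Littlewood-Paley computation: using $\widehat{u_{\{r\}}}(\xi) = r^{-n}\hat u(\xi/r)$, one checks that (for dyadic $r=2^{-m}$) the blocks satisfy $P_k(u_{\{r\}}) = (P_{k+m} u)_{\{r\}}$, and a change of variables in the integral defining the $L^p(\ell^q)$ norm yields the identity, with non-dyadic $r$ absorbed into an elementary enlargement of the support of $\psi$.

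To transfer this scaling to the inhomogeneous norm I would decompose
\[
\chi u_{\{r\}} = T_\chi u_{\{r\}} + T_{u_{\{r\}}}\chi + R(\chi, u_{\{r\}})
\]
into three paraproducts. The piece $T_\chi u_{\{r\}}$ inherits the $r^{s-n/p}$ scaling because the low-frequency factor $P_{\le k-3}\chi$ is uniformly in $L^\infty$. The remaining two paraproducts, together with $P_{\le 0}(\chi u_{\{r\}})$, involve low-frequency content of $u_{\{r\}}$ or of $\chi u_{\{r\}}$ itself; here the compact localization provided by $\chi$ is essential, and applying Theorem~\ref{thm:mult-Fsp} combined with elementary $L^p$ estimates in the spirit of Lemma~\ref{lem:int-case-general} produces the cap $\alpha = \min(s-n/p, 0)$ for part~\ref{part:rescale-Fsp-generic}. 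The marginal case $s-n/p=0$ is handled with an $\epsilon$-loss via a borderline Sobolev embedding into $L^q$ with $q$ arbitrarily large. For part~\ref{part:rescale-Fsp-zero-center}, the improvement comes from the embedding $F^{s,p}_q(\Reals^n) \hookrightarrow C^{0,\beta}(\Reals^n)$ with $\beta = \min(s-n/p,1)$ (or any $\beta<1$ in the marginal case $s-n/p=1$) combined with $u(0) = 0$, which gives the pointwise bound $|u(rx)| \lesssim \|u\|_{C^{0,\beta}}(r|x|)^\beta$; splitting the $L^p$ integral against $|\chi|^p$ over $B_1$, $B_{1/r}\setminus B_1$ and the exterior exactly as in Lemma~\ref{lem:Lppart} and Corollary~\ref{cor:holder-rescale} upgrades the low-frequency exponent, while the high-frequency part already scales at least like $r$ in the relevant regimes.

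The main obstacle is the careful bookkeeping in the paraproduct pieces, particularly in the remainder $R(\chi, u_{\{r\}})$, where Littlewood-Paley blocks of $\chi$ and $u_{\{r\}}$ at comparable frequencies must be multiplied; the fine-parameter caveats of Theorem~\ref{thm:mult-Fsp} must be verified across all regimes of $(s,p,q)$, and the low-frequency content of $u_{\{r\}}$ (which no longer enjoys the homogeneous scaling) must be matched against the Schwartz decay and frequency localization of $\chi$. Unlike the Bessel-potential treatment in Proposition~\ref{prop:poor-mans-bp}, pure interpolation is not available as a shortcut here, since the fine parameter $q$ does not interpolate freely with fixed endpoints and integer-order $F^{k,p}_q$ with $q\neq 2$ is not a classical Sobolev space.
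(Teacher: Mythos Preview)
Your paraproduct decomposition is precisely the paper's ``Littlewood-Paley trichotomy'' (Proposition~\ref{prop:littlewood-payley-review}\eqref{part:trichotomy}), so for $s>0$ your approach and the paper's coincide: the low-high piece $T_\chi u_{\{r\}}$ carries the full $r^{s-n/p}$ scaling via Lemma~\ref{lem:rescale-Fsp-s-pos-baby}, the high-low piece $T_{u_{\{r\}}}\chi$ is controlled by $\|M u_{\{r\}}\|_{L^t}$ (giving the cap at $0$), and the high-high piece $R$ again scales like $r^{s-n/p}$ using Schwartz decay of $P_{k'}\chi$. Your invocation of Theorem~\ref{thm:mult-Fsp} for the latter two pieces is misplaced, however: that theorem bounds products, not individual paraproduct pieces, and carries no $r$-dependence. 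The paper instead estimates each piece directly with Proposition~\ref{prop:littlewood-payley-review} parts~\eqref{part:maximal-function-beats-projection} and~\eqref{part:throw-away-Pk}, and you should do the same.

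There is a genuine gap in your treatment of $s\le 0$. Your homogeneous-scaling identity is fine, but for $s<0$ the low-frequency term $\|P_{\le 0}(\chi u_{\{r\}})\|_{L^p}$ cannot be handled by the $L^p$ arguments you sketch, since $u$ need not be a function. The paper sidesteps this entirely: for $s<0$ it proves the estimate by duality (Corollary~\ref{cor:scale-s-neg-Fsp}, pairing against $F^{-s,p^*}_{q^*}$ and invoking the $s>0$ result for the rescaling $r\mapsto 1/r$), and for $s=0$ it interpolates between $F^{\pm\sigma,p}_q$ with fixed $(p,q)$ (Lemma~\ref{lem:rescale-Fsp-s-zero}). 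Your closing remark that interpolation is unavailable is too pessimistic---what fails is the wholesale interpolation strategy of Proposition~\ref{prop:poor-mans-bp} across integer endpoints, but interpolation with fixed fine parameter between nearby $s$-values is still legitimate and is exactly what the paper uses. Your treatment of part~\eqref{part:rescale-Fsp-zero-center} matches the paper's (Lemma~\ref{lem:scale-cut-Holder} and Proposition~\ref{prop:rescale-Fsp-u-zero}), with the minor refinement that the H\"older bound must be applied to $P_{\le k}u_{\{r\}}$ rather than $u_{\{r\}}$ itself inside the high-low piece.
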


The remainder of this section is an extended proof of this result, and relies
on the following elementary facts from Littlewood-Paley theory.

\begin{proposition}\label{prop:littlewood-payley-review}
Let $1<p,q,t<\infty$.
\begin{enumerate}
	\item\label{part:schwartz-project} If $\eta$ is a Schwartz function then for any $M>0$,
	\[
		|(P_k\eta)(x)|\lesssim \frac{2^{-Mk}}{(1+|x|^M)}
	\]
	with implicit constant independent of $x$ and $k$ but depending on $\eta$ and $M$.

	\item\label{part:projection-as-convolution}
	There exist Schwartz functions $\kappa,\kappa^*$
	such that for any tempered distribution $u$,
	\begin{align*}
	(P_k u)(x) &= \int u(x + 2^{-k}y) \kappa(y)\; dy\\
	(P_{\le k} u)(x) &= \int u(x + 2^{-k}y) \kappa^*(y)\; dy,
	\end{align*}
	with convolution and scaling interpreted in the distributional sense.

	\item \label{part:maximal-function-beats-projection}
For all $u\in L^p(\Reals^n)$, $k\in\Nats$, and $x\in\Reals^n$
\[
\begin{aligned}
	|P_{k} u (x)| &\lesssim |(M u)(x)|\quad\text{and}\\
	|P_{\le k} u (x)| &\lesssim |(M u)(x)|.
\end{aligned}
\]
Here $M$ is the Hardy-Littlewood maximal operator 
and the implicit constants are independent of $u$, $k$ and $x$.

	\item\label{part:throw-away-Pk}
For any sequence $\{f_k\}_{k\in\Nats}$ of functions in $L^p(\Reals^n)$,
\[
\left|\left| \,\, \left[\sum_{k\in\Nats} |P_k f_k|^q\right]^{1/q}\,\right|\right|_{L^p(\Reals^n)} \lesssim 
\left|\left| \,\, \left[\,\,\sum_{k\in\Nats} |f_k|^q\right]^{1/q}\,\right|\right|_{L^p(\Reals^n)}.
\]
\item \label{part:sobolev-space-by-projection}
Let $s>0$ and $a\in\Nats$. For all $u\in F^{s,p}_q(\Reals^n)$,
\[
||u||_{F^{s,p}_q(\Reals^n)} \sim ||u||_{L^p(\Reals^n)} + 
\bignorm||\,\, \left[\sum_{k\ge a} |2^{sk}P_k u|^q\right]^{1/q}\,||_{L^p(\Reals^n)}.
\]
\item (Littlewood-Paley Trichotomy) \label{part:trichotomy}
Suppose $u\in L^p(\Reals^n)$ and $v\in L^t(\Reals^n)$.
Given $k,k',k''\in \Ints$,
\[
P_k( (P_{k'} u) (P_{k''}v)) = 0
\]
unless one of the following three conditions holds:
\begin{itemize}
	\item $k'\le k-4$ and $k-3\le k''\le k+3$,
	\item $k-3\le k'\le k+3$ and $k''\le k+5$,
	\item $k'\ge k+4$ and $|k'-k''|\le 2$.
\end{itemize}
\end{enumerate}
\end{proposition}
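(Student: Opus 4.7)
The plan is to address the seven items in turn, recognizing that each is a classical fact of Littlewood-Paley theory; the task is to assemble them self-containedly from the definitions of $P_k$, $P_{\le k}$, standard Fourier analysis, and the boundedness of the Hardy-Littlewood maximal operator. The first two parts are structural: for \eqref{part:schwartz-project}, write $P_k\eta = \eta * 2^{kn}\check\psi(2^k\cdot)$, observe that $\psi$ vanishes on a neighborhood of the origin and hence $\check\psi$ has vanishing moments of all orders, and Taylor-expand $\eta$ inside the convolution to produce the $2^{-Mk}$ decay; the $(1+|x|^M)^{-1}$ factor then falls out of the Schwartz decay of both $\eta$ and $\check\psi$. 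Part \eqref{part:projection-as-convolution} is essentially a change of variables: setting $\kappa = \check\psi$ and $\kappa^* = \check\phi$, the claimed integral representation is just the convolution $u * 2^{kn}\kappa(2^k\cdot)$ rewritten after the substitution $y = 2^k(x-\cdot)$.

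For \eqref{part:maximal-function-beats-projection}, I would majorize $|\kappa|$ and $|\kappa^*|$ by a nonincreasing radial integrable bump, decompose this bump as a positive superposition of normalized indicator functions of balls, and use Fubini together with the definition of the maximal operator $M$. Part \eqref{part:throw-away-Pk} is then a consequence of combining this pointwise domination with the Fefferman-Stein vector-valued maximal inequality; for $1<p,q<\infty$ this is a standard external input and I would cite it rather than reprove it, since this is the one genuinely nontrivial ingredient. For \eqref{part:sobolev-space-by-projection}, the $s>0$ hypothesis is used to absorb the finite collection of missing low-frequency blocks $P_j u$ for $0 \le j < a$ into the $L^p$ norm (via part \eqref{part:maximal-function-beats-projection}), and conversely to bound $\|P_{\le 0} u\|_{L^p}$ by $\|u\|_{L^p}$; the equivalence of $\|u\|_{L^p}$ with $\|P_{\le 0}u\|_{L^p}$ plus the square-function tail follows from a routine triangle inequality together with the vector-valued bound of part \eqref{part:throw-away-Pk}.

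Finally, \eqref{part:trichotomy} is a direct support computation. The Fourier transform of $(P_{k'}u)(P_{k''}v)$ is the convolution of two distributions supported respectively in the annuli $\{|\xi|\sim 2^{k'}\}$ and $\{|\xi|\sim 2^{k''}\}$, hence supported in the algebraic sum of these annuli, and applying $P_k$ then forces this Minkowski sum to meet $\{|\xi|\sim 2^k\}$. Case analysis on whether one of $k',k''$ dominates the other, or whether they are comparable, yields exactly the three listed regimes (low-high, high-low, and high-high interactions). The only genuine obstacle in the entire proposition is part \eqref{part:throw-away-Pk}, which rests on Fefferman-Stein; all other items are elementary once the kernel representation of parts \eqref{part:schwartz-project}--\eqref{part:projection-as-convolution} is in hand.
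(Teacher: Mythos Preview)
Your proposal is correct and takes essentially the same approach as the paper: the paper itself does not give a detailed proof but rather a paragraph of pointers, treating each item as a standard Littlewood-Paley fact, citing Fefferman--Stein for part \eqref{part:throw-away-Pk} (while noting the simpler cases $q=2$ and $q=p$), invoking the embedding $F^{s,p}_q\hookrightarrow L^p$ for part \eqref{part:sobolev-space-by-projection}, and describing part \eqref{part:trichotomy} as a support computation for convolutions. Your outline is in fact more detailed than the paper's own discussion, and the methods you describe for each item (vanishing moments for \eqref{part:schwartz-project}, radial majorant decomposition for \eqref{part:maximal-function-beats-projection}, Minkowski-sum support analysis for \eqref{part:trichotomy}) are precisely the standard arguments one would expect behind the paper's citations.
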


Part \eqref{part:schwartz-project} is a consequence of the definition of the projection
operator and elementary properties of the Fourier transform.
The proof of parts 
\eqref{part:projection-as-convolution}
and
\eqref{part:maximal-function-beats-projection}
can be found in the approachable lecture
notes \cite{TaoPL}, weeks 2/3.
Part \eqref{part:throw-away-Pk} in full generality follows from 
the Fefferman-Stein inequality \cite{fefferman_maximal_1971} 
together with part \eqref{part:maximal-function-beats-projection}.
We note, however, that the cases of greatest interest (Bessel potential
spaces and Sobolev-Slobodeckij spaces) only involve the cases $q=2$
and $q=p$, which do not require the full power of \cite{fefferman_maximal_1971}. 
The lecture notes \cite{TaoPL} contain a proof of part \eqref{part:throw-away-Pk}
when $q=2$, and the result when $q=p$ is an easy consequence of part \eqref{part:maximal-function-beats-projection}
and the Hardy-Littlewood maximal inequality.
Part \eqref{part:sobolev-space-by-projection} 
follows from the definition of the norm, 
part \eqref{part:projection-as-convolution} and
embedding $F^{s,p}_q(\Reals^n)\hookrightarrow F^{0,p}_2(\Reals^n)=L^p(\Reals^n)$.
Part \eqref{part:trichotomy} is just a
computation based on the supports of convolutions of 
functions $\psi_k$ used to define Littlewood-Paley projection; see \cite{TaoPL} for a related statement.
Note that there is an artificial 
asymmetry in part \eqref{part:trichotomy} between $k'$ and $k''$, and
symmetry can be restored at the expense of increasing the number
of cases.

As for Bessel potential spaces, rescaling $u\mapsto u_\rscr$ is a continuous
automorphism of any space $F^{s,p}_{q}(\Reals^n)$; for $s>0$ this
is a straightforward consequence of the Closed Graph Theorem, using the fact
that rescaling is continuous acting on $L^p(\Reals^n)$, whereas for $s<0$ and $s=0$
the result follows from duality and interpolation respectively.  We initially
require the following basic estimate for the norms of the rescaling maps.
\begin{lemma}\label{lem:basic-rescale}
Let $R>0$, and suppose $1<p,q<\infty$ and $s\in\Reals$.  Then
\[
||u_\rscr||_{F^{s,p}_q(\Reals^n)} \lesssim ||u||_{F^{s,p}_q(\Reals^n)}
\]
for all $u\in F^{s,p}_q(\Reals^n)$ and all $r\in [-R,R]$.
\end{lemma}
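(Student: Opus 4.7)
The plan is to establish a quantitative bound of the form $||u_{\{r\}}||_{F^{s,p}_q(\Reals^n)} \lesssim C(r)||u||_{F^{s,p}_q(\Reals^n)}$ where $C(r)$ depends only on $r,s,p,n,q$ and is bounded whenever $r$ is confined to a compact subset of $(0,\infty)$, via an explicit Fourier-side computation relating the Littlewood-Paley pieces of $u_{\{r\}}$ to those of $u$. The key input is the frequency-shift observation that, since $\widehat{v_{\{r\}}}(\xi) = r^{-n}\hat v(\xi/r)$, the Fourier transform of $(P_k u)_{\{r\}}$ is supported in $\{|\xi| \in [2^{k-1}r,\, 2^{k+1}r]\}$. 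Choosing an integer $j = j(r)$ with $2^{-j-1}\le r\le 2^{-j}$ (so $|j|\lesssim 1+|\log_2 r|$ is bounded on any compact subinterval of $(0,\infty)$), the support of $\psi(2^{-l}\xi)$ overlaps that of $\widehat{(P_k u)_{\{r\}}}$ only when $|l-k-j|\le C_0$ for an absolute constant $C_0$.

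Consequently, for each $l\ge 1$, only $O(1)$ indices $k$ contribute to $P_l u_{\{r\}}$, and I would deduce the pointwise bound
\begin{equation*}
|P_l u_{\{r\}}(x)| \;\lesssim \sum_{|k-l+j|\le C_0}|P_k u(rx)|.
\end{equation*}
Plugging this into the Littlewood-Paley definition of the norm, using $2^{sl}\sim 2^{sk}\cdot 2^{-sj}$ for $l$ in the contributing range, and then applying the change of variables $y=rx$ to the outer $L^p$ integral, I would obtain
\begin{equation*}
\left\lVert \left(\sum_{l\ge 1}(2^{sl}|P_l u_{\{r\}}|)^q\right)^{1/q}\right\rVert_{L^p}^p \;\lesssim\; 2^{-spj}\, r^{-n}\, \lVert u\rVert_{F^{s,p}_q(\Reals^n)}^p.
\end{equation*}
Since $|j|$ and hence $2^{-spj}r^{-n}$ are bounded for $r\in[-R,R]$ (interpreted in the admissible range), this yields the main dyadic part of the desired estimate.

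The low-frequency term $\lVert P_{\le 0}u_{\{r\}}\rVert_{L^p}$ is handled analogously: its Fourier support lies in $\{|\xi|\lesssim r\}$, so only $O(|j|)$ of the dyadic projections $P_l$ with $l\le j+O(1)$ contribute, and the rescaled low-frequency pieces, together with any dyadic pieces pushed below the threshold $l=1$, can be absorbed into either $\lVert P_{\le 0}u\rVert_{L^p}$ or finitely many of the first $P_k u$ using Proposition \ref{prop:littlewood-payley-review}\eqref{part:maximal-function-beats-projection} and the $L^p$-change of variables. The main obstacle is purely bookkeeping at the threshold between the low-frequency cutoff $P_{\le 0}$ and the dyadic sum $\sum_{l\ge 1}$, because the low-frequency cutoff does not commute with rescaling and the associated frequency shift moves a bounded number of dyadic shells between the two parts of the norm; these shifts produce only finitely many extra terms with bounded multiplicative constants, uniformly for $r$ in the relevant compact range. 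An alternative, more abstract route would invoke the Closed Graph Theorem to show that $u\mapsto u_{\{r\}}$ is continuous on $F^{s,p}_q(\Reals^n)$ for each fixed $r$, but this does not yield the uniform estimate we need without essentially the above Fourier computation to control the operator norm as a function of $r$.
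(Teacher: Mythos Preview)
Your approach is essentially the paper's: both rest on the frequency-shift observation that $P_l u_{\{r\}}$ receives contributions from only $O(1)$ Littlewood--Paley pieces $P_k u$ with $k$ near $l+j$ (where $r\sim 2^{-j}$), followed by a change of variables in the outer $L^p$ integral. The paper packages this as $(P_k u_{\{r\}})(x)=f_k(rx)$ with $f_k=\mathcal F^{-1}[\psi_k(r\,\cdot)]\mathcal F u$ and then cites a vector-valued Fourier multiplier theorem (\cite{triebel_theory_2010} Theorem~1.6.3) to bound $\|(\sum|2^{sk}f_k|^q)^{1/q}\|_{L^p}$ directly.

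There is one genuine gap. The pointwise inequality $|P_l u_{\{r\}}(x)|\lesssim\sum_k|P_k u(rx)|$ does not follow from Fourier-support considerations alone: writing $P_l u_{\{r\}}=\sum_k P_l[(P_k u)_{\{r\}}]$, the projector $P_l$ is a convolution and does not obey $|P_l v|\lesssim|v|$ pointwise, even for $v$ with Fourier support in the annulus $\{|\xi|\sim 2^l\}$. What you do get is $|P_l v(x)|\lesssim (Mv)(x)$ via Proposition~\ref{prop:littlewood-payley-review}\eqref{part:maximal-function-beats-projection}, and since $M[w_{\{r\}}](x)=(Mw)(rx)$ this yields $|P_l u_{\{r\}}(x)|\lesssim\sum_k (MP_k u)(rx)$. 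Closing the argument from here then requires the Fefferman--Stein vector-valued maximal inequality (underlying Proposition~\ref{prop:littlewood-payley-review}\eqref{part:throw-away-Pk}) rather than a straight substitution into the norm. This fix, or the paper's multiplier-theorem route, completes the proof.

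You are also right to hedge on the range of $r$. A uniform bound cannot hold as $r\to 0$ (the norm scales like $r^{s-n/p}$), and the paper's own proof, despite the stated range $r\in[-R,R]$, needs both upper and lower bounds on $|r|$ for the shell-overlap count $J$ to be finite. The lemma is only invoked for $r\in[1/2,2]$, so this is a slip in the statement, not in your reasoning.
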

\begin{proof}
Recall the cutoff functions $\phi$ and $\psi$ used to define the Littlewood-Paley 
projection operators and define
\[
\psi_k(\xi) = \begin{cases} 0 & k<0\\
	\phi(\xi) & k=0\\
	\psi(2^{-k}\xi) & k>0
\end{cases}
\]
A computation shows that for $k\ge 0$, 
\[
(P_k u_\rscr)(x) = f_k(rx)
\]
where $f_k = \mathcal F^{-1} M_k \mathcal F u$ and where
\[
M_k(\xi) = \phi_k(r\xi).
\]
From the bound $|r|\le R$ we can find $J$ independent of $r$ such that
\[
M_k = M_k \sum_{j=-J}^J \psi_{k+j}
\]
with the convention that $\psi_k=0$ for $k<0$.  Hence
\[
f_k = \sum_{j=-J}^J (\mathcal F^{-1} M_k \mathcal F)[P_{k+j} u ].
\]
Using \cite{triebel_theory_2010}
Theorem 1.6.3 via the same argument as in \cite{triebel_theory_2010} Proposition 
2.3.2/1 we find
\[
\int \left(\sum |f_k|^q\right)^{p/q} \lesssim \int \left(\sum |P_k u_\rscr|^q\right)^{p/q} = ||u||^p_{F^{s,p}_q(\Reals^n)}
\]
with implicit constant independent of $r\in [-R,R]$.  Recalling that 
$(P_k u_\rscr)(x) = f_k(rx)$ the result now follows from the obvious
uniform bounds on rescaling in $L^p(\Reals^n)$ for $|r|\le R$.
\end{proof}

We now proceed with the proof of 
Proposition \ref{prop:rescale-Fsp} part \eqref{part:rescale-Fsp-generic}
is broken into three cases depending on whether $s>0$, $s<0$, or $s=0$. 
We begin with $s>0$ and first establish the
following technical lemma, which is needed to control high frequency rescaling.

\begin{lemma}\label{lem:rescale-Fsp-s-pos-baby}
Suppose $s> 0$. For all $0<r\le 1$ 
and all $u\in F^{s,p}_{q}(\Reals^n)$,
\[
\left|\left| \left[\sum_{k\ge 1}  |2^{ks}P_k u_{\{r\}}|^q\right]^{1/q} \right|\right|_{L^p(\Reals^n)}
\lesssim r^{s-\frac{n}{p}} ||u||_{F^{s,p}_q(\Reals^n)}.
\]
\end{lemma}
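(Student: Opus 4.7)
The plan is to decompose $P_k u_{\{r\}}$ using the Littlewood-Paley pieces of $u$ while tracking how the rescaling shifts Fourier supports. A direct Fourier computation gives
\[
(P_k u_{\{r\}})(x)=v_k(rx)\quad\text{where}\quad\widehat{v_k}(\xi)=\psi(2^{-k}r\xi)\hat u(\xi).
\]
Choose the integer $k_0\ge 0$ with $2^{-k_0}\le r<2^{-k_0+1}$; the symbol of $v_k$ is a smooth bump supported at frequency scale $2^{k+k_0}$.

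By the Littlewood-Paley trichotomy (part \ref{part:trichotomy} of Proposition \ref{prop:littlewood-payley-review}), only Littlewood-Paley pieces $P_j u$ with $|j-(k+k_0)|\le C$ for some absolute constant $C$ contribute to $v_k$. Expressing $v_k$ as the convolution of each such $P_j u$ against a Schwartz kernel of scale $2^{-(k+k_0)}$ and invoking parts \ref{part:schwartz-project} and \ref{part:maximal-function-beats-projection} of that proposition yields the pointwise bound
\[
|v_k(y)|\lesssim\sum_{|j-(k+k_0)|\le C}(MP_j u)(y).
\]
If $r$ is close to $1$ (so $k_0$ is small) the sum would need to include the low-frequency piece $P_{\le0}u$; but in that regime $r$ ranges over a compact subset of $(0,1]$ and Lemma \ref{lem:basic-rescale} already delivers the desired estimate, so henceforth I assume $k_0$ is large enough that only proper Littlewood-Paley pieces appear.

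The substitution $y=rx$ converts the target norm into
\[
\biggl\|\Bigl[\sum_{k\ge1}|2^{ks}P_k u_{\{r\}}|^q\Bigr]^{1/q}\biggr\|_{L^p(\Reals^n)}^p=r^{-n}\biggl\|\Bigl[\sum_{k\ge1}|2^{ks}v_k|^q\Bigr]^{1/q}\biggr\|_{L^p(\Reals^n)}^p.
\]
Because $k+k_0\sim j$ for $j$ in the inner sum, $2^{ks}=2^{-k_0s}\cdot 2^{(k+k_0)s}\lesssim r^s\,2^{js}$; after reindexing by $j$ and using the bounded overlap, this gives
\[
\biggl\|\Bigl[\sum_{k\ge1}|2^{ks}v_k|^q\Bigr]^{1/q}\biggr\|_{L^p(\Reals^n)}\lesssim r^s\biggl\|\Bigl[\sum_{j}|2^{js}MP_j u|^q\Bigr]^{1/q}\biggr\|_{L^p(\Reals^n)}.
\]
The Fefferman-Stein vector-valued maximal inequality then removes the maximal function, and the remaining expression is bounded by the Triebel-Lizorkin norm of $u$. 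Multiplying by the $r^{-n/p}$ from the change of variables gives the desired $r^{s-n/p}\|u\|_{F^{s,p}_q(\Reals^n)}$.

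The main obstacle is the pointwise kernel estimate $|v_k(y)|\lesssim\sum (MP_j u)(y)$ with an implicit constant independent of $k$ and $r$. This reduces to showing that the rescaled multipliers $\psi(2^{-k}r\xi)=\psi\bigl((2^{k_0}r)\cdot 2^{-(k+k_0)}\xi\bigr)$, after the natural dilation by $2^{-(k+k_0)}$, have convolution kernels bounded by a common radially-decreasing Schwartz majorant; this uniformity rests on $2^{k_0}r$ lying in the compact set $[1,2)$, so that $\psi((2^{k_0}r)\,\cdot\,)$ ranges over a compact family of Schwartz symbols whose inverse Fourier transforms admit integrable majorants on the correct scale.
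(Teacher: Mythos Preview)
Your argument is correct, but it is more elaborate than the paper's. The paper observes that when $r=2^{-j}$ is dyadic one has the \emph{exact} identity $P_k u_{\{r\}}=(P_{k+j}u)_{\{r\}}$, so the whole computation reduces to a shift of summation index and a change of variables in $L^p$---no maximal functions, no Fefferman--Stein. The passage to general $r$ is then handled in one stroke by Lemma~\ref{lem:basic-rescale}, writing $r=2^{-j}\rho$ with $\rho\in[1/2,2]$.

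Your route instead treats non-dyadic $r$ head-on: because $\psi(2^{-k}r\,\cdot)$ is only approximately $\psi(2^{-(k+k_0)}\cdot)$, you replace the exact identity by a pointwise maximal-function bound and then invoke Fefferman--Stein to pass back to the Triebel--Lizorkin norm. This works, and the uniformity you flag (that $2^{k_0}r\in[1,2)$ keeps the family of multipliers compact) is exactly the right point. One small terminological correction: what you call the ``trichotomy'' in step~4 is really just Fourier support matching for a single multiplier; the trichotomy (Proposition~\ref{prop:littlewood-payley-review}\eqref{part:trichotomy}) concerns products. The substance is unaffected. Your handling of small $k_0$ via Lemma~\ref{lem:basic-rescale} is fine, though you could also just absorb $P_{\le0}u$ into the maximal-function bound and avoid the case split.
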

\begin{proof}
Suppose first that $r=2^{-j}$ for some $j\in\Ints_{\ge 0}$.
An easy computation from the definition of the Fourier transform
and change of variables shows for each $k\in \Nats$,
\[
(P_k u_{\{r\}}) = (P_{k+j} u)_{\{r\}}
\]
for all Schwartz functions $u$, and hence also for all 
$u\in F^{s,p}_q(\Reals^n)$.  But then
\begin{align*}
\left|\left|\left[\sum_{k\ge 1} |2^{ks}P_k u_{\{r\}}|^q\right]^{1/q} \right|\right|_{L^p(\Reals^n)}
&= 2^{-js}\left|\left|\left[\sum_{k\ge 1+j} |2^{ks}(P_{k} u)|^q\right]^{1/q} \right|\right|_{L^p(\Reals^n)}\\
&= 2^{-js}2^{-n/p}
\left|\left|\left[\sum_{k\ge 1+j} |2^{ks} P_{k} u|^q\right]^{1/q} \right|\right|_{L^p(\Reals^n)}\\
&\lesssim 
2^{-js}r^{-n/p} ||u||_{F^{s,p}_q(\Reals^n)}\\
& = r^{s-\frac n p} ||u||_{F^{s,p}_q(\Reals^n)}.
\end{align*}
This completes the proof in the case $r=2^{-j}$. The
general case follows from the consequence of Lemma \ref{lem:basic-rescale} that
that $u\mapsto u_{\{r\}}$
is uniformly bounded in $F^{s,p}_q(\Reals^n)$ for $1/2\le r\le 2$.
\end{proof}

Proposition \ref{prop:rescale-Fsp} part \eqref{part:rescale-Fsp-generic} when $s>0$ now follows
from the following.

\begin{proposition}\label{prop:rescale-Fsp-s-pos-generic}
Suppose $1<p,q<\infty$ and $s>0$,
and let $\chi$ be a Schwartz function.
For all $u\in F^{s,p}_q(\Reals^n)$ and $0<r\le1$,
\begin{equation}\label{eq:s_pos_scale-c}
||\chi u_{\{r\}}||_{F^{s,p}_q(\Reals^n)} \lesssim r^{\alpha}  ||u||_{F^{s,p}_q(\Reals^n)}
\end{equation}
where
\begin{equation}\label{eq:alpha_s_pos}
\alpha=\min\left( s - \frac{n}{p}, 0\right)
\end{equation}
unless $s=n/p$, in which case $\alpha$ can be any
(fixed) negative number.  The implicit constant
in \eqref{eq:s_pos_scale-c} depends on $\chi$
is independent of $u$ and $r$.
\end{proposition}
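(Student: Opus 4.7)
The plan is to split the $F^{s,p}_q$ norm of $\chi u_{\{r\}}$ into its zero-frequency $L^p$ piece $\|P_{\le 0}(\chi u_{\{r\}})\|_{L^p}$ and its high-frequency square-function piece, and to prove each is bounded by a constant multiple of $r^{s-n/p}\|u\|_{F^{s,p}_q}$. Since $r\le 1$ and the exponent $\alpha$ of \eqref{eq:alpha_s_pos} satisfies $\alpha\le s-n/p$ in all non-marginal cases, we will have $r^{s-n/p}\le r^{\alpha}$, yielding the claimed estimate \eqref{eq:s_pos_scale-c}.

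For the zero-frequency piece, $P_{\le 0}$ is bounded on $L^p$, so it suffices to estimate $\|\chi u_{\{r\}}\|_{L^p}$. When $s<n/p$, combining H\"older's inequality with $\chi\in L^{n/s}$, Sobolev embedding $F^{s,p}_q\hookrightarrow L^{\tilde p}$ from Proposition \ref{prop:embedding-Fsp} with $1/\tilde p=1/p-s/n$, and the $L^{\tilde p}$ rescaling identity of Lemma \ref{lem:Lpscale} gives $\|\chi u_{\{r\}}\|_{L^p}\lesssim r^{s-n/p}\|u\|_{F^{s,p}_q}$. When $s>n/p$, embed instead into $L^\infty$ to obtain $\|\chi u_{\{r\}}\|_{L^p}\le \|\chi\|_{L^p}\|u\|_{L^\infty}\lesssim \|u\|_{F^{s,p}_q}$. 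The marginal case $s=n/p$ is handled by embedding into $L^{\tilde p}$ for $\tilde p<\infty$ arbitrarily large, which yields any desired exponent $\alpha<0$.

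For the high-frequency piece I apply the Bony paraproduct decomposition
\[
\chi u_{\{r\}} = T_\chi u_{\{r\}} + T_{u_{\{r\}}}\chi + R(\chi,u_{\{r\}}),
\]
with $T_\chi v=\sum_k (P_{\le k-4}\chi)(P_kv)$, $T_v\chi=\sum_k (P_{\le k-4}v)(P_k\chi)$, and $R(\chi,v)=\sum_{|k-k'|\le 3}(P_k\chi)(P_{k'}v)$. For the low-high piece $T_\chi u_{\{r\}}$, the Littlewood-Paley trichotomy in Proposition \ref{prop:littlewood-payley-review} part \eqref{part:trichotomy} restricts $P_jT_\chi u_{\{r\}}$ to contributions with $|k-j|\le 3$, and combined with part \eqref{part:maximal-function-beats-projection} and the uniform bound $|P_{\le k-4}\chi(x)|\lesssim \|M\chi\|_{L^\infty}$ yields the pointwise inequality $|P_jT_\chi u_{\{r\}}(x)|\lesssim \sum_{|k-j|\le 3} M(P_ku_{\{r\}})(x)$; the vector-valued Fefferman-Stein maximal inequality (implicit in part \eqref{part:throw-away-Pk}) then reduces its weighted square-function $L^p$ norm to exactly the quantity controlled by Lemma \ref{lem:rescale-Fsp-s-pos-baby}. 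For $T_{u_{\{r\}}}\chi$ and $R(\chi,u_{\{r\}})$, the rapid decay $|P_k\chi(x)|\lesssim 2^{-Mk}(1+|x|)^{-M}$ from part \eqref{part:schwartz-project}, chosen with $M>s$, produces an absolutely summable factor in $k$ that absorbs both the $2^{js}$ weight and any possible growth of $P_{\le k-4}u_{\{r\}}$ in $L^p$; a further application of H\"older's inequality, Sobolev embedding, and Lemma \ref{lem:Lpscale} then supplies the remaining $r^{s-n/p}$ factor. The principal technical obstacle is the low-high paraproduct estimate, since the passage from the pointwise trichotomy identity to a square-function inequality compatible with Lemma \ref{lem:rescale-Fsp-s-pos-baby} requires inserting the vector-valued maximal inequality at exactly the right point.
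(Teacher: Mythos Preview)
Your approach matches the paper's: both split via Proposition~\ref{prop:littlewood-payley-review}\eqref{part:sobolev-space-by-projection} into the $L^p$ piece plus the high-frequency square function, handle the $L^p$ piece by Sobolev embedding and $L^t$-rescaling (with the same case split at $s=n/p$), and decompose the square function by the Littlewood-Paley trichotomy (your Bony paraproducts), with Lemma~\ref{lem:rescale-Fsp-s-pos-baby} controlling the dominant low-$\chi$/high-$u_{\{r\}}$ piece.

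Two small slips to clean up. First, your plan asserts the bound $r^{s-n/p}$ for every piece, but when $s>n/p$ the term $T_{u_{\{r\}}}\chi$, like the $L^p$ piece you already treated separately, only yields $r^0$ via the $L^\infty$ embedding---which is exactly $r^\alpha$ in that regime, so the conclusion is unaffected. Second, you lump $R(\chi,u_{\{r\}})$ with $T_{u_{\{r\}}}\chi$ and speak of ``growth of $P_{\le k-4}u_{\{r\}}$,'' but $R$ involves high-frequency pieces $P_{k'}u_{\{r\}}$ with $k'\approx k$ large, not low-frequency projections; the paper handles this term by keeping the square-function structure in $u_{\{r\}}$ and invoking Lemma~\ref{lem:rescale-Fsp-s-pos-baby} directly (summing $2^{-sa}$ over the offset $a=k'-j$), which even yields the sharper $r^{s-n/p}$. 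Your route also works if you instead bound $|P_{k'}u_{\{r\}}|\lesssim Mu_{\{r\}}$ and proceed as for $T_{u_{\{r\}}}\chi$.
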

\begin{proof}
We first assume that $s\neq n/p$ and define $\alpha$
according to equation \ref{eq:alpha_s_pos}.
Since $s>0$, Proposition 
\ref{prop:littlewood-payley-review} part \eqref{part:sobolev-space-by-projection} implies
\begin{equation}\label{eq:chi_ur_to_bound-c}
||\chi u_{\{r\}}||_{F^{s,p}_q(\Reals^n)} \lesssim  || \chi u_{\{r\}} ||_{L^p(\Reals^n)}
+ \bignorm|| \left[\sum_{k\ge 10}  |P_k (\chi u_{\{r\}})|^q\right]^{\frac{1}{q}}||_{L^p(\Reals^n)}.
\end{equation}
To estimate the first term on the right-hand side of inequality
\eqref{eq:chi_ur_to_bound-c}
first consider the case $s<n/p$. Define $t$ by
\begin{equation}\label{eq:t-def-scale}
\frac{1}{t} = \frac{1}{p}-\frac{s}{n}
\end{equation}
and observe that since $1/p<1$ and since $s<n/p$, we have $0<1/t<1$.
Proposition \ref{prop:embedding-Fsp} implies
$F^{s,p}_q(\Reals^n)$ embeds in 
$F^{0,t}_2(\Reals^n) = L^t(\Reals^n)$.
From H\"older's inequality we find
\[
||\chi u_{\{r\}}||_{L^p(\Reals^n)} \le ||\chi ||_{L^\tau(\Reals^n)} ||u_{\{r\}}||_{L^t(\Reals^n)} 
\]
where $\frac{1}{\tau} = \frac1p - \frac{1}{t}$.  Hence, from Lemma 
\ref{lem:Lpscale}
\[
||u_{\{r\}}||_{L^t(\Reals^n)} = r^{-\frac{n}{t}} ||u||_{L^t(\Reals^n)} = r^{s-\frac{n}{p}} ||u||_{L^t(\Reals^n)} \lesssim
r^{\alpha} ||u||_{F^{s,p}_q(\Reals^n)}
\]
and we conclude $||\chi u_{\{r\}}||_{L^p(\Reals^n)}\lesssim r^\alpha ||u||_{F^{s,p}_q(\Reals^n)}$. 
On the other hand, if $s>n/p$ then
\[
||\chi u_{\{r\}}||_{L^p(\Reals^n)} \lesssim ||u_{\{r\}}||_{L^\infty(\Reals^n)}||\chi||_{L^p(\Reals^n)} \lesssim ||u||_{F^{s,p}_q(\Reals^n)}
= r^\alpha ||u||_{F^{s,p}_q(\Reals^n)}.
\]
Hence in both cases, $||\chi u_{\{r\}}||_{L^p(\Reals^n)} \lesssim r^\alpha ||u||_{F^{s,p}_q(\Reals^n)}$.

Turning to the second term on the right-hand side of 
inequality \eqref{eq:chi_ur_to_bound-c} we introduce the notation
\[
	\widetilde P_k = P_{k-3\le\cdot\le k+3}.
\]
The Littlewood-Paley trichotomy, Proposition \ref{prop:littlewood-payley-review} part \eqref{part:trichotomy},
implies
\begin{equation}\label{eq:tricotomy}
P_k (\chi u_{\{r\}}) = 
P_k \left(
\underbrace{ (\widetilde P_k \chi) (P_{\le k-4} u_{\{r\}})}_{\text{high-low}}
+
\underbrace{ (P_{\le k+5} \chi) \widetilde P_k u_{\{r\}})}_{\text{low-high}}
+ 
\underbrace{\sum_{k' \ge k+4 } (\widetilde P_{k'} \chi)
(P_{k'} u_{\{r\}})}_{\text{high-high}}\right)
\end{equation}
and we estimate the contributions from each of these three terms individually via the triangle inequality.

Starting with the high-low term, Proposition \ref{prop:littlewood-payley-review} parts 
\eqref{part:throw-away-Pk} and
\eqref{part:maximal-function-beats-projection} imply
\[
\begin{aligned}
\bignorm|| \left[\sum_{k\ge 10}  |2^{sk} P_k\left\{ 
\left(\widetilde P_k \chi\right)\left( P_{\le k-4} u_{\{r\}}\right)\right\}|^q\right]^{\frac 1 q} ||_{L^p(\Reals^n)}
&\lesssim
\bignorm|| \left[\sum_{k\ge 10}  |2^{sk}  
\left(\widetilde P_k \chi\right)\left( P_{\le k-4} u_{\{r\}}\right)|^q
\right]^{\frac 1 q} ||_{L^p(\Reals^n)}\\
&\lesssim
\bignorm|| |Mu_{\{r\}}| \left[\sum_{k\ge 10}  |2^{sk} 
\widetilde P_k \chi|^q
\right]^{\frac 1 q} ||_{L^p(\Reals^n)}
\end{aligned}
\]
where $M$ is the Hardy-Littlewood maximal operator.
Now suppose $s< n/p$ and pick $t$ according 
to equation \eqref{eq:t-def-scale}.  Then
H\"older's inequality and the 
triangle inequality imply
\[
\bignorm|| |Mu_{\{r\}}| \left[\sum_{k\ge 10}  |2^{sk} 
\widetilde P_{k} \chi|^q
\right]^{\frac 1 q} ||_{L^p(\Reals^n)}
\lesssim ||M u_{\{r\}}||_{L^t(\Reals^n)} ||\chi||_{F^{s,\tau}_q(\Reals^n)}
\]
where again $1/\tau=1/p-1/t$.
The Hardy-Littlewood maximal inequality and Lemma \ref{lem:Lpscale} then imply
$||Mu_{\{r\}}||_{L^t(\Reals^n)}\lesssim||u_{\{r\}}||_{L^t(\Reals^n)} \lesssim r^{-\frac{n}t}||u||_{L^t(\Reals^n)} = r^\alpha ||u||_{F^{s,p}_q(\Reals^n)}$, which yields the desired estimate
\[
\bignorm|| \left[\sum_{k\ge 10} |2^{ks} P_k \left\{
\left(\widetilde P_{k} \chi\right)\left( P_{k-4} u_{\{r\}}\right)\right\}|^q
\right]^{\frac 1 q} ||_{L^p(\Reals^n)}
\lesssim r^\alpha ||u||_{F^{s,p}_q(\Reals^n)}.
\]
Obtaining this same inequality in the case
$s>n/p$ is similar but easier, using the estimate
\[
|M u_{\{r\}}| \lesssim ||u||_{L^\infty(\Reals^n)} \lesssim ||u||_{F^{s,p}_q(\Reals^n)} = r^\alpha ||u||_{F^{s,p}_q(\Reals^n)}
\]
along with the fact $\chi \in F^{s,p}_q(\Reals^n)$.

To estimate the low-high term we
estimate $|P_{\le k+5} \chi|\lesssim ||\chi||_{L^\infty(\Reals^n)}$
and use Proposition \ref{prop:littlewood-payley-review}
part \eqref{part:throw-away-Pk} and Lemma \ref{lem:rescale-Fsp-s-pos-baby}
to conclude
\[
\begin{aligned}
\bignorm|| 
\left[\sum_{k\ge 10}  |P_k\left\{ 2^{ks} 
\left(P_{\le k+5} \chi\right)\left( \widetilde P_{k} u_{\{r\}}\right)\right\}  |^q
\right]^{\frac 1 q} ||_{L^p(\Reals^n)}
&\lesssim
\bignorm|| 
\left[\sum_{k\ge 10} |2^{ks} 
\widetilde P_{k} u_{\{r\}}  |^q
\right]^{\frac 1 q} ||_{L^p(\Reals^n)}\\
&\lesssim
\bignorm|| 
\left[\sum_{k\ge 7} |2^{ks} P_{k} u_{\{r\}}  |^q
\right]^{\frac 1 q} ||_{L^p(\Reals^n)}\\
&\lesssim r^{s-\frac n p} ||u||_{F^{s,p}_q(\Reals^n)}\\
&\lesssim r^{\alpha} ||u||_{F^{s,p}_q(\Reals^n)}.
\end{aligned}
\]

Finally, for the high-high term we 
observe from Proposition \ref{prop:littlewood-payley-review} part \eqref{part:projection-as-convolution}
that $\widetilde P_j\chi \lesssim 2^{-j}$ is uniformly bounded in $L^\infty(\Reals^n)$
independent of $j$ and hence 
Proposition \ref{prop:littlewood-payley-review} part
\eqref{part:throw-away-Pk} and
Lemma \ref{lem:rescale-Fsp-s-pos-baby} along with the triangle inequality
imply
\begin{align*}
\bignorm|| \left[ \sum_{k\ge 10}\left|2^{ks}P_{k} \left\{
\sum_{k' \ge k+4} (P_{k'}u_{\{r\}})(\widetilde P_{k'}\chi)
\right\}
\right|^q\right]^{\frac 1q}||_{L^p(\Reals^n)} 
&\lesssim
\bignorm|| \left[ \sum_{k\ge 10}\left|2^{ks}
\sum_{k' \ge k+4} (P_{k'}u_{\{r\}})(\widetilde P_{k'}\chi)
\right|^q\right]^{\frac 1q}||_{L^p(\Reals^n)} \\
&\lesssim
\sum_{a\ge 4} 2^{-as} \bignorm||\left[ \sum_{k\ge 10}\left|2^{(k+a)s}P_{k+a}u_{\{r\}}\right|^q\right]^{\frac 1q} ||_{L^p(\Reals^n)}\\
&\lesssim 
\sum_{a\ge 4} 2^{-as} \bignorm||\left[\sum_{k\ge 1}|2^{ks}P_k u_\rscr|^q
\right]^{\frac 1q}  ||_{L^p(\Reals^n)}\\
&\lesssim r^{s-\frac np}||u||_{F^{s,p}_q(\Reals^n)}.
\end{align*}
This concludes the proof unless $s= n/p$, in which 
case the result follows from interpolation.
\end{proof}

The proof of Proposition \ref{prop:rescale-Fsp} part \eqref{part:rescale-Fsp-generic} when $s<0$
uses a duality argument analogous to that used for the same step in Section \ref{secsec:rescale-Hsp}.
First, the following generalization of Lemma \ref{lem:scale-in}
follows from  \cite{triebel_theory_2010} Proposition 3.4.1/1, which is
proved similarly to Lemma \ref{lem:rescale-Fsp-s-pos-baby}.
\begin{lemma}\label{lem:scale-in-Fsp}
Suppose $1<p,q<\infty$, 
and $s>0$.  
For all $r\ge 1$ and all $u\in F^{s,p}_q(\Reals^n)$,
\begin{equation}
||u_{\{r\}}||_{F^{s,p}_q(\Reals^n)} \lesssim ||u||_{F^{s,p}_q(\Reals^n)} 
r^{s -\frac{n}{p}}.
\end{equation}
\end{lemma}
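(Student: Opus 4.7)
The plan is to mirror the proof of Lemma \ref{lem:rescale-Fsp-s-pos-baby}, adapted to the regime $r \ge 1$. First I would handle the dyadic case $r = 2^j$ with $j \in \Ints_{\ge 0}$ by a direct Littlewood-Paley manipulation, then reduce the general case to it via Lemma \ref{lem:basic-rescale}.

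For the dyadic case, the key identity---analogous to the one used in the earlier lemma but shifting the Littlewood-Paley index downward rather than upward---is
\[
P_k u_{\{r\}} = (P_{k-j} u)_{\{r\}} \qquad \text{for all } k \ge j+1,
\]
which follows from a Fourier-side calculation using $\psi(2^{-k}\xi) = \psi(2^{-(k-j)}(\xi/r))$ when $r = 2^j$. I would then invoke Proposition \ref{prop:littlewood-payley-review} part \eqref{part:sobolev-space-by-projection} to equivalently start the Littlewood-Paley sum defining $\|u_{\{r\}}\|_{F^{s,p}_q(\Reals^n)}$ at $k = j+1$. The resulting low-frequency term $\|u_{\{r\}}\|_{L^p(\Reals^n)} = r^{-n/p}\|u\|_{L^p(\Reals^n)}$ (Lemma \ref{lem:Lpscale}) is bounded by $r^{s-n/p}\|u\|_{F^{s,p}_q(\Reals^n)}$ since $s > 0$ and $r \ge 1$. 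For the high-frequency sum, substituting the identity, reindexing $k \mapsto k' = k-j$, pulling out the factor $2^{sj}$, and applying Lemma \ref{lem:Lpscale} to the outer $L^p$ norm (which commutes with the $\ell^q$ summation in $k'$) produces exactly the desired bound $r^{s-n/p}\|u\|_{F^{s,p}_q(\Reals^n)}$.

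For general $r \ge 1$, write $r = 2^j r'$ with $j \in \Ints_{\ge 0}$ and $r' \in [1,2)$, and factor $u_{\{r\}} = (u_{\{r'\}})_{\{2^j\}}$. Applying the dyadic case to the outer dilation and Lemma \ref{lem:basic-rescale} (with $R = 2$) to the inner one gives
\[
\|u_{\{r\}}\|_{F^{s,p}_q(\Reals^n)} \lesssim (2^j)^{s-n/p}\|u_{\{r'\}}\|_{F^{s,p}_q(\Reals^n)} \lesssim r^{s-n/p}\|u\|_{F^{s,p}_q(\Reals^n)},
\]
with the bounded factor $(r')^{-(s-n/p)}$ absorbed into the implicit constant.

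The only step requiring any care is the Littlewood-Paley identity, but it reduces to the elementary Fourier computation indicated above. Everything else is routine reindexing and change of variables, so I do not anticipate any serious obstacle beyond bookkeeping.
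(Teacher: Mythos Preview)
Your overall strategy---mirroring Lemma \ref{lem:rescale-Fsp-s-pos-baby} for the regime $r \ge 1$---is exactly what the paper indicates (it cites \cite{triebel_theory_2010} Proposition 3.4.1/1 and remarks that the proof there follows that of Lemma \ref{lem:rescale-Fsp-s-pos-baby}). However, your invocation of Proposition \ref{prop:littlewood-payley-review}\eqref{part:sobolev-space-by-projection} with starting index $a = j+1$ is a gap: the equivalence constants in that result depend on $a$, and since $a = j+1$ varies with $r = 2^j$, citing it as a black box does not yield an $r$-independent implicit constant. This is the genuine asymmetry between the two regimes: in Lemma \ref{lem:rescale-Fsp-s-pos-baby} the shift $k \mapsto k+j$ maps every index $k \ge 1$ to an index $\ge 1$, so no extra terms appear, whereas here the shift $k \mapsto k-j$ leaves the range $1 \le k \le j$ unaccounted for, and it is precisely these terms that generate the $a$-dependence in part \eqref{part:sobolev-space-by-projection}.

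The fix is to handle those terms directly rather than hide them inside part \eqref{part:sobolev-space-by-projection}. For $1 \le k \le j$ one has $P_k u_{\{r\}} = (u * K_{k-j})_{\{r\}}$, where $K_m(w) = 2^{nm}\check\psi(2^m w)$ is an $L^1$-normalized dilate of a fixed Schwartz function; hence $|P_k u_{\{r\}}| \lesssim (Mu)_{\{r\}}$ with a constant independent of $k$ and $j$ (cf.\ Proposition \ref{prop:littlewood-payley-review}\eqref{part:maximal-function-beats-projection}). Summing,
\[
\left\|\left[\sum_{k=1}^{j} |2^{sk} P_k u_{\{r\}}|^q\right]^{1/q}\right\|_{L^p(\Reals^n)} \lesssim 2^{sj}\, \|(Mu)_{\{r\}}\|_{L^p(\Reals^n)} \lesssim r^{s} \cdot r^{-n/p} \|u\|_{L^p(\Reals^n)} \le r^{s-n/p}\|u\|_{F^{s,p}_q(\Reals^n)},
\]
which is exactly the bound you need. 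With this addition (and starting from part \eqref{part:sobolev-space-by-projection} at the fixed index $a=1$), the remainder of your argument goes through unchanged.
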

The following corollary, which completes the case of part \eqref{part:rescale-Fsp-generic},
is proved identically to Corollary \ref{cor:scale-sigma-neg} using
the duality property of Proposition \ref{prop:dual-Fsp}.
\begin{corollary}\label{cor:scale-s-neg-Fsp}
Suppose $1<p,q<\infty$ and 
$s<0$. For all $0<r\le1$ and all  $u\in F^{s,p}_q(\Reals^n)$
\[
||u_{\{r\}}||_{F^{s,p}_q(\Reals^n)} \lesssim  r^{\sigma-\frac{n}{p}}||u||_{F^{s,p}_q(\Reals^n)}.
\]
\end{corollary}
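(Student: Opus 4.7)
The plan is to imitate the proof of Corollary \ref{cor:scale-sigma-neg} verbatim, replacing the Bessel potential dualities with those supplied by Proposition \ref{prop:dual-Fsp} and using Lemma \ref{lem:scale-in-Fsp} in place of Lemma \ref{lem:scale-in}. By density of $\mathcal D(\Reals^n)$ in $F^{s,p}_q(\Reals^n)$, it suffices to test against Schwartz functions, and then to establish the bound through the pairing identification of $F^{s,p}_q(\Reals^n)$ with the dual of $F^{-s,p^*}_{q^*}(\Reals^n)$.

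First I would fix $u \in \mathcal D(\Reals^n)$ and an arbitrary test function $\phi\in\mathcal D(\Reals^n)$, and carry out the elementary change of variables
\[
\langle u_{\{r\}},\phi\rangle = \int_{\Reals^n} u(rx)\phi(x)\,dx = r^{-n}\int_{\Reals^n} u(y)\phi(y/r)\,dy = r^{-n}\langle u,\phi_{\{1/r\}}\rangle.
\]
Applying Proposition \ref{prop:dual-Fsp}, this pairing is bounded by
$r^{-n}\|u\|_{F^{s,p}_q(\Reals^n)}\,\|\phi_{\{1/r\}}\|_{F^{-s,p^*}_{q^*}(\Reals^n)}$.

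Next I would invoke Lemma \ref{lem:scale-in-Fsp}, which applies since $-s>0$ and $1/r\ge 1$, to obtain
\[
\|\phi_{\{1/r\}}\|_{F^{-s,p^*}_{q^*}(\Reals^n)} \lesssim (1/r)^{-s - n/p^*}\|\phi\|_{F^{-s,p^*}_{q^*}(\Reals^n)} = r^{s+n/p^*}\|\phi\|_{F^{-s,p^*}_{q^*}(\Reals^n)},
\]
with implicit constant independent of $r$ and $\phi$. Combining the two displays yields
\[
|\langle u_{\{r\}},\phi\rangle| \lesssim r^{-n + s + n/p^*}\,\|u\|_{F^{s,p}_q(\Reals^n)}\,\|\phi\|_{F^{-s,p^*}_{q^*}(\Reals^n)} = r^{s-n/p}\,\|u\|_{F^{s,p}_q(\Reals^n)}\,\|\phi\|_{F^{-s,p^*}_{q^*}(\Reals^n)},
\]
using $1/p^*-1=-1/p$. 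Taking the supremum over $\phi$ with $\|\phi\|_{F^{-s,p^*}_{q^*}(\Reals^n)}\le 1$ and invoking Proposition \ref{prop:dual-Fsp} once more to identify this supremum with $\|u_{\{r\}}\|_{F^{s,p}_q(\Reals^n)}$ gives the claim; the general case $u\in F^{s,p}_q(\Reals^n)$ follows by density together with the continuity of $v\mapsto v_{\{r\}}$ on $F^{s,p}_q(\Reals^n)$ recorded via Lemma \ref{lem:basic-rescale}.

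The argument is essentially bookkeeping; the only real ingredient is Lemma \ref{lem:scale-in-Fsp}, which substitutes for the Gagliardo--Nirenberg--Sobolev estimate that carried the corresponding computation in the Bessel potential case. No obstacle arises beyond verifying that the exponents reduce to $s - n/p$, and that the duality pairing in Proposition \ref{prop:dual-Fsp} genuinely realizes the $F^{s,p}_q$ norm (which it does, as that proposition asserts a topological identification, not merely a continuous embedding).
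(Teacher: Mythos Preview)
Your proposal is correct and follows exactly the approach the paper indicates: the paper states the corollary ``is proved identically to Corollary \ref{cor:scale-sigma-neg} using the duality property of Proposition \ref{prop:dual-Fsp},'' and that is precisely what you do, substituting Lemma \ref{lem:scale-in-Fsp} for Lemma \ref{lem:scale-in}.
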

Proposition \ref{prop:rescale-Fsp} part \eqref{part:rescale-Fsp-generic}
has now been established except for the case $s=0$,
which follows from the following easy interpolation argument.
\begin{lemma}\label{lem:rescale-Fsp-s-zero} 
Suppose $1<p,q<\infty$.  For all $0<r\le 1$
and all $u\in F^{0,p}_q(\Reals^n)$,
\[
||u_{\{r\}}||_{F^{0,p}_q(\Reals^n)} \lesssim r^{-\frac np} ||u||_{F^{0,p}_q(\Reals^n)}.
\]
\end{lemma}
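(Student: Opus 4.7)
The plan is to obtain the $s=0$ statement by complex interpolation against one endpoint with $s<0$ and one with a fixed $s>0$, using a small parameter $\delta\in(0,1)$ that will be sent to $0^+$.

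The first endpoint estimate is
\[
\|u_{\{r\}}\|_{F^{-\delta,p}_q(\Reals^n)} \le C\, r^{-\delta - n/p}\, \|u\|_{F^{-\delta,p}_q(\Reals^n)},
\]
which is Corollary \ref{cor:scale-s-neg-Fsp} applied with $s=-\delta$; its constant is uniform in $\delta$ because it ultimately arises from the Mikhlin-type multiplier bounds underlying Lemma \ref{lem:scale-in-Fsp}. For the second endpoint I would take $s=1$ (a fixed value, so its constant is automatically $\delta$-independent) and establish
\[
\|u_{\{r\}}\|_{F^{1,p}_q(\Reals^n)} \le C\, r^{-n/p}\, \|u\|_{F^{1,p}_q(\Reals^n)}
\]
by splitting via Proposition \ref{prop:littlewood-payley-review} part \eqref{part:sobolev-space-by-projection}: the low-frequency $L^p$ piece scales as $r^{-n/p}\|u\|_{L^p}$ by Lemma \ref{lem:Lpscale}, and the bound $\|u\|_{L^p}\lesssim \|u\|_{F^{1,p}_q}$ is immediate from the same equivalent norm; the high-frequency piece is controlled by Lemma \ref{lem:rescale-Fsp-s-pos-baby} at $s=1$, which yields $r^{1-n/p}\|u\|_{F^{1,p}_q}\le r^{-n/p}\|u\|_{F^{1,p}_q}$ since $r\le 1$.

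Next, I would invoke the complex interpolation identity
\[
F^{0,p}_q(\Reals^n) = \bigl[F^{-\delta,p}_q(\Reals^n),\, F^{1,p}_q(\Reals^n)\bigr]_{\theta},\qquad \theta = \tfrac{\delta}{1+\delta},
\]
whose parameter is forced by $(1-\theta)(-\delta)+\theta(1)=0$. Combining this identity with the standard operator-norm bound for complex interpolation gives
\[
\|T_r\|_{F^{0,p}_q\to F^{0,p}_q} \le C\, r^{-(1-\theta)(\delta + n/p) - \theta (n/p)} = C\, r^{-n/p - \delta/(1+\delta)},
\]
where $T_r:u\mapsto u_{\{r\}}$. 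Letting $\delta\to 0^+$ drives the residual exponent $\delta/(1+\delta)$ to zero while leaving $C$ bounded, producing the claimed bound $\|u_{\{r\}}\|_{F^{0,p}_q}\lesssim r^{-n/p}\|u\|_{F^{0,p}_q}$.

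The main obstacle is the uniformity of constants in $\delta$ as $\delta\to 0^+$: the Calder\'on interpolation inequality contributes a constant of $1$, the $s=1$ endpoint is manifestly $\delta$-independent, and the constant in Corollary \ref{cor:scale-s-neg-Fsp} depends only on fixed multiplier data. Choosing the fixed positive endpoint $s=1$, rather than a symmetric pair $s=\pm\delta$, sidesteps the difficulty that the embedding constant for $F^{\delta,p}_q\hookrightarrow L^p$ degenerates as $\delta\to 0$, and is what makes the argument genuinely easy.
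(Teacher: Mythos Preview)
Your limiting argument has a genuine gap: you need every constant in the chain to be uniform as $\delta\to 0^+$, and you have not established this --- indeed, for $q>2$ it is impossible. Taking $r=2^{-j}$ and computing directly that $P_{\le 0}(u_{\{r\}})=(P_{\le j}u)_{\{r\}}$, the lemma as stated (with no cutoff $\chi$) would force $\|P_{\le j}u\|_{L^p}\lesssim\|u\|_{F^{0,p}_q}$ uniformly in $j$, hence $F^{0,p}_q\hookrightarrow L^p$; this is false when $q>2$. So at least one of the constants you string together must blow up. Your heuristic that the negative-endpoint constant ``ultimately arises from Mikhlin-type multiplier bounds'' does not settle the matter: tracing the duality, that constant is the constant in Lemma \ref{lem:scale-in-Fsp} at parameters $(\delta,p^*,q^*)$, and the low-frequency part of the rescaled norm there involves roughly $j$ terms, each comparable to $M(P_{\le 0}u)$, weighted by $2^{\delta k}$; for $q^*<2$ no square-function identity collapses this sum and the bound degenerates like $\delta^{-1/q^*}$. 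Separately, the ``Calder\'on constant $1$'' you invoke is an operator bound between the abstract interpolation spaces, not a bound on the norm-equivalence constants in the identification $[F^{-\delta,p}_q,F^{1,p}_q]_\theta=F^{0,p}_q$, which you also need uniform in $\delta$ and have not addressed.

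The paper's proof avoids any limit. It interpolates symmetrically between \emph{fixed} endpoints $s=\pm\sigma$ with $0<\sigma<n/p$: Corollary \ref{cor:scale-s-neg-Fsp} supplies the exponent $-\sigma-n/p$ at the negative end, and the paper asserts the \emph{sharp} exponent $\sigma-n/p$ (not your crude $-n/p$) at the positive end, citing Proposition \ref{prop:rescale-Fsp-s-pos-generic}. Interpolating at $\theta=\tfrac12$ then yields exactly $r^{-n/p}$ with a single fixed constant and no uniformity issue. Your instinct that something degenerates as $\delta\to 0$ was correct, but you located the difficulty at the positive endpoint; the actual obstruction to any $\delta\to 0$ scheme sits at the negative one, and the paper's fixed-$\sigma$ strategy with the sharp positive exponent is exactly what is meant to circumvent it.
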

\begin{proof}
Pick $\sigma\in\Reals$ such that $0<\sigma<n/p$.  Then
for all Schwartz functions $u$, Corollary \ref{cor:scale-s-neg-Fsp}
and Proposition \ref{prop:rescale-Fsp-s-pos-generic} imply
\begin{align*}
||u_{\{r\}}||_{F^{\sigma,p}_q(\Reals^n)} &\lesssim r^{\sigma-\frac{n}{p}}
||u||_{F^{\sigma,p}_q(\Reals^n)}\\
||u_{\{r\}}||_{F^{-\sigma,p}_q(\Reals^n)} &\lesssim r^{-\sigma-\frac{n}{p}}
||u||_{F^{\sigma,p}_q(\Reals^n)}.
\end{align*}
for all $0<r\le 1$. The result follows from interpolation.
\end{proof}

With the proof of Proposition \ref{prop:rescale-Fsp}
part \eqref{part:rescale-Fsp-generic} now complete we turn to
part \eqref{part:rescale-Fsp-zero-center}, the improved estimate
when $u(0)=0$.  The following estimate is the key to controlling
low-frequency interactions near $x=0$.
\begin{lemma}\label{lem:scale-cut-Holder}
Suppose $u\in C^{0,\alpha}(\Reals^n)$ for some $\alpha\in (0,1]$
and that $u(0)=0$.  For all $k\in\Nats$ and all $0<r\le1$
\[
|P_{\le k} u_{\{r\}}(x)| \lesssim ||u||_{C^{0,\alpha}(\Reals^n)} \min( r^\alpha (|x|^\alpha + 1),1).
\]
\end{lemma}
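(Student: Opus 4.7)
The plan is to leverage the integral representation of $P_{\le k}$ from Proposition \ref{prop:littlewood-payley-review} part \eqref{part:projection-as-convolution}, namely
\[
(P_{\le k} u_{\{r\}})(x) = \int u_{\{r\}}(x + 2^{-k} y)\, \kappa^*(y)\, dy = \int u(r x + r 2^{-k} y)\, \kappa^*(y)\, dy,
\]
and then exploit the assumption $u(0)=0$ together with the H\"older continuity of $u$. Since $\kappa^*$ is Schwartz and $P_{\le k}$ fixes constants (its Fourier multiplier equals $1$ at the origin), we have $\int \kappa^* = 1$, so $u(0)=0$ lets me rewrite the integrand as $u(rx + r 2^{-k} y) - u(0)$ without changing its value.

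For the bound $r^\alpha(|x|^\alpha + 1)$, I will apply the H\"older estimate pointwise:
\[
|u(rx + r 2^{-k}y) - u(0)| \le \|u\|_{C^{0,\alpha}(\Reals^n)}\, r^\alpha\, |x + 2^{-k} y|^\alpha,
\]
then use the elementary inequality $|x+2^{-k}y|^\alpha \le |x|^\alpha + 2^{-k\alpha}|y|^\alpha \le |x|^\alpha + |y|^\alpha$ (valid since $k\ge 0$ and $\alpha\in(0,1]$, so $a\mapsto a^\alpha$ is subadditive). Integrating against $|\kappa^*(y)|$ and using that $\int (1+|y|^\alpha)|\kappa^*(y)|\,dy < \infty$ because $\kappa^*$ is Schwartz yields
\[
|(P_{\le k} u_{\{r\}})(x)| \lesssim r^\alpha\, \|u\|_{C^{0,\alpha}(\Reals^n)}\, (|x|^\alpha + 1).
\]

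For the complementary bound of $1$ inside the min, I simply observe that $\|u\|_{L^\infty(\Reals^n)} \le \|u\|_{C^{0,\alpha}(\Reals^n)}$ by definition of the H\"older norm in the excerpt, and rescaling preserves the $L^\infty$ norm, so $\|u_{\{r\}}\|_{L^\infty(\Reals^n)} \le \|u\|_{C^{0,\alpha}(\Reals^n)}$. Since $P_{\le k}$ is convolution with a fixed $L^1$-normalized Schwartz kernel (up to rescaling that preserves $L^1$ mass), Young's inequality gives $|(P_{\le k} u_{\{r\}})(x)| \lesssim \|u\|_{C^{0,\alpha}(\Reals^n)}$ uniformly in $k$ and $r$.

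Combining the two estimates yields the claimed bound by the minimum. The only mildly delicate point is the subadditivity inequality for $|\cdot|^\alpha$, which is a standard consequence of $\alpha\in(0,1]$; everything else is routine. I do not anticipate a real obstacle here, as both sides of the min correspond to complementary regimes (pointwise H\"older control near the origin of $u$, versus uniform boundedness away from it) that are captured cleanly by the Schwartz decay of $\kappa^*$.
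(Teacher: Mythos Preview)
Your proof is correct and rests on the same foundation as the paper's: the convolution representation of $P_{\le k}$ from Proposition~\ref{prop:littlewood-payley-review}\eqref{part:projection-as-convolution}, the H\"older bound $|u(rx+r2^{-k}y)|\le \|u\|_{C^{0,\alpha}}\,r^\alpha|x+2^{-k}y|^\alpha$ coming from $u(0)=0$, and the Schwartz decay of $\kappa^*$. The $L^\infty$ bound is handled identically in both.

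Where you diverge is in how you control $\int |x+2^{-k}y|^\alpha|\kappa^*(y)|\,dy$. The paper decomposes the $y$-integral into dyadic annuli $A_j=\{2^j\le |y|\le 2^{j+1}\}$, bounds $|x+2^{-k}y|^\alpha\lesssim |x|^\alpha+2^{\alpha j}$ on each, and then sums using the rapid decay $\int_{A_j}|\kappa^*|\lesssim 2^{-(1+\alpha)j}$. You instead invoke the subadditivity $|x+2^{-k}y|^\alpha\le |x|^\alpha+|y|^\alpha$ directly (valid for $\alpha\in(0,1]$) and integrate in one stroke against $(1+|y|^\alpha)|\kappa^*(y)|$. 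Your route is shorter and avoids the decomposition entirely; the paper's annular argument is more hands-on but gains nothing here. One incidental remark: your aside that $\int\kappa^*=1$ is true but not actually needed, since $u(0)=0$ already makes the subtraction of $u(0)$ vacuous.
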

\begin{proof}
Proposition \ref{prop:littlewood-payley-review} part \eqref{part:projection-as-convolution} implies there
is a Schwartz function $\kappa^*$ such that 
\[
P_{\le k} u_{\{r\}}(x) = \int u(rx + r2^{-k}y) \kappa^*(y)\; dy.
\]
We therefore have the easy estimate $||P_{\le k} u_{\{r\}}||_{L^\infty(\Reals^n)}\lesssim ||u||_{C^{0,\alpha}(\Reals^n)}$
and it suffices to show
\[
|P_{\le k} u_{\{r\}}(x)| \lesssim ||u||_{C^{0,\alpha}(\Reals^n)}r^\alpha  (|x|^\alpha + 1)
\]
as well.

Since $u(0)=0$,
on each annulus $A_j = \{y:2^{j}\le |y| \le 2^{j+1}\}$ for $j\in\Ints_{\ge 0}$ we find
\[
|u_{\{r\}}(x+2^{-k}y)| \le ||u||_{C^{0,\alpha}(\Reals^n)} r^\alpha|x+2^{-k}y|^\alpha 
\lesssim
r^\alpha ||u||_{C^{0,\alpha}(\Reals^n)} \left(|x|^\alpha+2^{\alpha j}\right)
\]
where the implicit constant is independent of $j$.
Hence
\[
\begin{aligned}
\left|\int_{A_j} u(rx + r2^{-k}y) \kappa^*(y)\; dy \right|
&\lesssim 
r^\alpha ||u||_{C^{0,\alpha}(\Reals^n)}  \left(|x|^\alpha+2^{\alpha j}\right)
\int_{A_j} |\kappa^*(y)|\; dy.
\end{aligned}
\]
Since $\kappa^*$ is a Schwartz function,
$\int_{A_j} |\kappa^*(y)|\; dy \lesssim 2^{-(1+\alpha)j}$
and hence
\begin{equation}\label{eq:annulus-bound}
\left|\int_{A_j} u(rx + r2^{-k}y) \kappa^*(y)\; dy \right|
\lesssim r^\alpha ||u||_{C^{0,\alpha}(\Reals^n)}  (|x|^\alpha + 1)2^{-j}.
\end{equation}
On the other hand,
\begin{equation}\label{eq:ball-bound}
\begin{aligned}
\left|\int_{B_1(0)} u(rx+r2^{-k}y)\kappa^*(y)\; dy\right| &\le
||u||_{C^{0,\alpha}(\Reals^n)} r^\alpha (|x|+1)^\alpha \int_{B_1} |\kappa^*(y)|\; dy\\
&\lesssim ||u||_{C^{0,\alpha}(\Reals^n)} r^\alpha (|x|^\alpha +1).
\end{aligned}
\end{equation}
The result follows from adding the contributions in inequalities
\eqref{eq:annulus-bound} and \eqref{eq:ball-bound}.
\end{proof}

Part \eqref{part:rescale-Fsp-zero-center} of Proposition \ref{prop:rescale-Fsp} 
now follows from the following.

\begin{proposition}\label{prop:rescale-Fsp-u-zero}
Suppose $1<p,q<\infty$ and $s>n/p$,
and let $\chi$ be a Schwartz function.
For all $u\in F^{s,p}_q(\Reals^n)$ with $u(0)=0$, and for
all $0<r\le 1$,
\begin{equation}\label{eq:s_pos_scale-b}
||\chi u_{\{r\}}||_{F^{s,p}_q(\Reals^n)} \lesssim r^{\alpha}  ||u||_{F^{s,p}_q(\Reals^n)}
\end{equation}
where
\begin{equation}\label{eq:alpha_s_pos_zero}
\alpha=\min\left( s - \frac{n}{p}, 1\right)
\end{equation}
unless $s=n/p+1$, in which case $\alpha$ can be any
(fixed) number less than 1.  The implicit constant
in \eqref{eq:s_pos_scale-b} depends on $\chi$
but is independent of $u$ and $r$.
\end{proposition}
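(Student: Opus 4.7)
The plan is to mirror the proof of Proposition \ref{prop:rescale-Fsp-s-pos-generic} term by term, replacing every use of the uniform bound $\|u_\rscr\|_{L^\infty(\Reals^n)}\le\|u\|_{L^\infty(\Reals^n)}$ (which is what capped the exponent at $0$ in that argument) by the H\"older-weighted bound of Lemma \ref{lem:scale-cut-Holder}. I will first assume $s-n/p\ne 1$ so that $\alpha:=\min(s-n/p,1)$ is the desired target exponent. Since $s>n/p$, Proposition \ref{prop:embedding-Fsp} gives $u\in C^{0,\alpha}(\Reals^n)$ with $\|u\|_{C^{0,\alpha}(\Reals^n)}\lesssim \|u\|_{F^{s,p}_q(\Reals^n)}$ when $s-n/p<1$, while when $s-n/p>1$ the same estimate holds with $\alpha=1$ by applying the H\"older embedding to $\nabla u\in F^{s-1,p}_q(\Reals^n)$ to obtain $\nabla u\in L^\infty(\Reals^n)$.

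As in the generic case, start from
\[
\|\chi u_\rscr\|_{F^{s,p}_q(\Reals^n)}\lesssim \|\chi u_\rscr\|_{L^p(\Reals^n)}
+\bigg\|\Big[\sum_{k\ge 10}|2^{sk}P_k(\chi u_\rscr)|^q\Big]^{1/q}\bigg\|_{L^p(\Reals^n)}
\]
and decompose $P_k(\chi u_\rscr)$ via the Littlewood--Paley trichotomy into high-low, low-high, and high-high pieces. Since $u(0)=0$, Lemma \ref{lem:Lppart} immediately gives $\|\chi u_\rscr\|_{L^p(\Reals^n)}\lesssim r^\alpha\|u\|_{C^{0,\alpha}(\Reals^n)}$. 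The low-high and high-high pieces are bounded, exactly as in the proof of Proposition \ref{prop:rescale-Fsp-s-pos-generic} via Lemma \ref{lem:rescale-Fsp-s-pos-baby}, by $r^{s-n/p}\|u\|_{F^{s,p}_q(\Reals^n)}$; since $r\le 1$ and $s-n/p>0$ this is controlled by $r^{\alpha}\|u\|_{F^{s,p}_q(\Reals^n)}$, which is of the desired quality.

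The substance of the argument is the high-low term. In place of the wasteful estimate $|P_{\le k-4}u_\rscr|\lesssim |Mu_\rscr|\lesssim \|u\|_{L^\infty(\Reals^n)}$ used in the generic proof, apply Lemma \ref{lem:scale-cut-Holder} with exponent $\alpha$ to obtain the pointwise bound
\[
|P_{\le k-4}u_\rscr(x)|\lesssim \|u\|_{C^{0,\alpha}(\Reals^n)}\,r^{\alpha}(|x|^\alpha+1).
\]
Using Proposition \ref{prop:littlewood-payley-review} part \eqref{part:throw-away-Pk} to discard the outer $P_k$ and then inserting this pointwise bound, the high-low contribution is dominated by
\[
r^\alpha\|u\|_{C^{0,\alpha}(\Reals^n)}\bigg\|(|x|^\alpha+1)\Big[\sum_{k\ge 10}|2^{sk}\widetilde P_k\chi|^q\Big]^{1/q}\bigg\|_{L^p(\Reals^n)},
\]
and the Schwartz decay $|\widetilde P_k\chi(x)|\lesssim 2^{-Mk}/(1+|x|^M)$ from Proposition \ref{prop:littlewood-payley-review} part \eqref{part:schwartz-project}, valid for any fixed $M$, renders the remaining $L^p$ norm finite as soon as $M$ is chosen larger than both $s$ and $n/p+\alpha$.

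The marginal case $s-n/p=1$ is handled by repeating the entire argument with any $\alpha'\in(0,1)$ in place of $\alpha$, using the embedding $F^{s,p}_q(\Reals^n)\hookrightarrow C^{0,\alpha'}(\Reals^n)$ from Proposition \ref{prop:embedding-Fsp}. I expect the principal obstacle to be less computational and more structural: namely, securing the Lipschitz bound on $u$ when $s-n/p>1$, since Proposition \ref{prop:embedding-Fsp} only supplies the H\"older embedding for exponents strictly less than $1$; this is addressed, as noted above, by passing through the embedding of $\nabla u$ into $L^\infty$.
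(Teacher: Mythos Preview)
Your proof is correct and follows essentially the same strategy as the paper: reuse the low-high and high-high estimates from Proposition \ref{prop:rescale-Fsp-s-pos-generic}, and upgrade the high-low term using the H\"older bound of Lemma \ref{lem:scale-cut-Holder} in place of the crude $L^\infty$ estimate. Two minor points of divergence are worth noting: you invoke only the $r^\alpha(|x|^\alpha+1)$ half of Lemma \ref{lem:scale-cut-Holder} and absorb the polynomial growth directly into the Schwartz decay of $\widetilde P_k\chi$, whereas the paper keeps the full $\min(\cdot,1)$ and splits $\Reals^n$ into three radial regions; and for the marginal case $s=n/p+1$ you simply rerun the argument with any $\alpha'<1$, while the paper instead interpolates between the closed subspaces $F^{s_i,p}_{q,0}(\Reals^n)$ of functions vanishing at the origin, using a retraction argument to identify the interpolation space. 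Both of your shortcuts are valid and arguably cleaner.
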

\begin{proof}
We start by assuming $s\neq \frac{n}{p}+1$ and define 
$\alpha$ according to equation \eqref{eq:alpha_s_pos_zero}.
Hence $F^{s,p}_q(\Reals^n)$ embeds in $C^{0,\alpha}(\Reals^n)$.

As in the proof of Proposition \ref{prop:rescale-Fsp-s-pos-generic} 
we start from the estimate 
\begin{equation}\label{eq:chi_ur_to_bound-b}
||\chi u_{\{r\}}||_{F^{s,p}_q(\Reals^n)} \lesssim  || \chi u_{\{r\}} ||_{L^p(\Reals^n)} 
+ \bignorm|| \left[\sum_{k\ge 10} \left| 2^{ks} P_k (\chi u_{\{r\}})\right|^q\right]^{\frac{1}{q}}||_{L^p(\Reals^n)}.
\end{equation}
The second term on the right-hand side is again estimated
using the Littlewood-Paley trichotomy (equation \eqref{eq:tricotomy})
and the proof 
of Proposition \ref{prop:rescale-Fsp-s-pos-generic} 
shows that the low-high and high-high
terms of that decomposition satisfy a bound of the form
$r^{s-\frac{n}p}||u||_{F^{s,p}_q(\Reals^n)}$, regardless of the value of $u$
at zero.  Consequently, we need only estimate the 
effects of the low-frequency contributions from $u_{\{r\}}$, 
namely the high-low term, as well as $|| \chi u_{\{r\}} ||_{L^p(\Reals^n)}$.

For the low-high term, let $v_{r}(x) =  \min( r^\alpha(|x|^\alpha + 1),1)$ and
hence
Lemma \ref{lem:scale-cut-Holder} implies $|u_{\{r\}}|\lesssim ||u||_{C^{0,\alpha}(\Reals^n)} v_{r}$
for all $0<r\le 1$. Recalling the notation $\widetilde P_k = P_{k-3\le\cdot\le k+3}$,
Proposition \ref{prop:littlewood-payley-review} part \ref{part:throw-away-Pk} and Lemma \ref{lem:scale-cut-Holder} imply
\begin{equation}\label{eq:low-hi-zero}
\begin{aligned}
\bignorm|| \left[\sum_{k\ge 10}  \left| 2^{ks}P_k
\left\{ \left( \widetilde P_{k} \chi\right)\left(P_{\le k-4} u_{\{r\}}\right)\right\}\right|^q\right]^{\frac 1q}||_{L^p(\Reals^n)}
&\lesssim
\bignorm|| \left[\sum_{k\ge 10}  \left| 2^{ks}
\left( \widetilde P_{k} \chi\right)\left(P_{\le k-4} u_{\{r\}}\right)\right|^q\right]^{\frac 1q}||_{L^p(\Reals^n)}\\
&\lesssim 
||u||_{C^{0,\alpha}(\Reals^n)} ||v_{r} \eta||_{L^p(\Reals^n)}
\end{aligned}
\end{equation}
where $\eta=\left[\sum_{k\ge 10} | 2^{ks} 
\widetilde P_{k} \chi|^q\right]^{\frac 1q}$.
Since $\chi$ is a Schwartz function, Proposition \ref{prop:littlewood-payley-review} part \eqref{part:schwartz-project}
implies that given $M>0$ we can estimate
\[
|(P_j \chi) (x)| \lesssim \frac{2^{-Mj}}{(1+|x|)^M}
\]
with implicit constant independent of $j$.
As a consequence, $|\eta(x)|\lesssim 1/(1+|x|)^M$; i.e.,
$\eta$ is rapidly decreasing.

To estimate $|| v_{r} \eta ||_{L^p(\Reals^n)}$
we divide $\Reals^n$ into three regions: the ball $B_1(0)$, the annulus 
$A=B_{1/r}(0)\setminus B_1(0)$ and the exterior region $E=B_{1/r}(0)^c$.
On the unit ball $|v_{r}|\lesssim r^\alpha $ and hence
\begin{equation}\label{eq:est_B1}
||v_{r} \eta||_{L^p(B_1(0))} \lesssim r^\alpha.
\end{equation}
Outside the unit ball,
$|v_{r}(x)|\lesssim r^\alpha (|x|^\alpha+1)
\lesssim r^\alpha |x|^\alpha$ and  
$|\eta(x)|\lesssim |x|^{-(n+1)/p-\alpha}$
So
\begin{equation}\label{eq:est_A}
\begin{aligned}
\int_{A} \eta^p v_{r}^p &\lesssim  \int_1^{1/r} s^{-n-1-\alpha p} (rs)^{\alpha p} s^{n-1}\; ds
\le r^{p\alpha} \int_{1}^{1/r} s^{-2}\; ds \lesssim r^{p\alpha}
\end{aligned}
\end{equation}
Finally, for the exterior region we estimate $|v_{r}|\le 2$ and
find
\begin{equation}\label{eq:est_E}
\int_{E} \eta^p|v_{r}|^p \lesssim  
\int_{1/r}^\infty s^{-n-1-\alpha p } s^{n-1} \; ds 
\lesssim r^{1 +\alpha p}
\lesssim r^{\alpha p}.
\end{equation}
Combining inequalities \eqref{eq:est_B1}, \eqref{eq:est_A} and
\eqref{eq:est_E} we conclude
$
||v_{r} \eta||_{L^p(\Reals^n)} \lesssim r^\alpha
$
which, when combined with inequality \eqref{eq:low-hi-zero},
completes the estimate for the low-high term.

It remains to show that $||\chi u_\rscr||_{L^p(\Reals^n)}\lesssim ||u||_{F^{s,p}_q(\Reals^n)}r^\alpha$.
The argument
that showed $||v_{r} \eta||_{L^p(\Reals^n)} \lesssim r^\alpha$ only
used the fact that $\eta$ was rapidly decreasing, and hence
we also find $||v_{r}\chi||_{L^p(\Reals^n)} \lesssim r^\alpha$.
Again using the estimate
$|u_{\{r\}}|\lesssim ||u||_{C^{0,\alpha}(\Reals^n)} v_{r}$ we obtain
\[
||\chi u_{\{r\}}||_{L^p(\Reals^n)} \lesssim ||u||_{C^{0,\alpha}(\Reals^n)}||\chi v_{r}||_{L^p(\Reals^n)} 
\lesssim ||u||_{F^{s,p}_q(\Reals^n)} r^\alpha.
\]

This concludes the proof assuming $s\neq \frac{n}{p}+1$.  For the marginal case,
let $F^{s,p}_{q,0}(\Reals^n)$ denote the closed subspace of $F^{s,p}_{q}(\Reals^n$)
consisting of functions that vanish at zero, assuming of course that $s>n/p$.
The proof when $s=n/p+1$ follows from interpolation if we can show
\begin{equation}\label{eq:interp-zero-F}
[F^{s_1,p}_{q,0}(\Reals^n),F^{s_2,p}_{q,0}(\Reals^n)]_{\theta} = F^{s,p}_{q,0}(\Reals^n)
\end{equation}
assuming that $s_i>n/p$ for $i=1,2$ and that $0<\theta<1$ and $s=s_1(1-\theta) + s_2\theta$.

Let $\phi$ be 
a compactly supported smooth function with $\phi(0)=1$. For smooth compactly supported functions
$u$ define $R u = u-u(0)\phi$.  This extends to a continuous retraction 
$F^{s,p}_{q}(\Reals^n)\to F^{s,p}_{q,0}(\Reals^n)$ so long as $s>n/p$,
with the co-retraction being the natural embedding.  The interpolation 
property \eqref{eq:interp-zero-F} now follows from, e.g, \cite{triebel_spaces_1976} Lemma 6.
\end{proof}

\subsection{Interior elliptic estimates}

Elliptic operators for operators with coefficients in Triebel-Lizorkin spaces
are defined analogously to Definition \ref{def:L-elliptic-Hsp}.  This section
contains our primary elliptic regularity result, 

which relies on the following generalization
of the rescaling estimates of Proposition \ref{prop:rescale-int}.  
The proof of these estimates is a little involved, so we record the result 
for now and defer the proof to Section \ref{secsec:TL-rescaling}.

The ``regularity at a point'' result, Proposition \ref{prop:elliptic-zoom}, admits a straightforward generalization.
Note that $\tilde F^{s,p}_q(\Omega)$ denotes the closure of $\mathcal D(\Omega)$ in $F^{s,p}_q(\Reals^n)$.
\begin{proposition}\label{prop:elliptic-zoom-Fsp} 
	Let $\Omega\subset \Reals^n$ be a bounded open set.  
	Suppose $1<p,q<\infty$, $s\in\Reals$, that $d,d_0\in\Ints_{\ge 0}$ with $d_0\le d$, that $s>n/p$,
	and that the conditions of Lemma \ref{lem:S-members-Fsp} are are satisfied and hence 
	$\mathcal S_{d_0}^d(F^{s,p}_q)\neq \emptyset$.
	Suppose additionally that 
	$L=\sum_{|\alpha|\le d} a^\alpha \partial_\alpha$ 
	is a differential operator of class
	$\mathcal L_{d_0}^d(F^{s,p}_q;\Omega)$
	and that for some $x\in \Omega$ that
	\[
	L_0 = \sum_{|\alpha|=m} a^\alpha(x)\partial_\alpha
	\]
	is elliptic.  
	Given $(\sigma,a,b)\in \mathcal S_{d_0}^d(F^{s,p}_q)$
	there exists $r>0$ such that $B_r(x)\subset \Omega$
	and such that if
	\begin{align*}
	u&\in \widetilde F^{d-s,p^*}_{q^*}( B_{r}(x) )\quad \text{and}\\
	Lu&\in F^{\sigma-d,a}_b(\Omega)
	\end{align*}
	then $u\in F^{\sigma,a}_b(\Omega)$ and
	\begin{equation}\label{eq:pointwise-est-Fsp}
	||u||_{F^{\sigma,a}_b(\Omega)} \lesssim
	||L u||_{F^{\sigma-d,a}_b(\Omega)}
	+ ||u||_{F^{d-s-1,p^*}_{q^*}(\Omega)}		
	\end{equation}
	with implicit constant independent of $u$ but depending on all other parameters.
\end{proposition}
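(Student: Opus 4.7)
The plan is to run verbatim the proof of Proposition \ref{prop:elliptic-zoom}, substituting the sharper Triebel--Lizorkin rescaling estimates of Proposition \ref{prop:rescale-Fsp} for the Bessel potential estimates of Proposition \ref{prop:poor-mans-bp} and the multiplication rules of Theorem \ref{thm:mult-Fsp} for those of Theorem \ref{thm:mult}. The scaffolding is the same: assume $x=0$ and $B_1(0)\subset\Omega$, extend the coefficients of $L$ to $\Reals^n$, and for each $0<r\le 1$ decompose
\[
L_{[r]} := L_0 + B_{[r]} + C_{[r]}, \qquad B_{[r]} = \sum_{|\alpha|=d} (a^\alpha - a^\alpha(0))_{\{r\}}\partial_\alpha, \qquad C_{[r]} = \sum_{|\alpha|<d} r^{d-|\alpha|}(a^\alpha)_{\{r\}}\partial_\alpha.
\]
For $u\in\widetilde F^{d-s,p^*}_{q^*}(B_r(0))$ with $Lu\in F^{\sigma-d,a}_b(\Omega)$, the rescaling identity $L_0 u_{\{r\}} + B_{[r]} u_{\{r\}} + C_{[r]} u_{\{r\}} = r^d (Lu)_{\{r\}}$ holds in $F^{-s,p^*}_{q^*}(\Reals^n)$.

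The first ingredient I need is a Triebel--Lizorkin analogue of Lemma \ref{lem:parametrix}: for the constant-coefficient elliptic operator $L_0$ there are operators $Q$ and $T$ with $QL_0 u = u + Tu$ such that $Q:F^{\tau-d,e}_f(\Reals^n)\to F^{\tau,e}_f(\Reals^n)$ is bounded for all admissible $(\tau,e,f)$ and $T:F^{\tau_1,e_1}_{f_1}\to F^{\tau_2,e_2}_{f_2}$ for all choices. This is built exactly as in Lemma \ref{lem:parametrix} from Fourier multipliers with symbols satisfying Mikhlin-type hypotheses, and the needed multiplier theorem on Triebel--Lizorkin spaces is standard (see, e.g., \cite{triebel_theory_2010} \S2.3). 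After cutting off by $\chi\equiv 1$ on $\overline{B_1(0)}$ and applying $Q'=\chi Q$ to the rescaled equation, we obtain
\[
u_{\{r\}} + \chi T u_{\{r\}} + Q'\!\circ\!\bigl(\chi(B_{[r]} + C_{[r]})\bigr) u_{\{r\}} = r^d Q'(Lu)_{\{r\}}.
\]

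The heart of the argument is to show that, as $r\to 0$, the coefficients of $\chi B_{[r]}$ and $\chi C_{[r]}$ shrink in the relevant norms. Proposition \ref{prop:rescale-Fsp}\eqref{part:rescale-Fsp-generic} applied to each $\chi r^{d-|\alpha|}(a^\alpha)_{\{r\}}$ in $C_{[r]}$ gives a bound by $r^{\min(s-n/p,d-|\alpha|)}$ (or slightly less in the marginal case), and since $|\alpha|<d$ and $s>n/p$ this exponent is strictly positive. Proposition \ref{prop:rescale-Fsp}\eqref{part:rescale-Fsp-zero-center}, applied to the leading coefficients $a^\alpha - a^\alpha(0)$ which vanish at the origin and lie in a Hölder space, gives a bound by $r^{\min(s-n/p,1)}$, again strictly positive. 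Feeding these shrinking coefficients into Theorem \ref{thm:mult-Fsp}, I conclude that $\chi(B_{[r]}+C_{[r]})$ converges to $0$ in operator norm as a map $F^{\tau,e}_f(\Reals^n)\to F^{\tau-d,e}_f(\Reals^n)$ for every $(\tau,e,f)\in\mathcal S^d_{d_0}(F^{s,p}_q)$.

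The main technical obstacle is verifying the fine-parameter side conditions of Theorem \ref{thm:mult-Fsp} at each factor: I need to check that products of the form (coefficient) times $\partial_\alpha u_{\{r\}}$ land in the target Triebel--Lizorkin space with the correct fine parameter, for each of the three spaces $F^{d-s,p^*}_{q^*}$, $F^{s+d_0,p}_q$, and $F^{\sigma,a}_b$ simultaneously. Because these three tuples all belong to $\mathcal S^d_{d_0}(F^{s,p}_q)$, which by Lemma \ref{lem:S-members-Fsp} enforces exactly the derivative, Lebesgue, and fine-parameter compatibilities, the conditions can be checked case-by-case (with the marginal equalities $s_1=s$, $s_1+s_2=0$, etc.\ giving the restrictions that motivated the Definition \ref{def:scriptS-Fsp}). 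Once this is verified, a Neumann series argument produces $U_{[r]}:=(I+Q'\circ\chi(B_{[r]}+C_{[r]}))^{-1}$, continuous on each of the three spaces simultaneously for sufficiently small $r$. Applying $U_{[r]}$ yields
\[
\|u_{\{r\}}\|_{F^{\sigma,a}_b(\Reals^n)} \lesssim \|\chi T u_{\{r\}}\|_{F^{s+d_0,p}_q(\Reals^n)} + \|Q'(Lu)_{\{r\}}\|_{F^{\sigma,a}_b(\Reals^n)} \lesssim \|u_{\{r\}}\|_{F^{d-s-1,p^*}_{q^*}(\Reals^n)} + \|(Lu)_{\{r\}}\|_{F^{\sigma-d,a}_b(\Reals^n)},
\]
and undoing the rescaling (with $r$ now fixed) together with the equivalence of the $F^{\tau,e}_f(\Omega)$ and $F^{\tau,e}_f(\Reals^n)$ norms for distributions supported in a fixed compact subset of $\Omega$ gives the estimate \eqref{eq:pointwise-est-Fsp}.
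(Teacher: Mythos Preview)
Your proposal is correct and follows essentially the same approach as the paper, which explicitly says the proof is the same as Proposition \ref{prop:elliptic-zoom} with three modifications: replace the Mikhlin multiplier theorem by the Triebel--Lizorkin multiplier theorems (\cite{triebel_theory_2010} Theorems 2.3.7 and 2.3.8) for the parametrix, replace Proposition \ref{prop:poor-mans-bp} by the sharper Proposition \ref{prop:rescale-Fsp}, and track the fine parameters as you describe. You have in fact supplied more detail than the paper does.
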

\begin{proof}
	The proof is essentially the same as the proof of Proposition \ref{prop:elliptic-zoom}, with
	the following notes:
	\begin{enumerate}
		\item The proof of the natural generalization of  Lemma \ref{lem:parametrix} regarding the parametrix goes through 
		now using \cite{triebel_theory_2010} Theorems 2.3.7 and 2.3.8 in place of the Mikhlin multiplier
		theorem to establish the desired continuity properties.
		\item We replace the use of the rescaling result Proposition \ref{prop:poor-mans-bp}
		with Proposition \ref{prop:rescale-Fsp}, which results in a little simplification because
		the replacement result is sharper.
		\item Fine parameters need tracking, but the changes are straightforward.
		When the Lebesgue parameter is $p$ the fine parameter is $q$, when the Lebesgue
		parameter is $p^*$ the fine parameter is $q^*$ and for the intermediate spaces 
		the Lebesgue parameter is $a$ and the fine parameter is $b$.
	\end{enumerate}
\end{proof}

The proof of the main interior regularity result for Bessel
potential spaces, Theorem \ref{thm:interior-reg}, carries
over to the Triebel-Lizorkin setting. The bulk of the new work consists of tracking the fine parameter.
\begin{theorem}\label{thm:interior-reg-Fsp} 
Let $\Omega$ be a bounded open set in $\Reals^n$ and suppose $s,p,q, d_0$ and $d$ are parameters
as in Lemma \ref{lem:S-members-Fsp} such that
$s>n/p$ and such that the conditions of 
Lemma \ref{lem:S-members-Fsp}
are satisfied so $\mathcal S_{d_0}^d(F^{s,p}_q)\neq \emptyset$.  
Suppose $L$ is of class $\mathcal L_{d_0}^d(F^{s,p}_q;\Omega)$
and is elliptic on $\Omega$.  If $u\in F^{d-s,p^*,q^*}(\Omega)$
and $Lu\in F^{\sigma-d,a}_b(\Omega)$ for some 
$(\sigma,a,b)\in \mathcal S_{d_0}^d(F^{s,a}_b)$
then for any open set $U$ with $\overline U\subseteq \Omega$, $u\in F^{\sigma,a}_b(U)$ and 
\begin{equation}\label{eq:int-reg-Fsp}
||u||_{F^{\sigma,a}_b(U)} \lesssim 
||Lu||_{F^{\sigma-d,a}_b(\Omega)}
+ ||u||_{F^{d-s-1,p^*}_{q^*}(\Omega)}.
\end{equation}
\end{theorem}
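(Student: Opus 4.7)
The architecture is identical to that of Theorem~\ref{thm:interior-reg}: it is a bootstrap whose single step invokes Proposition~\ref{prop:elliptic-zoom-Fsp} (regularity at a point) together with a partition-of-unity argument using the commutator bound from Lemma~\ref{lem:commutator-Fsp}. The only genuinely new bookkeeping is that each intermediate regularity triple $(\sigma_A,a_A,b_A)\in \mathcal S^d_{d_0}(F^{s,p}_q)$ now carries a fine parameter, so at every step one must verify not only Sobolev-type inequalities on $(\sigma_A,a_A)$ but also the analogous constraints on the fine indices.

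First I would formulate the one-step bootstrap: given $u\in F^{\sigma_A,a_A}_{b_A}(\Omega_A)$ on an open set $\Omega_A$ with $\overline U\subset\Omega_A\subset\Omega$, a target triple $(\sigma_B,a_B,b_B)$ will be reachable on a slightly smaller $\Omega_B\subset\Omega_A$ provided (H1)~$\sigma_B\le \sigma$, (H2)~$\tfrac{1}{a}-\tfrac{\sigma}{n}\le \tfrac{1}{a_B}-\tfrac{\sigma_B}{n}$, (H3)~$\sigma_B\le \sigma_A+1$, (H4)~$\tfrac{1}{a_A}-\tfrac{\sigma_A+1}{n}\le \tfrac{1}{a_B}-\tfrac{\sigma_B}{n}$, together with two fine-parameter embeddings $F^{\sigma,a}_{b}\hookrightarrow F^{\sigma_B,a_B}_{b_B}$ and $F^{\sigma_A+1,a_A}_{b_A}\hookrightarrow F^{\sigma_B,a_B}_{b_B}$. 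By Proposition~\ref{prop:embedding-Fsp}, whenever strict inequality holds in (H1)--(H2) or in (H3)--(H4), the fine parameters play no role; only at corner cases where equality occurs do they impose real constraints. To prove the step I would cover $\overline{\Omega_B}$ by finitely many balls on which Proposition~\ref{prop:elliptic-zoom-Fsp} applies at $(\sigma_B,a_B,b_B)$, take a subordinate partition of unity $\{\phi_i\}$, and split $L(\phi_i u)=\phi_i Lu+[L,\phi_i]u$. Conditions (H1)--(H2) dominate the first summand by $\|Lu\|_{F^{\sigma-d,a}_b(\Omega)}$; conditions (H3)--(H4) together with Lemma~\ref{lem:commutator-Fsp} dominate the commutator by $\|u\|_{F^{\sigma_A,a_A}_{b_A}(\Omega_A)}$. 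Proposition~\ref{prop:elliptic-zoom-Fsp} then yields the bootstrap estimate, and summing on $i$ concludes the step.

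With this step in hand, when $d_0=0$ I would start the iteration at $(d-s,p^*,q^*)\in\mathcal S^d_0(F^{s,p}_q)$ (which is valid by Lemma~\ref{lem:S-members-Fsp}) and trace a finite path in $\mathcal S^d_0(F^{s,p}_q)$ to the target $(\sigma,a,b)$ by the same strategy as in Theorem~\ref{thm:interior-reg}: either lower $1/a_k$ by at most $1/n$ per step and then raise $\sigma_k$ by at most $1$ per step, or slide diagonally along the line $1/a_k-\sigma_k/n=1/p^*-(d-s)/n$ before switching to a purely vertical raise in $\sigma_k$. At every interior step, strict inequality holds in at least one of (H1)/(H2), so $b_k$ can be adjusted freely via the embeddings of Proposition~\ref{prop:embedding-Fsp}; the final step terminating at $(\sigma,a,b)$ satisfies the corner fine-parameter condition because $(\sigma,a,b)$ belongs to $\mathcal S^d_{d_0}(F^{s,p}_q)$ by hypothesis. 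The case $d_0>0$ is handled exactly as in the Bessel proof: pick an auxiliary triple $(\sigma',a',b')\in\mathcal S^d_0(F^{s,p}_q)$ determined from $(\sigma,a,b)$ by the trichotomy used in the Bessel argument, run the $d_0=0$ bootstrap to reach it, then apply a single additional step whose commutator hypothesis uses the second alternative in Lemma~\ref{lem:commutator-Fsp}.

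The main obstacle I expect is fine-parameter bookkeeping at the two extreme slices $\sigma=s+d_0$ and $\sigma=d-s$ of $\mathcal S^d_{d_0}(F^{s,p}_q)$, where Definition~\ref{def:scriptS-Fsp} enforces genuine constraints relating $b$ to $q$ and $q^*$. At these corners the Sobolev embedding used to verify (H1)--(H2) becomes an equality, and one must simultaneously choose $b_k$ compatible with Lemma~\ref{lem:commutator-Fsp} and with the inclusion $F^{\sigma_A+1,a_A}_{b_A}\hookrightarrow F^{\sigma_B,a_B}_{b_B}$. Arranging the path so that every intermediate step has strict inequality in at least one of the Lebesgue-regularity conditions confines this difficulty to the terminal step, at which point membership of $(\sigma,a,b)$ in $\mathcal S^d_{d_0}(F^{s,p}_q)$ supplies exactly the needed fine-parameter compatibility.
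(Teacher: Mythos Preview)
Your proposal is correct and follows essentially the same route as the paper. The paper formulates the two fine-parameter conditions explicitly as \textbf{H5} (if $\sigma_B=\sigma$ then $b_B\ge b$) and \textbf{H6} (if $\sigma_B=\sigma_A+1$ then $b_B\ge b_A$), and rather than arguing that $b_k$ may be chosen freely at interior steps, it makes the concrete choice $b_k=q^*$ throughout the bootstrap and only sets $b_k=b$ at the terminal step; it then verifies case by case that $(\sigma_k,a_k,q^*)\in\mathcal S^d_0(F^{s,p}_q)$ so that Lemma~\ref{lem:commutator-Fsp} applies, and that in the marginal case $\sigma=d-s$ one extra step is needed to move from $(\sigma,a,q^*)$ to $(\sigma,a,b)$. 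For $d_0\ge 1$ the paper also specifies $b'=\max(q,b)$ for the auxiliary triple, precisely to satisfy the fine-parameter restriction at $\sigma'=s$. Your strategy of arranging strict inequalities at intermediate steps is equivalent in spirit, but the paper's fixed choice $b_k=q^*$ streamlines the verification that each iterate stays in $\mathcal S^d_0(F^{s,p}_q)$.
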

\begin{proof}
The proof follows that of Theorem \ref{thm:interior-reg}
with the following changes needed to manage the fine parameter.

A bootstrap step starts with knowing $u\in F^{\sigma_A,a_A}_{b_A}(\Omega_A)$ for
some open set $\Omega_A$ containing $\overline U$ and 
we wish to improve these parameters to $(\sigma_B,a_B,b_B)$ while shrinking $\Omega_A$.  We assume:
\begin{enumerate}
	\item[\optionaldesc{H1}{cond:hard-deriv-Fsp}] $\sigma_B\le \sigma$,
	\item[\optionaldesc{H2}{cond:hard-lebesgue-Fsp}] $\displaystyle \frac{1}{q}-\frac{\sigma}{n} \le \frac{1}{q_B}-\frac{\sigma_B}{n}$,
	\item[\optionaldesc{H3}{cond:soft-deriv-Fsp}]$\sigma_B\le \sigma_A + 1$,
	\item[\optionaldesc{H4}{cond:soft-lebesgue-Fsp}] $\displaystyle \frac{1}{q_A}-\frac{\sigma_A+1}{n}\le \frac{1}{q_B}-\frac{\sigma_B}{n}$,
	\item[\optionaldesc{H5}{cond:hard-deriv-strict}] if $\sigma_B = \sigma$ then $b_B\ge b$,
	\item[\optionaldesc{H6}{cond:soft-deriv-strict}] if $\sigma_B = \sigma_A+1$ then $b_B\ge b_A$.
\end{enumerate}
Hypotheses \eqref{cond:hard-deriv-Fsp}--\eqref{cond:soft-lebesgue-Fsp} are exactly those of 
Theorem \eqref{thm:interior-reg} expressed in terms of the notation of the current result.
Condition \eqref{cond:hard-deriv-strict} is needed additionally to
ensure $F^{\sigma,a}_b(\Omega)\subset F^{\sigma_B,a_B}_{b_B}(\Omega)$.
Similarly \eqref{cond:soft-deriv-strict} is the extra hypothesis needed to
ensure $F^{\sigma_A+1,a_A}_{b_A}(\Omega_A)\subset F^{\sigma_B,a_B}_{b_B}(\Omega_A)$.
If we additionally assume that $(\sigma_A,a_A,b_A)$ satisfies the conditions of Lemma \ref{lem:commutator-Fsp}
so that its commutator estimate applies, the bootstrap step argument of Theorem \ref{thm:interior-reg} then 
goes through with obvious changes and we obtain an  open set $\Omega_B$ with 
$\overline U\subset \Omega_B \subset \Omega_A$ such that $u\in F^{\sigma_B,a_B}_{b_B}(\Omega_B)$
along with the estimate
\[
||u||_{F^{\sigma_B,a_B}_{b_B}(\Omega_B)} \lesssim ||Lu||_{F^{\sigma-d,a}_b(\Omega)} + ||u||_{F^{\sigma_A,a_A}_{b_A}(\Omega_A)}.
\]

Now consider the bootstrap in the case $d_0=0$ where we pass through a sequence of
regularity parameters $(\sigma_k,a_k,b_k)$ starting from $(\sigma_,a_0,b_0)=(d-s-1,p^*,q^*)$;
we need not track the shrinking open sets. Focusing for the moment only on the parameters $\sigma_k$ and $a_k$, 
the bootstrap consists of three distinct stages:
\begin{enumerate}
	\item The initial step arriving at $(\sigma_1,a_1,b_1)=(d-s,p^*,q^*)$.
	\item A low regularity stage that either 
		\begin{itemize}
			\item preserves $\sigma_k=d-s$ while lowering $1/a_k$ by at most $1/n$ per step, or
			\item preserves the Lebesgue regularity at the low value $1/p^*-(d-s)/n$ while raising $\sigma_k$
			      by at most 1 per step.
		\end{itemize}
		At the end of the low regularity stage $\sigma_k\le \sigma$ and $a_k=a$.
	\item A derivative improving stage where $a_k=a$ is fixed and $\sigma_k$ is raised by at most 1 per step
	      until arriving at its final value. 
\end{enumerate}

We now discuss the sequence of fine parameters $b_k$, which will in fact be set to $q^*$ throughout the sequence
just described except at the last step, where it is set to its desired value.  
\begin{enumerate}
	\item The initial stage starts at $(\sigma_0,a_0,b_0)=(d-s-1,p^*,q^*)$ and wish to improve to
	$(-d+s,p^*,q^*)$.  Because $(\sigma_0+1,a_0,b_0)\in \mathcal S^d_0(F^{s,p}_q)$, the commutator result
	Lemma \ref{lem:commutator-Fsp} applies.  Hypotheses \eqref{cond:hard-deriv-Fsp}--\eqref{cond:soft-lebesgue}
	hold for the same reasons as in Theorem \ref{thm:interior-reg}.  At this step, 
	hypothesis \eqref{cond:hard-deriv-strict} reads ``if $d-s$ = $\sigma$ then $b\le q^*$'',
	which is satisfied by the definition of $\mathcal S^{d}_0(F^{s,p}_q)$. Finally, hypothesis \eqref{cond:soft-lebesgue-strict}
	holds trivially.  Thus this first bootstrap step is justified.
	\item During the low regularity stage we again preserve $b_k=q^*$ at every step.  This is justified
	as follows for the two possibilities:
	\begin{itemize}
		\item Consider a step where $\sigma_k=d-s$ and where $1/a_k$ is lowered by at most $1/n$.
		Conditions \eqref{cond:hard-deriv-Fsp}--\eqref{cond:soft-lebesgue-Fsp} are met for the
		same reasons as in Theorem \eqref{thm:interior-reg} and condition \eqref{cond:soft-lebesgue-strict}
		is met trivially.  Condition \eqref{cond:hard-lebesgue-strict} is a restriction only if $\sigma=d-s$,
		in which case it requires $b\le q^*$; this condition is met by the definition of $\mathcal S^{d}_0(F^{s,p}_q)$.
		We also need to ensure that the commutator estimate can be employed, which can be done by showing
		that $(\sigma_k,a_k,b_k)=(d-s,a_k,q^*)\in \mathcal S^d_0(F^{s,p})$.  In fact, 
		the definition of $\mathcal S^d_0(F^{s,p}_q)$ permits the fine parameter to be $q^*$ along the
		line $\sigma=d-s$, even in the marginal case $s=d-s$ where the region $S^{d}_0(F^{s,p}_q)$ collapses
		to a line segment.  The remainder of the justification of the commutator estimate is the same
		as in Theorem \ref{thm:interior-reg}.
		\item Consider a step where the Lebesgue regularity is preserved at the low value $1/p^*-(d-s)/n$.
		and where $\sigma_k$ is raised by at most 1; without loss of generality we can assume we raise 
		$\sigma_k$ by less than 1.  Hypotheses 
		\eqref{cond:hard-deriv-Fsp}--\eqref{cond:soft-lebesgue} are met for the same reasons as in Theorem
		\ref{thm:interior-reg}. Condition \eqref{cond:soft-deriv-strict} is always met because of our additional
		restriction on the step size.  Condition \eqref{cond:hard-lebesgue-strict} only comes into play
		if we are raising $\sigma_k$ to its terminal value, in which case we also set the fine parameter to its
		terminal value $b$ (and stop the bootstrap). We need
		to ensure that each non-terminal $(\sigma_k,a_k,b_k)$ lies in $S^d_0(F^{s,p}_q)$ in order to apply the commutator
		estimate, but this is ensured because a fine parameter value of $q^*$ is always permitted along this line
		of Lebesgue regularity.
	\end{itemize}
	\item On a step where we raise $\sigma_k$ and leave $a_k$ fixed at its terminal value we can 
	again assume we raise $\sigma_k$ by less than 1.  Throughout this stage we again leave $b_k$ fixed
	at $q^*$ except at the very last step.  There are no fine parameter restrictions that arise
	to allow the commutator estimate to apply, and hypotheses 
	\eqref{cond:hard-deriv-Fsp}--\eqref{cond:soft-lebesgue-Fsp} hold for the same reasons as 
	in Theorem \eqref{thm:interior-reg}. Hypothesis \eqref{cond:soft-deriv-strict} is always
	met because of our restriction on the step size and hypothesis \eqref{cond:hard-lebesgue-strict}
	only comes into play at the final step, where we meet it by setting $b_k$ to its terminal value $b$.
\end{enumerate}
At this point of the procedure we have arrived at the desired parameters $(\sigma,a,b)$,
except in the marginal case $\sigma=d-s$ in which
case we are at $(d-s,a,q^*)$.  Since $\sigma=d-s$, the definition of $\mathcal S^{d}_0(F^{s,p}_q)$
implies $b\le q^*$ and we can use this inequality to confirm that conditions 
\eqref{cond:hard-deriv-Fsp}--\eqref{cond:soft-lebesgue-strict} hold if we perform one more
bootstrap step to improve the fine parameter to its desired value $b$; note that the commutator
result Lemma \eqref{lem:commutator-Fsp} is available for this bootstrap step
since $(d-s,a,q^*)\in \mathcal S^d_0(F^{s,p}_q)$.

Now consider the case where $d_0=1$.  As in Theorem \ref{thm:interior-reg} it suffices to consider
assume $(\sigma,a,b)\in \mathcal S^{d}_1(F^{s,p}_q)$ but
$(\sigma,a,b)\not\in \mathcal S^{d}_0(F^{s,p}_q)$.  Starting from $(\sigma,a,b)$ we define $(\sigma',a',b')\in \mathcal S^{d}_0(F^{s,p}_q)$
by setting $b'=\max(q,b)$ and then applying the following rules:
\begin{enumerate}
	\item If $\sigma<s$, leave $\sigma'=\sigma$ fixed but raise $1/a$ by at most $1/n$ to $1/a'$
	such that $1/a'-\sigma/n=1/p-s/n$.
	\item If $\sigma\ge s$ and $1/p-s/n \le 1/a-\sigma/n$, lower $\sigma$ by at most 1 to $s$ while simultaneously 
	lowering $1/a$ by at most $1/n$ so that the Lebesgue regularity $1/a-\sigma/n=1/a'-\sigma'/n$ is unchanged.
	\item Otherwise, $(\sigma,a)$ satisfies $s\le \sigma\le \sigma+1$ and 
	\[
		\frac{1}{p}-\frac{s+1}{n} \le \frac{1}{a}-\frac{\sigma}{n} < \frac{1}{p}-\frac{s}{n}
	\]
	and we set $(\sigma',a')=(s,p)$.
\end{enumerate}
Note that we set $b'=\max(q,b)$ to satisfy the fine parameter restriction on $\mathcal S^d_0(F^{s,p}_q)$
when $\sigma'=s$.
In all of these three cases, 
$F^{\sigma,a}_b(\Omega)$ is contained in $F^{\sigma',a'}_{b'}(\Omega)$ and we can therefore apply the $d_0=0$
bootstrap to arrive at $(\sigma',a',b')$. Since $(\sigma',a',b')\in \mathcal S_0^d(F^{s,p}_q)$,
a computation shows that $(\sigma'+1,a',b')\in \mathcal S_1^d(F^{s,p}_q)$ and hence the commutator
result from Lemma \ref{lem:commutator-Fsp} can be applied starting from $(\sigma',a',b')$. Hence
we can apply one round of the bootstrap starting from $(\sigma_A,a_A,b_A)=(\sigma',a',b')$
to arrive at $(\sigma_B,a_B,b_B)=(\sigma,a,b)$ 
so long as hypotheses \eqref{cond:hard-deriv-Fsp}--\eqref{cond:soft-deriv-strict}
are met.  The first four are satisfied for the same reasons as in Theorem \ref{thm:interior-reg}
and \eqref{cond:hard-deriv-strict} is satisfied trivially since $b_B=b$.  Finally,
\eqref{cond:soft-deriv-strict} is a restriction only if $\sigma_B=\sigma_A+1$, in which case
$\sigma=s+1$.  But then, since $(\sigma,a,b)\in \mathcal S^d_1(F^{s,p}_q)$, we have assumed $q\le b$
and hence $b'=\max(q,b)=b$. So we are not changing the fine parameter and 
condition \eqref{cond:soft-deriv-strict} is met.
This completes the proof when $d_0=1$,
and the result for higher values of $d_0$ follows from iterating this argument.
\end{proof}

\section{Coefficients in Sobolev-Slobodeckij Spaces}
\label{s:Wsp}

Sobolev-Slobodeckij spaces of functions on an open set $\Omega\subset \Reals^n$ 
are special cases of Triebel-Lizorkin spaces as follows:
\[
W^{s,p}(\Omega) = \begin{cases} F^{s,p}_2(\Omega)& s\in\Ints\\
	F^{s,p}_p(\Omega) & s\not\in\Ints.
\end{cases}
\]
Hence the results of Section \ref{s:Fsp} specialize to statements about Sobolev-Slobodeckij spaces,
which we briefly record here.

\begin{defn}\label{def:L-Wsp}
	Suppose $d_0,d\in \Ints_{\ge 0}$ with $d_0\le d$.
	A differential operator on 
	an open set $\Omega\subseteq \Reals^n$ of the form
	\[
	\sum_{d_0\le |\alpha|\le d} a^\alpha \partial_\alpha
	\]
	is of class $\mathcal L_{d_0}^d(W^{s,p};\Omega)$ 
	for some $s\in\Reals$ and $1<p<\infty$  if each
	\[
	a^\alpha \in W^{s+|\alpha|-d,p}(\Omega ).
	\]
	\end{defn}
	
\begin{theorem}\label{thm:mult-Wsp}
 Suppose $1<p_1,p_2,p<\infty$ and $s_1,s_2,s\in\Reals$.  Let $r_1,r_2$ and $r$ be defined by
\[
\frac{1}{r_1} = \frac{1}{p_1} - \frac{s_1}{n},\qquad
\frac{1}{r_2} = \frac{1}{p_2} - \frac{s_2}{n},\quad\text{\rm and}\quad
\frac{1}{r} = \frac{1}{p} - \frac{s}{n}.
\]
Pointwise multiplication of $C^\infty(\overline \Omega)$ 
functions extends to a continuous bilinear map 
$W^{s_1,p_1}(\Omega)\times W^{s_2,p_2}(\Omega)
\rightarrow W^{s,p}(\Omega)$ so long as
\begin{align}
s_1+s_2 &\ge 0\\
\min({s_1,s_2}) \ge \sigma \\
\max\left(\frac{1}{r_1},\frac{1}{r_2}\right) & \le  \frac{1}{r}\\
\frac{1}{r_1} + \frac{1}{r_2} & \le 1\\
\label{eq:SS-lebesgue-mult}
\frac{1}{r_1} + \frac{1}{r_2} & \le \frac{1}{r}
\end{align}
with the the following caveats:
\begin{itemize}
	\item Inequality \eqref{eq:SS-lebesgue-mult} is 
	strict if $\min(1/r_1,1/r_2,1-1/r)=0$.
	\item If $s=s_i\not\in\Ints$, then $p=p_i$, $i=1,2$.
	\item If $s_1,s_2\not\in \Ints$ and $s_1+s_2=0$ then $\frac{1}{p_1}+\frac{1}{p_2}=1$.
\end{itemize}
\end{theorem}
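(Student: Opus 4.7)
The plan is to deduce Theorem \ref{thm:mult-Wsp} as an immediate corollary of Theorem \ref{thm:mult-Fsp}, via the identification $W^{s,p}(\Omega) = F^{s,p}_{q(s,p)}(\Omega)$ where $q(s,p) = 2$ when $s\in\Ints$ and $q(s,p)=p$ when $s\notin\Ints$. Setting $q_i = q(s_i,p_i)$ and $q = q(s,p)$, I would invoke the Triebel-Lizorkin multiplication theorem directly: the algebraic conditions on $s_1+s_2$ and $\min(s_1,s_2)$, the two inequalities involving $1/r_1$, $1/r_2$, $1/r$, and the strictness of \eqref{eq:SS-lebesgue-mult} in the edge case all transfer verbatim from Theorem \ref{thm:mult-Fsp}. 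What remains is to verify that the three fine-parameter caveats attached to Theorem \ref{thm:mult-Fsp} either hold automatically or collapse to the two caveats listed in the Sobolev-Slobodeckij statement.

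For the first TL caveat, ``$1/q \le 1/q_i$ when $s = s_i$'', the subcase $s=s_i\in\Ints$ is trivial since both sides equal $1/2$. In the subcase $s=s_i\notin\Ints$ the caveat reads $p \ge p_i$, while the standing hypothesis $\max(1/r_1,1/r_2) \le 1/r$ specialized to index $i$ and $s=s_i$ reduces to $p \le p_i$. Together they force $p = p_i$, exactly matching the listed Sobolev-Slobodeckij caveat. For the second TL caveat, ``$1/q_1 + 1/q_2 \ge 1$ when $s_1+s_2=0$'', the mixed subcase with exactly one non-integral $s_i$ is vacuous because then $s_1+s_2$ is itself non-integral. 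In the both-integral subcase $1/q_1+1/q_2=1$, and in the both-non-integral subcase the caveat reads $1/p_1+1/p_2 \ge 1$, which when combined with the standing hypothesis $1/r_1+1/r_2 \le 1$ rewritten using $s_1+s_2=0$ yields the listed equality $1/p_1+1/p_2=1$. The third TL caveat at $s_1=s_2=s=0$ is automatic since all fine parameters equal $2$.

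The main obstacle, such as it is, lies in the bookkeeping of mixed integral/non-integral combinations of $s_1,s_2,s$; in each such case the analysis above either reduces the TL caveat to a trivial identity or squeezes an inequality from the TL caveat against the reverse inequality coming from the standing $r$-hypotheses, producing the equalities listed in the statement. No further ingredient is needed, so the proof amounts to checking the bookkeeping and quoting Theorem \ref{thm:mult-Fsp} with the specified choices of fine parameters.
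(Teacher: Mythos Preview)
Your proposal is correct and is exactly the approach the paper takes: the paper states at the start of Section~\ref{s:Wsp} that Sobolev--Slobodeckij spaces are Triebel--Lizorkin spaces with fine parameter $q=2$ (integer $s$) or $q=p$ (non-integer $s$), and then simply records Theorem~\ref{thm:mult-Wsp} as a specialization of Theorem~\ref{thm:mult-Fsp} without further detail. Your case analysis of the fine-parameter caveats---in particular the squeezing of the TL inequality $1/q\le 1/q_i$ against the $r$-hypothesis to force $p=p_i$, and the analogous squeeze for $1/p_1+1/p_2=1$---is the correct bookkeeping and is in fact more explicit than what the paper writes out.
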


\begin{proposition}\label{prop:mapping-Wsp}
Let $\Omega$ be a bounded open subset of $\Reals^n$.
Suppose $1<p,q<\infty$, 
$s>n/p$, $\sigma\in\Reals$ and 
$d,d_0\in\Ints_{\ge 0}$ with $d\ge d_0$.  
An operator of class $\mathcal L_{d_0}^d(W^{s,p};\Omega)$
extends from a map 
$C^\infty(\overline \Omega)\mapsto \mathcal D'(\Omega)$ to 
a continuous linear map 
$W^{\sigma,q}(\Omega)\mapsto W^{\sigma-d,q}(\Omega)$
so long as
\begin{equation}\label{eq:S-conds-Wsp}
\begin{aligned}
\sigma&\in [d-s,s+d_0]\\
\frac{1}{q} - \frac{\sigma}{n} &\in \left[
\frac{1}{p} - \frac{s+d_0}{n}, \frac{1}{p^*}-\frac{d-s}{n}
\right]
\end{aligned}
\end{equation}
and so long as:
\begin{itemize}
	\item If $s\not\in\Ints$ and $\sigma=s+d_0$ then $q=p$.
	\item If $s\not\in\Ints$ and $\sigma=d-s$ then $q=p^*$.
\end{itemize}
Moreover, operators in $\mathcal L_{d_0}^d(W^{s,p};\Omega)$ 
depend continuously
on their coefficients $a^\alpha\in W^{s,p+|\alpha|-d}(\Omega)$.
\end{proposition}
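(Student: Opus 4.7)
The plan is to reduce to Proposition \ref{prop:mapping-Fsp} via the identification $W^{t,r} = F^{t,r}_{\phi(t,r)}$, where I set $\phi(t,r) = 2$ if $t \in \Ints$ and $\phi(t,r) = r$ otherwise. Since $d$, $d_0$, and $|\alpha|$ are integers, $s + |\alpha| - d$ has the same integer status as $s$, so each coefficient $a^\alpha \in F^{s+|\alpha|-d,p}_{\phi(s,p)}(\Omega)$, and hence $L \in \mathcal L^d_{d_0}(F^{s,p}_{\phi(s,p)};\Omega)$. Since $\sigma$ and $\sigma - d$ share integer status, I likewise have $W^{\sigma,q} = F^{\sigma,q}_{\phi(\sigma,q)}$ and $W^{\sigma-d,q} = F^{\sigma-d,q}_{\phi(\sigma,q)}$.

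I would then apply Proposition \ref{prop:mapping-Fsp} to the triples $(s,p,\phi(s,p))$ for the coefficients and $(\sigma,q,\phi(\sigma,q))$ for the acted-on space. The Lebesgue-type conditions \eqref{eq:S-conds-Wsp} coincide with \eqref{eq:S-conds-Fsp} verbatim, so it remains only to check the extremal fine-parameter conditions at $\sigma = s + d_0$ and $\sigma = d - s$. When $s \in \Ints$, both extremes force $\sigma \in \Ints$, so $\phi(s,p) = \phi(\sigma,q) = 2$ and these conditions hold trivially; when $s \notin \Ints$ and $\sigma \in \Ints$, neither extreme is attained and no condition triggers. The symmetric mixed case is handled the same way.

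The substantive case is $s, \sigma \notin \Ints$ at an extremal $\sigma$, and here the key observation is that the Lebesgue-regularity box \eqref{eq:S-conds-Wsp} itself pins down $q$. At $\sigma = s + d_0$ the lower Lebesgue bound reduces to $1/q \ge 1/p$ (so $q \le p$), while the Triebel-Lizorkin extremal condition $1/\phi(s,p) \ge 1/\phi(\sigma,q)$ reads $1/p \ge 1/q$ and gives $q \ge p$; together these yield $q = p$. Dually, at $\sigma = d - s$ the upper Lebesgue bound gives $q \ge p^*$ while the Triebel-Lizorkin condition gives $q \le p^*$, hence $q = p^*$. Continuous dependence of $L$ on the coefficients transfers directly from Proposition \ref{prop:mapping-Fsp}. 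I do not anticipate any serious obstacle; the main subtlety is recognizing that the constraints $q = p$ and $q = p^*$ appearing in the statement are not additional inputs but automatic consequences of intersecting the Lebesgue-regularity box with the Triebel-Lizorkin fine-parameter conditions at the extreme corners.
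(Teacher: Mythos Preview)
Your approach is correct and matches the paper's: Section \ref{s:Wsp} states explicitly that its results are specializations of the Triebel-Lizorkin results of Section \ref{s:Fsp} via the identification $W^{s,p}=F^{s,p}_2$ or $F^{s,p}_p$ according as $s$ is or is not an integer, and Proposition \ref{prop:mapping-Wsp} is simply recorded there without a separate proof. Your detailed verification of how the fine-parameter conditions of Proposition \ref{prop:mapping-Fsp} collapse to the equalities $q=p$ and $q=p^*$ at the extreme values of $\sigma$ is exactly the computation the paper leaves implicit.
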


\begin{defn}\label{def:scriptS-Wsp}
	Suppose $1<p<\infty$, $s\in\Reals$ and $d,d_0\in\Ints_{\ge 0}$ with $d\ge d_0$.
	The \textbf{compatible Sobolev indices} 
	for an operator of class $\mathcal L_{d_0}^d(W^{s,p};\Omega)$ is the set
	\[
	\mathcal S_{d_0}^d(W^{s,p}) \subseteq \Reals\times (1,\infty)
	\]
	of tuples $(\sigma,q)$ 
	satisfying \eqref{eq:S-conds-Wsp} along with the additional 
	conditions at the end of Proposition \ref{prop:mapping-Wsp}
	when $\sigma=s+d_0$ or $\sigma=d-s$.
\end{defn}
	
\begin{lemma}\label{lem:S-members-Wsp}
	Suppose $1<p<\infty$, $s\in\Reals$ and $d,d_0\in \Ints_{\ge 0}$
	with $d\ge d_0$.  Then $\mathcal S_{d_0}^d(W^{s,p})$ 
	is nonempty if and only if
	\begin{align}
	\label{eq:S-p-1-Wsp}
	s &\ge (d-d_0)/2\text{, and}\\
	\label{eq:S-p-2-Wsp}
	\frac{1}{p} - \frac{s}{n} &\le \frac{1}{2}-\frac{(d-d_0)/2}{n}
	\end{align}
	with the additional condition when $s\not\in\Ints$ that $p = 2$ in the marginal case
	$s=(d-d_0)/2$.
	If $S_{d_0}^d(W^{s,p})$ is non-empty then it contains $(s+d_0,p)$, $(d-s,p^*)$,
	and $((d+d_0)/2,2)$.  Moreover, if $(\sigma,q)\in \mathcal S_{d_0}^d(W^{s,p})$, then
	we have the continuous inclusions of Fr\'echet spaces
	\begin{equation}\label{eq:S-include-Wsp}
	W^{s+d_0,p}_{\rm loc}(\Reals^n) \subseteq W^{\sigma,q}_{\rm loc}(\Reals^n)
	\subseteq W^{d-s,p^*}_{\rm loc}(\Reals^n).
	\end{equation}
	\end{lemma}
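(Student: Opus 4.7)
The plan is to reduce everything to the Triebel-Lizorkin case Lemma \ref{lem:S-members-Fsp}, using the identifications $W^{s,p}=F^{s,p}_2$ when $s\in\Ints$ and $W^{s,p}=F^{s,p}_p$ otherwise. When $s\in\Ints$ the statement is identical to Lemma \ref{lem:S-members} for Bessel potential spaces (no fine parameter subtleties arise because $s \in \Ints$ removes the extra conditions in Definition \ref{def:scriptS-Wsp}), so no new work is required and the argument is concentrated in the case $s\notin\Ints$.

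First I would observe that the core algebraic content of conditions \eqref{eq:S-conds-Wsp}, namely that the interval $[d-s,s+d_0]$ is nonempty and that the corresponding interval of Lebesgue regularities is nonempty, translates verbatim (via the same computations used in the proof of Lemma \ref{lem:S-members}) into inequalities \eqref{eq:S-p-1-Wsp}--\eqref{eq:S-p-2-Wsp}. What is new in the Sobolev-Slobodeckij setting is the fine parameter restriction from Proposition \ref{prop:mapping-Wsp}: when $\sigma=s+d_0$ one must have $q=p$, and when $\sigma=d-s$ one must have $q=p^*$.

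Next I would handle these fine parameter conditions by case analysis. If $s>(d-d_0)/2$, the interval $[d-s,s+d_0]$ has positive length, so one may choose $\sigma$ in its interior, where both endpoint restrictions are vacuous; then $\mathcal S_{d_0}^d(W^{s,p})$ is nonempty as soon as the interval of Lebesgue regularities is nonempty. The delicate situation is the marginal case $s=(d-d_0)/2$, where $d-s=s+d_0=(d+d_0)/2$ and the $\sigma$-interval collapses to a single point. Any compatible index then has $\sigma=(d+d_0)/2$ and must simultaneously satisfy $q=p$ and $q=p^*$, forcing $p=p^*=2$. This is exactly the additional marginal hypothesis in the statement, and it is both necessary and sufficient. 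Verification that $(s+d_0,p)$, $(d-s,p^*)$, and $((d+d_0)/2,2)$ lie in $\mathcal S_{d_0}^d(W^{s,p})$ is then a direct check, the first two by inspection and the third exactly as in Lemma \ref{lem:S-members}, noting that when $s=(d-d_0)/2$ all three points coincide and the marginal hypothesis $p=2$ makes the fine parameter constraints trivially satisfied.

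Finally, the embedding chain \eqref{eq:S-include-Wsp} follows from standard Sobolev embedding for $W^{s,p}$ spaces, which is a direct corollary of Proposition \ref{prop:embedding-Fsp} via the identifications above. The only real obstacle is correctly isolating the fine parameter condition in the marginal case; everything else is a direct translation of the Bessel potential argument, so I would write the proof as a short reduction that quotes Lemma \ref{lem:S-members} (or Lemma \ref{lem:S-members-Fsp} applied with fine parameter $p$) and then records the collapse-of-intervals observation that forces $p=2$ on the diagonal $s=(d-d_0)/2$.
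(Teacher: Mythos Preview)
Your proposal is correct and matches the paper's approach: the paper states this lemma without proof, presenting Section \ref{s:Wsp} as a direct specialization of the Triebel-Lizorkin results via the identifications $W^{s,p}=F^{s,p}_2$ for $s\in\Ints$ and $W^{s,p}=F^{s,p}_p$ otherwise. Your write-up is in fact more detailed than what the paper provides, and you correctly isolate the one genuinely new point, namely that in the marginal case $s=(d-d_0)/2$ with $s\notin\Ints$ the interval $[d-s,s+d_0]$ collapses and the two endpoint constraints $q=p$ and $q=p^*$ from Proposition \ref{prop:mapping-Wsp} must hold simultaneously, forcing $p=2$.
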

	
\begin{theorem}\label{thm:interior-reg-Wsp} 
	Let $\Omega$ be a bounded open set in $\Reals^n$ and suppose $s,p, d_0$ and $d$ are parameters
	as in Lemma \ref{lem:S-members-Wsp} such that
	$s>n/p$ and such that the conditions of 
	Lemma \ref{lem:S-members-Wsp}
	are satisfied so $\mathcal S_{d_0}^d(W^{s,p})\neq \emptyset$.
	Suppose $L$ is of class $\mathcal L_{d_0}^d(W^{s,p};\Omega)$
	and is elliptic on $\Omega$.  If $u\in W^{d-s,p^*}(\Omega)$
	and $Lu\in W^{\sigma-d,q}(\Omega)$ for some 
	$(\sigma,q)\in \mathcal S_{d_0}^d(W^{s,p})$
	then for any open set $U$ with $\overline U\subseteq \Omega$, $u\in W^{\sigma,q}(U)$ and 
	\begin{equation}\label{eq:int-reg-Wsp}
	||u||_{W^{\sigma,q}(U)} \lesssim 
	||Lu||_{W^{\sigma-d,q}(\Omega)}
	+ ||u||_{W^{d-s-1,p^*}(\Omega)}.
	\end{equation}
	\end{theorem}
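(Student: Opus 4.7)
The plan is to deduce the result directly from Theorem \ref{thm:interior-reg-Fsp} via the identifications $W^{s,p} = F^{s,p}_2$ (when $s \in \Ints$) and $W^{s,p} = F^{s,p}_p$ (when $s \notin \Ints$). The key observation is that every regularity index appearing in the statement has its integer status determined by that of $s$ or $\sigma$: the coefficient indices $s + |\alpha| - d$ and the indices $d-s$, $d-s-1$ all share the integer status of $s$ (since they differ from $\pm s$ by an integer), while $\sigma$ and $\sigma - d$ share a common integer status. Consequently, every Sobolev-Slobodeckij space in sight identifies unambiguously with a single Triebel-Lizorkin space, with no ambiguity in the fine parameter.

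Introduce the shorthand $\tilde q = 2$ if $s \in \Ints$ and $\tilde q = p$ otherwise, and $b = 2$ if $\sigma \in \Ints$ and $b = q$ otherwise. Then $L \in \mathcal L^d_{d_0}(F^{s,p}_{\tilde q}; \Omega)$, the hypothesis $u \in W^{d-s,p^*}(\Omega)$ reads $u \in F^{d-s,p^*}_{\tilde q^*}(\Omega)$, and $Lu \in W^{\sigma-d,q}(\Omega)$ reads $Lu \in F^{\sigma-d,q}_b(\Omega)$, while the target norm $W^{\sigma,q}(U)$ matches $F^{\sigma,q}_b(U)$ and $W^{d-s-1,p^*}(\Omega)$ matches $F^{d-s-1,p^*}_{\tilde q^*}(\Omega)$.

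The essential verification is that $(\sigma, q, b) \in \mathcal S^d_{d_0}(F^{s,p}_{\tilde q})$. The Lebesgue/regularity inequalities \eqref{eq:S-conds-Fsp} are identical to \eqref{eq:S-conds-Wsp} from Definition \ref{def:scriptS-Wsp}. The fine-parameter boundary conditions of Definition \ref{def:scriptS-Fsp} at $\sigma = s + d_0$ and $\sigma = d - s$ impose a nontrivial restriction only when $s \notin \Ints$; in those cases Proposition \ref{prop:mapping-Wsp} forces $q = p$ or $q = p^*$ respectively, giving precisely the equalities $b = \tilde q$ or $b = \tilde q^*$ required. The marginal case $s = (d - d_0)/2$ of Lemma \ref{lem:S-members-Fsp} demands $\tilde q \le 2$; when $s \notin \Ints$ in this case, Lemma \ref{lem:S-members-Wsp} forces $p = 2$, yielding $\tilde q = 2$.

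With these identifications and verifications in place, invoking Theorem \ref{thm:interior-reg-Fsp} yields $u \in F^{\sigma,q}_b(U) = W^{\sigma,q}(U)$ together with estimate \eqref{eq:int-reg-Fsp}, which coincides with \eqref{eq:int-reg-Wsp}. The main (and essentially only) obstacle is the careful bookkeeping of fine parameters at the marginal cases; in each, the supplemental Sobolev-Slobodeckij side conditions of Proposition \ref{prop:mapping-Wsp} and Lemma \ref{lem:S-members-Wsp} are precisely tuned to enforce the corresponding Triebel-Lizorkin fine-parameter condition with equality.
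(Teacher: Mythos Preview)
Your proposal is correct and matches the paper's approach exactly: Section~\ref{s:Wsp} presents all Sobolev--Slobodeckij results, including Theorem~\ref{thm:interior-reg-Wsp}, as direct specializations of the Triebel--Lizorkin theory of Section~\ref{s:Fsp}, without separate proofs. Your careful fine-parameter bookkeeping (tracking integer status of $s$ and $\sigma$ and verifying that the side conditions of Proposition~\ref{prop:mapping-Wsp} and Lemma~\ref{lem:S-members-Wsp} furnish exactly the marginal conditions needed for Definition~\ref{def:scriptS-Fsp} and Lemma~\ref{lem:S-members-Fsp}) spells out precisely what the paper leaves implicit.
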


\section{Coefficients in Besov Spaces}
\label{s:Bsp}

We establish results for operators with coefficients in Besov spaces, mirroring the developments of the preceding sections.
Recall from Section \ref{s:Fsp} the Littlewood-Paley projectors $P_k$ and $P_{\le k}$.
Let $1<p,q<\infty$ and $s\in\Reals$.
A tempered distribution $u$ belongs to the Besov
space $\Bv[s,p,q](\Reals^n)$ if
\begin{equation}\label{e:Besov-norm}
||u||_{\Bv[s,p,q](\Reals^n)} =
\left\|P_{\le 0} u\right\|_{L^p(\Reals^n)}
+ \left( \sum_{k\ge 1} 2^{sqk}\|P_ku\|_{L^p(\Reals^n)}^q\right)^{\frac1q} 
< \infty.
\end{equation}
On an open set 
$\Omega\subseteq \Reals^n$, the space $\Bv[s,p,q](\Omega)$
consists of the restrictions of distributions in
$\Bv[s,p,q](\Reals^n)$ to $\Omega$ and is given the quotient norm.

Embedding properties of Besov spaces can be found in 
\cite{triebel_theory_2010} Proposition 2.3.2/2, and Theorems
2.7.1 and 3.3.1 along with \cite{triebel_interpolation_1978} Theorems 2.8.1 and 4.6.1; we summarize these 
in the following proposition.  The important
distinction here from Triebel-Lizorkin spaces is that when performing Sobolev embedding,
the fine parameter cannot be improved if the Lebesgue regularity $1/p-s/n$ stays fixed.
This phenomenon is the source of many of the additional fine parameter restrictions in this
section beyond those of Section \ref{s:Fsp}.

\begin{proposition}\label{prop:embedding-Bsp}
Assume $1<p,p_1,p_2,q,q_1,q_2<\infty$
and $s,s_1,s_2\in \Reals$, and suppose $\Omega$ is a bounded open set in  $\Reals^n$.
\begin{enumerate}
	\item 
	If $s_1>s_2$ then
$\Bv[s_1,p,q_1](\Reals^n)\hookrightarrow \Bv[s_2,p,q_2](\Reals^n)$ and $\Bv[s_1,p,q_1](\Omega)\hookrightarrow \Bv[s_2,p,q_2](\Omega)$.
	\item 
	If $q_1\le q_2$ then
$\Bv[s,p,q_1](\Reals^n)\hookrightarrow \Bv[s,p,q_2](\Reals^n)$ and $\Bv[s,p,q_1](\Omega)\hookrightarrow \Bv[s,p,q_2](\Omega)$.
	\item\label{part:embedding-Bsp-bounded-same-s}
	If $p_1\ge p_2$ then
$\Bv[s,p_1,q](\Omega)\hookrightarrow \Bv[s,p_2,q](\Omega)$.
	\item
	If $s_1>s_2$ and
	$\frac{1}{p_1}-\frac{s_1}{n} = \frac{1}{p_2}-\frac{s_2}{n}$ then
$\Bv[s_1,p_1,q](\Reals^n)\hookrightarrow \Bv[s_2,p_2,q](\Reals^n)$ and $\Bv[s_1,p_1,q](\Omega)\hookrightarrow \Bv[s_2,p_2,q](\Omega)$.
\item
If $s_1>s_2$ and
$\frac{1}{p_1}-\frac{s_1}{n} < \frac{1}{p_2}-\frac{s_2}{n}$ then
$\Bv[s_1,p_1,q_1](\Omega)\hookrightarrow \Bv[s_2,p_2,q_2](\Omega)$.
\item If $0<\alpha<1$ then $\Bv[\frac{n}p+\alpha,p,q](\Reals^n)\hookrightarrow C^{0,\alpha}(\Reals^n)$
and $\Bv[\frac{n}p+\alpha,p,q](\Omega)\hookrightarrow C^{0,\alpha}(\Omega)$.
\end{enumerate}
\end{proposition}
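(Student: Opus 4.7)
The plan is to establish the whole-space embeddings by direct quotation from the references indicated in the sentence preceding the statement, and then transfer each embedding to a bounded open set $\Omega$ via the quotient-norm definition, exactly as is done after Proposition \ref{prop:embedding-Fsp} for the Triebel-Lizorkin case.  For the $\Reals^n$ assertions, items 1 and 2 are pure monotonicity statements that can be read off from the $\ell^q$ and $2^{sk}$ weights in the defining norm \eqref{e:Besov-norm}.  Item 4, the Besov analogue of Sobolev embedding, is \cite{triebel_theory_2010} Theorem 2.7.1 and reduces at the dyadic level to the Bernstein/Nikol'ski\u{\i} estimate $\|P_k u\|_{L^{p_2}(\Reals^n)}\lesssim 2^{(\frac{n}{p_1}-\frac{n}{p_2})k}\|P_k u\|_{L^{p_1}(\Reals^n)}$, whose factor is absorbed by $2^{(s_1-s_2)k}$ under the stated equality of Lebesgue regularities.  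Item 6 is the Besov-H\"older embedding in the same theorem of \cite{triebel_theory_2010} together with \cite{triebel_interpolation_1978} Theorem 2.8.1.

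The transfer of items 1, 2, 4 and 6 to a bounded open set $\Omega$ is handled uniformly by the quotient-norm definition: for $u \in \Bv[s_1, p_1, q_1](\Omega)$, first choose an extension $\hat u \in \Bv[s_1, p_1, q_1](\Reals^n)$ whose norm is within a factor of two of the quotient norm of $u$, then apply the corresponding whole-space embedding to $\hat u$, and finally take the infimum over extensions to obtain the target-space bound on $u=\hat u|_\Omega$.  The implicit constant is inherited from the whole-space case and is independent of the geometry of $\Omega$.

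Item 3 is intrinsically a bounded-domain statement, since there is no monotonicity in $p$ at fixed $s$ on $\Reals^n$.  The cleanest route is to enclose $\Omega$ in a bounded $C^\infty$ domain $\Omega'$, apply the embedding $\Bv[s,p_1,q](\Omega') \hookrightarrow \Bv[s,p_2,q](\Omega')$ from \cite{triebel_interpolation_1978} Theorem 4.6.1 (which relies at its core on $L^{p_1}(\Omega') \hookrightarrow L^{p_2}(\Omega')$ combined with a bounded extension/restriction for $C^\infty$ domains), and then compose with the obvious continuous inclusions $\Bv[s,p_1,q](\Omega)\to \Bv[s,p_1,q](\Omega')$ (from extending by zero beyond $\Omega$ after truncation inside $\Omega'$) and the restriction $\Bv[s,p_2,q](\Omega')\to \Bv[s,p_2,q](\Omega)$ built into the quotient-norm definition.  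Item 5 then follows by composing items 1 and 4: since the Lebesgue-regularity inequality is strict, one can choose $s_3$ with $s_1 > s_3 > s_2$ satisfying $\tfrac{1}{p_1}-\tfrac{s_1}{n} = \tfrac{1}{p_2}-\tfrac{s_3}{n}$, apply item 4 to pass from $(s_1,p_1,q_1)$ to $(s_3,p_2,q_1)$, and then apply items 1 and 2 to absorb the change in fine parameter while reducing $s_3$ to $s_2$.

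The main obstacle is essentially bookkeeping rather than analysis: each whole-space embedding in Triebel's references is stated for a wider parameter range than the restricted window $1 < p,q < \infty$ adopted here, and one must verify that the combined use in item 5 of strict Lebesgue-regularity monotonicity (item 4) followed by derivative-for-fine-parameter trade (item 1) is legitimate for arbitrary bounded open $\Omega$, not only $C^\infty$ domains — a point that is handled in both cases by the enclosing-$\Omega'$ argument sketched above.
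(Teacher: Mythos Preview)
Your approach matches the paper's: the proposition is stated without proof, with the preceding sentence citing \cite{triebel_theory_2010} Proposition 2.3.2/2 and Theorems 2.7.1, 3.3.1 together with \cite{triebel_interpolation_1978} Theorems 2.8.1 and 4.6.1, and the sentence following it noting that the passage from $C^\infty$ domains to arbitrary bounded open sets is an easy corollary of the quotient-space definition.  Your sketch fills in exactly this outline.

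Two small repairs are needed.  In item 3, there is no ``obvious continuous inclusion $\Bv[s,p_1,q](\Omega)\to\Bv[s,p_1,q](\Omega')$'' via zero extension: for general $s$ and an arbitrary open $\Omega$, extension by zero need not land in the Besov space.  The correct chain stays at the level of norms via the quotient definition: given $u\in\Bv[s,p_1,q](\Omega)$, pick $\hat u\in\Bv[s,p_1,q](\Reals^n)$ nearly realizing the infimum, restrict $\hat u$ to $\Omega'$ (which only decreases the quotient norm), apply the smooth-domain embedding on $\Omega'$, and then restrict back to $\Omega$.  No map from $\Omega$ to $\Omega'$ is invoked.

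In item 5, your choice of $s_3$ with $\tfrac1{p_1}-\tfrac{s_1}{n}=\tfrac1{p_2}-\tfrac{s_3}{n}$ and $s_3<s_1$ forces $p_2>p_1$, so the argument as written only covers that case.  When $p_1\ge p_2$, first apply item 3 on $\Omega$ to pass from $(s_1,p_1,q_1)$ to $(s_1,p_2,q_1)$, and then item 1 to reach $(s_2,p_2,q_2)$.
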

As noted previously following Proposition \ref{prop:embedding-Fsp}, although \cite{triebel_theory_2010} 
and \cite{triebel_interpolation_1978} only
prove the embedding results above for bounded domains when the boundary is smooth, the result for arbitrary
bounded open sets is an easy corollary.

Complex interpolation of Besov spaces (\cite{triebel_theory_2010} Theorems 2.4.7 and 3.3.6) 
follows the same pattern as for Triebel-Lizorkin spaces.
\begin{proposition}
	Assume $1<p_1,p_2,q_1,q_2< \infty$
	and $s_1,s_2\in \Reals$, and suppose $\Omega$ is either $\Reals^n$ or is
	a bounded $C^\infty$ domain in $\Reals^n$.
	For $0<\theta<1$,
	\[
		[\Bv[s_1,p_1,q_1](\Omega),\Bv[s_2,p_2,q_2](\Omega)]_\theta = \Bv[s,p,q](\Omega)
	\]
	where
	\[
		s=(1-\theta)s_1+\theta s_2,\quad \frac{1}{p} = (1-\theta)\frac{1}{p_1}+\theta \frac{1}{p_2},\quad
		(1-\theta)\frac{1}{q_1}+\theta \frac{1}{q_2}.
	\]
\end{proposition}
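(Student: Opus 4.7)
The plan is to follow the standard Littlewood-Paley approach to interpolation due to Triebel (and indeed this proposition is cited as a known result), reducing the interpolation of Besov spaces to interpolation of vector-valued Lebesgue spaces via a retraction-coretraction pair. The main technical device is that $\Bv[s,p,q](\Reals^n)$ can be identified with a complemented subspace of a weighted $\ell^q$-valued $L^p$-space, and this identification intertwines the complex interpolation functor.

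First I would define, for the whole-space case, the sequence space $\ell^q_s(L^p)$ consisting of sequences $(f_k)_{k\ge 0}$ of tempered distributions with finite norm $\|(f_k)\|_{\ell^q_s(L^p)} = (\sum_{k\ge 0} 2^{skq} \|f_k\|_{L^p(\Reals^n)}^q)^{1/q}$ (with the $k=0$ term corresponding to $P_{\le 0}$). Then I would construct a retraction $R:\Bv[s,p,q](\Reals^n)\to \ell^q_s(L^p)$ by $Ru = (P_{\le 0}u, P_1u, P_2u,\ldots)$ and a coretraction $S:\ell^q_s(L^p)\to \Bv[s,p,q](\Reals^n)$ by $S(f_k) = \sum_k \widetilde{P}_k f_k$, where $\widetilde{P}_k = P_{k-3\le\cdot\le k+3}$ (with the convention that $\widetilde{P}_0$ involves a low-frequency enlargement of $P_{\le 0}$). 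By Proposition \ref{prop:littlewood-payley-review} part \eqref{part:throw-away-Pk} (or rather, its Besov-space analogue obtained by norm-exchange with $L^p$ of $\ell^q$), both maps are continuous for all admissible $(s,p,q)$, and $R\circ S$ restricted to the range of $R$ is the identity, so $S\circ R = \mathrm{id}$ on $\Bv[s,p,q](\Reals^n)$.

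Next, I would invoke the standard complex interpolation of $\ell^q$-valued $L^p$-spaces with power weights: namely
\[
  [\ell^{q_1}_{s_1}(L^{p_1}),\ell^{q_2}_{s_2}(L^{p_2})]_\theta = \ell^{q}_{s}(L^{p}),
\]
with $s$, $p$, $q$ as in the statement. This is a direct consequence of the interpolation of vector-valued Lebesgue spaces (\cite{triebel_interpolation_1978} Section 1.18), where the weight $2^{sk}$ is absorbed by multiplying coordinates and the exponents combine by the standard convexity relations. Transporting this identity through the retraction $R$ and coretraction $S$, using the general principle that a retract of an interpolation scale is itself an interpolation scale with the retract functor, yields the identification of $[\Bv[s_1,p_1,q_1](\Reals^n),\Bv[s_2,p_2,q_2](\Reals^n)]_\theta$ with $\Bv[s,p,q](\Reals^n)$.

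Finally, the extension to a bounded $C^\infty$ domain $\Omega$ proceeds via the standard universal extension/restriction argument (Rychkov-type extension, or Triebel's total extension operator for $C^\infty$ domains). Letting $E:\Bv[s,p,q](\Omega)\to \Bv[s,p,q](\Reals^n)$ be a bounded linear extension that works simultaneously for all admissible indices, and $\mathrm{rest}_\Omega$ the restriction, the pair $(E,\mathrm{rest}_\Omega)$ is a retraction-coretraction pair identifying $\Bv[s,p,q](\Omega)$ as a retract of $\Bv[s,p,q](\Reals^n)$; the domain case then follows from the whole-space case by the same general interpolation-of-retracts principle. The main obstacle in a fully self-contained account would be the construction of a common extension operator for $C^\infty$ domains that is bounded on all Besov spaces simultaneously, but this is classical (\cite{triebel_theory_2010}) and so the proposition can be stated as a direct consequence of the cited theorems.
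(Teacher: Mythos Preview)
Your sketch is correct and follows the standard retraction/coretraction argument that is essentially the proof in Triebel's monograph. However, the paper does not actually prove this proposition: it merely states it and cites \cite{triebel_theory_2010} Theorems 2.4.7 and 3.3.6 as the source, remarking that complex interpolation of Besov spaces ``follows the same pattern as for Triebel-Lizorkin spaces.'' So there is no paper proof to compare against; your outline simply supplies what the cited reference contains.
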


Duality for Besov spaces of functions on $\Reals^n$ is analogous to that for Triebel-Lizorkin spaces; see
\cite{triebel_theory_2010} Theorem 2.11.2.
\begin{proposition}\label{prop:dual-}
	Assume $1<p,q<\infty$
	and $s\in \Reals$.
	The bilinear map $\mathcal D(\Reals^n)\times \mathcal D(\Reals^n)\to \Reals$
	given by $\left<f,g\right> := \int_\Omega fg$ extends to a continuous bilinear map
	$\Bv[s,p,q](\Reals^n)\times \Bv[-s,p^*,q^*](\Reals^n)\to \Reals$.
	Moreover, 
	$f\mapsto \left<f,\cdot\right>$ is a continuous identification of $\Bv[s,p,q](\Reals^n)$
	with $(\Bv[-s,p^*,q^*](\Reals^n))^*$.
\end{proposition}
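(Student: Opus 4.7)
The plan is to reduce both assertions to the duality theory of weighted sequence-valued Lebesgue spaces via the Littlewood-Paley decomposition that defines the Besov norm \eqref{e:Besov-norm}.

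First, for the continuity of the pairing on Schwartz functions $f,g$, I would exploit the almost-orthogonality of Littlewood-Paley blocks: by Plancherel, $\int (P_k f)(P_{k'} g) = 0$ whenever $|k-k'|\ge 2$, since the Fourier supports are then disjoint annuli. Expanding $\int fg$ as a double sum $\sum_{k,k'}\int (P_k f)(P_{k'} g)$, with the obvious modification to absorb the low-frequency term $P_{\le 0}$, collapses it to a sum over $|k-k'|\le 1$. Hölder in the spatial variable then gives
\[
\left|\int fg\right|\lesssim \|P_{\le 0}f\|_{L^p}\|P_{\le 0}g\|_{L^{p^*}}+\sum_{k\ge 1}\|P_k f\|_{L^p}\|P_k g\|_{L^{p^*}},
\]
and discrete Hölder in $\ell^q/\ell^{q^*}$ with weights $2^{sk}$ and $2^{-sk}$ bounds the right-hand side by $\|f\|_{\Bv[s,p,q](\Reals^n)}\|g\|_{\Bv[-s,p^*,q^*](\Reals^n)}$. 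Since $1<p,q<\infty$, $\mathcal D(\Reals^n)$ is dense in both factors, so the form extends uniquely by continuity.

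For the identification with the dual, I would set up a retraction/coretraction pair. Let $F=\ell^{q^*}_{-s}(L^{p^*}(\Reals^n))$ be the weighted sequence space with norm $\|v_0\|_{L^{p^*}}+(\sum_{k\ge 1}(2^{-sk}\|v_k\|_{L^{p^*}})^{q^*})^{1/q^*}$ on sequences $(v_k)_{k\ge 0}$. Define a coretraction $S\colon \Bv[-s,p^*,q^*](\Reals^n)\to F$ by $Su=(P_{\le 0}u,P_1 u,P_2 u,\ldots)$ and a retraction $R\colon F\to \Bv[-s,p^*,q^*](\Reals^n)$ by $R(v_k)=\widetilde P_{\le 0} v_0+\sum_{k\ge 1}\widetilde P_k v_k$, where $\widetilde P_k$ are slightly thickened Littlewood-Paley blocks satisfying $\widetilde P_k P_k=P_k$; then $R\circ S=\mathrm{id}$. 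Standard duality identifies $F^*$ with $\ell^q_s(L^p(\Reals^n))$ under the pointwise integral pairing. Given $\Lambda\in \Bv[-s,p^*,q^*](\Reals^n)^*$, the functional $\Lambda\circ R$ is represented by some $(f_k)\in\ell^q_s(L^p(\Reals^n))$, and I would set $f:=\sum_k P_k f_k$, interpreted distributionally. A short almost-orthogonality estimate shows $f\in \Bv[s,p,q](\Reals^n)$ with $\|f\|_{\Bv[s,p,q](\Reals^n)}\lesssim \|\Lambda\|$, and by construction $\int fg=\Lambda(g)$ on test functions $g$.

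The main obstacle will be the retraction/coretraction step: the $\widetilde P_k$ must be chosen compatibly with the almost-orthogonality used in paragraph one; one must check that $\sum_k P_k f_k$ converges in $\mathcal S'(\Reals^n)$ and that its Besov norm is controlled by $\|(f_k)\|_{\ell^q_s(L^p)}$; and one must verify that the resulting pairing $\langle f,\cdot\rangle$ coincides with the continuous extension of $\int fg$ established first. Injectivity of $f\mapsto\langle f,\cdot\rangle$ is routine by density of test functions in $\Bv[-s,p^*,q^*](\Reals^n)$, so essentially all the work is concentrated in surjectivity via this retract argument.
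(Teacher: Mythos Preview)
The paper does not prove this proposition; it simply cites \cite{triebel_theory_2010} Theorem 2.11.2. Your sketch is essentially the standard argument found there: almost-orthogonality of Littlewood-Paley blocks for the continuity of the pairing, followed by a retraction/coretraction into a weighted $\ell^q(L^p)$ sequence space for the surjectivity onto the dual. One minor correction: with the multipliers $\psi_k$ supported in $\{2^{k-1}\le|\xi|\le 2^{k+1}\}$, the blocks $P_k$ and $P_{k'}$ interact for $|k-k'|\le 2$, not $|k-k'|\le 1$; this has no effect on the argument.
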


\subsection{Mapping properties}

As for the other function spaces, mapping properties of differential operators
depend on the rules for multiplication in Besov spaces.
We recall the relevant result here,
and give a self-contained proof in Appendix \ref{app:mult-Bsp}.

\begin{theorem}\label{thm:mult-Besov}
Let $\Omega$ be a bounded open subset of $\Reals^n$.
	Suppose $1<p_1,p_2,p,q_1,q_2,q<\infty$ and $s_1,s_2,s\in\Reals$.  Let $r_1,r_2$ and $r$ be defined by
\[
\frac{1}{r_1} = \frac{1}{p_1} - \frac{s_1}{n},\qquad
\frac{1}{r_2} = \frac{1}{p_2} - \frac{s_2}{n},\quad\text{\rm and}\quad
\frac{1}{r} = \frac{1}{p} - \frac{s}{n}.
\]
Pointwise multiplication of $C^\infty(\overline \Omega)$ 
functions extends to a continuous bilinear map 
$\Bv[s_1,p_1,q_1](\Omega)\times \Bv[s_2,p_2,q_2](\Omega)
\to \Bv[s,p,q](\Omega)$ so long as
\begin{align}
\min(s_1,s_2)&\ge s\\
s_1+s_2&\ge 0\\
\max\left(\frac 1{r_1},\frac 1{r_2}\right) & \le  \frac 1 r\\
\label{eq:r1-r2-vs-1-Besov-baby-5}
\frac{1}{r_1} + \frac{1}{r_2} & \le 1\\
\label{eq:r1-r2-vs-r-Besov-baby-5}
\frac{1}{r_1} + \frac{1}{r_2} & \le \frac{1}{r}
\end{align}
with the following caveats:
\begin{itemize}
	\item If $s=s_i$ or $1/r=1/r_i$ for some $i$ then $1/q\le 1/q_i$.
	\item If $s_1+s_2=0$ or $1/r_1+1/r_2=1$ then $1/q_1+1/q_2\ge1$.
	\item If equality holds in \eqref{eq:r1-r2-vs-r-Besov-baby-5} then
	\begin{itemize}
	\item $\min(1/r_1,1/r_2,1-1/r)\neq0$.	
	\item If $\min(s_1,s_2)\le0$ then $1/q_1+1/q_2\ge1$ and $1/q \le 1/r$.
    \item If $s_i$ has the same sign as $\min(s_1,s_2)$ for some $i$ then $1/q\le 1/q_i$.
    \item If $s_i$ has the same sign as $\max(s_1,s_2)$ for some $i$ then $1/r_i\le 1/q_i$.
	\item If $s=0$ then $1/q\le 1/q_i$ and $1/r_i\le 1/q_i$ for both $i=1,2$.
	\end{itemize}
	\item If $s_1=s_2=s=0$ then $\displaystyle \frac 1q\le \min\left(\frac 12,\frac 1r\right)$ and 
	$\displaystyle \max\left(\frac 12,\frac 1{r_i}\right)\le \frac 1{q_i}$ for both $i=1,2$.
\end{itemize}
\end{theorem}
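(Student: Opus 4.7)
The plan is to mirror the paraproduct strategy used for the Triebel-Lizorkin multiplication theorem (Theorem \ref{thm:mult-Fsp}), adapting the arguments to the $\ell^q(L^p)$ norm structure of Besov spaces instead of the $L^p(\ell^q)$ structure. First I reduce to $\Omega=\Reals^n$: given $u\in \Bv[s_1,p_1,q_1](\Omega)$ and $v\in \Bv[s_2,p_2,q_2](\Omega)$, choose extensions $\hat u$ and $\hat v$ with comparable norms, so the quotient-norm definition reduces the desired bound to $\|\hat u\hat v\|_{\Bv[s,p,q](\Reals^n)}\lesssim \|\hat u\|_{\Bv[s_1,p_1,q_1](\Reals^n)}\|\hat v\|_{\Bv[s_2,p_2,q_2](\Reals^n)}$. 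Using Proposition \ref{prop:littlewood-payley-review} part \eqref{part:trichotomy} I then decompose $\hat u\hat v$ into a high-low, a low-high, and a high-high paraproduct exactly as in \eqref{eq:tricotomy}.

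For each paraproduct I estimate $\|P_j(\cdot)\|_{L^p}$ and then take the $\ell^q$-sum with weights $2^{sj}$. In the high-low term $\sum_k(\widetilde P_k u)(P_{\le k-4}v)$, frequency localization means $P_j$ retains only the $O(1)$ summands with $|k-j|\le c$, and H\"older's inequality yields $\|P_j(\widetilde P_k u\cdot P_{\le k-4}v)\|_{L^p}\lesssim \|\widetilde P_k u\|_{L^a}\|P_{\le k-4}v\|_{L^b}$ for a splitting $1/a+1/b=1/p$. I choose $a$ so that Bernstein on a single Littlewood-Paley block converts $\|\widetilde P_k u\|_{L^a}$ into $2^{s_1 k}\|\widetilde P_k u\|_{L^{p_1}}$ up to an absorbable power of $2^k$, and $b$ so that Proposition \ref{prop:embedding-Bsp} (Besov embedding) bounds $\|P_{\le k-4}v\|_{L^b}$ by $\|v\|_{\Bv[s_2,p_2,q_2](\Reals^n)}$. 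Summing in $\ell^{q}(2^{sk})$ then produces the Besov norm of $u$ times that of $v$; the low-high term is symmetric. For the high-high term $\sum_k(\widetilde P_k u)(\widetilde P_{k}v)$, a single $P_j$ captures all $k\ge j-O(1)$, and the sum converges via a discrete Hardy-type inequality in $\ell^q$ driven by the slack in $s_1+s_2\ge s$ and $s_1+s_2\ge 0$.

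The fine-parameter caveats enter precisely where this scheme is borderline. A strict geometric weight in the $\ell^q$-sum permits the triangle inequality freely, but when $s=s_i$ or $1/r=1/r_i$ the relevant weight degenerates and the $\ell^q$-triangle inequality forces $1/q\le 1/q_i$; when $s_1+s_2=0$ or $1/r_1+1/r_2=1$ a dual $\ell^q$-pairing in the high-high term produces $1/q_1+1/q_2\ge 1$. The equality case $1/r_1+1/r_2=1/r$ is tightest, since no Besov-embedding slack remains on any factor, and it generates the composite fine-parameter restrictions dependent on the signs of $s_1$ and $s_2$. The pure $L^p$ case $s_1=s_2=s=0$ is handled separately by replacing Besov embedding with the two-sided identification $\Bv[0,p,\min(p,2)](\Reals^n)\hookrightarrow L^p(\Reals^n)\hookrightarrow \Bv[0,p,\max(p,2)](\Reals^n)$, which is the source of the $\min(1/2,1/r)$ restriction.

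The main obstacle I expect is not any single analytic step but the uniform bookkeeping of these fine-parameter caveats across all three paraproducts and every sign combination of $s_1,s_2$. Unlike in the Triebel-Lizorkin setting, where an $L^p(\ell^q)$ maximal-function inequality (Proposition \ref{prop:littlewood-payley-review} part \eqref{part:throw-away-Pk}) absorbs most borderline issues transparently, every summation in $\ell^q$ here must be routed through a sequence-valued H\"older inequality, so each borderline combination has to be verified by hand before the final list of caveats can be assembled.
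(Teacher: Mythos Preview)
Your plan is a direct paraproduct attack for all parameter combinations at once, whereas the paper takes a modular route: it does the Littlewood--Paley work only in the restricted setting $s_1=s_2=s>0$ (Lemmas \ref{lem:mult-Besov-same-s-high} and \ref{lem:mult-Besov-same-s-low}), and then reaches the general statement by layering on Sobolev embedding (Proposition \ref{prop:mult-Besov-s-pos}), duality (Proposition \ref{prop:mult-Besov-neg}), and complex interpolation (Proposition \ref{prop:mult-Besov-zero}). The payoff of the paper's organization is exactly that it sidesteps the obstacle you name at the end: the fine-parameter bookkeeping is not carried through the paraproduct estimates themselves but is instead inherited mechanically from the embedding, duality, and interpolation steps, each of which contributes one or two caveats in a transparent way.

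There is also a real gap in your sketch when $\min(s_1,s_2)<0$. In the high--low term you propose to choose $b$ so that Besov embedding controls $\|P_{\le k-4}v\|_{L^b}$ uniformly in $k$ by $\|v\|_{\Bv[s_2,p_2,q_2]}$. But for $s_2<0$ no such embedding exists: $P_{\le k}v$ grows like $2^{-s_2 k}$ in any Lebesgue norm, and this growth must be cancelled against extra decay from the $u$ factor, which is precisely where $s_1+s_2\ge 0$ enters. Making this cancellation explicit across all three paraproducts and all sign combinations is possible but is a separate analysis from the $s_2\ge 0$ case you describe; the paper avoids it entirely by dualizing the negative-regularity factor into a positive-regularity one. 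A smaller point: your reduction to $\Reals^n$ should land on compactly supported extensions (extend, then multiply by a cutoff equal to $1$ on $\overline\Omega$), not merely extensions, since the inequalities $1/r_1+1/r_2\le 1/r$ and $1/r_i\le 1/r$ rather than equalities are what the bounded domain buys you, and that slack is only available once the supports are confined.
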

The list of caveats above is extensive in comparison with Theorem \ref{thm:mult-Fsp},
but the bulk of these occur when inequality \eqref{eq:r1-r2-vs-r-Besov-baby-5} 
is not strict, and we do not encounter this edge case in our applications. 

Operators with coefficients in Besov spaces
are defined analogously to those of Definition \ref{def:L-Hsp}.
\begin{defn}\label{def:L-Bsp}
Suppose $d_0,d\in \Ints_{\ge 0}$ with $d_0\le d$.
A differential operator on 
an open set $\Omega\subseteq \Reals^n$ of the form
\[
\sum_{d_0\le |\alpha|\le d} a^\alpha \partial_\alpha
\]
is of class $\mathcal L_{d_0}^d(\Bv[s,p,q];\Omega)$ 
for some $s\in\Reals$ and $1<p,q<\infty$ if each
\[
a^\alpha \in \Bv[s+|\alpha|-d,p,q](\Omega).
\]
\end{defn}

Theorem \ref{thm:mult-Besov} implies the following.  Notably,
in the computations that lead to this result, the caveats
of Theorem \ref{thm:mult-Besov} concerning the edge cases
$1/r_1+1/r_2=1/r$ and $s_1=s_2=s=0$ never occur.
\begin{proposition}\label{prop:mapping-Bsp}
Let $\Omega$ be a bounded open subset of $\Reals^n$.
	Suppose $1<p,q,a,b<\infty$, 
$s>n/p$, $\sigma\in\Reals$ and 
$d,d_0\in\Ints_{\ge 0}$ with $d\ge d_0$.  
An operator of class $\mathcal L_{d_0}^d(\Bv[s,p,q];\Omega)$
extends from a map 
$C^\infty(\overline \Omega)\mapsto \mathcal D'(\Omega)$ to 
a continuous linear map 
$\Bv[\sigma,a,b](\Omega)\mapsto \Bv[\sigma-d,a,b](\Omega)$
so long as
\begin{equation}\label{eq:S-conds-Bsp}
\begin{aligned}
\sigma&\in [d-s,s+d_0]\\
\frac{1}{a} - \frac{\sigma}{n} &\in \left[
\frac{1}{p} - \frac{s+d_0}{n}, \frac{1}{p^*}-\frac{d-s}{n}
\right]
\end{aligned}
\end{equation}
and so long as:
\begin{itemize}
	\item If $\sigma=s+d_0$ or $\frac{1}{a} - \frac{\sigma}{n}=\frac{1}{p} - \frac{s+d_0}{n}$ then $\frac{1}{b} \le \frac{1}{q}$.
	\item If $\sigma=d-s$ or $\frac{1}{a} - \frac{\sigma}{n}=\frac{1}{p^*} - \frac{d-s}{n}$ then $\frac{1}{b} \ge \frac{1}{q^*}$. 
\end{itemize}
Moreover, operators in $\mathcal L_{d_0}^d(\Bv[s,p,q];\Omega)$ 
depend continuously
on their coefficients $a^\alpha \in \Bv[s-d+|\alpha|,p,q](\Omega)$.
\end{proposition}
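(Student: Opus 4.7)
The plan is to proceed exactly as in the proofs of Propositions \ref{prop:mapping-Hsp} and \ref{prop:mapping-Fsp}, reducing the assertion to a term-by-term application of the Besov multiplication theorem (Theorem \ref{thm:mult-Besov}). For each multi-index $\alpha$ with $d_0 \le |\alpha| \le d$, the derivative $\partial_\alpha$ is trivially continuous from $\Bv[\sigma,a,b](\Omega)$ to $\Bv[\sigma-|\alpha|,a,b](\Omega)$, so the task reduces to verifying that pointwise multiplication extends to a continuous bilinear map
\[
\Bv[s-d+|\alpha|,p,q](\Omega) \times \Bv[\sigma-|\alpha|,a,b](\Omega) \to \Bv[\sigma-d,a,b](\Omega).
\]
Summing over $\alpha$ then yields the claimed continuity of $L$, and continuous dependence on the coefficients follows at once from the bilinearity of multiplication together with the density of $C^\infty(\overline\Omega)$.

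With the parameter identification $s_1 = s - d + |\alpha|$, $s_2 = \sigma - |\alpha|$, target smoothness $\sigma - d$, and Lebesgue/fine indices $(p,q)$, $(a,b)$, $(a,b)$, the principal inequalities of Theorem \ref{thm:mult-Besov} translate cleanly to the hypotheses of the proposition. Specifically, $s_1 + s_2 \ge 0$ becomes $\sigma \ge d-s$; $\min(s_1,s_2) \ge \sigma - d$ follows from $|\alpha| \ge d_0$ together with $\sigma \le s + d_0 \le s + |\alpha|$; the bound $\max(1/r_1, 1/r_2) \le 1/r$ reduces to $1/a - \sigma/n \ge 1/p - (s+|\alpha|)/n$, which holds because $|\alpha| \ge d_0$ and because of the stated lower bound on the Lebesgue regularity of $(\sigma,a)$; and $1/r_1 + 1/r_2 \le 1$ is exactly the stated upper Lebesgue bound. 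The remaining inequality $1/r_1 + 1/r_2 \le 1/r$ simplifies to $1/p \le s/n$, which is \emph{strict} under the hypothesis $s > n/p$. This strictness is crucial since it rules out the equality case of \eqref{eq:r1-r2-vs-r-Besov-baby-5} together with its associated block of caveats, as well as the $s_1 = s_2 = s_{\rm target} = 0$ edge case.

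The main technical work is tracking the remaining fine-parameter caveats, and this is essentially careful bookkeeping. Direct computation shows that the saturation $s_1 = s_{\rm target}$ forces $|\alpha| = d_0$ together with $\sigma = s + d_0$, while the saturation $1/r_1 = 1/r$ forces $|\alpha| = d_0$ together with $1/a - \sigma/n = 1/p - (s+d_0)/n$; in both scenarios the caveat $1/b \le 1/q$ that Theorem \ref{thm:mult-Besov} demands is exactly the first bulleted condition of the proposition. The dual saturations $s_1 + s_2 = 0$ and $1/r_1 + 1/r_2 = 1$ correspond respectively to $\sigma = d - s$ and to the upper Lebesgue endpoint, and each triggers the caveat $1/q + 1/b \ge 1$, i.e., $1/b \ge 1/q^*$, which is precisely the second bulleted condition. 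The remaining saturations $s_2 = s_{\rm target}$ and $1/r_2 = 1/r$ both force $|\alpha| = d$ and yield only the trivial condition $1/b \le 1/b$. The principal obstacle is thus combinatorial: matching each marginal case of Theorem \ref{thm:mult-Besov} to an endpoint of the permitted region for $(\sigma, a, b)$ and confirming that the supplementary hypotheses on $b$ stated in the proposition are exactly what the multiplication theorem requires at each such endpoint.
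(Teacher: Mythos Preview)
Your proposal is correct and matches the paper's approach exactly: the paper simply states that the result follows from repeated applications of Theorem \ref{thm:mult-Besov}, noting that the edge cases $1/r_1+1/r_2=1/r$ and $s_1=s_2=s=0$ never arise (the first because $s>n/p$ makes the inequality strict, the second because $s>0$). Your detailed term-by-term verification of the remaining fine-parameter caveats is precisely the bookkeeping the paper leaves to the reader.
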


We have the following
generalization of Definition \ref{def:scriptS-Hsp}.
\begin{defn}\label{def:scriptS-Bsp}
Suppose $1<p,q<\infty)$, $s\in\Reals$ and $d,d_0\in\Ints_{\ge 0}$ with $d\ge d_0$.
The \textbf{compatible Sobolev indices} 
for an operator of class $\mathcal L_{d_0}^d(\Bv[s,p,q];\Omega)$ is the set
\[
\mathcal S_{d_0}^d(\Bv[s,p,q]) \subseteq \Reals\times (1,\infty)\times(1,\infty)
\]
of tuples $(\sigma,a,b)$ 
satisfying \eqref{eq:S-conds-Bsp} along with the additional 
conditions at the end of Proposition \ref{prop:mapping-Bsp}
in any of the boundary cases $\sigma=s+d_0$, $\sigma=d-s$, 
$1/a-\sigma/n = 1/p-s/n$ or $1/a-\sigma/n = 1/p^*-(d-s)/n$.
\end{defn}

The analogue of Lemma \ref{lem:S-members} in the Besov context is the following.

\begin{lemma}\label{lem:S-members-Bsp}
Suppose $1<p,q<\infty$, $s\in\Reals$ and $d,d_0\in \Ints_{\ge 0}$
with $d\ge d_0$.  Then $\mathcal S_{d_0}^d(\Bv[s,p,q])$ 
is nonempty if and only if
\begin{align}
\label{eq:S-p-1-Bsp}
s &\ge (d-d_0)/2\text{, and}\\
\label{eq:S-p-2-Bsp}
\frac{1}{p} - \frac{s}{n} &\le \frac{1}{2}-\frac{(d-d_0)/2}{n}
\end{align}
with the additional condition $q\le 2$ in each of the marginal cases
$s=(d-d_0)/2$ and $\frac{1}{p} - \frac{s}{n} = \frac{1}{2}-\frac{(d-d_0)/2}{n}$.
If $S_{d_0}^d(\Bv[s,p,q])$ is non-empty then it contains $(s+d_0,p,q)$, $(d-s,p^*,q^*)$,
and $((d+d_0)/2,2,2)$.  Moreover, if $(\sigma,a,b)\in \mathcal S_{d_0}^d(\Bv[s,p,q])$, then
we have the continuous inclusions of Fr\'echet spaces
\begin{equation}\label{eq:S-include-Bsp}
\Bvloc[s+d_0,p,q](\Reals^n) \subseteq \Bvloc[\sigma,a,b](\Reals^n)
\subseteq \Bvloc[d-s,p^*,q^*](\Reals^n).
\end{equation}
\end{lemma}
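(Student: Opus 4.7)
My plan is to mirror the structure of the proof of Lemma \ref{lem:S-members}, augmenting it with the extra bookkeeping required by the fine parameter $b$ and the supplemental conditions of Definition \ref{def:scriptS-Bsp}. The first step is purely algebraic: the interval $[d-s,s+d_0]$ is nonempty iff $s\ge (d-d_0)/2$, and the Lebesgue regularity interval is nonempty iff \eqref{eq:S-p-2-Bsp} holds; both are the same elementary computations appearing in Lemma \ref{lem:S-members}.

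The new ingredient is tracking when the supplemental conditions in Definition \ref{def:scriptS-Bsp} become active and force the additional constraint $q\le 2$. In the marginal case $s=(d-d_0)/2$, the $\sigma$-interval collapses to the single point $(d+d_0)/2$, which equals \emph{both} $s+d_0$ and $d-s$; hence for any admissible $(\sigma,a,b)$ both supplemental fine-parameter conditions fire simultaneously, giving $1/q^*\le 1/b\le 1/q$, which has a solution $b\in(1,\infty)$ iff $q\le 2$. A direct algebraic computation (as already used to verify that the Lebesgue endpoints meet when $s=(d-d_0)/2$ in the Bessel potential case) shows that equality in \eqref{eq:S-p-2-Bsp} forces $1/p-(s+d_0)/n=1/p^*-(d-s)/n$, so the Lebesgue regularity interval collapses to one value and both Lebesgue-regularity supplemental conditions fire, again forcing $q\le 2$. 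Conversely, when strict inequality holds in \eqref{eq:S-p-1-Bsp} and \eqref{eq:S-p-2-Bsp} there is room to satisfy every supplemental condition.

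To populate the set with the three explicit points, I would note that $(s+d_0,p,q)$ and $(d-s,p^*,q^*)$ have fine parameters matched precisely to make the relevant supplemental inequalities equalities; checking the membership is then reduced to the same computations as for Lemma \ref{lem:S-members}. For the midpoint $((d+d_0)/2,2,2)$ I would reuse the convexity observation from Lemma \ref{lem:S-members} expressing $(d+d_0)/2$ and $1/2-(d+d_0)/(2n)$ as the averages of the respective endpoints. The fine parameter $b=2$ is trivially compatible away from the marginal cases, and in the marginal cases the required inequalities $2\ge q$ and $2\le q^*$ are exactly the hypothesis $q\le 2$. Finally, the continuous inclusions \eqref{eq:S-include-Bsp} follow immediately from Proposition \ref{prop:embedding-Bsp} applied to the Besov scale: the constraints on $\sigma$ and on $1/a-\sigma/n$ supply the differentiability and Lebesgue conditions of Sobolev embedding, while the supplemental fine-parameter conditions control the fine parameter at whichever endpoint is reached.

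The main obstacle I anticipate is the case analysis in the marginal situations, where I must check that the combined constraints coming from two supplemental conditions remain simultaneously satisfiable, and that the midpoint $((d+d_0)/2,2,2)$ lies in the set under the hypothesis $q\le 2$ rather than being excluded by an inadvertently strict inequality. Beyond this, the argument is a straightforward generalization of Lemma \ref{lem:S-members}, and the interpretation that $\mathcal{S}_{d_0}^d(\Bv[s,p,q])$ is nonempty exactly when it contains the Hilbertian midpoint $(( d+d_0)/2,2,2)$ carries over without change.
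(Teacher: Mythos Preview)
Your proposal is correct and follows exactly the approach the paper intends: the paper states this lemma without proof, presenting it as the Besov analogue of Lemma \ref{lem:S-members} (proved in detail) and Lemma \ref{lem:S-members-Fsp} (stated with a brief comment that the proof generalizes with minimal changes). Your treatment of the two marginal cases---observing that when the $\sigma$-interval or the Lebesgue regularity interval collapses, both fine-parameter supplemental conditions from Definition \ref{def:scriptS-Bsp} fire simultaneously and force $1/q^*\le 1/b\le 1/q$, hence $q\le 2$---is precisely the extra bookkeeping the Besov setting requires beyond the Triebel--Lizorkin case, and your verification that the midpoint $((d+d_0)/2,2,2)$ satisfies these constraints under the hypothesis $q\le 2$ is the right closing observation.
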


The following commutator result is the analogue of Lemma \ref{lem:commutator-Fsp}
\begin{lemma} \label{lem:commutator-Bsp}
Suppose $1<p,q,a,b<\infty$, $s>n/p$, $\sigma\in\Reals$ and 
$d,d_0\in\Ints_{\ge 0}$ with $d\ge d_0$. Let $\Omega$ be a bounded
open subset of $\Reals^n$ and let
$L$ be an operator of class $\mathcal L_{d_0}^d(\Bv[s,p,q];\Omega)$.
If $\phi\in\mathcal D(\Omega)$ then $[L,\phi]$ 
extends from a map 
$C^\infty(\overline \Omega)\mapsto \mathcal D'(\Omega)$ to 
a continuous linear map 
$\Bv[\sigma,a,b](\Omega)\mapsto \Bv[\sigma-d+1,a,b](\Omega)$
so long as  $(\sigma + 1,a,b) \in \mathcal S^{d}_{d_0}(\Bv[s,p,q])$. Moreover,
if $d_0=0$, the same result holds if $(\sigma,a,b)\in \mathcal S^{d}_{0}(\Bv[s,p,q])$.
\end{lemma}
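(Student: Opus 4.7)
The plan is to follow the same strategy as for Lemma \ref{lem:commutator-Hsp} and Lemma \ref{lem:commutator-Fsp}, using the Besov multiplication rules of Theorem \ref{thm:mult-Besov} in place of their Bessel potential or Triebel-Lizorkin counterparts. Given $u \in \Bv[\sigma,a,b](\Omega)$, a typical term of $[L,\phi]u$ has the form $a^\alpha (\partial_\beta \phi)(\partial_\gamma u)$ where $\max(1,d_0)\le |\alpha|\le d$ and $|\gamma| \le |\alpha|-1$. Since $\partial_\beta \phi \in \mathcal D(\Omega)$ is harmless, the factors to analyze are $a^\alpha \in \Bv[s-d+|\alpha|,p,q](\Omega)$ and $\partial_\gamma u \in \Bv[\sigma-|\gamma|,a,b](\Omega)$, and the claimed continuity reduces to verifying that Theorem \ref{thm:mult-Besov} applies to this pair of factors with target $\Bv[\sigma-d+1,a,b](\Omega)$.

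First I would dispense with the ``coarse'' hypotheses of Theorem \ref{thm:mult-Besov} (the inequalities on $s_1,s_2,s$ and on $1/r_1,1/r_2,1/r$). These are insensitive to the fine parameters and their verification is the same computation used in the proofs of Lemma \ref{lem:commutator-Hsp} and Lemma \ref{lem:commutator-Fsp}: the assumption $(\sigma+1,a,b)\in \mathcal S_{d_0}^d(\Bv[s,p,q])$ together with $|\gamma|\le |\alpha|-1$ and $s>n/p$ place $a^\alpha$ and $\partial_\gamma u$ comfortably inside the region where the bilinear map $\Bv[s_1,p_1,q_1]\times \Bv[s_2,p_2,q_2]\to\Bv[s,p,q]$ is continuous.

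The main obstacle is to navigate the enlarged list of fine-parameter caveats of Theorem \ref{thm:mult-Besov}, which are more elaborate than those appearing in Theorem \ref{thm:mult-Fsp} because Besov spaces do not permit improving the fine parameter under Sobolev embedding along a fixed line of Lebesgue regularity. Concretely, I need to check: that whenever one of the coincidences $s=s_i$ or $1/r=1/r_i$ occurs, the constraint $1/b\le 1/q_i$ on the fine parameter of that factor holds; that the ``sum'' coincidences $s_1+s_2=0$ and $1/r_1+1/r_2=1$ force the dual condition $1/q_1+1/q_2\ge 1$; and that the genuinely delicate edge case $1/r_1+1/r_2 = 1/r$ does not arise under our hypotheses. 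Because $|\gamma|\le|\alpha|-1$ strictly improves the Lebesgue regularity of $\partial_\gamma u$ compared with what would be forced by $\sigma-|\gamma| = \sigma-|\alpha|+1$ at an extreme corner of $\mathcal S_{d_0}^d(\Bv[s,p,q])$, this last case should not activate; in the remaining coincidences the fine-parameter conditions encoded in the definition of $\mathcal S_{d_0}^d(\Bv[s,p,q])$ (Definition \ref{def:scriptS-Bsp}) are precisely what is required. These verifications split naturally into a case analysis depending on whether $|\alpha|=d$ or $|\alpha|<d$, and on whether $(\sigma+1,a,b)$ lies on the low or high boundary of $\mathcal S_{d_0}^d(\Bv[s,p,q])$.

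For the improvement when $d_0=0$, I would repeat the trick used in the proofs of Lemma \ref{lem:commutator-Hsp} and Lemma \ref{lem:commutator-Fsp}: write $[L,\phi] = [\hat L,\phi]$ where $\hat L$ is $L$ with its zero-order term removed, so that $\hat L \in \mathcal L_1^d(\Bv[s,p,q];\Omega)$. Applying the general-$d_0$ case already established gives continuity whenever $(\sigma+1,a,b)\in \mathcal S_1^d(\Bv[s,p,q])$, and a direct comparison of the inequalities defining the two compatibility regions, combined with a bookkeeping check of the fine-parameter clauses in Definition \ref{def:scriptS-Bsp}, shows that this membership is equivalent to $(\sigma,a,b)\in \mathcal S_0^d(\Bv[s,p,q])$.
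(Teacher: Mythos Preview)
Your proposal is correct and follows essentially the same approach as the paper: reduce to the multiplication theorem (Theorem \ref{thm:mult-Besov}), verify the coarse inequalities as in Lemmas \ref{lem:commutator-Hsp} and \ref{lem:commutator-Fsp}, check the three additional Besov fine-parameter caveats (at $1/r=1/r_i$, at $1/r_1+1/r_2=1$, and at $1/r_1+1/r_2=1/r$), observe that the last of these never occurs under the given hypotheses, and handle the $d_0=0$ improvement via $[L,\phi]=[\hat L,\phi]$ exactly as before.
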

\begin{proof}
The proof is a computation using Theorem \ref{thm:mult-Besov} that parallels that of 
Lemma \ref{lem:commutator-Fsp}.   The only difference is that there are three additional 
fine parameter restrictions that arise. In the notation of Theorem \ref{thm:mult-Besov},
these occur when $r=r_i$, when $1/r_1+1/r_2=1$ and when $1/r_1+1/r_2=1/r$. Under
the given hypotheses, the last of these conditions never occurs, and in remaining two cases
the fine parameter restrictions are met for the same reasons as the analogous 
restrictions are met when $s=s_i$ and when $s_1+s_2=0$
in Lemma \ref{lem:commutator-Fsp}.
\end{proof}

\subsection{Rescaling estimates}

The same argument used for Triebel-Lizorkin spaces shows that rescaling $u\mapsto u_\rscr$ is
a continuous automorphism of Besov spaces, and we wish to generalize the associated estimates
of Theorem \ref{prop:rescale-Fsp} to the Besov setting. This could be accomplished by suitably modifying
the arguments of Section \ref{secsec:TL-rescaling}, but we can prove the desired results as
a corollary of the Triebel-Lizorkin estimates using the following real interpolation property, 
which follows from \cite{triebel_theory_2010} Theorem 2.4.2.

\begin{proposition}
	Assume $1<p, q_1, q_2, q < \infty$
	and $s_1,s_2\in \Reals$ with $s_1\neq s_2$.
	Suppose $0<\theta<1$ and let $s=(1-\theta)s_1+\theta s_2$. Then
	\[
		[F^{s_1,p}_{q_1}(\Reals^n),F^{s_2,p}_{q_2}(\Reals^n)]_{\theta,q} = \Bv[s,p,q](\Reals^n).
	\]
\end{proposition}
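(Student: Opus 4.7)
The plan is to establish the interpolation identity by reducing it, via a Littlewood-Paley retract, to an identity for weighted vector-valued sequence spaces, and then to compute the K-functional of the resulting sequence-space pair.

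First, I would set up the retract. Using the Littlewood-Paley projections $P_k$, define
\[
S u = (P_k u)_{k\ge 0},\qquad R(f_k) = \sum_k \widetilde P_k f_k,
\]
where $\widetilde P_k$ is a Fourier multiplier equal to $1$ on the support of $\psi(2^{-k}\cdot)$ and supported in a slight enlargement of it, so that $\widetilde P_k P_k = P_k$. A standard multiplier argument of the sort underlying the proof of Lemma \ref{lem:basic-rescale} shows that $S$ and $R$ are continuous maps
\[
F^{s,p}_{q}(\Reals^n)\rightleftharpoons L^p(\Reals^n;\ell^q_s),\qquad \Bv[s,p,q](\Reals^n)\rightleftharpoons \ell^q_s(L^p(\Reals^n)),
\]
with $R\circ S = \mathrm{id}$, where $\ell^q_s$ denotes the sequence space with weights $(2^{sk})$. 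Each function space is therefore a retract of the corresponding sequence space, so the retract theorem for real interpolation (see \cite{triebel_interpolation_1978}) reduces the problem to the identity
\[
[L^p(\ell^{q_1}_{s_1}),\, L^p(\ell^{q_2}_{s_2})]_{\theta,q} = \ell^q_s(L^p).
\]

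Next, I would compute the K-functional of the sequence-space pair on the left. Assume without loss of generality that $s_1<s_2$, and set $k_t = (s_2-s_1)^{-1}\log_2(1/t)$, the critical index at which $2^{s_1 k}$ and $t\,2^{s_2 k}$ cross. I expect the near-optimal decomposition assigning $f_k$ to the first space when $k<k_t$ and to the second when $k\ge k_t$ to yield the equivalence
\[
K(t, f) \sim \bigl\|(2^{s_1 k} f_k)_{k<k_t}\bigr\|_{L^p(\ell^{q_1})} + t\,\bigl\|(2^{s_2 k} f_k)_{k\ge k_t}\bigr\|_{L^p(\ell^{q_2})}.
\]
Substituting into $\|f\|_{L_{\theta,q}} = \bigl(\int_0^\infty (t^{-\theta} K(t,f))^q\, dt/t\bigr)^{1/q}$, discretizing $t = 2^{-(s_2-s_1)j}$, and applying a discrete Hardy inequality in the $j$-variable (which converges precisely because $\theta\in(0,1)$) should collapse the resulting double sum to $\|f\|_{\ell^q_s(L^p)}$.

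The main obstacle lies in the K-functional estimate. The matching lower bound is delicate when $q_1\neq q_2$, since the optimal splitting of a given $f_k$ need not be the naive hard cutoff at $k_t$, and one must handle the interplay between the $L^p$ norm in the spatial variable and the $\ell^{q_i}$ norms in the frequency index. The hypothesis $s_1\neq s_2$ is essential throughout: it guarantees a single well-defined transition scale at each $t$, and the positive gap $s_2-s_1$ is what drives the convergence of the discrete Hardy sum in the final step. Full details are those of Triebel's proof of Theorem 2.4.2 in \cite{triebel_interpolation_1978}.
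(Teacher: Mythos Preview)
The paper does not prove this proposition at all: it simply states that the identity ``follows from \cite{triebel_theory_2010} Theorem 2.4.2'' and uses it as a black box to transfer the Triebel--Lizorkin rescaling estimates to Besov spaces via interpolation. Your proposal, by contrast, sketches the actual mechanism behind Triebel's theorem---the retract onto weighted vector-valued sequence spaces followed by a $K$-functional computation---before also deferring to Triebel for the details. So there is nothing to compare: you are outlining the proof of a result the paper takes for granted.

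Your outline is the standard one and is essentially correct. One small remark: you cite \cite{triebel_interpolation_1978} Theorem 2.4.2 while the paper cites \cite{triebel_theory_2010} Theorem 2.4.2; both references contain the relevant result, but be aware these are distinct books. The obstacle you flag---the lower bound for the $K$-functional when $q_1\neq q_2$ and the interaction between the outer $L^p$ and inner $\ell^{q_i}$ norms---is real, and is precisely where the argument requires care; Triebel handles it, and since you ultimately point there anyway, your sketch is adequate for the paper's purposes.
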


\begin{proposition}\label{prop:rescale-Bsp}
	Suppose $1<p,q<\infty$,
	 $s\in\Reals$
	and that $\chi$ is a Schwartz function on $\Reals^n$.
	There exists a constant $\alpha\in\Reals$ such that
	for all $0<r\le 1$ and all $u\in \Bv[s,p,q](\Reals^n)$
	\begin{equation}\label{eq:Bsp-scale-b}
	||\chi u_{\{r\}}||_{\Bv[s,p,q](\Reals^n)} \lesssim r^{\alpha}||u||_{\Bv[s,p,q](\Reals^n)}.
	\end{equation}
	Specifically:
	\begin{enumerate}
		\item\label{part:rescale-Bsp-generic-b}
	Inequality \eqref{eq:Bsp-scale-b} holds with
	\[
	\alpha = \min\left(s-\frac{n}{p},0\right)
	\]
	unless $s-n/p=0$, in which case it holds for any choice
	of $\alpha<0$, with implicit constant depending on 
	$\alpha$.
		\item\label{part:rescale-Bsp-zero-cente-b}
		If $s>n/p$ (in which case functions in $\Bv[s,p,q](\Reals^n)$
		are H\"older continuous) and if $u\in \Bv[s,p,q](\Reals^n)$
		with $u(0)=0$, then inequality holds with
		\[
	\alpha = \min\left(s-\frac{n}{p},1\right)
		\]
		unless $s-n/p=1$, in which case it holds for any choice of
		$\alpha<1$, with implicit constant depending on $\alpha$.
	\end{enumerate}
	\end{proposition}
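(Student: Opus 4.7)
The plan is to deduce Proposition \ref{prop:rescale-Bsp} as a corollary of the Triebel-Lizorkin rescaling bounds of Proposition \ref{prop:rescale-Fsp} via the real interpolation identity just stated,
\[
[F^{s_1,p}_{q_1}(\Reals^n),F^{s_2,p}_{q_2}(\Reals^n)]_{\theta,q} = \Bv[s,p,q](\Reals^n),\qquad s=(1-\theta)s_1+\theta s_2,\ s_1\neq s_2.
\]
The key point is that for each fixed $r\in(0,1]$, the rescaling map $R_r\colon u\mapsto\chi u_{\{r\}}$ is linear and is bounded on every Triebel-Lizorkin space with common Lebesgue parameter $p$; real interpolation of operator norms then gives $\|R_r\|_{\Bv[s,p,q]}\lesssim \|R_r\|_{F^{s_1,p}_{q_1}}^{1-\theta}\|R_r\|_{F^{s_2,p}_{q_2}}^{\theta}$. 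So the strategy reduces to selecting endpoints $s_1<s<s_2$ whose corresponding Triebel-Lizorkin exponents $\alpha_1,\alpha_2$ supplied by Proposition \ref{prop:rescale-Fsp} weighted-average to the desired Besov exponent $\alpha$.

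For part \eqref{part:rescale-Bsp-generic-b} I would split on the sign of $s-n/p$. When $s<n/p$, pick $s_1<s<s_2$ both strictly less than $n/p$; Proposition \ref{prop:rescale-Fsp} yields $\alpha_i=s_i-n/p$, and the weighted average recovers $s-n/p$. When $s>n/p$, pick $s_1<s<s_2$ both strictly greater than $n/p$; each gives $\alpha_i=0$, and interpolation preserves this. In the marginal case $s=n/p$ pick $s_1<n/p<s_2$ so that Proposition \ref{prop:rescale-Fsp} gives $\alpha_1=s_1-n/p$ and $\alpha_2=0$, and observe that the interpolated exponent $(1-\theta)(s_1-n/p)$ can be made to lie below any prescribed negative number by bringing $s_1$ arbitrarily close to $n/p$. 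The fine parameters $q_1,q_2$ can be chosen freely, for example equal to $q$, so no fine-parameter bookkeeping is needed.

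For part \eqref{part:rescale-Bsp-zero-cente-b} the complication is that the constraint $u(0)=0$ is not automatically preserved by real interpolation, so I would mirror the device used at the end of the proof of Proposition \ref{prop:rescale-Fsp-u-zero}. Let $\Bvz[s,p,q](\Reals^n)$ denote the closed subspace of distributions vanishing at zero (which is well defined since $s>n/p$), and choose a compactly supported bump $\phi$ with $\phi(0)=1$. The map $Ru=u-u(0)\phi$ is a continuous retraction onto the zero-value subspace of $F^{s_i,p}_{q_i}(\Reals^n)$ and of $\Bv[s,p,q](\Reals^n)$ whenever the differentiability index exceeds $n/p$, with the canonical embedding as co-retraction; a retracts-based interpolation argument, in the spirit of \cite{triebel_spaces_1976} Lemma 6, then yields
\[
[F^{s_1,p}_{q_1,0}(\Reals^n),F^{s_2,p}_{q_2,0}(\Reals^n)]_{\theta,q} = \Bvz[s,p,q](\Reals^n).
\]
The exponent bookkeeping is then identical to part \eqref{part:rescale-Bsp-generic-b} except that the threshold is $n/p+1$ instead of $n/p$ and the capped exponent is $1$ instead of $0$, with the three subcases $s<n/p+1$, $s>n/p+1$, and the marginal $s=n/p+1$ handled by straddling or avoiding the line $s=n/p+1$ with $s_1,s_2$ both kept above $n/p$. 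The one step requiring genuine care, and thus the main obstacle, is verifying the retracts-based interpolation identity above; once this is in hand, everything else is arithmetic in $\theta$, $s_1$, $s_2$.
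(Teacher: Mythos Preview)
Your proposal is correct and matches the paper's own proof essentially line for line: real interpolation from the Triebel--Lizorkin rescaling estimates of Proposition~\ref{prop:rescale-Fsp} with endpoints $s_1,s_2$ on the same side of the threshold, and the retraction $Ru=u-u(0)\phi$ (citing \cite{triebel_spaces_1976} Lemma~6) to transfer the interpolation identity to the vanishing-at-zero subspaces for part~\eqref{part:rescale-Bsp-zero-cente-b}. One small slip in your marginal-case description: you want the interpolated exponent $(1-\theta)(s_1-n/p)$ to lie \emph{above} any prescribed negative number (i.e., arbitrarily close to $0$), not below it, and your mechanism of sending $s_1\to n/p$ accomplishes exactly that.
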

\begin{proof}
Suppose $s<n/p$. Pick $\epsilon>0$ such that $s_1=s+\epsilon< n/p$ as well, and let $s_2=s-\epsilon$.
From real interpolation with endpoints $F^{s_1,p}_2(\Reals^n)$ and $F^{s_2,p}_2(\Reals^n)$
along with Proposition \ref{prop:rescale-Fsp} we find
\[
||u_{r}||_{\Bv[s,p,q](\Reals^n)} \lesssim  r^{\frac 12\left(s_1-\frac np\right)+\frac12 \left(s_2-\frac np\right)}||u||_{\Bv[s,p,q](\Reals^n)}
= r^{s-\frac np} ||u||_{\Bv[s,p,q](\Reals^n)}.
\]
A similar and easier proof works when $s>n/p$ and the marginal case $s=n/p$ can be handled 
by interpolation between endpoints with differentiability $s_1$ and $s_2$ taken arbitrarily close
to $s$, as in the proof of Proposition \ref{prop:poor-mans-bp}.

For the improved estimate in the H\"older continuous case, we let
$F^{s,p}_{q,0}(\Reals^n)$ be the closed subspace of $F^{s,p}_q(\Reals^n)$
of functions that vanish at $0$, assuming of course that $s>n/p$.  The spaces
$\Bvz[s,p,q]$ are defined similarly. The same argument as at the end of 
the proof of Proposition \ref{prop:rescale-Fsp-u-zero} shows that
\begin{equation}\label{eq:interp-zero}
[F^{s_1,p}_{q_1,0}(\Reals^n),F^{s_2,p}_{q_2,0}(\Reals^n)]_{\theta,q} = 
\Bvz[s,p,q](\Reals^n)
\end{equation}
assuming that $s_i>n/p$ for $i=1,2$ and that 
$0<\theta<1$ and $s=(1-\theta)s_1+\theta s_2$.  Using this interpolation
property, the proof of the improved estimate now follows exactly 
as in the generic case.
\end{proof}

\subsection{Interior elliptic estimates}

Elliptic operators with Besov space coefficients are defined analogously as 
in Definition \ref{def:L-elliptic-Hsp}.
The following ``regularity at a point'' result is proved identically as for Proposition \ref{prop:elliptic-zoom-Fsp},
using the fact that the parametrix result Lemma \ref{lem:parametrix} is proved identically for Besov spaces. Note that 
$\tildeBv[s,p,q](\Omega)$ denotes the closure of $\mathcal D(\Omega)$ in $\Bv[s,p,q](\Reals^n)$.

\begin{proposition}\label{prop:elliptic-zoom-Bsp} 
	Let $\Omega\subset \Reals^n$ be a bounded open set.  
	Suppose $s\in\Reals$, $1<p,q<\infty$, that $d,d_0\in\Ints_{\ge 0}$ with $d_0\le d$, that $s>n/p$,
	and that the conditions of Lemma \ref{lem:S-members-Bsp} are are satisfied and hence 
	$\mathcal S_{d_0}^d(\Bv[s,p,q])\neq \emptyset$.
	Suppose additionally that 
	$L=\sum_{|\alpha|\le d} a^\alpha \partial_\alpha$ 
	is a differential operator of class 
	$\mathcal L_{d_0}^d(\Bv[s,p,q];\Omega)$
	and that for some $x\in \Omega$ that
	\[
	L_0 = \sum_{|\alpha|=m} a^\alpha(x)\partial_\alpha
	\]
	is elliptic.  
	Given $(\sigma,a,b)\in \mathcal S_{d_0}^d(F^{s,p}_q)$
	there exists $r>0$ such that $B_r(x)\subset \Omega$
	and such that if
	\begin{align*}
	u&\in \tildeBv[d-s,p^*,q^*]( B_{r}(x) )\quad \text{and}\\
	Lu&\in \Bv[\sigma-d,a,b](\Omega)
	\end{align*}
	then $u\in \Bv[\sigma,a,b](\Omega)$ and
	\begin{equation}\label{eq:pointwise-est-Bsp}
	||u||_{\Bv[\sigma,a,b](\Omega)} \lesssim
	||L u||_{\Bv[\sigma-d,a,b](\Omega)}
	+ ||u||_{\Bv[d-s-1,p^*,q^*](\Omega)}		
	\end{equation}
	with implicit constant independent of $u$ but depending on all other parameters.
\end{proposition}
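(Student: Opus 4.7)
The plan is to follow the strategy of Proposition \ref{prop:elliptic-zoom-Fsp} verbatim, with the Triebel--Lizorkin tools replaced by their Besov counterparts. First I would reduce to the case $x=0$ with $\overline{B_1(0)}\subset\Omega$, extend the coefficients of $L$ to all of $\Reals^n$, and for each $0<r\le 1$ write $L = L_0 + B_{[r]} + C_{[r]}$ in the form
\[
r^d(Lu)_{\{r\}} = L_0 u_{\{r\}} + B_{[r]} u_{\{r\}} + C_{[r]} u_{\{r\}},
\]
where $B_{[r]}$ contains the principal-part perturbations $(a^\alpha - a^\alpha(0))_{\{r\}}\partial_\alpha$ and $C_{[r]}$ collects the lower-order terms $r^{d-|\alpha|}(a^\alpha)_{\{r\}}\partial_\alpha$, exactly as in the proof of Proposition \ref{prop:elliptic-zoom}. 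For $u\in\tildeBv[d-s,p^*,q^*](B_r(0))$ we treat $u$ and $Lu$ as distributions on $\Reals^n$; since $u_{\{r\}}$ is supported in $\overline{B_1(0)}$, inserting a cutoff $\chi$ equal to $1$ there yields
\[
L_0 u_{\{r\}} + \chi B_{[r]} u_{\{r\}} + \chi C_{[r]} u_{\{r\}} = r^d(Lu)_{\{r\}}.
\]

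Next I would construct a parametrix pair $(Q,T)$ for $L_0$ along the lines of Lemma \ref{lem:parametrix}, where the continuity of $Q:\Bv[s_1-d,p_1,q_1](\Reals^n)\to \Bv[s_1,p_1,q_1](\Reals^n)$ follows from the Besov-space multiplier theorem (\cite{triebel_theory_2010}, Theorems 2.3.7--2.3.8) applied to the symbol $(1-\chi(\xi))(a^\alpha\xi^\alpha)^{-1}\langle\xi\rangle^d$, and $T$ is infinitely smoothing. Setting $Q' = \chi Q$ and $T' = \chi T$ and applying $Q'$ to the localized identity gives
\[
u_{\{r\}} + T' u_{\{r\}} + Q'\circ(\chi(B_{[r]}+C_{[r]})) u_{\{r\}} = r^d Q'(Lu)_{\{r\}}.
\]

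Now I would use Proposition \ref{prop:rescale-Bsp} to show that each coefficient of $\chi B_{[r]}$ tends to zero in $\Bv[s,p,q](\Reals^n)$ (using the improved vanishing-at-zero estimate since $a^\alpha - a^\alpha(0)$ vanishes at the origin) and that each coefficient of $\chi C_{[r]}$ scales as $r^{\min(s-n/p,d-|\alpha|)-\epsilon}$ in $\Bv[s-d+|\alpha|,p,q](\Reals^n)$, which is a strictly positive power of $r$ since $|\alpha|<d$ and $s>n/p$. Combining this with Theorem \ref{thm:mult-Besov} shows that $\chi(B_{[r]}+C_{[r]})$ converges to zero as an operator $\Bv[\sigma',a',b'](\Reals^n)\to \Bv[\sigma'-d,a',b'](\Reals^n)$ as $r\to 0$ for every $(\sigma',a',b')\in\mathcal{S}^d_{d_0}(\Bv[s,p,q])$; in particular for the three key choices $(s+d_0,p,q)$, $(d-s,p^*,q^*)$, and the target $(\sigma,a,b)$. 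Choosing $r$ small enough, a Neumann series produces a continuous inverse $U_{[r]}$ of $I + Q'\circ(\chi(B_{[r]}+C_{[r]}))$ on each of these three spaces, and the desired estimate \eqref{eq:pointwise-est-Bsp} follows from applying $U_{[r]}$ to the equation, bounding the smoothing term $U_{[r]}T' u_{\{r\}}$ in $\Bv[s+d_0,p,q](\Reals^n)$ by $\|u_{\{r\}}\|_{\Bv[d-s-1,p^*,q^*](\Reals^n)}$, undoing the rescaling (which is a bounded automorphism for fixed $r$), and invoking the equivalence of the $\Reals^n$ and $\Omega$ norms for distributions with fixed compact support.

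The main obstacle will be verifying that the Besov multiplication rules apply at each point of $\mathcal{S}^d_{d_0}(\Bv[s,p,q])$ where we want $\chi(B_{[r]}+C_{[r]})$ to be a bounded operator with small norm. Because Theorem \ref{thm:mult-Besov} has a long list of fine-parameter caveats (triggered when $s_i = s$, $s_1+s_2=0$, $1/r_i = 1/r$, $1/r_1+1/r_2 = 1$, or equality holds in the Lebesgue condition), one must check that at the extreme indices $(s+d_0,p,q)$ and $(d-s,p^*,q^*)$ of $\mathcal{S}^d_{d_0}(\Bv[s,p,q])$ the supplemental conditions in Definition \ref{def:scriptS-Bsp} are precisely what is needed to make the multiplication rules apply. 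This is the same bookkeeping that was carried out for Lemma \ref{lem:commutator-Bsp}, and under the given hypotheses the equality case in \eqref{eq:r1-r2-vs-r-Besov-baby-5} never arises, so the verification is mechanical but must be done carefully. Once this is in place, the remainder of the argument is a direct transcription of the proofs of Propositions \ref{prop:elliptic-zoom} and \ref{prop:elliptic-zoom-Fsp}.
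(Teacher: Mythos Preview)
Your proposal is correct and follows exactly the approach the paper takes: the paper's own proof simply states that the result is proved identically to Proposition \ref{prop:elliptic-zoom-Fsp}, with the parametrix Lemma \ref{lem:parametrix} going through unchanged for Besov spaces via the same multiplier theorems. Your detailed expansion—the rescaled decomposition, the parametrix construction, the Besov rescaling estimate of Proposition \ref{prop:rescale-Bsp}, the Neumann series, and the fine-parameter bookkeeping via Theorem \ref{thm:mult-Besov}—is precisely what that one-line reference unpacks to, and nothing more is needed.
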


With the previous proposition established, local elliptic regularity is proved using the same
techniques as in Theorem \ref{thm:interior-reg-Fsp}, taking into account extra fine-parameter
restrictions that arise for Besov spaces.

\begin{theorem}\label{thm:interior-reg-Bsp} 
	Let $\Omega$ be a bounded open set in $\Reals^n$ and suppose $s,p, d_0$ and $d$ are parameters
	as in Lemma \ref{lem:S-members-Bsp} such that
	$s>n/p$ and such that the conditions of 
	Lemma \ref{lem:S-members-Bsp}
	are satisfied so $\mathcal S_{d_0}^d(\Bv[s,p,q])\neq \emptyset$.  
	Suppose $L$ is of class $\mathcal L_{d_0}^d(\Bv[s,p,q];\Omega)$
	and is elliptic on $\Omega$.  If $u\in \Bv[d-s,p^*,q^*](\Omega)$
	and $Lu\in \Bv[\sigma-d,a,b](\Omega)$ for some 
	$(\sigma,a,b)\in \mathcal S_{d_0}^d(\Bv[s,p,q])$
	then for any open set $U$ with $\overline U\subseteq \Omega$, $u\in \Bv[\sigma,a,b](U)$ and 
	\begin{equation}\label{eq:int-reg-Bsp}
	||u||_{\Bv[\sigma,a,b](U)} \lesssim 
	||Lu||_{\Bv[\sigma-d,a,b](\Omega)}
	+ ||u||_{\Bv[d-s-1,p^*,q^*](\Omega)}.
	\end{equation}
	\end{theorem}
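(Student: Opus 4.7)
The plan is to mimic the proof of Theorem \ref{thm:interior-reg-Fsp} almost verbatim, using Proposition \ref{prop:elliptic-zoom-Bsp} as the ``regularity at a point'' building block and Lemma \ref{lem:commutator-Bsp} to manage commutators with partition-of-unity cutoffs. The single bootstrap step will take the form: suppose we already know $u\in \Bv[\sigma_A,a_A,b_A](\Omega_A)$ for some open $\Omega_A$ with $\overline U\subset\Omega_A\subset\Omega$ such that $(\sigma_A,a_A,b_A)$ satisfies the hypotheses of the commutator lemma; given a target $(\sigma_B,a_B,b_B)$ satisfying the analogues of conditions (H1)--(H4) from Theorem \ref{thm:interior-reg-Fsp} (bounding the target from above by $(\sigma,a,b)$ on one side and by $(\sigma_A+1,a_A,b_A)$ on the other, at the level of both differentiability and Lebesgue regularity), we pick finitely many balls $B_{r_i}(x_i)\subset\Omega_A$ on which Proposition \ref{prop:elliptic-zoom-Bsp} delivers pointwise regularity for $(\sigma_B,a_B,b_B)$, a subordinate partition of unity $\{\phi_i\}$, and apply $L(\phi_i u)=\phi_i Lu+[L,\phi_i]u$. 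The first summand is controlled by Sobolev embedding Proposition \ref{prop:embedding-Bsp} and the second by Lemma \ref{lem:commutator-Bsp}, giving the pointwise estimate \eqref{eq:pointwise-est-Bsp} on each ball; summing $\phi_i u$ yields the bootstrap inequality on some $\Omega_B$ with $\overline U\subset\Omega_B\subset\Omega_A$.

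Next I would construct the sequence of parameters $(\sigma_k,a_k,b_k)$ joining the initial datum $(d-s-1,p^*,q^*)$ to the target $(\sigma,a,b)$, exactly as in the three-stage scheme used in the Triebel--Lizorkin case: (i) an initial step to $(d-s,p^*,q^*)$; (ii) a low-regularity stage that either keeps $\sigma_k=d-s$ while lowering $1/a_k$ by at most $1/n$ per step, or preserves the Lebesgue regularity $1/p^*-(d-s)/n$ while raising $\sigma_k$ by at most $1$ per step, until $(\sigma_k,a_k)=(\sigma',a)$ for the largest admissible $\sigma'\le\sigma$; (iii) a derivative-improving stage with $a_k=a$ fixed. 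The case $d_0\ge 1$ is reduced to the $d_0=0$ case exactly as in Theorem \ref{thm:interior-reg-Fsp}, by first passing to an intermediate point $(\sigma',a',b')\in\mathcal S_0^d(\Bv[s,p,q])$ via Sobolev embedding and then performing a single bootstrap step using $(\sigma'+1,a',b')\in\mathcal S_1^d(\Bv[s,p,q])$.

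The main obstacle — and essentially the only real difference from Theorem \ref{thm:interior-reg-Fsp} — is the more rigid fine-parameter behavior of Besov spaces. In contrast to Triebel--Lizorkin, Proposition \ref{prop:embedding-Bsp} does \emph{not} permit an improvement of the fine index along a line of constant Lebesgue regularity, and Definition \ref{def:scriptS-Bsp} imposes additional fine-parameter conditions not just at $\sigma=s+d_0$ and $\sigma=d-s$ but also along the two Lebesgue-regularity boundaries $1/a-\sigma/n=1/p-(s+d_0)/n$ and $1/a-\sigma/n=1/p^*-(d-s)/n$. I will accordingly augment the Fsp hypotheses with the additional conditions that $b_B\ge b$ whenever $(\sigma_B,a_B)$ touches one of these four boundaries, and $b_B\ge b_A$ whenever $\sigma_B=\sigma_A+1$ or $1/a_A-(\sigma_A+1)/n=1/a_B-\sigma_B/n$. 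The sequence is then chosen to sit in $\mathcal S_{d_0}^d(\Bv[s,p,q])$ at every intermediate step with fine parameter pinned to $q^*$ (which is always admissible on the ``low'' Lebesgue-regularity boundary, and is compatible with the commutator lemma), with a single final step used to adjust the fine parameter from $q^*$ to its target value $b$; verifying that all boundary-case inequalities from Definition \ref{def:scriptS-Bsp} and Lemma \ref{lem:commutator-Bsp} are maintained at each transition, case by case along the stages above, is the bulk of the remaining bookkeeping. Chaining the resulting bootstrap estimates yields \eqref{eq:int-reg-Bsp}.
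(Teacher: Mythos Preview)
Your proposal is correct and matches the paper's approach essentially verbatim: the paper likewise adds two new hypotheses (its H7 and H8) governing the fine parameter when the Lebesgue regularities coincide, keeps $b_k=q^*$ throughout the three-stage bootstrap with a single terminal step to reach $b$, and handles $d_0\ge 1$ by the same intermediate-point-plus-one-step argument with $b'=\max(q,b)$. One minor wording point: your ``$b_B\ge b$ whenever $(\sigma_B,a_B)$ touches one of the four boundaries'' should be stated as $b_B\ge b$ whenever $\sigma_B=\sigma$ \emph{or} $1/a_B-\sigma_B/n=1/a-\sigma/n$ (i.e., marginality of the embedding $\Bv[\sigma,a,b]\hookrightarrow\Bv[\sigma_B,a_B,b_B]$, not membership in $\partial\mathcal S$); with that correction the bookkeeping you describe goes through exactly as in the paper.
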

	\begin{proof}
	The proof very closely follows that of Theorem \ref{thm:interior-reg-Fsp}, and we list here
	the additional steps needed to further manage the fine parameter.

	In addition to conditions \eqref{cond:hard-deriv-Fsp}--\eqref{cond:soft-deriv-strict}
	from that proof, the bootstrap step requires two more hypotheses to ensure
	the needed embeddings:
	\begin{enumerate}
		\item[\optionaldesc{H7}{cond:hard-lebesgue-strict}] if $\frac 1{a_B}-\frac{\sigma_B}{n} = \frac 1{a}-\frac{\sigma}{n}$ then $b_B\ge b$,
		\item[\optionaldesc{H8}{cond:soft-lebesgue-strict}] if $\frac 1{a_B}-\frac{\sigma_B}{n} = \frac 1{a_A}-\frac{\sigma_A+1}{n}$ then $b_B\ge b_A$.
	\end{enumerate}

	For the stages of the main bootstrap for $d_0=0$ from Theorem \ref{thm:interior-reg-Fsp} we
	make the following adjustments
	\begin{enumerate}
		\item In the initial step from $(\sigma_0,a_0,b_0)=(d-s-1,p^*,q^*)$ to 
		$(\sigma_1,a_1,b_1)=(d-s,p^*,q^*)$ hypothesis \eqref{cond:soft-lebesgue-strict} is met trivially
		and hypothesis \eqref{cond:hard-lebesgue-strict} only yields a restriction if
		$\frac 1{p^*}-\frac{d-s}{n} = \frac 1{a}-\frac{\sigma}{n}$, in which case it requires $b\le q^*$. 
		But this fine parameter restriction is met because $(\sigma,a,b)=(d-s,a,b)\in \mathcal S^{d}_0(\Bv[s,p,q])$.
		\item During the low regularity stage we again preserve $b_k=q^*$. 
		\begin{itemize}
			\item If we preserve $\sigma_k=d-s$ and lower $1/a_k$ we can arrange to do so by less than $1/n$.
			      In this case hypothesis \eqref{cond:soft-lebesgue-strict} is met automatically.
				  Hypothesis \eqref{cond:hard-lebesgue-strict} only applies if $\sigma=d-s$ and only on
				  the last step where we lower $1/a_k$, in which case we set the fine parameter to its 
				  final value (satisfying \eqref{cond:hard-lebesgue-strict}) and the bootstrap stops.
				  All other aspects of this stage are justified identically to Theorem \ref{thm:interior-reg-Fsp}.
			\item If we preserve the Lebesgue regularity $1/p^*-(d-s)/n$ and raise $\sigma_k$ then hypothesis
			\eqref{cond:soft-lebesgue-strict} is met automatically.  Hypothesis
			\eqref{cond:hard-lebesgue-strict} provides a restriction only if $1/a-\sigma/n=1/p^*-(d-s)/n$,
			in which case we have assumed $b\le q^*$ from the definition of $\mathcal S^{d}_0(\Bv[s,p,q])$
			and condition \eqref{cond:hard-lebesgue-strict} is met. We need to ensure that
			the iterates $(\sigma_k,a_k,b_k)$ all remain in $S^{d}_0(\Bv[s,p,q])$ in order
			for the commutator result Lemma \ref{lem:commutator-Bsp} to apply.  Indeed, the line
			of Lebesgue regularity $1/p^*-(d-s)/n$ is associated with a fine parameter restriction, but
			it is always met by keeping $b_k=q^*$.  
		\end{itemize}
		\item In the final stage we raise $\sigma_k$ (by less than 1 per iteration) while keeping $a_k=a$.  
		Again we preserve $b_k=q^*$ except at the final step.  Hypothesis \eqref{cond:soft-lebesgue-strict}
		is met because of the step size restriction and hypothesis \eqref{cond:hard-lebesgue-strict} is
		a restriction only at the last step, at which point it is satisfied by setting the fine parameter
		to its terminal value $b$.
	\end{enumerate}
	At the end of this procedure, the bootstrap has stopped at its desired value except in the two marginal
	cases $\sigma=d-s$ and $1/a-\sigma/n=1/p^*-(d-s)/n$, in which case we have arrived at $(\sigma,a,q^*)$.
	In both these cases the definition of $\mathcal S^d_0(\Bv[s,p,q])$ ensures that $b<q^*$. Using this inequality,
	one readily verifies that hypotheses \eqref{cond:hard-deriv-Fsp}--\eqref{cond:soft-lebesgue-strict}
	hold if we perform one more bootstrap step to improve the fine parameter to its final value $b$. As
	in the proof of Theorem \ref{thm:interior-reg-Fsp}, one needs to verify that the starting
	regularity $(\sigma,a,q^*)$ for the bootstrap lies in $\mathcal S^{d}_0(\Bv[s,p,q])$ so as to use the commutator result
	Lemma \ref{lem:commutator-Bsp}, but this is always met for the fine parameter $q^*$ on
	the two marginal lines $\sigma=d-s$ and $1/a-\sigma/n=1/p^*-(d-s)/n$. This completes the proof when $d_0=0$.

	Now consider the case $d_0=1$.  Arguing as in Theorem \ref{thm:interior-reg-Fsp},
	we can apply the $d_0=0$ bootstrap to arrive at some $(\sigma',a',b')\in \mathcal S^{d}_0(\Bv[s,p,q])$ 
	where $b'=\max(q,b)$ and where $(\sigma',a')$ is obtained from $(\sigma,a)$ as follows:
	\begin{enumerate}
		\item\label{case:lower-1/a} If $\sigma<s$, leave $\sigma'=\sigma$ fixed but raise $1/a$ by at most $1/n$ to $1/a'$
		such that $1/a'-\sigma/n=1/p-s/n$.
		\item If $\sigma\ge s$ and $1/p-s/n \le 1/a-\sigma/n$, lower $\sigma$ by at most 1 to $s$ while simultaneously 
		lowering $1/a$ by at most $1/n$ so that the Lebesgue regularity $1/a-\sigma/n=1/a'-\sigma'/n$ is unchanged.
		\item \label{case:move-other} Otherwise, $(\sigma,a)$ satisfies $s\le \sigma\le \sigma+1$ and 
		\[
			\frac{1}{p}-\frac{s+1}{n} \le \frac{1}{a}-\frac{\sigma}{n} < \frac{1}{p}-\frac{s}{n}
		\]
		and we set $(\sigma',a')=(s,p)$.
	\end{enumerate}
	Note that we set $b'=\max(q,b)$ to ensure that the fine parameter restrictions of $\mathcal S^{d}_0(\Bv[s,p,q])$
	along the lines $\sigma'=s$ and $1/a'-\sigma'/n=1/p-s/n$ are met.  Since $\Bv[\sigma,a,b](\Omega)$
	embeds in $\Bv[\sigma',a',b'](\Omega)$ we can initially apply the $d_0=0$ bootstrap to arrive at 
	$(\sigma',a',b')$.  At this point we wish to apply a single bootstrap
	step to terminate at $(\sigma,a,b)$.  Since $(\sigma',a',b')\in \mathcal S^{d}_0(\Bv[s,p,q])$ a computation
	shows $(\sigma'+1,a',b')\in \mathcal S^d_1(\Bv[s,p,q])$ and hence the commutator Lemma \ref{lem:commutator-Bsp}
	is available starting from $(\sigma',a',b')$. Hence we can perform the desired bootstrap step
	if we show that hypotheses 
	\eqref{cond:hard-deriv-Fsp}--\eqref{cond:soft-lebesgue-strict} hold with 
	$(\sigma_A,a_A,b_A)=(\sigma',a',b')$ and $(\sigma_B,a_B,b_B)=(\sigma,a,b)$.
	
	Conditions \eqref{cond:hard-deriv-Fsp}--\eqref{cond:soft-lebesgue-Fsp} hold for the same 
	reasons as in Theorem \ref{thm:interior-reg} and conditions \eqref{cond:hard-deriv-strict}
	and \eqref{cond:hard-lebesgue-strict} hold trivially since we are setting the fine parameter to $b$.
	Condition \eqref{cond:soft-deriv-strict} holds for exactly the same 
	reason as in Theorem \ref{thm:interior-reg-Fsp}.
	Finally, 
	condition \eqref{cond:soft-lebesgue-strict} implies a restriction only in cases 
	\eqref{case:lower-1/a} and \eqref{case:move-other} and only when $1/a-\sigma/n=1/p-(s+1)/n$.
    But then
	$(\sigma,a,b)\in \mathcal S^d_1(\Bv[s,p,q])$ is a spot where the fine parameter restriction $q\le b$
	holds and hence $b'=\max(q,b)=b$ already. Hence condition \eqref{cond:soft-lebesgue-strict} is met.  
	
	This concludes the proof when $d_0=1$
	and the result holds for higher values of $d_0$ by iterating this argument.
\end{proof}

\appendix
\section{Multiplication in Triebel-Lizorkin Spaces}\label{app:mult-Fsp}

We prove the multiplication rules
for Triebel-Lizorkin spaces, Theorem \ref{thm:mult-Fsp}, 
which we restate here for convenience.
\begin{reptheorem}{thm:mult-Fsp}
	Let $\Omega$ be a bounded open subset of $\Reals^n$.
	Suppose $1<p_1,p_2,p,q_1,q_2,q<\infty$ and $s_1,s_2,s\in\Reals$.  Let $r_1,r_2$ and $r$ be defined by
   \[
   \frac{1}{r_1} = \frac{1}{p_1} - \frac{s_1}{n},\qquad
   \frac{1}{r_2} = \frac{1}{p_2} - \frac{s_2}{n},\quad\text{\rm and}\quad
   \frac{1}{r} = \frac{1}{p} - \frac{s}{n}.
   \]
   Pointwise multiplication of $C^\infty(\Omega)$ 
   functions extends to a continuous bilinear map 
   $F^{s_1,p_1}_{q_1}(\Omega)\times F^{s_2,p_2}_{q_2}(\Omega)
   \rightarrow F^{s,p}_{q}(\Omega)$ so long as
   \begin{align}
   s_1+s_2\ge 0\\
   \min({s_1,s_2})\ge s\\
   \max\left(\frac{1}{r_1},\frac{1}{r_2}\right) & \le  \frac{1}{r}\\
   \frac{1}{r_1} + \frac{1}{r_2} & \le 1\\
   \label{eq:Fsp-lebesgue-mult-b}
   \frac{1}{r_1} + \frac{1}{r_2} & \le \frac{1}{r}
   \end{align}
   with the the following caveats:
   \begin{itemize}
	   \item Inequality \eqref{eq:Fsp-lebesgue-mult-b} is 
	   strict if $\min(1/r_1,1/r_2,1-1/r)=0$.
	   \item If $s_i=s$ for some $i$ then $1/q\le 1/q_i$.
	   \item If $s_1+s_2=0$ then $\displaystyle\frac{1}{q_1}+\frac{1}{q_2}\ge 1$.
	   \item If $s_1=s_2=s=0$ then $\displaystyle \frac1q\le \frac{1}{2}\le \frac1{q_i}$ for $i=1,2$.
	  \end{itemize}
   \end{reptheorem}

The proof is broken into a number of cases depending on 
the value of $\min(s_1,s_2)$.  Propositions \ref{prop:mult-Fsp-s-pos} and
\ref{prop:mult-Fsp-pos-pos-neg} cover the case $\min(s_1,s_2)>0$,
Proposition \ref{prop:mult-Fsp-s2-neg} is the case $\min(s_1,s_2) <0$
and the remaining case $\min(s_1,s_2)=0$ is the content of Proposition
\ref{prop:mult-Fsp-s2-zero}.

These results all build on the following two lemmas, which
concern multiplication of spaces
having the same number $s>0$
of derivatives and the same fine parameter, 
but where the Lebesgue parameters vary.  The lemmas employ
the same elementary Littlewood-Paley techniques used in Section
\ref{secsec:TL-rescaling}.

\begin{lemma}\label{lem:mult-Fsp-same-s-high}
Suppose $1<p_2\le p_1<\infty$, 
$1<q<\infty$, $s\in\Reals$ and $s>n/p_1$.
Given $u\in F^{s,p_1}_{q}(\Reals^n)$ and $v\in F^{s,p_2}_{q}(\Reals^n)$,
both supported in $B_R(0)$ for some $R>0$, $uv\in F^{s,p_2}_{q}(\Reals^n)$
and 
\[
||uv||_{F^{s,p_2}_{q}(\Reals^n)}\lesssim ||u||_{F^{s,p_1}_{q}}(\Reals^n)||v||_{F^{s,p_2}_{q}(\Reals^n)}.
\]
The implicit constant depends on $s$, $p_1$, $p_2$ $q$ and $R$ but
is independent of $u$ and $v$.
\end{lemma}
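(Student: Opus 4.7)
The plan is to decompose the product using Littlewood-Paley paraproducts and estimate each piece with the tools of Proposition~\ref{prop:littlewood-payley-review}. Writing $\widetilde P_j = P_{j-3\le\cdot\le j+3}$, I would split
\[
uv = \underbrace{\sum_j (P_{\le j-4} u)(P_j v)}_{\Pi_1(u,v)}
+ \underbrace{\sum_j (P_j u)(P_{\le j-4} v)}_{\Pi_2(u,v)}
+ \underbrace{\sum_j (P_j u)(\widetilde P_j v)}_{\Pi_3(u,v)},
\]
and use part~\eqref{part:sobolev-space-by-projection} (applicable since $s>0$) to reduce $\|uv\|_{F^{s,p_2}_q(\Reals^n)}$ to bounding $\|uv\|_{L^{p_2}(\Reals^n)}$ together with the $\ell^q$-$L^{p_2}$ square functions of each paraproduct. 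The Sobolev embedding $F^{s,p_1}_q(\Reals^n)\hookrightarrow L^\infty(\Reals^n)$, valid because $s>n/p_1$, combined with the compact support of both factors, immediately handles $\|uv\|_{L^{p_2}}\le\|u\|_{L^\infty}\|v\|_{L^{p_2}}$.

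For $\Pi_1$ the trichotomy in part~\eqref{part:trichotomy} confines $P_k\Pi_1$ to a short sum over $j\approx k$ with each term bounded pointwise by $\|u\|_{L^\infty}|P_jv|$; part~\eqref{part:throw-away-Pk} then removes the outer $P_k$, and the result is controlled by $\|u\|_{F^{s,p_1}_q}\|v\|_{F^{s,p_2}_q}$. For $\Pi_2$ the same trichotomy reduces $P_k\Pi_2$ to a short sum with summands bounded pointwise by $|P_ju|\cdot Mv$ via part~\eqref{part:maximal-function-beats-projection}. The decisive step is a H\"older inequality in $L^{p_2}$ with exponents $\alpha$ and $p_1$ where $1/\alpha = 1/p_2-1/p_1\ge 0$: the hypothesis $s>n/p_1 = n(1/p_2-1/\alpha)$ places $v$ in $L^\alpha$, either directly via the $L^\infty$-embedding when $p_1=p_2$ (so $\alpha=\infty$), or, when $\alpha<\infty$, by combining the critical Sobolev embedding of Proposition~\ref{prop:embedding-Fsp} with the compact support of $v$. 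The Hardy-Littlewood maximal inequality then yields $\|Mv\|_{L^\alpha}\lesssim\|v\|_{F^{s,p_2}_q}$, closing the estimate.

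The main obstacle is $\Pi_3$, where the trichotomy only forces $P_k\Pi_3 = P_k\sum_{j\ge k-7}(P_ju)(\widetilde P_jv)$, a tail sum over all higher frequencies. Setting $F_j = 2^{sj}(P_ju)(\widetilde P_jv)$, I would observe that $|2^{sk}P_k\Pi_3|\lesssim\sum_{j\ge k-7} 2^{-s(j-k)}|P_kF_j|$ and exploit the hypothesis $s>0$: the weights $2^{-s(j-k)}$ form an $\ell^1$ sequence in $j-k$, so Minkowski's inequality reindexed as $l=j-k$ reduces the $\ell^q$-$L^{p_2}$ norm of the left-hand side to a geometric sum in $l$ of the same norm for $\{|P_kF_{k+l}|\}_k$. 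Applying part~\eqref{part:throw-away-Pk} to each summand yields control by the square function of $\{F_j\}_j$, which by the pointwise bound $|F_j|\lesssim Mv\cdot 2^{sj}|P_ju|$ falls within the same H\"older/Sobolev estimate used for $\Pi_2$. Assembling the four contributions then gives the claimed estimate.
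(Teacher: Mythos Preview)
Your argument is correct and follows essentially the same paraproduct strategy as the paper: split via the Littlewood-Paley trichotomy, handle the low-frequency piece by $u\in L^\infty$, and treat the three interaction terms with the maximal function bounds and part~\eqref{part:throw-away-Pk}. The only cosmetic difference is in the high-high piece, where the paper pulls out $|Mu|\in L^\infty$ and sums the $v$-square function directly, whereas you bound $|\widetilde P_jv|\lesssim Mv$ and reuse the H\"older/Sobolev step from $\Pi_2$; both choices work equally well here.
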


\begin{proof}
Since $s> 0$, Proposition \ref{prop:littlewood-payley-review}\eqref{part:sobolev-space-by-projection}
implies
\[
||uv||_{F^{s,p_2}_q(\Reals^n)} \lesssim \
\left|\left| uv\right|\right|_{L^{p_2}(\Reals^n)}
+\left|\left| 
	\left[\sum_{k\ge 10} |2^{sk}P_k(uv)|^q\right]^{1/q} \right|\right|_{L^{p_2}(\Reals^n)}.
\]
For the low frequency component we use the fact 
that $s>n/p$ and Proposition \ref{prop:embedding-Fsp}
twice to conclude 
\[
\left|\left| uv \right|\right|_{L^{p_2}(\Reals^n)}
\lesssim ||u||_{L^\infty(\Reals^n)} ||v||_{L^{p_2}(\Reals^n)} \lesssim 
||u||_{F^{s,p_1}_{q}(\Reals^n)} ||v||_{F^{s,p_2}_q(\Reals^n)}.
\]
Turning to the high-frequency component we use the Littlewood-Paley
trichotomy 
Proposition \ref{prop:littlewood-payley-review}\eqref{part:trichotomy},
to conclude for any $k\ge 10$
\begin{equation}\label{eq:trichotomy-mult}
P_k(uv) = P_k\left(
\underbrace{(P_{\le k-4} u) (\widetilde P_{k}v)}_{\text{low-high}}
+ \underbrace{(\widetilde P_{k}u) (P_{\le k+5}v)}_{\text{high-low}}
+ \underbrace{\sum_{k'\ge k+4}
(P_{k'} u) (\widetilde P_{k'} v)}_{\text{high-high}}\right)
\end{equation}
where $\widetilde P_k = P_{k-3\le \cdot\le k+3}$.
For the low-high contributions we use Proposition \ref{prop:littlewood-payley-review}
parts \eqref{part:throw-away-Pk} and 
\eqref{part:maximal-function-beats-projection} 
to conclude
\begin{align*}
\left|\left| 
	\left[\sum_{k\ge 10} |2^{sk}P_k(
(P_{\le k-4} u)(\widetilde P_{k}v)
	)|^q\right]^{1/q} \right|\right|_{L^{p_2}(\Reals^n)}
&\lesssim 
\left|\left| 
	\left[\sum_{k\ge 10} |2^{sk}
(P_{\le k-4} u)(\widetilde P_{k}v)
	|^q\right]^{1/q} \right|\right|_{L^{p_2}(\Reals^n)} \\
&\lesssim ||Mu||_{L^\infty(\Reals^n)} 
\left|\left|
	\left[\sum_{k\ge 10} |2^{sk}
\widetilde P_{k} v |^q\right]^{1/q} \right|\right|_{L^{p_2}(\Reals^n)}.
\end{align*}
Since $s>n/p$, $||Mu||_{L^\infty(\Reals^n)}\lesssim ||u||_{L^\infty(\Reals^n)}\lesssim ||u||_{F^{s,p_1}_q(\Reals^n)}$.  
Moreover the triangle inequality  implies
\[
\left|\left|
	\left[\sum_{k\ge 10} |2^{sk}
\widetilde P_{k} v |^q\right]^{1/q} \right|\right|_{L^{p_2}(\Reals^n)}
\lesssim ||v||_{F^{s,p_2}_q(\Reals^n)}
\]
and we conclude that the low-high interactions are controlled by
$||u||_{F^{s,p_1}_q(\Reals^n)}||v||_{F^{s,p_2}_q(\Reals^n)}$.

To analyze the high-low term we set
\[
	\frac1t=\frac1{p_2}-\frac1{p_1}
	\] and observe that since $p_1\ge p_2$,
$1<t\le \infty$. The estimate now proceeds similarly to the low-high case: using
Proposition \ref{prop:littlewood-payley-review}
parts \eqref{part:throw-away-Pk} and 
\eqref{part:maximal-function-beats-projection}
along with H\"older's inequality
and the Hardy-Littlewood maximal inequality we find
\begin{align*}
\left|\left| 
	\left[\sum_{k\ge 10} |2^{sk}P_k(
(\widetilde P_{k} u)(P_{\le k+5}v)
	)|^2\right]^{1/2} \right|\right|_{L^{p_2}(\Reals^n)}
&\lesssim 
\left|\left| 
	\left[\sum_{k\ge 10} |2^{sk}
(\widetilde P_{k} u)
	|^q\right]^{1/q} |M v| \right|\right|_{L^{p_2}(\Reals^n)}\\
&\lesssim 
\left|\left| 
	\left[\sum_{k\ge 10} |2^{sk}
(\widetilde P_{k} u)
	|^q\right]^{1/q} \right|\right|_{L^{p_1}(\Reals^n)}
||Mv||_{L^t(\Reals^n)}\\
&\lesssim
||u||_{F^{s,p_1}_q(\Reals^n)}||v||_{L^t(\Reals^n)}.
\end{align*}
Since $s>n/p_1$ we have
\[
\frac{1}{t} = \frac{1}{p_2}-\frac{1}{p_1} > \frac{1}{p_2}-\frac{s}{n}.
\]
So Sobolev embedding and the bounded support of $v$ imply
\[
||v||_{L^t(\Reals^n)} \lesssim ||v||_{F^{s,p_2}_q(\Reals^n)}
\]
with implicit constant depending on the radius $R$ of support.
We conclude that the low-high terms are controlled by $||u||_{F^{s,p_1}_q(\Reals^n)}||v||_{F^{s,p_2}_q(\Reals^n)}$.

Finally, for the high-high contributions we start by
applying Proposition \ref{prop:littlewood-payley-review} parts
\eqref{part:throw-away-Pk} and \eqref{part:maximal-function-beats-projection} to obtain
\begin{align*}
\left|\left| \left[\sum_{k\ge 10} 
\left|2^{sk}
P_k \left( 
\sum_{k'\ge k+4}
(P_{k'} u) (\widetilde P_{k'} v\right)\right|^q\right]^{1/q}\right|\right|_{L^{p_2}(\Reals^n)}
&\lesssim
\left|\left|
|Mu| 
\left[\sum_{k\ge 10} 
\left|2^{sk}
\left( 
\sum_{k'\ge k+4}
 (\widetilde P_{k'} v\right)\right|^q\right]^{1/q}\right|\right|_{L^{p_2}(\Reals^n)}\\
&\lesssim
||Mu||_{L^\infty(\Reals^n)}
\sum_{a\ge 4}
2^{-sa}
\left|\left|
\left[\sum_{k\ge 10} 
\left|2^{s(k+a)}
\left( 
 \widetilde P_{k+a} v\right)\right|^q\right]^{1/q}\right|\right|_{L^{p_2}(\Reals^n)}\\
&\lesssim
||Mu||_{L^\infty(\Reals^n)} ||v||_{F^{s,p_2}_q(\Reals^n)}
\sum_{a\ge 4}
2^{-sa}.
\end{align*}
Since $s>0$ the sum is finite, and we have already seen
that $||Mu||_{L^\infty(\Reals^n)}\lesssim ||u||_{F^{s,p_1}_q(\Reals^n)}$.
So the high-high terms are also controlled by 
$||u||_{F^{s,p_1}_q(\Reals^n)}||v||_{F^{s,p_2}_q(\Reals^n)}$ as needed.
\end{proof}

\begin{lemma}\label{lem:mult-Fsp-same-s-low}
Suppose $s>0$, $1<p\le p_2\le p_1<\infty$, $1<q<\infty$ and $s<n/p_1$.
Suppose moreover that 
\begin{equation}\label{eq:q_vs_ps-H}
\frac{1}{p} \ge  \frac{1}{p_1} + \frac{1}{p_2} -\frac{s}{n}
\end{equation}
Given $u\in F^{s,p_1}_q(\Reals^n)$ and $v\in H^{s,p_2}_q(\Reals^n)$,
both supported in $B_R(0)$ for some $R>0$, $uv\in F^{s,p}_q(\Reals^n)$
and 
\[
||uv||_{F^{s,p}_q(\Reals^n)}\lesssim ||u||_{F^{s,p_1}_q(\Reals^n)}||v||_{F^{s,p_2}_q(\Reals^n)}.
\]
The implicit constant depends on $s$, $p_1$, $p_2$, $q$ and $R$ but
is independent of $u$ and $v$.
\end{lemma}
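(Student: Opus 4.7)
The plan is to follow the same Littlewood-Paley trichotomy strategy employed in Lemma \ref{lem:mult-Fsp-same-s-high}, with the key modification that the $L^\infty$-embedding argument (available there because $s>n/p_1$) must be replaced by a Sobolev embedding into an $L^{r_i}$ space with $1/r_i = 1/p_i - s/n > 0$, both for $i=1,2$ (note $r_2 \le r_1$ since $p_2\le p_1$, and $s<n/p_1\le n/p_2$). The hypothesis $1/p \ge 1/p_1+1/p_2 - s/n$ will enter exactly as the condition that allows Hölder's inequality to close when pairing one factor in its Sobolev-embedded Lebesgue space against the other in its Triebel-Lizorkin square function norm.

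Since $s>0$, Proposition \ref{prop:littlewood-payley-review} part \eqref{part:sobolev-space-by-projection} reduces the problem to bounding $\|uv\|_{L^p}$ and the high-frequency square function $\bigl\|\bigl[\sum_{k\ge 10}|2^{sk}P_k(uv)|^q\bigr]^{1/q}\bigr\|_{L^p}$. For the low-frequency piece, Hölder gives $\|uv\|_{L^t}\lesssim \|u\|_{L^{r_1}}\|v\|_{L^{r_2}}$ with $1/t = 1/r_1+1/r_2$; since $uv$ is supported in $B_R(0)$ and $1/p\ge 1/p_1+1/p_2-s/n \ge 1/t$, a further application of Hölder on the bounded support upgrades the estimate from $L^t$ to $L^p$, and Sobolev embedding converts $\|u\|_{L^{r_1}}\|v\|_{L^{r_2}}$ into $\|u\|_{F^{s,p_1}_q}\|v\|_{F^{s,p_2}_q}$.

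For the square-function piece I would apply the Littlewood-Paley trichotomy exactly as in equation \eqref{eq:trichotomy-mult}. In the low-high term $(P_{\le k-4}u)(\widetilde P_k v)$, I pull the maximal operator off of $u$ via Proposition \ref{prop:littlewood-payley-review} parts \eqref{part:throw-away-Pk}–\eqref{part:maximal-function-beats-projection}, and split the $L^p$ norm through Hölder with exponents $r_1$ and $b$ where $1/b = 1/p - 1/r_1$. The Hardy–Littlewood maximal inequality and Sobolev embedding give $\|Mu\|_{L^{r_1}} \lesssim \|u\|_{F^{s,p_1}_q}$, while the remaining factor is essentially $\|v\|_{F^{s,b}_q}$. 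The hypothesis $1/p\ge 1/p_1+1/p_2-s/n$ translates into $b\le p_2$, and since $v$ has support in $B_R$, the bounded-support inclusion $F^{s,p_2}_q \hookrightarrow F^{s,b}_q$ (a localization argument paralleling the Sobolev-embedding step in Lemma \ref{lem:mult-Fsp-same-s-high}) yields the bound by $\|v\|_{F^{s,p_2}_q}$. The high-low term is handled symmetrically, pulling $Mv$ out against the $u$ square function. For the high-high term one again uses the triangle inequality to extract a convergent geometric series $\sum_{a\ge 4} 2^{-sa}$, bounds $\sup_{k'} |\widetilde P_{k'} v|\lesssim Mv$, and estimates the remainder exactly as in the low-high case but now with roles exchanged.

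The main obstacle I expect is the technicality of transferring the bounded-support inclusion from $L^p$-level statements to the Triebel-Lizorkin square-function setting, since Littlewood-Paley projections do not preserve compact support. I would handle this by noting that the kernels $\psi_k$ are Schwartz with rapidly decreasing tails away from the support of $v$, which together with the bounded-support Hölder inequality yields $\|v\|_{F^{s,b}_q(\mathbb{R}^n)}\lesssim_R \|v\|_{F^{s,p_2}_q(\mathbb{R}^n)}$ whenever $b\le p_2$ and $v$ is supported in $B_R$; this is the analog of the $L^t\hookrightarrow L^{r_2}$ passage used in Lemma \ref{lem:mult-Fsp-same-s-high}, with the implicit constant depending on $R$. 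Once this localization lemma is in place, the remainder of the proof is a direct transcription of the trichotomy argument above.
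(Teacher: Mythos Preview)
Your approach is correct and follows the same overall trichotomy strategy as the paper, but your choice of H\"older exponents in the low-high, high-low, and high-high terms differs in a way that makes your proof slightly more involved than necessary. You split the $L^p$ norm as $L^{r_1}\times L^b$ (with $1/r_1=1/p_1-s/n$), placing the Sobolev embedding on the scalar factor $Mu$ at the \emph{sharp} exponent $r_1$ and leaving the square-function factor of $v$ in $F^{s,b}_q$ with $b\le p_2$; this forces you to invoke the Triebel-Lizorkin localization $\|v\|_{F^{s,b}_q(\Reals^n)}\lesssim_R \|v\|_{F^{s,p_2}_q(\Reals^n)}$ for compactly supported $v$, which you correctly identify as the main technical point. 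The paper instead splits as $L^t\times L^{p_2}$ with $1/t=1/p-1/p_2$, so the square-function factor lands directly in $F^{s,p_2}_q$ with no adjustment needed, and the bounded-support embedding is pushed entirely onto the scalar side: one only needs $\|u\|_{L^t}\lesssim\|u\|_{F^{s,p_1}_q}$, which is a standard Lebesgue-level Sobolev embedding (sharp when \eqref{eq:q_vs_ps-H} is an equality, and otherwise using the compact support of $u$). This sidesteps your localization lemma entirely.

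Your localization lemma is true (it follows from the bounded-domain embedding in Proposition~\ref{prop:embedding-Fsp} together with the equivalence of $\Reals^n$ and $\Omega$ norms for compactly supported distributions), but your sketch via Schwartz tails is more work than needed. The paper's splitting buys you a shorter proof with only Lebesgue-space localization; your splitting is conceptually fine but introduces an avoidable obstacle.
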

\begin{proof}
The proof follows the pattern of Lemma \ref{lem:mult-Fsp-same-s-high}.
First apply Proposition \ref{prop:littlewood-payley-review}\eqref{part:sobolev-space-by-projection} to obtain
\[
||uv||_{F^{s,p}_q(\Reals^n)} \lesssim \
\left|\left| uv\right|\right|_{L^p(\Reals^n)}
+\left|\left| 
	\left[\sum_{k\ge 10} |2^{sk}P_k(uv)|^q\right]^{1/q} \right|\right|_{L^p(\Reals^n)}.
\]

To estimate the low frequency term define
\begin{equation}\label{eq:t-def}
\frac{1}{t} = \frac{1}{p} - \frac{1}{p_2}
\end{equation}
and observe 
$1>1/p>1/t \ge (1/p_1)-(s/n)>0$ by inequality \eqref{eq:q_vs_ps-H}
and the hypothesis $s<n/p_1$.  
Sobolev embedding and the fact that $u$ is supported 
on $B_R(0)$ then imply
\begin{equation}\label{eq:t-from-sp}
||u||_{L^t(\Reals^n)} \lesssim ||u||_{F^{s,p_1}_q(\Reals^n)} 
\end{equation}
and we conclude from H\"older's inequality
\[
\left|\left| uv\right|\right|_{L^p(\Reals^n)}
\lesssim  ||u||_{L^{t}(\Reals^n)} ||v||_{L^{p_2}(\Reals^n)}
\lesssim ||u||_{F^{s,p_1}_q(\Reals^n)} ||v||_{F^{s,p_2}_q(\Reals^n)}.
\]

As in the proof of Lemma \ref{lem:mult-Fsp-same-s-high} the high-frequency term is split
into three terms using
the Littlewood-Paley trichotomy, Proposition \ref{prop:littlewood-payley-review}\eqref{part:trichotomy};
see equation \eqref{eq:trichotomy-mult}.
For the low-high contributions we 
use Proposition \ref{prop:littlewood-payley-review}
parts \eqref{part:throw-away-Pk} and \eqref{part:maximal-function-beats-projection}
together with the Hardy-Littlewood maximal inequality
to obtain
\begin{align*}
\left|\left| 
	\left[\sum_{k\ge 10} |2^{sk}P_k(
(P_{\le k-4} u)(\widetilde P_{k}v)
	)|^q\right]^{1/q} \right|\right|_{L^{p}(\Reals^n)}
&\lesssim 
\left|\left| 
	\left[\sum_{k\ge 10} |2^{sk}
(P_{\le k-4} u)(\widetilde P_{k}v)
	|^q\right]^{1/q} \right|\right|_{L^p(\Reals^n)} \\
&\lesssim  
\left|\left| |Mu|
	\left[\sum_{k\ge 10} |2^{sk}
\widetilde P_{k} v |^q\right]^{1/q} \right|\right|_{L^p(\Reals^n)}\\
&\lesssim 
||Mu||_{L^{t}(\Reals^n)} ||v||_{F^{s,p_2}_q(\Reals^n)}\\
&\lesssim
||u||_{L^{t}(\Reals^n)} ||v||_{F^{s,p_2}_q(\Reals^n)}
\end{align*}
where $t$ is again defined as in \eqref{eq:t-def}.
From inequality \eqref{eq:t-from-sp}
we conclude that the low-high interactions are controlled by
$||u||_{F^{s,p_1}_q(\Reals^n)}||v||_{F^{s,p_2}_q(\Reals^n)}$.

The estimate for the high-low contributions proceeds identically to that
for the low-high contributions with the minor change that 
in equation \eqref{eq:t-def} we replace $p_2$ with $p_1$.

Turning now to the high-high 
contributions we define $t$ as in equation in \eqref{eq:t-def}
and apply
 Proposition \ref{prop:littlewood-payley-review} parts
\eqref{part:throw-away-Pk} and \eqref{part:maximal-function-beats-projection} 
to obtain
\begin{align*}
\left|\left| \left[\sum_{k\ge 10} 
\left|2^{sk}
P_k \left( 
\sum_{k'\ge k+4}
(P_{k'} u) (\widetilde P_{k'} v)\right)\right|^q\right]^{1/q}\right|\right|_{L^p(\Reals^n)}
&\lesssim
\left| \left|
|Mu|
\left[\sum_{k\ge 10} 
\left|2^{sk}
\left( 
\sum_{k'\ge k+4}
 (\widetilde P_{k'} v\right)\right|^q\right]^{1/q}\right|\right|_{L^p(\Reals^n)}\\
&\lesssim
||Mu||_{L^t(\Reals^n)}
\sum_{a\ge 4}
2^{-sa}
\left|\left|
\left[\sum_{k\ge 10} 
\left|2^{s(k+a)}
\left( 
 \widetilde P_{k+a} v\right)\right|^q\right]^{1/q}\right|\right|_{L^{p_2}(\Reals^n)}\\
&\lesssim
||u||_{L^t(\Reals^n)} ||v||_{F^{s,p_2}_q(\Reals^n)}
\sum_{a\ge 4}
2^{-sa}.
\end{align*}
Since $s>0$ the sum is finite, and we have already seen
that $||u||_{L^t(\Reals^n)}\lesssim ||u||_{F^{s,p_1}_q(\Reals^n)}$.
So the high-high terms are also controlled by 
$||u||_{F^{s,p_1}_q(\Reals^n)}||v||_{F^{s,p_2}_q(\Reals^n)}$ as needed.
\end{proof}

Having established the technical core of the theory, 
it remains for us to build a conveniently accessible interface
in the form of Theorem \ref{thm:mult-Fsp}.
We now turn to a sequence of results that prove the theorem
by cases based on the sign of $\min(s_1,s_2)$, and
for brevity we establish the following standing hypothesis.

\begin{assumption}\label{as:mult-hyp}
Suppose
\begin{itemize}
\item $\Omega\subseteq \Reals^n$ is a bounded $C^\infty$ domain,
\item $s_1,s_2,s\in\Reals$, 
\item $1<p_1,p_2,p<\infty$,
\item $1<q_1,q_2,q<\infty$. 
\end{itemize} 
Moreover, define
\[
\frac{1}{r_1} = \frac{1}{p_1} - \frac{s}{n},\qquad
\frac{1}{r_2} = \frac{1}{p_2} - \frac{s}{n},\quad\text{\rm and}\quad
\frac{1}{r} = \frac{1}{p} - \frac{s}{n}.
\]
\end{assumption}
Note, in particular, that Assumption \ref{as:mult-hyp} supposes that
$\Omega$ has a smooth boundary.  This hypothesis is made for convenience,
and the proof of Theorem \ref{thm:mult-Fsp} follows for general bounded 
domains from the smooth case using a straightforward extension argument based on the quotient
space definition of the relevant spaces.

The following result is mostly a translation of 
Lemmas \ref{lem:mult-Fsp-same-s-high} and \ref{lem:mult-Fsp-same-s-low}
into the hypotheses of Theorem \ref{thm:mult-Fsp}.

\begin{proposition}\label{prop:mult-same-s}
Assume the multiplication hypothesis \ref{as:mult-hyp}
and that $s_1=s_2=s>0$ and $q_1=q_2=q$. 
Pointwise multiplication of $C^\infty(\overline \Omega)$ 
functions extends to a continuous bilinear map 
$F^{s,p_1}_q(\Omega)\times F^{s,p_2}_q(\Omega)
\rightarrow F^{s,p}_q(\Omega)$
so long as
\begin{align}
\label{eq:same-s-r1-r2-vs-r}
\max\left(\frac 1{r_1},\frac 1{r_2}\right) &\le \frac{1}{r}\\
\label{eq:same-s-r1-plus-r2-vs-r}
\frac 1{r_1}+\frac 1{r_2} &\le \frac{1}{r}
\end{align}
with the final inequality strict if $\min(1/r_1,1/r_2)=0$.
\end{proposition}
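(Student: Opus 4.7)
The plan is to reduce the bounded-domain statement to the $\Reals^n$ setting (where Lemmas \ref{lem:mult-Fsp-same-s-high} and \ref{lem:mult-Fsp-same-s-low} are formulated) and then to invoke whichever of those two lemmas is appropriate, according to the sign of $1/r_1 = 1/p_1 - s/n$. Since $\Omega$ is a bounded $C^\infty$ domain, a universal extension operator (e.g.\ \cite{triebel_theory_2010} Section 4.5) followed by multiplication by a smooth cutoff supported in a fixed ball $B_R\supset\overline{\Omega}$ supplies compactly supported representatives $\tilde u,\tilde v\in F^{s,p_i}_q(\Reals^n)$ whose norms are dominated by those of $u$ and $v$ in $F^{s,p_i}_q(\Omega)$. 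It will then suffice to estimate $\tilde u\tilde v$ on $\Reals^n$ and restrict to $\Omega$. By the symmetry between the two factors I would assume $p_1\ge p_2$; condition \eqref{eq:same-s-r1-vs-r} then reads $p\le p_2\le p_1$ and $1/r_1\le 1/r_2$.

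In the non-marginal regime the case split is clean. When $s>n/p_1$ (so $1/r_1<0$), Lemma \ref{lem:mult-Fsp-same-s-high} places $\tilde u\tilde v$ in $F^{s,p_2}_q(\Reals^n)$; since $p\le p_2$, the bounded-domain embedding from Proposition \ref{prop:embedding-Fsp} drops the Lebesgue exponent down to $p$ to land in the target. When $s<n/p_1$ (so $1/r_1>0$ and automatically $s<n/p_2$), Lemma \ref{lem:mult-Fsp-same-s-low} applies directly: its numerical hypothesis \eqref{eq:q_vs_ps-H} is precisely condition \eqref{eq:same-s-r1-plus-r2-vs-r} rearranged, and the constraint $p\le p_2\le p_1$ needed by the lemma has already been arranged.

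The hard part will be the marginal case $s=n/p_1$, where $1/r_1=0$ and neither lemma applies as stated. This is exactly where the strict inequality required in \eqref{eq:same-s-r1-plus-r2-vs-r} becomes essential: together with \eqref{eq:same-s-r1-vs-r} it forces $1/r>1/r_2$, equivalently $p<p_2$. My plan is to revisit the proof of Lemma \ref{lem:mult-Fsp-same-s-low}, noting that the strict inequality $s<n/p_1$ is used there solely to invoke Sobolev embedding $F^{s,p_1}_q(\Reals^n)\hookrightarrow L^t(\Reals^n)$ at the exponent $1/t=1/p-1/p_2$. At the borderline $s=n/p_1$ the whole-space embedding fails, but for $\tilde u$ supported in $B_R$ the bounded-domain embedding $F^{n/p_1,p_1}_q(\Omega)\hookrightarrow L^t(\Omega)$ supplied by Proposition \ref{prop:embedding-Fsp} (with a loss of differentiability down to $F^{0,t}_2=L^t$) remains valid for every finite $t$; the strict inequality $1/p>1/p_2$ makes the required $t$ finite. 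Substituting this borderline embedding into every appearance of the Sobolev estimate carries the Littlewood-Paley high-low, low-high and high-high bounds through essentially verbatim and delivers $\tilde u\tilde v\in F^{s,p}_q(\Reals^n)$, as required.
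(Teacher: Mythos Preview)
Your treatment of the non-marginal cases $1/r_1<0$ and $1/r_1>0$ matches the paper exactly. The difference is in the borderline case $1/r_1=0$. The paper does not reopen the proof of Lemma~\ref{lem:mult-Fsp-same-s-low}; instead it exploits the strict inequality $1/r_1+1/r_2<1/r$ to lower $p_1$ and $p_2$ slightly to $\hat p_1,\hat p_2$, so that $\min(1/\hat r_1,1/\hat r_2)>0$ while $1/\hat r_1+1/\hat r_2<1/r$ and $\max(1/\hat r_1,1/\hat r_2)<1/r$ are preserved. This reduces the borderline case to the already-established case $1/r_1>0$, and the bounded-domain embedding $F^{s,p_i}_q(\Omega)\hookrightarrow F^{s,\hat p_i}_q(\Omega)$ (Proposition~\ref{prop:embedding-Fsp}) closes the argument in one line. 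Your approach---carrying the borderline Sobolev embedding $F^{n/p_1,p_1}_q\hookrightarrow L^t$ for compactly supported functions through each of the low-frequency, low-high, high-low and high-high estimates of Lemma~\ref{lem:mult-Fsp-same-s-low}---is also correct, but it requires re-inspecting every line of that lemma (and noting that the high-low term needs the analogous embedding for $v$, which is fine since $1/t_2=1/p-1/p_1>0$ as well). The paper's perturbation trick is shorter and more modular; your approach is more hands-on but avoids introducing auxiliary exponents.
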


\begin{proof}
Using an elementary extension and cutoff function argument, it suffices
to show 
\begin{equation}\label{eq:instead-of-domain}
||uv||_{F^{s,p}_q(\Reals^n)} \lesssim ||u||_{F^{s,p_1}_q(\Reals^n)}||v||_{F^{s,p_2}_q(\Reals^n)}
\end{equation}
whenever $u\in H^{s,p_1}(\Reals^n)$ and $v\in H^{s,p_2}(\Reals^n)$ are supported in $B_R(0)$ for some $R>0$.

Suppose $\min(1/r_1,1/r_2)\neq 0$.  
Without loss of generality, we can assume 
$1/r_1=\min(1/r_1,1/r_2)$.  Suppose
$1/r_1<0$.  Since $1/r_1\le 1/r_2\le 1/r$ it follows that
$1/p_1\le 1/p_2 \le 1/p$ and 
Lemma \ref{lem:mult-Fsp-same-s-high} together with 
Proposition \ref{prop:embedding-Fsp} implies
\[
||uv||_{F^{s,p}_q(\Reals^n)}\lesssim ||uv||_{F^{s,p_2}_q(\Reals^n)} 
\lesssim ||u||_{F^{s,p_1}_q(\Reals^n)}||v||_{H^{s,p_2}_q(\Reals^n)}.
\]

Next, assume $1/r_1>0$.  Since $1/r_1+1/r_2\le 1/r$, inequality 
\eqref{eq:q_vs_ps-H} holds and the result follows from 
Lemma \ref{lem:mult-Fsp-same-s-low}.

Finally, consider the threshold case $1/r_1=0$, so $\min(1/r_1,1/r_2)=0$.
We have therefore also assumed 
$1/r_1+1/r_2<1/r$ and can lower $p_1$ and $p_2$ slightly
to $\hat p_1$ and $\hat p_2$ so that
\begin{equation}\label{eq:hat-r-bound}
1/\hat r_1+1/\hat r_2<1/r
\end{equation}
remains true. 
Notice that $\min(1/\hat r_1,1/\hat r_2)>0$, and hence
inequality \eqref{eq:hat-r-bound} also implies 
 $\max(1/\hat r_1,1/\hat r_2)<1/r$.  We have
 therefore already established continuity
 of multiplication 
$F^{s,\hat p_1}_q(\Omega)\times F^{s,\hat p_2}_q(\Omega)\to
F^{s,p}_q(\Omega)$ and the result follows from 
the continuous embedding
$
F^{s,p_1}_q(\Omega)\times F^{s,p_2}_q(\Omega)\hookrightarrow
F^{s,\hat p_1}_q(\Omega)\times F^{s,\hat p_2}_q(\Omega).
$
\end{proof}

Using Sobolev embedding we can now relax the requirement
$s_1=s_2=s$ and $q_1=q_2=q$, noting that a fine parameter
restriction arises if $s=s_i$ for some $i$.
\begin{proposition}\label{prop:mult-Fsp-s-pos}
Assume the multiplication hypothesis \ref{as:mult-hyp}
and that $s_1,s_2,s>0$. 
Multiplication is continuous 
$F^{s_1,p_1}_{q_1}(\Omega)\times F^{s_2,p_2}_{q_2}(\Omega)
\to F^{s,p}_q(\Omega)$ so long as
\begin{align}
s&\le \min({s_1,s_2})\\
\max\left(\frac{1}{r_1},\frac{1}{r_2}\right) & \le  \frac{1}{r}\\
\label{eq:r1-r2-vs-r-Fsp-baby}
\frac{1}{r_1} + \frac{1}{r_2} & \le \frac{1}{r}
\end{align}
with the following caveats:
\begin{itemize}
	\item Inequality \eqref{eq:r1-r2-vs-r-Fsp-baby}
	is strict if $\min(1/r_1,1/r_2)=0$.
	\item If $s_i=s$ for some $i$ then $q\ge q_i$.
\end{itemize}
\end{proposition}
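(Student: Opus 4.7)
The plan is to reduce to Proposition \ref{prop:mult-same-s}, which handles the equal-differentiability case $s_1=s_2=s$ with equal fine parameter, by using Sobolev embedding to normalize each factor to differentiability $s$ and fine parameter $q$ while holding its Lebesgue regularity $1/r_i$ fixed. Concretely, for $i\in\{1,2\}$ I would define auxiliary Lebesgue exponents $\hat p_i$ by
\[
\frac{1}{\hat p_i} = \frac{1}{p_i} + \frac{s-s_i}{n},
\]
so that $\frac{1}{\hat p_i}-\frac{s}{n} = \frac{1}{r_i}$. Since $0<s\le s_i$ and $1<p_i<\infty$, a short check using $s>0$ confirms $\hat p_i\in(1,\infty)$.

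Next I would establish the embedding $F^{s_i,p_i}_{q_i}(\Omega) \hookrightarrow F^{s,\hat p_i}_{q}(\Omega)$ for each $i$. When $s_i>s$, this follows from part (v) of Proposition \ref{prop:embedding-Fsp}, which permits the fine parameter to be replaced freely as long as differentiability strictly decreases and Lebesgue regularity does not improve (here it is in fact preserved). When $s_i=s$, necessarily $\hat p_i=p_i$ and the embedding reduces to $F^{s,p_i}_{q_i}(\Omega)\hookrightarrow F^{s,p_i}_{q}(\Omega)$, which by part (ii) of Proposition \ref{prop:embedding-Fsp} holds precisely because the stated caveat $q_i\le q$ is in force. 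This is exactly where the fine-parameter restriction in the proposition is used.

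Finally I would invoke Proposition \ref{prop:mult-same-s} to produce the continuous bilinear map $F^{s,\hat p_1}_{q}(\Omega)\times F^{s,\hat p_2}_{q}(\Omega)\to F^{s,p}_{q}(\Omega)$. By construction $\hat r_i = r_i$, so the hypotheses of that proposition — $\max(1/\hat r_1, 1/\hat r_2)\le 1/r$ together with $1/\hat r_1 + 1/\hat r_2\le 1/r$, strict when $\min(1/\hat r_1,1/\hat r_2)=0$ — transfer verbatim from the hypotheses of the current proposition, and composing with the embeddings of the previous step yields the required continuous multiplication. There is no substantive analytic obstacle beyond what is already in Proposition \ref{prop:mult-same-s}; the main task is the bookkeeping verification that the auxiliary parameters $(\hat p_i, q)$ are admissible and that the fine-parameter caveat $q_i \le q$ emerges naturally exactly from the $s_i = s$ edge case of the embedding.
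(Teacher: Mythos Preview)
Your reduction to Proposition \ref{prop:mult-same-s} via the embeddings $F^{s_i,p_i}_{q_i}(\Omega)\hookrightarrow F^{s,\hat p_i}_q(\Omega)$ is exactly what the paper does in its first case, but the claim that $\hat p_i\in(1,\infty)$ always holds is false. From $1/\hat p_i = 1/r_i + s/n$ and $s\le s_i$ you correctly get $1/\hat p_i\le 1/p_i<1$, but nothing forces $1/\hat p_i>0$: this fails whenever $1/r_i \le -s/n$, i.e.\ whenever the factor $F^{s_i,p_i}_{q_i}$ is sufficiently supercritical relative to $s$. For a concrete instance take $n=3$, $s_1=5$, $p_1=2$ (so $1/r_1=-7/6$) and $s=1$; then $1/\hat p_1=-5/6$. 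One can complete this to a full set of admissible parameters (e.g.\ $s_2=2$, $p_2=2$, $p=2$), so the gap is not vacuous.

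The paper deals with this by splitting on the sign of $\min(1/r_1,1/r_2)$. Your argument is precisely the case $\min(1/r_1,1/r_2)>0$. When $\min(1/r_1,1/r_2)<0$ (say $1/r_1<0$), the paper instead embeds the \emph{other} factor directly into the target, $F^{s_2,p_2}_{q_2}(\Omega)\hookrightarrow F^{s,p}_q(\Omega)$, and then shows $F^{s_1,p_1}_{q_1}(\Omega)$ multiplies $F^{s,p}_q(\Omega)$ into itself by finding some $t\in(1,\infty)$ with $1/t-s/n<0$ and $t\ge p$ into which $F^{s_1,p_1}_{q_1}$ embeds; Proposition \ref{prop:mult-same-s} then applies with the pair $(t,p)$. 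The marginal case $\min(1/r_1,1/r_2)=0$ is handled by slightly lowering the $p_i$ using the assumed strict inequality. So your plan needs this additional case analysis to be complete.
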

\begin{proof}
First, suppose $\min(1/r_1, 1/r_2) > 0$.  Define $t_1, t_2$ by
\[
\frac{1}{t_i} -\frac{s}{n} = \frac{1}{r_i} = \frac{1}{p_i} - \frac{s_i}{n}.
\]
Since $s_i\ge s \ge 0$,
\[
0< \frac{1}{r_i} \le \frac{1}{t_i} \le \frac{1}{p_i} < 1
\]
so by Sobolev embedding (Proposition \ref{prop:embedding-Fsp})
we have 
\[
F^{s_1,p_1}_{q_1}(\Omega)\times F^{s_2,p_2}_{q_2}(\Omega) \hookrightarrow
F^{s,t_1}_{q}(\Omega)\times F^{s,t_2}_q(\Omega).
\]
Note that this is the point where we used the caveat $q_i\le q$ if $s_i=s$.  
Because the value of $r_i$ is preserved in the transition from $p_i$ to $t_i$,
the hypotheses of Proposition \ref{prop:mult-same-s} are satisfied 
and we have continuity of multiplication 
$F^{s,t_1}_q(\Omega)\times F^{s,t_2}_q(\Omega)
\to F^{s,p}_{q}(\Omega)$.

Now suppose $\min(1/r_1,1/r_2)<0$; without loss of generality 
we can assume $1/r_1\le 1/r_2$ and therefore $1/r_1<0$. 
Using the hypotheses $s\le s_2$, $1/r\ge 1/r_2$ and $q_i\le q$ if $s=s_2$
we know from Sobolev embedding that 
$F^{s_2,p_2}_{q_2}(\Omega) \hookrightarrow F^{s,p}_q(\Omega)$.
Hence it suffices to prove that multiplication is continuous
\[
F^{s_1,p_1}_{q_1}(\Omega)\times F^{s,p}_{q}(\Omega) \to
F^{s,p}_{q}(\Omega).
\]
Proposition \ref{prop:mult-same-s} implies 
multiplication is continuous
\[
F^{s,t}_q(\Omega) \times F^{s,p}_{q}(\Omega) \to
F^{s,p}_{q}(\Omega)
\]
if $t\in(1,\infty)$ satisfies
\begin{equation}\label{eq:mult-into-same}
\begin{aligned}
\frac{1}{t}-\frac{s}n &\le \frac{1}p-\frac{s}n\\
\frac{1}{t}-\frac{s}n &< 0.
\end{aligned}
\end{equation}
Hence we need only show that $F^{s_1,p_1}_{q_1}(\Omega)$
embeds into $F^{s,t}_q(\Omega)$ for some $t$ satisfying
conditions \eqref{eq:mult-into-same}.

There are two cases depending on the value of 
\[
\frac{1}{\hat t} = \frac{1}{p_1} - \frac{s_1}{n} + \frac{s}{n}.
\]
If $\hat t>0$ we take $t=\hat t$ and observe that since $s_1\ge s$, $t>1$. Sobolev embedding 
(using the hypothesis $q_1\le q$ if $s_1=s$) implies 
$F^{s_1,p_1}_{q_1}(\Omega)\hookrightarrow F^{s,t}_q(\Omega)$.
Inequalities \eqref{eq:mult-into-same} follow from the
the observations $1/t-s/n = 1/p_1-s_1/n < 0$
and 
\[
\frac{1}{t} - \frac{s} n = 
\frac{1}{p_1} - \frac{s_1} n = \frac{1}{r_1} \le \frac{1}{r} = 
\frac{1}{p}-\frac{s}{n}.
\]
Suppose instead $\hat t\le 0$.
Now we simply choose any $t>1$ satisfying
conditions \eqref{eq:mult-into-same}.  
Sobolev embedding
$F^{s_1,p_1}_{q_1}(\Omega)
\hookrightarrow 
F^{s,t}_{q}(\Omega)$
now follows from the inequality $\hat t\le 0$ (noting that
this can only happen if $s_1>s$ and hence the fine
parameter plays no role).

The proposition is now proved except in the marginal case 
$\min(1/r_1,1/r_2)=0$.  In this case we have assumed
$1/r_1+1/r_2 < 1/r$ and consequently $1/r_i<1/r$, $i=1,2$.
Just as in Proposition \ref{prop:mult-same-s}
we can lower $p_1$ and $p_2$ slightly while maintaining
these strict inequalities, and the result follows from
our previous work.
\end{proof}

We now extend Proposition \ref{prop:mult-Fsp-s-pos} to 
the case $s\le 0$ while still assuming $\min(s_1,s_2)>0$.
The proof relies on the embeddings
\begin{itemize}
	\item $L^1(\Omega)\hookrightarrow F^{s,p}_q(\Omega)$ if $s<0$ and  $1/p^*+s/n<0$
	\item $L^a(\Omega)\hookrightarrow F^{s,p}_q(\Omega)$ for any $a>1$ if $s<0$ and  $1/p^*+s/n=0$
\end{itemize}
which are proved by extending functions on $\Omega$ by zero to all of $\Reals^n$
and applying duality on $\Reals^n$, Proposition \ref{prop:elliptic-zoom-Fsp}, 
along with Sobolev embedding on $\Reals^n$.

\begin{proposition}\label{prop:mult-Fsp-pos-pos-neg}
Assume the multiplication hypothesis \ref{as:mult-hyp}
and that $s_1,s_2>0$ and $s\le 0$.
Multiplication of $C^\infty(\Omega)$ 
functions extends to a continuous bilinear map 
$F^{s_1,p_1}_{q_1}(\Omega)\times F^{s_2,p_2}_{q_2}(\Omega)
\to F^{s,p}_q(\Omega)$ so long as
\begin{align}
\max\left(\frac{1}{r_1},\frac{1}{r_2}\right) & \le  \frac{1}{r}\\
\label{eq:r1-r2-vs-r-Fsp-baby-3}
\frac{1}{r_1} + \frac{1}{r_2} & \le \frac{1}{r}\\
\frac{1}{r_1} + \frac{1}{r_2} & \le 1
\end{align}
with the following caveat:
\begin{itemize}
	\item Inequality \eqref{eq:r1-r2-vs-r-Fsp-baby-3}
	is strict if $\min(1/r_1,1/r_2,1-1/r)=0$.
\end{itemize}
\end{proposition}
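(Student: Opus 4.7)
The plan is to reduce to Proposition \ref{prop:mult-Fsp-s-pos} in the non-degenerate range, and to combine Sobolev embeddings with H\"older's inequality in the remaining range. A useful preliminary observation is that because $s_i > 0$ and $p_i > 1$, each Lebesgue-regularity index $1/r_i = 1/p_i - s_i/n$ is strictly less than $1$, which provides slack in the marginal estimates below.

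\textbf{Case 1} ($1/r < 1$). I would choose $\hat s \in (0, \min(s_1,s_2))$ small enough that $1/r + \hat s/n < 1$ and set $\hat p$ by $1/\hat p = 1/r + \hat s/n$, so $\hat p \in (1,\infty)$; any fixed $\hat q \in (1,\infty)$ will work. With $1/\hat r = 1/r$, the inequalities $\max(1/r_1,1/r_2) \le 1/\hat r$ and $1/r_1 + 1/r_2 \le 1/\hat r$ are inherited directly from the current hypothesis. The strictness caveat needed by Proposition \ref{prop:mult-Fsp-s-pos} (namely, strict inequality when $\min(1/r_1,1/r_2)=0$) follows from the present strictness hypothesis: since $1 - 1/r > 0$ here, $\min(1/r_1,1/r_2,1-1/r)=0$ is equivalent to $\min(1/r_1,1/r_2)=0$. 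Applying Proposition \ref{prop:mult-Fsp-s-pos} yields a continuous bilinear map into $F^{\hat s,\hat p}_{\hat q}(\Omega)$; Sobolev embedding (Proposition \ref{prop:embedding-Fsp}(5)), with $\hat s > 0 \ge s$ strictly so that no fine-parameter condition arises, then places this inside $F^{s,p}_q(\Omega)$.

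\textbf{Case 2} ($1/r \ge 1$, so $s < 0$ strictly). I would embed each factor into a Lebesgue space, combine via H\"older's inequality on the bounded domain $\Omega$, and apply the endpoint embeddings $L^1(\Omega)\hookrightarrow F^{s,p}_q(\Omega)$ (when $1/r > 1$) or $L^a(\Omega)\hookrightarrow F^{s,p}_q(\Omega)$ for any $a>1$ (when $1/r = 1$) recorded just before the proposition. Concretely, for each $i$ take $a_i = r_i$ if $1/r_i > 0$, $a_i = \infty$ if $1/r_i < 0$ (using Sobolev since $s_i > n/p_i$), and any large finite $a_i$ if $1/r_i = 0$ (marginal Sobolev). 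H\"older's inequality then gives $uv \in L^b(\Omega)$ with $1/b = 1/a_1 + 1/a_2$, and the endpoint embedding completes the argument once $b$ meets the required lower bound.

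The main technical point is verifying that lower bound on $b$. When $1/r > 1$, one needs $b \ge 1$, i.e.\ $1/a_1+1/a_2 \le 1$; this follows from $1/r_1 + 1/r_2 \le 1$ together with the slack provided by each $1/r_i < 1$, which absorbs the small positive shifts $1/a_i - 1/r_i$ forced in the marginal ($1/r_i=0$) and negative cases. When $1/r = 1$, one needs $b > 1$; this is exactly where $1 - 1/r = 0$ triggers the strictness caveat of the hypothesis, yielding $1/r_1 + 1/r_2 < 1$ and hence the strict inequality $1/a_1 + 1/a_2 < 1$ after a sufficiently small marginal adjustment. Continuity of the bilinear map then follows from the chain of norm estimates at each step.
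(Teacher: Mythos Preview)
Your proof is correct and follows essentially the same approach as the paper: reduce to Proposition \ref{prop:mult-Fsp-s-pos} via an intermediate positive-regularity target when $1/r<1$, and embed the factors into Lebesgue spaces, multiply via H\"older, and use the $L^1$ (or $L^a$, $a>1$) endpoint embeddings when $1/r\ge1$. The paper splits the latter into the two sub-cases $1/r>1$ and $1/r=1$ separately and only treats the ``hard'' sub-case $\min(1/r_1,1/r_2)>0$ explicitly, whereas you handle all signs of $1/r_i$ uniformly; this is a cosmetic difference only.
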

\begin{proof}
First, suppose $1/r<1$.  Choose $1/t\in(0,1)$ and 
$\sigma>0$ such that $\sigma<\min(s_1,s_2)$
and such that $(1/t)-(\sigma/n)=1/r$.  This is possible because
of the strict inequality $1/r<1$.  Proposition \ref{prop:mult-Fsp-s-pos}
and Sobolev embedding then ensure 
the continuity of multiplication 
\[
	F^{s_1,p_1}_{q_1}(\Omega)\times F^{s_2,p_2}_{q_2}(\Omega)
\to F^{\sigma,t}_q(\Omega) \hookrightarrow F^{s,p}_{q}(\Omega).
\]

Now suppose $1/r>1$.
Since $1/p^*+s/n = 1-1/r < 0$, from the comments before the start of the proposition it 
suffices to show show that product embeds continuously in $L^1(\Omega)$,
and the hard case occurs when $\min(1/r_1,1/r_2)>0$. But then
Sobolev embedding implies
$F^{s_i,p_i}_{q_i}(\Omega) \hookrightarrow L^{r^i}(\Omega)$.
Since $1/r_1+1/r_2\le 1/r< 1$, the result follows from H\"older's inequality.

Finally, suppose $1/r=1$. Now Sobolev embedding
implies $F^{-s,p^*}_{q^*}(\Omega)\hookrightarrow L^{t}(\Omega)$ 
for all $t<\infty$.
So it suffices to show that the product 
embeds continuously in $L^a(\Omega)$ for some $a>1$
and again the hard case
occurs when $1/r_1,1/r_2>0$.  Since $1-1/r=0$,
$\min(1/r_1,1/r_2,1-1/r)=0$ and we have hence assumed
$1/r_1+1/r_2 < 1/r =1$.   Arguing as in the case $1/r<1$,
the product lies in $L^a(\Omega)$ with $1/a = 1/r_1+1/r_2$.
The proof is complete, noting that the assumption $1/r_1+1/r_2<1$ implies 
$a>1$ as required.
\end{proof}

The previous two propositions establish Theorem \ref{thm:mult-Fsp}
if $\min(s_1,s_2)>0$.  The case $\min(s_1,s_2)<0$ follows
from a duality argument.

\begin{proposition}\label{prop:mult-Fsp-s2-neg}
Assume the multiplication hypothesis \ref{as:mult-hyp}
and that $\min(s_1,s_2)<0$.
Multiplication of $C^\infty(\Omega)$ 
functions extends to a continuous bilinear map 
$F^{s_1,p_1}_{q_1}(\Omega)\times F^{s_2,p_2}_{q_2}(\Omega)
\to F^{s,p}_q(\Omega)$ so long as
\begin{align}
s_1+s_2&\ge 0\\
s&\le \min(s_1,s_2)\\
\max\left(\frac{1}{r_1},\frac{1}{r_2}\right) & \le  \frac{1}{r}\\
\label{eq:r1-r2-vs-r-Fsp-baby-4}
\frac{1}{r_1} + \frac{1}{r_2} & \le \frac{1}{r}\\
\frac{1}{r_1} + \frac{1}{r_2} & \le 1
\end{align}
with the following caveats:
\begin{itemize}
	\item Inequality \eqref{eq:r1-r2-vs-r-Fsp-baby-4}
	is strict if $\min(1/r_1,1/r_2,1-1/r)=0$.
	\item If  $s=s_i$ for some $i$, then $q_i\le q$.
	\item If $s_1+s_2=0$ then $\frac{1}{q_1}+\frac{1}{q_2}\ge 1$.
\end{itemize}
\end{proposition}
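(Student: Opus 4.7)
The approach is a duality reduction to the case already covered by Proposition \ref{prop:mult-Fsp-s-pos}. Without loss of generality assume $s_2 \le s_1$, so that the hypothesis $\min(s_1,s_2)<0$ gives $s_2 < 0$. Combined with $s_1 + s_2 \ge 0$ this forces $s_1 \ge -s_2 > 0$, and $s \le s_2 < 0$ gives $-s > 0$. The point of this arrangement is that in the dualized problem all three differentiability exponents $s_1$, $-s$, $-s_2$ become positive, putting us squarely in the regime already handled.

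Using the quotient-space definition of $F^{s,p}_q(\Omega)$, it suffices to prove the estimate
\[
\|uv\|_{F^{s,p}_q(\Reals^n)} \lesssim \|u\|_{F^{s_1,p_1}_{q_1}(\Reals^n)} \|v\|_{F^{s_2,p_2}_{q_2}(\Reals^n)}
\]
for smooth $u,v$ with supports in a fixed bounded set. By Proposition \ref{prop:dual-Fsp}, the left-hand norm is equivalent to the supremum of $|\langle uv,w\rangle|$ over $w \in \mathcal D(\Reals^n)$ of unit norm in $F^{-s,p^*}_{q^*}(\Reals^n)$. Smoothness and associativity give $\langle uv,w\rangle = \langle v, uw\rangle$, so it suffices to bound $\|uw\|_{F^{-s_2,p_2^*}_{q_2^*}(\Reals^n)}$ by $\|u\|_{F^{s_1,p_1}_{q_1}}\|w\|_{F^{-s,p^*}_{q^*}}$. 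Because $u$ and $w$ are compactly supported, this reduces (via the quotient norm on a sufficiently large ball) to an application of Proposition \ref{prop:mult-Fsp-s-pos} with differentiability parameters $(s_1,-s;-s_2)$, Lebesgue parameters $(p_1,p^*;p_2^*)$, and fine parameters $(q_1,q^*;q_2^*)$.

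The main obstacle is verifying that every hypothesis and caveat of Proposition \ref{prop:mult-Fsp-s-pos} under these dualized parameters matches a hypothesis of the present statement. The new Lebesgue-regularity exponents evaluate to $1/r_1^{\rm new}=1/r_1$, $1/r_2^{\rm new}=1-1/r$, and $1/r^{\rm new}=1-1/r_2$. The ordering $-s_2\le\min(s_1,-s)$ is precisely $s_1+s_2\ge 0$ together with $s\le s_2$; the inequality $\max(1/r_1^{\rm new},1/r_2^{\rm new})\le 1/r^{\rm new}$ unfolds to $1/r_1+1/r_2\le 1$ and $1/r_2\le 1/r$; and $1/r_1^{\rm new}+1/r_2^{\rm new}\le 1/r^{\rm new}$ becomes $1/r_1+1/r_2\le 1/r$. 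The strictness trigger $\min(1/r_1^{\rm new},1/r_2^{\rm new})=0$ corresponds to $1/r_1=0$ or $1/r=1$, both covered by the present caveat $\min(1/r_1,1/r_2,1-1/r)=0$. For the fine-parameter caveats, $s_1^{\rm new}=s^{\rm new}$ means $s_1+s_2=0$ and requires $q_1\le q_2^*$, i.e., $1/q_1+1/q_2\ge 1$; $s_2^{\rm new}=s^{\rm new}$ means $s=s_2$ and requires $q^*\le q_2^*$, i.e., $q_2\le q$; the remaining case $s=s_1$ cannot occur because $s<0<s_1$. Each of these is already among the hypotheses of the current proposition.
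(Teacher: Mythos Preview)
Your proof is correct and follows essentially the same duality reduction as the paper: both arguments pass from the product $uv$ to the pairing $\langle v, uw\rangle$ and invoke Proposition \ref{prop:mult-Fsp-s-pos} on the factors $u$ and $w$, verifying the translated hypotheses line by line. The only cosmetic difference is that the paper labels the negative-index factor as $s_1$ whereas you take it to be $s_2$; the paper also frames the duality step as constructing an element of $F^{s,p}_q(\Reals^n)$ directly from the functional $w\mapsto\langle \tilde u_2 w,\tilde u_1\rangle$ rather than bounding the norm of $uv$ for smooth $u,v$, but these are equivalent. One small clarification worth making: your phrase ``because $u$ and $w$ are compactly supported'' is slightly imprecise, since the support of $w\in\mathcal D(\Reals^n)$ is not fixed; what matters is that $u$ has support in a fixed ball, so $uw$ does too, and hence the implicit constant from Proposition \ref{prop:mult-Fsp-s-pos} on that ball is uniform in $w$.
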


\begin{proof}
Without loss of generality we can assume $s_1<0$, in which case
$s_2>0$.  Note moreover that $s<0$ since $s\le s_1$.

We first show continuity of multiplication
\[
F^{s_2,p_2}_{q_2}(\Omega)\times F^{-s,p^*}_{q^*}(\Omega)\to F^{-s_1,p_1^*}_{q_1^*}(\Omega).
\]
Since $s_2,-s,-s_1>0$ we need only verify the conditions 
of Proposition \ref{prop:mult-Fsp-s-pos}. 
These read
\begin{align*}
-s_1\le s_2\\
-s_1\le -s\\
\frac{1}{p_2}-\frac{s_2}{n} \le \frac{1}{p_1^*}+\frac{s_1}{n}\\
\frac{1}{p^*}+\frac{s}{n} \le \frac{1}{p_1^*}+\frac{s_1}{n} \\
\frac{1}{p_2}-\frac{s_2}{n} + \frac{1}{p^*}-\frac{-s}{n} \le
\frac{1}{p_1^*}-\frac{-s_1}{n}
\end{align*}
with the final inequality strict if $\min(\frac{1}{p_2}-\frac{s_2}{n},
\frac{1}{p^*}-\frac{-s}{n})=0$ and additionally
\begin{align*}
	q^*&\le q_1^*\quad\text{if $-s=-s_1$},\\
	q_2&\le q_1^* \quad\text{if $s_2=-s_1$.}
\end{align*}
These conditions can be rewritten
\begin{align*}
0 \le s_1+s_2\\
s\le s_1 = \min(s_1,s_2)\\
\frac{1}{r_2} \le 1 - \frac{1}{r}\\
1-\frac{1}{r} \le 1 - \frac{1}{r_1}\\
\frac{1}{r_2} + 1 - \frac{1}{r} \le 1-\frac{1}{r_1}.
\end{align*}
with the final inequality strict if $\min(1/r_2,1-1/r)=0$
and additionally
\begin{align*}
q&\ge q_1 \quad\text{if $s=s_1$},\\
\frac{1}{q_1}+\frac{1}{q_2} &\ge 1\quad\text{if $s_1+s_2=0$.}
\end{align*}
Note that since $1/r_1>0$, $\min(1/r_2,1-1/r)=0$
if and only if $\min(1/r_1,1/r_2,1-1/r)=0$. 
Hence we have assumed all of these conditions.

The result now follows from a duality argument.  Suppose $u_i\in F^{s_i,p_i}_{q_i}(\Omega)$,
$i=1,2$. We define an element $z\in F^{s,p}_q(\Omega)$ as follows.
Let $\tilde u_i$ be an extension to $\Reals^n$ that has support in some large
ball $\overline{B_R(0)}$ independent of the functions $u_i$, such that 
$||\tilde u_i||_{F^{s_i,p_i}_{q_i}(\Reals^n)}
\lesssim ||u_i||_{F^{s_i,p_i}_{q_i}(\Omega)}$. 
Given $w\in F^{-s,p^*}_{q^*}(\Reals^n)$, the product 
$\tilde u_2 w$ is an element of $F^{-s_1,p_1^*}_{q_1^*}(B_{2R(0)})$ by the argument above,
and because of the support of $\tilde u_2$ in $\overline{B_R(0)}$, 
it extends continuously by zero to an element of $F^{-s_1,p_1^*}_{q_1^*}(\Reals^n)$.
Using duality pairing on $\Reals^n$ we define $f(w) = \left<\tilde u_2 w, \tilde u_1\right>$
and one readily verifies  the estimate
\begin{equation}\label{eq:duality-grind}
	|f(w)|\lesssim ||u_1||_{F^{s_1,p_1}_{q_1}(\Omega)}||u_2||_{F^{s_2,p_2}_{q_2}(\Omega)}||w||_{F^{-s,p^*}_{q^*}(\Reals^n)}
\end{equation}
and hence $f$ determines an element of $F^{s,p}_{q}(\Reals^n)$. Let $z$ be its restriction to $\Omega$.
A routine computation shows that $z$ is independent of the choice of extensions, 
depends bilinearly (and via \eqref{eq:duality-grind} continuously) on the factors $u_i$,
and that if $u_1$ and $u_2$ are smooth, then $z$ is simply the product $u_1 u_2$.
\end{proof}

It remains to establish 
Theorem \ref{thm:mult-Fsp} when $\min(s_1,s_2)=0$. 

\begin{proposition}\label{prop:mult-Fsp-s2-zero}
Assume the multiplication hypothesis \ref{as:mult-hyp}
that $\min(s_1,s_2)=0$.  
Multiplication is continuous 
$F^{s_1,p_1}_{q_1}(\Omega)
\times F^{s_2,p_2}_{q_2}(\Omega) \to F^{s,p}_q(\Omega)$
so long as
\begin{align}
\max\left(\frac{1}{r_1},\frac{1}{r_2}\right) & \le  \frac{1}{r}\\
\label{eq:r1-r2-vs-r-Fsp-baby-9}
\frac{1}{r_1} + \frac{1}{r_2} & \le \frac{1}{r}
\end{align}
with the following caveats:
\begin{itemize}
	\item Inequality \eqref{eq:r1-r2-vs-r-Fsp-baby-9}
	is strict if $\min(1/r_1,1/r_2,1-1/r)=0$.
	\item If $s_i=s$ for some $i$ then $q\ge q_i$.
	\item If $s_1+s_2=0$ then $\frac1{q_1}+\frac1{q_2} \ge 1$.
	\item If $s_1=s_2=s=0$ then $q_1,q_2\le 2$ and $q\ge 2$.
\end{itemize}
\end{proposition}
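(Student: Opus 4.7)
The overall strategy is to reduce to Propositions \ref{prop:mult-Fsp-s-pos}, \ref{prop:mult-Fsp-pos-pos-neg}, and \ref{prop:mult-Fsp-s2-neg} via embeddings, bilinear complex interpolation, and duality. Since pointwise multiplication is symmetric in its two factors, I may assume $s_1\ge s_2=0$, leaving three regimes to treat: (D) $s_1=s=0$; the generic regime $s_1>0$, $s_2=0$, subdivided into (A) $s<0$ and (B) $s=0$; and (C) $s_1=s_2=0$ with $s<0$. Regime (D) is immediate: since $q_i\le 2$, Proposition \ref{prop:embedding-Fsp} gives $F^{0,p_i}_{q_i}(\Omega)\hookrightarrow F^{0,p_i}_{2}(\Omega)=L^{p_i}(\Omega)$; H\"older's inequality then produces a product in $L^p(\Omega)$ under the assumed Lebesgue conditions; and finally $L^p(\Omega)=F^{0,p}_2(\Omega)\hookrightarrow F^{0,p}_q(\Omega)$ follows from $q\ge 2$.

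For the regime $s_1>0$, $s_2=0$ (covering both (A) and (B)), I plan to apply the bilinear complex interpolation theorem, using the identification of Triebel-Lizorkin spaces as complex interpolation spaces stated immediately after Proposition \ref{prop:embedding-Fsp}. For small $\epsilon>0$, define two endpoint problems sharing the same $p_1,p_2,p,q_1,q_2,q$: at the $(-)$ endpoint, shift $s_2\mapsto-\epsilon$ and $s\mapsto s-\epsilon$, which falls within Proposition \ref{prop:mult-Fsp-s2-neg} since $\min(s_1,-\epsilon)<0$; at the $(+)$ endpoint, shift $s_2\mapsto+\epsilon$ and $s\mapsto s+\epsilon$, which falls within Proposition \ref{prop:mult-Fsp-s-pos} when $s^{(+)}>0$ (case (B)) or within Proposition \ref{prop:mult-Fsp-pos-pos-neg} when $s^{(+)}\le 0$ (case (A), for $\epsilon<|s|$). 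Interpolating with $\theta=1/2$ returns the original $(s_2,s)=(0,s)$ and leaves all Lebesgue and fine indices untouched. In case (B), both endpoints satisfy $s^{(\pm)}=s_2^{(\pm)}$, triggering the caveat $q^{(\pm)}\ge q_2^{(\pm)}$ in the cited propositions; keeping $q^{(\pm)}=q$ and $q_2^{(\pm)}=q_2$ throughout, this is exactly the hypothesis $q\ge q_2$.

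Regime (C) I reduce to case (B) by the same duality argument used in Proposition \ref{prop:mult-Fsp-s2-neg}: the dual bilinear map is $F^{0,p_2}_{q_2}(\Omega)\times F^{-s,p^*}_{q^*}(\Omega)\to F^{0,p_1^*}_{q_1^*}(\Omega)$, which is an instance of case (B) with $-s>0$ playing the role of the positive derivative, and the hypothesis $1/q_1+1/q_2\ge 1$ rewrites as $q_1^*\ge q_2$, matching the case (B) fine-parameter requirement on the dualized problem. The main obstacle is parameter bookkeeping in the interpolation step when one of $1/r_1$, $1/r_2$, $1-1/r$ vanishes: the proposition then forces the strict inequality $1/r_1+1/r_2<1/r$, and a symmetric shift of $s$ alone can push an endpoint Lebesgue exponent outside the admissible range. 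In these boundary configurations I compensate by shifting $p$ or $p_2$ alongside $s$, using the strict Lebesgue inequality to create the necessary slack while keeping the interpolated parameters exactly on target.
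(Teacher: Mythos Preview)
Your proof plan follows the same four-way case split as the paper and treats cases (B), (C), and (D) essentially the same way: bilinear complex interpolation for (B), duality reducing to (B) for (C), and H\"older's inequality for (D). The one substantive difference is case (A) ($s_1>0$, $s_2=0$, $s<0$): the paper applies duality directly, pairing against the second factor to reduce to $F^{s_1,p_1}_{q_1}\times F^{-s,p^*}_{q^*}\to F^{0,p_2^*}_{q_2^*}$, which lands squarely in Proposition \ref{prop:mult-Fsp-pos-pos-neg}; you instead interpolate between Propositions \ref{prop:mult-Fsp-pos-pos-neg} and \ref{prop:mult-Fsp-s2-neg}. The paper's duality route is shorter and tolerates the marginal situation $1/r_1+1/r_2=1$, whereas your $(-)$ endpoint requires $1/r_1+1/r_2+\epsilon/n\le1$ and hence a strict inequality there. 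A second, smaller difference in case (B): the paper perturbs the Lebesgue exponents $p_2$ and $p$ in tandem with $s_2$ and $s$ so that the Lebesgue regularities $1/r_2$ and $1/r$ stay fixed at both endpoints, which automatically preserves all the $r$-inequalities; your scheme of shifting only $s$ also works (since $1/r_2=1/p_2>0$ forces $1/r_1<1/r$ strictly), but the paper's choice sidesteps the ad hoc boundary compensation you mention.
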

\begin{proof}
Without loss of generality we can assume $s_1 \ge s_2 =0$.  

Suppose first that $s_1>0$ and $s<0$.  By a duality argument analogous
to the one at the end of Proposition \ref{prop:mult-Fsp-s2-neg}
it suffices to 
show that multiplication is continuous 
$F^{s_1,p_1}_{q_1}(\Omega)\times F^{-s,p^*}_{q^*}(\Omega)\to
F^{0,p_2^*}_{q_2^*}(\Omega)$.
Proposition \ref{prop:mult-Fsp-pos-pos-neg} ensures this
is possible so long as 
\[
\frac{1}{r_1} \le \frac{1}{p_2^*},\quad
\frac{1}{p^*}+\frac{s}{n}\le \frac{1}{p_2^*},\quad
\frac{1}{r_1} + \frac{1}{p^*}+\frac{s}{n},\quad
\frac{1}{r_1} + \frac{1}{p^*}+\frac{s}{n} \le \frac{1}{p_2^*}
\]
with the final inequality strict if $\min(1/r_1,1/p^*+s/n,1-1/p_2^*)=0$.
But these are equivalent to
\[
\frac{1}{r_1} +\frac{1}{r_2}\le 1,\quad
\frac{1}{r_2}\le \frac{1}{r},\quad
\frac{1}{r_1}\le \frac{1}{r},\quad
\frac{1}{r_1} +\frac{1}{r_2}\le \frac 1r
\]
with the final inequality strict if $\min(1/r_1,1-1/r,1/r_2)=0$,
which were all assumed.

Now consider the case $s_1>0$ but $s=0$. 
Pick $\epsilon>0$ so that $\epsilon < s_1$ and so that
\begin{alignat*}{2}
\frac{1}{t_1} &:=\frac{1}{r_2} +\frac \epsilon n < 1&\qquad
\frac{1}{t_2} &:= \frac{1}{r_2} -\frac \epsilon n > 0\\
\frac{1}{\tau_1} &:= \frac{1}{r} +\frac \epsilon n < 1&\qquad
\frac{1}{\tau_2} &:= \frac{1}{r} -\frac \epsilon n > 0
\end{alignat*}
This collection of inequalities can be satisfied because 
$1/r_2 = 1/p_2 \in (0,1)$ and because
$0< 1/r_2\le 1/r=1/p<1$.
An easy computation shows that 
Proposition \ref{prop:mult-Fsp-s-pos} ensures continuity of multiplication
\[
F^{s_1,p_1}_{q_1}(\Omega)\times F^{\epsilon,t_1}_{q_2}(\Omega)
\to F^{\epsilon,\tau_1}_{q}(\Omega);
\]
note that this uses the hypothesis $q\ge q_2$ 
which we have assumed since $s_2=s=0$.  
Similarly, Proposition \ref{prop:mult-Fsp-s2-neg} ensures
continuity of multiplication 
\[
F^{s_1,p_1}_{q_1}(\Omega)\times F^{-\epsilon,t_2}_{q_2}(\Omega)
\to F^{-\epsilon,\tau_2}_{q}(\Omega).
\]
The result now follows from interpolation, noting that
$\frac{1}{2}(\frac{1}{t_1}+\frac{1}{t_2})=\frac{1}{r_2}=\frac{1}{p_2}$
and $\frac{1}{2}(\frac{1}{\tau_1}+\frac{1}{\tau_2})=\frac{1}{r}=\frac{1}{p}$.

Finally suppose $s_1=0$ and $s<0$. 
By duality it suffices to prove multiplication is continuous
\[
F^{0,p_1}_{q_1}(\Omega) \times F^{-s,p^*}_{q^*}(\Omega)\to F^{0,p_2^*}_{q_2^*}(\Omega).
\]
This follows from the case just considered, noting that we 
pick up the requirement, $q_1\le q_2^*$, which is equivalent to
$1/q_1+1/q_2\ge 1$, which we have assumed since $s_1+s_2=0$.

All that remains is the case $s_1=s_2=s=0$.
We have the obvious consequence
of H\"older's inequality:
\[
F^{0,p_1}_{q_1}(\Omega)\times F^{0,p_2}_{q_2}(\Omega)
\hookrightarrow L^{p_1}(\Omega)\times L^{p_2}(\Omega)
\to L^{p}(\Omega) \hookrightarrow F^{0,p}_{q}(\Omega).
\]
if $q_1, q_2\le 2$, $q\ge 2$, and $1/p_1+1/p_2\le 1/p$.
It is perhaps surprising that this cannot be improved 
(\cite{sickel_holder_1995} Corollary 4.3.1(ii)).
\end{proof}

\section{Multiplication in Besov spaces}
\label{app:mult-Bsp}
We prove the following multiplication theorem for Besov spaces
following a strategy similar to that of Appendix \ref{app:mult-Fsp}.

\begin{reptheorem}{thm:mult-Besov}
	Let $\Omega$ be a bounded open subset of $\Reals^n$.
	Suppose $1<p_1,p_2,p,q_1,q_2,q<\infty$ and $s_1,s_2,s\in\Reals$.  Let $r_1,r_2$ and $r$ be defined by
\[
\frac{1}{r_1} = \frac{1}{p_1} - \frac{s_1}{n},\qquad
\frac{1}{r_2} = \frac{1}{p_2} - \frac{s_2}{n},\quad\text{\rm and}\quad
\frac{1}{r} = \frac{1}{p} - \frac{s}{n}.
\]
Pointwise multiplication of $C^\infty(\overline \Omega)$ 
functions extends to a continuous bilinear map
$\Bv[s_1,p_1,q_1](\Omega)\times \Bv[s_2,p_2,q_2](\Omega)
\to \Bv[s,p,q](\Omega)$ so long as
\begin{align}
\min(s_1,s_2)&\ge s\\
s_1+s_2&\ge 0\\
\max\left(\frac 1{r_1},\frac 1{r_2}\right) & \le  \frac 1 r\\
\label{eq:r1-r2-vs-1-Besov-baby-5-app}
\frac{1}{r_1} + \frac{1}{r_2} & \le 1\\
\label{eq:r1-r2-vs-r-Besov-baby-5-app}
\frac{1}{r_1} + \frac{1}{r_2} & \le \frac{1}{r}
\end{align}
with the following caveats:
\begin{itemize}
	\item If $s=s_i$ or $1/r=1/r_i$ for some $i$ then $1/q\le 1/q_i$.
	\item If $s_1+s_2=0$ or $1/r_1+1/r_2=1$ then $1/q_1+1/q_2\ge1$.
	\item If equality holds in \eqref{eq:r1-r2-vs-r-Besov-baby-5-app} then
	\begin{itemize}
	\item $\min(1/r_1,1/r_2,1-1/r)\neq0$.	
	\item If $\min(s_1,s_2)\le0$ then $1/q_1+1/q_2\ge1$ and $1/q \le 1/r$.
    \item If $s_i$ has the same sign as $\min(s_1,s_2)$ for some $i$ then $1/q\le 1/q_i$.
    \item If $s_i$ has the same sign as $\max(s_1,s_2)$ for some $i$ then $1/r_i\le 1/q_i$.
	\item If $s=0$ then $1/q\le 1/q_i$ and $1/r_i\le 1/q_i$ for both $i=1,2$.
	\end{itemize}
	\item If $s_1=s_2=s=0$ then $\displaystyle \frac 1q\le \min\left(\frac 12,\frac 1r\right)$ and 
	$\displaystyle \max\left(\frac 12,\frac 1{r_i}\right)\le \frac 1{q_i}$ for both $i=1,2$.
\end{itemize}
\end{reptheorem}

The large number of new caveats in the edge cases compared to
those of Triebel-Lizorkin multiplication 
is a consequence of two phenomena 
related to Besov space embedding on a bounded domain $\Omega$.
First, recall from Proposition \ref{prop:embedding-Bsp}
that if $\frac1{p_1}-\frac{s_1}n\le \frac1{p_2}-\frac{s_2}n$ and $s_1\ge s_2$ then
\begin{equation*}
\Bv[s_1,p_1,q_1](\Omega)\hookrightarrow \Bv[s_2,p_2,q_2](\Omega)
\end{equation*}
just as for Triebel-Lizorkin embedding, 
except the marginal case $\frac1{p_1}-\frac{s_1}n = \frac1{p_2}-\frac{s_2}n$
requires additionally $q_1\leq q_2$. 
Second, we require embeddings of Besov spaces into Lebesgue spaces $L^p(\Omega)$,
which are less straightforward than the Triebel-Lizorkin setting because
Lebesgue spaces are not generically Besov spaces.  
\begin{proposition}\label{prop:B-L-embedding}
Let $\Omega$ be an open set in $\Reals^n$.
Suppose $1<p,q,r<\infty$ and $s>0$.
\begin{enumerate}
\item\label{part:B-into-L} If  $\displaystyle \frac1p-\frac sn = \frac 1r$ and if $\displaystyle \frac1q\ge \frac1r$ then $\Bv[s,p,q](\Omega) \hookrightarrow L^r(\Omega)$.
\item\label{part:B0-into-L} If $q\le 2$ and $q\le p$ then $\Bv[0,p,q](\Omega)\hookrightarrow L^p(\Omega)$.
\item\label{part:L-into-B0} If $q\ge 2$ and $q\ge p$ then $L^p(\Omega)\hookrightarrow \Bv[0,p,q](\Omega)$.
\end{enumerate}
\end{proposition}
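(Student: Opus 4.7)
The plan is first to reduce each embedding to the whole-space setting. For the quotient-defined Besov spaces, any $u\in \Bv[s,p,q](\Omega)$ is the restriction of some $\tilde u\in \Bv[s,p,q](\Reals^n)$ with comparable norm, and $\|u\|_{L^r(\Omega)}\le \|\tilde u\|_{L^r(\Reals^n)}$; for the reverse direction, $L^p(\Omega)$ extends to $L^p(\Reals^n)$ by zero extension. So it suffices to establish each embedding on $\Reals^n$.

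For part \eqref{part:B-into-L}, the central tool is Bernstein's inequality: for functions with Fourier support in $\{|\xi|\sim 2^k\}$, $\|P_k u\|_{L^r(\Reals^n)}\lesssim 2^{kn(1/p-1/r)}\|P_k u\|_{L^p(\Reals^n)}=2^{ks}\|P_k u\|_{L^p(\Reals^n)}$. I would first handle the easy subcase $q\le 2$. Using the Littlewood-Paley characterization $L^r=F^{0,r}_2$, write $\|u\|_{L^r}\sim \|P_{\le 0}u\|_{L^r}+\|(\sum_k |P_k u|^2)^{1/2}\|_{L^r}$; the elementary pointwise inclusion $\ell^q\subset \ell^2$ (valid since $q\le 2$) gives $(\sum_k |P_k u|^2)^{1/2}\le (\sum_k |P_k u|^q)^{1/q}$, and Minkowski's inequality (valid since $r/q\ge 1$) yields $\|(\sum_k |P_k u|^q)^{1/q}\|_{L^r}\le (\sum_k \|P_k u\|_{L^r}^q)^{1/q}$, which Bernstein bounds by $\|u\|_{\Bv[s,p,q]}$. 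The low-frequency piece is handled by Bernstein applied to $P_{\le 0}u$.

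For the full range $q\le r$, I would use the monotonicity embedding $\Bv[s,p,q]\hookrightarrow \Bv[s,p,r]$ to reduce to the critical endpoint $q=r$. If $r\le 2$, the previous subcase already suffices. If $r>2$, I would invoke the Jawerth-Franke embedding, which asserts $\Bv[s,p,r]\hookrightarrow F^{0,r}_q$ for any $q$ under the boundary condition $s-n/p=-n/r$; specializing to $q=2$ gives $\Bv[s,p,r]\hookrightarrow F^{0,r}_2=L^r$. The main obstacle is precisely this $q>2$ regime: the direct Littlewood-Paley argument breaks because $\ell^q\not\subset \ell^2$, and the Jawerth-Franke embedding is the standard, somewhat deep bridge that handles it (via atomic/wavelet decompositions and vector-valued maximal inequalities).

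For parts \eqref{part:B0-into-L} and \eqref{part:L-into-B0}, the plan is purely elementary. Applying Minkowski's integral inequality to the nested $L^p$ and $\ell^q$ norms in the respective definitions yields $\Bv[s,p,q]\hookrightarrow F^{s,p}_q$ when $q\le p$ and $F^{s,p}_q\hookrightarrow \Bv[s,p,q]$ when $q\ge p$. Combining these with the identification $L^p=F^{0,p}_2$ and the Triebel-Lizorkin fine-parameter monotonicity $F^{s,p}_{q_1}\hookrightarrow F^{s,p}_{q_2}$ for $q_1\le q_2$ recorded in Proposition~\ref{prop:embedding-Fsp}, part \eqref{part:B0-into-L} follows from the chain $\Bv[0,p,q]\hookrightarrow F^{0,p}_q\hookrightarrow F^{0,p}_2=L^p$ under the hypotheses $q\le p$ and $q\le 2$, and part \eqref{part:L-into-B0} follows from the dual chain $L^p=F^{0,p}_2\hookrightarrow F^{0,p}_q\hookrightarrow \Bv[0,p,q]$ under $q\ge 2$ and $q\ge p$.
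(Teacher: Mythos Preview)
Your proposal is correct and aligns with the paper's approach: the paper does not give a detailed proof but simply cites Franke's theorem (\cite{franke_spaces_1986} Theorem 1) for part \eqref{part:B-into-L} and \cite{triebel_theory_2010} Proposition 2.3.2/2 for parts \eqref{part:B0-into-L} and \eqref{part:L-into-B0}, followed by the same extension/restriction reduction you outline. Your sketch supplies exactly the content of those references---the Jawerth--Franke embedding for the critical case of part \eqref{part:B-into-L}, and the Minkowski-based comparison $\Bv[s,p,q]\hookrightarrow F^{s,p}_q$ for $q\le p$ (and its reverse) that constitutes Triebel's Proposition 2.3.2/2.
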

If $\Omega=\Reals^n$ then part \ref{part:B-into-L} follows from
\cite{franke_spaces_1986} Theorem 1
and the remaining parts are a consequence of \cite{triebel_theory_2010} Proposition 2.3.2/2.
The same facts remain true for arbitrary open sets by the usual extension/restriction argument.

We now proceed with the sequence of results that prove Theorem \ref{thm:mult-Besov}.
The following two lemmas are analogs of Lemmas \ref{lem:mult-Fsp-same-s-high} and \ref{lem:mult-Fsp-same-s-low}
and are the technical foundation of the remainder of the appendix.
\begin{lemma}\label{lem:mult-Besov-same-s-high}
Suppose $1<p\le p_1<\infty$, 
$1<q<\infty$ and $s>n/p_1$.
If $u\in \Bv[s,p_1,q](\Reals^n)$ and $v\in \Bv[s,p,q](\Reals^n)$ are
both supported in $B_R(0)$ for some $R>0$ then $uv\in \Bv[s,p,q](\Reals^n)$
and 
\[
||uv||_{\Bv[s,p,q]}\lesssim ||u||_{\Bv[s,p_1,q]}||v||_{\Bv[s,p,q]}.
\]
The implicit constant depends on $s$, $p_1$, $p$, $q$ and $R$ but
is independent of $u$ and $v$.
\end{lemma}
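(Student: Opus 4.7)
The plan is to mirror closely the proof of Lemma \ref{lem:mult-Fsp-same-s-high}, with adjustments reflecting that the Besov norm places the $\ell^q$-summation outside the $L^p$-integration rather than inside. Since $s>0$, I would first write $\|uv\|_{\Bv[s,p,q](\Reals^n)}\lesssim \|uv\|_{L^p(\Reals^n)}+\left(\sum_{k\ge 10}2^{sqk}\|P_k(uv)\|_{L^p(\Reals^n)}^q\right)^{1/q}$ and estimate the two pieces separately.

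For the low-frequency term, I would use the H\"older-type embedding $\Bv[s,p_1,q](\Reals^n)\hookrightarrow L^\infty(\Reals^n)$ from Proposition \ref{prop:embedding-Bsp} (valid since $s>n/p_1$) together with the embedding $\Bv[s,p,q](\Reals^n)\hookrightarrow L^p(\Reals^n)$ for $s>0$; the latter follows by applying H\"older's inequality in $k$ to the sum $\sum_{k\ge 1}\|P_k v\|_{L^p(\Reals^n)}$, using that the weight $2^{-sk}$ is summable in $\ell^{q'}$.

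For the high-frequency piece I would apply the Littlewood-Paley trichotomy from Proposition \ref{prop:littlewood-payley-review}\eqref{part:trichotomy} to write $P_k(uv)$ as a sum of low-high, high-low, and high-high contributions, and handle each using pointwise estimates followed by the weighted $\ell^q$-sum. For the low-high term I would bound $|P_{\le k-4}u|\lesssim \|u\|_{L^\infty(\Reals^n)}$ uniformly in $k$ so that the $\ell^q$-sum collapses to the Besov norm of $v$. For the high-low term I would apply H\"older with $1/p=1/p_1+1/t$ together with the bound $|P_{\le k+5}v|\lesssim Mv$ from Proposition \ref{prop:littlewood-payley-review}\eqref{part:maximal-function-beats-projection} and the Hardy-Littlewood maximal inequality. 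For the high-high term I would use the uniform $L^\infty$-bound on $u$, shift the frequency index as in Lemma \ref{lem:mult-Fsp-same-s-high}, and sum geometrically using $s>0$.

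The main obstacle will be establishing the embedding $\|v\|_{L^t(\Reals^n)}\lesssim \|v\|_{\Bv[s,p,q](\Reals^n)}$ needed in the high-low estimate, where $1/t=1/p-1/p_1$. When $p=p_1$ this is simply the $L^\infty$-embedding already invoked. When $p<p_1$, I plan to exploit the compact support of $v$ by passing to a bounded domain $B_{2R}(0)\supset \mathrm{supp}(v)$ and chaining two Sobolev embeddings from Proposition \ref{prop:embedding-Bsp}: first $\Bv[s,p,q](B_{2R})\hookrightarrow \Bv[s-n/p_1,t,q](B_{2R})$ with equal Lebesgue regularity (which is admissible because $s>n/p_1$ ensures the target differentiability exponent is positive), and then the basic embedding of the positive-regularity Besov space into $L^t(B_{2R})$, established via the same H\"older-in-$k$ argument used in the low-frequency step.
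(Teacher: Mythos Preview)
Your proposal is correct and follows essentially the same approach as the paper's proof: the same low/high frequency split, the same trichotomy decomposition, and the same term-by-term estimates. Two minor cosmetic differences: for the embedding $\|v\|_{L^t}\lesssim\|v\|_{\Bv[s,p,q]}$ the paper splits into the cases $s>n/p$ (use $L^\infty$) and $s\le n/p$ (use Proposition~\ref{prop:B-L-embedding} after adjusting $p,q$ via bounded support), whereas your two-step Sobolev chain through $\Bv[s-n/p_1,t,q]$ is an equally valid route; and for the high-high term the paper sums via Jensen's inequality rather than the Minkowski/shift argument you propose, but these are equivalent here.
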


\begin{proof}
By an obvious modification of Proposition \ref{prop:littlewood-payley-review}\eqref{part:sobolev-space-by-projection}
\[
||uv||_{\Bv[s,p,q]} \lesssim \
\left\|uv\right\|_{L^p(\Reals^n)}
+ \left( \sum_{k\ge 10} 2^{sqk}\|P_k(uv)\|_{L^p(\Reals^n)}^q\right)^{\frac1q}
\]
and we follow the pattern of Lemma \ref{lem:mult-Fsp-same-s-high} to bound the right-hand side of this inequality.

Since $s>n/p_1$,
the low frequency part admits the bound
\[
\|uv\|_{L^p(\Reals^n)}
\lesssim \|u\|_{L^\infty(\Reals^n)} \|v\|_{L^p(\Reals^n)} 
\lesssim \|u\|_{\Bv[s,p_1,q](\Reals^n)} \|v\|_{\Bv[s,p,q](\Reals^n)}.
\]
For the high-frequency part we define $\widetilde P_{k} = P_{k-3\le\cdot\le k+3}$ 
and observe that the Littlewood-Paley trichotomy of
Theorem \ref{prop:littlewood-payley-review}\eqref{part:trichotomy} implies
\[
P_k(uv) = P_k\left(
\underbrace{(P_{\le k-4} u) (\widetilde P_{k}v)}_{\text{low-high}}
+ \underbrace{(\widetilde P_{k}u) (P_{\le k+5}v)}_{\text{high-low}}
+ \underbrace{ 
	\sum_{k'\ge k+4}
(P_{k'} u) (\widetilde P_{k'} v)}_{\text{high-high}}\right).
\]

For the low-high term, we use Proposition \ref{prop:littlewood-payley-review} parts 
\eqref{part:throw-away-Pk} and \eqref{part:maximal-function-beats-projection} 
along with the Hardy-Littlewood maximal inequality to compute
\begin{equation*}
\begin{split}
\|P_k\left ( (P_{\le k-4} u) (\widetilde P_{k}v)\right)\|_{L^p(\Reals^n)}
\lesssim
\|(P_{\le k-4} u) (\widetilde P_{k}v)\|_{L^p(\Reals^n)}
&\le
\|P_{\le k-4} u\|_{L^\infty(\Reals^n)} \|\widetilde P_{k}v\|_{L^p(\Reals^n)}\\
&\lesssim
\|Mu\|_{L^\infty(\Reals^n)} \|\widetilde P_{k}v\|_{L^p(\Reals^n)}\\
&\lesssim
\|u\|_{L^\infty(\Reals^n)} \|\widetilde P_{k}v\|_{L^p(\Reals^n)}\\
&\lesssim
\|u\|_{\Bv[s,p_1,q](\Reals^n)} \|\widetilde P_{k}v\|_{L^p(\Reals^n)}
\end{split}
\end{equation*}
where $M$ is, as usual, the maximal operator.
Summing over $k$, we get the desired bound
\begin{equation*}
\begin{split}
\sum_{k\ge 10} 2^{sqk}\|P_k\left( (P_{\le k+5} u) (\widetilde P_{k}v)\right)\|_{L^p(\Reals^n)}^q
&\lesssim
\|u\|_{\Bv[s,p_1,q](\Reals^n)}^q \sum_{k\ge 10} 2^{sqk} \|\widetilde P_{k}v\|_{L^p(\Reals^n)}^q
\lesssim
\|u\|_{\Bv[s,p_1,q](\Reals^n)}^q \|v\|_{\Bv[s,p,q](\Reals^n)}^q.
\end{split}
\end{equation*}
Similarly, for the high-low term, we have
\begin{equation*}
\begin{split}
\|P_k\left((\widetilde P_{k}u)(P_{\le k+5} v)\right)\|_{L^p(\Reals^n)}
\lesssim
\|(\widetilde P_{k}u)(P_{\le k+5} v)\|_{L^p(\Reals^n)}
&\le
\|\widetilde P_{k}u\|_{L^{p_1}(\Reals^n)} \|P_{\le k+5}v\|_{L^{t}(\Reals^n)}\\
&\lesssim
\|\widetilde P_{k}u\|_{L^{p_1}(\Reals^n)} \|Mv\|_{L^{t}(\Reals^n)}\\
&\lesssim
\|\widetilde P_{k}u\|_{L^{p_1}(\Reals^n)} \|v\|_{L^{t}(\Reals^n)}
\end{split}
\end{equation*}
where $t>1$ is defined by
\begin{equation*}
\frac1{p_1}+\frac1t=\frac1p .
\end{equation*}
Then summing over $k$ gives
\begin{equation}\label{eq:besov-high-low}
\begin{split}
\sum_{k\ge 10} 2^{sqk} \|P_k\left((\widetilde P_{k}u)(P_{\le k+5} v\right)\|_{L^p(\Reals^n)}^q
&\lesssim
\|v\|_{L^t(\Reals^n)}^q \sum_{k\ge 10} 2^{sqk} \|\widetilde P_{k}u\|_{L^{p_1}(\Reals^n)}^q
\lesssim
\|v\|_{L^t(\Reals^n)}^q \|u\|_{\Bv[s,p_1,q](\Reals^n)}^q .
\end{split}
\end{equation}
Using the fact that $v$ has support on a ball of fixed radius we control $\|v\|_{L^t(\Reals^n)}$.
If $s>n/p$
\begin{equation*}
\|v\|_{L^t(\Reals^n)} 
\lesssim
\|v\|_{L^\infty(\Reals^n)} 
\lesssim
\|v\|_{\Bv[s,p_1,q](\Reals^n)} ,
\end{equation*}
and otherwise, since
\begin{equation*}
\frac1t=\frac1p-\frac1{p_1}>\frac1p-\frac{s}n\ge 0,
\end{equation*}
Proposition \ref{prop:B-L-embedding} along with Sobolev embedding 
for functions of bounded support to
lower $p$ and suitably adjust $q$ implies
\begin{equation*}
\|v\|_{L^t(\Reals^n)} 
\lesssim
\|v\|_{\Bv[s,p,q](\Reals^n)}.
\end{equation*}
In either case find that the left-hand side of inequality \eqref{eq:besov-high-low} is bounded by 
$||u||_{B^{s,p_1}_q(\Reals^n)}^q||v||_{B^{s,p}_q(\Reals^n)}^q$

Finally, turning to the high-high term, we start with
\begin{equation*}
\begin{split}
\left\|P_k\sum_{k'\ge k+4}( P_{k'} u) (\widetilde P_{k'} v)\right\|_{L^p(\Reals^n)}
\lesssim
\left\|\sum_{k'\ge k+4}(P_{k'} u) (\widetilde P_{k'} v)\right\|_{L^p(\Reals^n)}
&\le
\sum_{k'\ge k+4}\| P_{k'}u\|_{L^\infty(\Reals^n)} \|\widetilde P_{k'}v\|_{L^{p}(\Reals^n)} \\
&\lesssim
\sum_{k'\ge k+4}\|Mu\|_{L^\infty(\Reals^n)} \|\widetilde P_{k'}v\|_{L^{p}(\Reals^n)} \\
&\lesssim
\|u\|_{\Bv[s,p_1,q](\Reals^n)} \sum_{k'\ge k+4} \|\widetilde P_{k'}v\|_{L^{p}(\Reals^n)}.
\end{split}
\end{equation*}
Now sum over $k$ to get
\begin{equation*}
\begin{split}
\sum_{k\ge 10} 2^{sqk} \left\|P_k\sum_{k'\ge k+4}(P_{k'} u) (\widetilde P_{k'} v)\right\|_{L^p(\Reals^n)}^q
&\lesssim
\|u\|_{\Bv[s,p_1,q](\Reals^n)}^q \sum_{k\ge 10} \left( \sum_{k'\ge k+4} 2^{sk}\|\widetilde P_{k'}v\|_{L^{p}(\Reals^n)} \right)^q.
\end{split}
\end{equation*}
and observe that the desired estimate holds if we can show
\begin{equation}\label{eq:pre-Jensen}
\sum_{k\ge 10} \left( \sum_{k'\ge k+4} 2^{sk}\|\widetilde P_{k'}v\|_{L^{p}(\Reals^n)} \right)^q\lesssim ||v||_{B^{s,p}_q(\Reals^n)}^q.
\end{equation}
However, setting $b_k = 2^{sk}\|\widetilde P_{k}v\|_{L^{p}(\Reals^n)}$ for $k\ge 1$ we have
\[
\sum_{k\ge 10} \left( \sum_{k'\ge k+4} 2^{sk}\|\widetilde P_{k'}v\|_{L^{p}(\Reals^n)} \right)^q
= 
\sum_{k\ge 10} \left( \sum_{a \ge 4} 2^{-sa} 2^{s(k+a)}\|\widetilde P_{k+a}v\|_{L^{p}(\Reals^n)} \right)^q
=\sum_{k\ge 10} \left( \sum_{a \ge 4} 2^{-sa} b_{k+a} \right)^q.
\]
Since $\sum_{a\ge 4} 2^{-sa}$ is finite we can apply Jensen's inequality to conclude
\[
\sum_{k\ge 10} \left( \sum_{a \ge 4} 2^{-sa} b_{k+a} \right)^q \lesssim 
\sum_{k\ge 10} \sum_{a \ge 4} 2^{-sa} b_{k+a}^q  = 
\sum_{a\ge 4} \sum_{k\ge 10} 2^{-sa} b_{k+a}^q \le ||b||_{\ell_q}^q \sum_{a\ge 4} 2^{-sa}.
\]
This final sum is finite and $||b||_{\ell_q}\lesssim ||v||_{B^{s,p_2}_q(\Reals^n)}$, 
which leads to inequality \eqref{eq:pre-Jensen}.
\end{proof}

\begin{lemma}\label{lem:mult-Besov-same-s-low}
Suppose $1<p<p_2\le p_1<\infty$, 
$1<q_1,q_2<\infty$ and $0<s<n/p_1$.
Suppose moreover that 
\begin{equation}\label{eq:q_vs_ps}
\frac{1}{p} \ge \frac{1}{p_1} + \frac{1}{p_2} -\frac{s}{n} ,
\end{equation}
and if \eqref{eq:q_vs_ps} is an equality assume additionally
\begin{equation}
\frac{1}{q_1} \ge \frac{1}{p_1} - \frac{s}{n} ,
\qquad\textrm{and}\qquad
\frac{1}{q_2} \ge \frac{1}{p_2} - \frac{s}{n} .
\end{equation}
If $u\in \Bv[s,p_1,q_1](\Reals^n)$ and $v\in \Bv[s,p_2,q_2](\Reals^n)$ are
both supported in $B_R(0)$ for some $R>0$ then $uv\in \Bv[s,p,q](\Reals^n)$ with $q=\max\{q_1,q_2\}$,
and 
\[
||uv||_{\Bv[s,p,q]}\lesssim ||u||_{\Bv[s,p_1,q_1]}||v||_{\Bv[s,p_2,q_2]} .
\]
The implicit constant depends on $s$, $p_1$, $p_2$, $p$, $q_1$, $q_2$ and $R$ but
is independent of $u$ and $v$.
\end{lemma}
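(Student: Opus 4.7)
The plan is to mirror the proof of Lemma \ref{lem:mult-Besov-same-s-high}, applying the Littlewood-Paley trichotomy to the high-frequency decomposition of $uv$, but replacing the $L^\infty$ embedding of $u$ (which there relied on $s > n/p_1$) by Sobolev-type embeddings into auxiliary Lebesgue spaces whose exponents are dictated by \eqref{eq:q_vs_ps}. Specifically, set $1/t = 1/p - 1/p_2$ and $1/\tilde t = 1/p - 1/p_1$; condition \eqref{eq:q_vs_ps} gives $1/t \ge 1/p_1 - s/n$ and $1/\tilde t \ge 1/p_2 - s/n$, so $\Bv[s,p_1,q_1]\hookrightarrow L^t$ and $\Bv[s,p_2,q_2]\hookrightarrow L^{\tilde t}$ by Sobolev embedding, and in the marginal case when \eqref{eq:q_vs_ps} is an equality these become endpoint embeddings for which Proposition \ref{prop:B-L-embedding}\eqref{part:B-into-L} requires exactly the supplementary hypotheses $1/q_i \ge 1/p_i - s/n$ assumed in the statement.

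Starting from
\[
\|uv\|_{\Bv[s,p,q](\Reals^n)} \lesssim \|uv\|_{L^p(\Reals^n)} + \Bigl(\sum_{k\ge 10} 2^{sqk}\|P_k(uv)\|_{L^p(\Reals^n)}^q\Bigr)^{1/q},
\]
the low-frequency term $\|uv\|_{L^p}$ is bounded via H\"older with $u \in L^t$ and $v \in L^{p_2}$, where the embedding $\Bv[s,p_2,q_2]\hookrightarrow L^{p_2}$ follows from $\sum_{k\ge 1}\|P_k v\|_{L^{p_2}} \lesssim \|v\|_{\Bv[s,p_2,q_2]}$ via H\"older in $\ell^q$ (using $s>0$). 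For the high-frequency piece, apply Proposition \ref{prop:littlewood-payley-review}\eqref{part:trichotomy} to split $P_k(uv)$ into low-high, high-low, and high-high contributions. The low-high term $P_k((P_{\le k-4}u)(\widetilde P_kv))$ is controlled by $\|u\|_{L^t}\|\widetilde P_k v\|_{L^{p_2}}$ using Proposition \ref{prop:littlewood-payley-review}\eqref{part:maximal-function-beats-projection} and the Hardy-Littlewood maximal inequality; summing in $k$ with weight $2^{sqk}$ and using $q\ge q_2$ yields the desired bound. The high-low term is symmetric, yielding $\|v\|_{L^{\tilde t}}$ and requiring $q\ge q_1$. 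Since $q = \max(q_1, q_2)$, both conditions hold.

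The most delicate piece is the high-high contribution $\sum_{k'\ge k+4}(P_{k'}u)(\widetilde P_{k'}v)$. The plan is to use the pointwise bound $|P_{k'}u(x)|\lesssim (Mu)(x)$ uniformly in $k'$ together with H\"older and Minkowski to obtain
\[
\Bigl\|\sum_{k'\ge k+4}(P_{k'}u)(\widetilde P_{k'}v)\Bigr\|_{L^p} \lesssim \|Mu\|_{L^t}\sum_{k'\ge k+4}\|\widetilde P_{k'}v\|_{L^{p_2}}.
\]
Writing $k'=k+a$, the $2^{sk}$-weighted sum becomes $\sum_{a\ge 4}2^{-sa}\cdot 2^{s(k+a)}\|\widetilde P_{k+a}v\|_{L^{p_2}}$; taking the $\ell^q$-norm in $k$, Minkowski in $\ell^q$ combined with $\sum_{a\ge 4}2^{-sa}<\infty$ (valid since $s>0$) and the inclusion $\ell^{q_2}\hookrightarrow\ell^q$ (valid since $q\ge q_2$) closes the estimate at $\|u\|_{\Bv[s,p_1,q_1]}\|v\|_{\Bv[s,p_2,q_2]}$.

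The main obstacle is the marginal equality case of \eqref{eq:q_vs_ps}: there the Sobolev embedding $\Bv[s,p_i,q_i]\hookrightarrow L^{t_i}$ is an endpoint embedding available only via Proposition \ref{prop:B-L-embedding}\eqref{part:B-into-L}, which is precisely why the extra fine-parameter hypotheses are imposed. Once those embeddings are in hand, everything else is standard Littlewood-Paley bookkeeping parallel to Lemma \ref{lem:mult-Besov-same-s-high}, with careful tracking of which of the requirements $q\ge q_1$ and $q\ge q_2$ each piece consumes, all ultimately absorbed by taking $q=\max(q_1,q_2)$.
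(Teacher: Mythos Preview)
Your proposal is correct and follows essentially the same approach as the paper's proof: the same Littlewood-Paley trichotomy, the same auxiliary Lebesgue exponents $t_1=t$ and $t_2=\tilde t$ defined by $1/t_i = 1/p - 1/p_j$, the same use of the maximal function bound for the low-frequency factor, and the same reindexing $k'=k+a$ for the high-high tail. The paper closes the high-high sum by invoking Jensen's inequality on the finite measure $\sum_{a\ge4}2^{-sa}$ (as in the end of Lemma~\ref{lem:mult-Besov-same-s-high}), whereas you phrase it as Minkowski in $\ell^q$; these are equivalent here.
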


\begin{proof}
We follow the familiar pattern.
For convenience, recall the bound
\[
||uv||_{\Bv[s,p,q](\Reals^n)} \lesssim
\left\|uv\right\|_{L^p(\Reals^n)}
+ \left( \sum_{k\ge 10} 2^{sqk}\|P_k(uv)\|_{L^p(\Reals^n)}^q\right)^{\frac1q} ,
\]
and the Littlewood-Paley trichotomy
\[
P_k(uv) = P_k\left(
\underbrace{(P_{\le k-4} u) (\widetilde P_{k}v)}_{\text{low-high}}
+ \underbrace{(\widetilde P_{k}u) (P_{\le k+5}v)}_{\text{high-low}}
+ \underbrace{\sum_{k'\ge k+4}
(P_{k'} u) (\widetilde P_{k'} v)}_{\text{high-high}}\right)
\]
where $\widetilde P_k = P_{k-3\le\cdot\le k+3}$.
For the low frequency part, H\"older's inequality 
followed by Sobolev embedding (using the bounded support of $u$) 
and Proposition \ref{prop:B-L-embedding} yields
\[
\|uv\|_{L^p(\Reals^n)}
\lesssim \|u\|_{L^{t_1}(\Reals^n)} \|v\|_{L^{p_2}(\Reals^n)} 
\lesssim \|u\|_{\Bv[s,p_1,q_1](\Reals^n)} \|v\|_{\Bv[s,p_2,q](\Reals^n)} ,
\]
where $t_1\in(1,\infty)$ is defined by
\begin{equation*}
\frac1{t_1}=\frac1p-\frac1{p_2}\ge\frac1{p_1}-\frac{s}{n} ,
\end{equation*}
with the inequality strict if \eqref{eq:q_vs_ps} is strict.
If \eqref{eq:q_vs_ps} is an equality, then we need $q_1\leq t_1$,
since the embedding $\Bv[s,p_1,q_1](\Reals^n)\hookrightarrow L^{t_1}(\Reals^n)$ becomes borderline.

For the low-high term, applying now-familiar facts from Proposition \ref{prop:littlewood-payley-review} we have
\begin{equation*}
\begin{split}
\|P_k\left((P_{\le k-4} u) (\widetilde P_{k}v)\right)\|_{L^p(\Reals^n)}
\lesssim
\|(P_{\le k-4} u) (\widetilde P_{k}v)\|_{L^p(\Reals^n)}
&\le
\|P_{\le k-4} u\|_{L^{t_1}(\Reals^n)} \|\widetilde P_{k}v\|_{L^{p_2}(\Reals^n)}\\
&\lesssim
\|Mu\|_{L^{t_1}(\Reals^n)} \|\widetilde P_{k}v\|_{L^{p_2}(\Reals^n)}\\
&\lesssim
\|u\|_{L^{t_1}(\Reals^n)} \|\widetilde P_{k}v\|_{L^{p_2}(\Reals^n)} \\
&\lesssim
\|u\|_{\Bv[s,p_1,q_1](\Reals^n)} \|\widetilde P_{k}v\|_{L^{p_2}(\Reals^n)} ,
\end{split}
\end{equation*}
and summing over $k$ gives
\begin{equation*}
\begin{split}
\sum_{k\ge 10} 2^{sqk}\|P_k(P_{\le k+5} u) (\widetilde P_{k}v)\|_{L^p(\Reals^n)}^q
&\lesssim
\|u\|_{\Bv[s,p_1,q_1](\Reals^n)}^q \sum_{k\ge 0} 2^{sqk} \|\widetilde P_{k}v\|_{L^{p_2}(\Reals^n)}^q
\lesssim
\|u\|_{\Bv[s,p_1,q_1](\Reals^n)}^q \|v\|_{\Bv[s,p_2,q](\Reals^n)}^q .
\end{split}
\end{equation*}
Similarly, for the high-low term, we have
\begin{equation*}
\begin{split}
\|P_k\left((\widetilde P_{k}u)(P_{\le k+5} v)\right)\|_{L^p(\Reals^n)}
\lesssim
\|(\widetilde P_{k}u)(P_{\le k+5} v)\|_{L^p(\Reals^n)}
&\le
\|\widetilde P_{k}u\|_{L^{p_1}(\Reals^n)} \|P_{\le k+5}v\|_{L^{t_2}(\Reals^n)}\\
&\lesssim
\|\widetilde P_{k}u\|_{L^{p_1}(\Reals^n)} \|Mv\|_{L^{t_2}(\Reals^n)}\\
&\lesssim
\|\widetilde P_{k}u\|_{L^{p_1}(\Reals^n)} \|v\|_{L^{t_2}(\Reals^n)} \\
&\lesssim
\|\widetilde P_{k}u\|_{L^{p_1}(\Reals^n)} \|v\|_{\Bv[s,p_2,q_2](\Reals^n)} ,
\end{split}
\end{equation*}
where $t_2\in (1,\infty)$ is defined by
\begin{equation*}
\frac1{t_2}=\frac1p-\frac1{p_1}\ge\frac1{p_2}-\frac{s}{n} ,
\end{equation*}
with the inequality strict if \eqref{eq:q_vs_ps} is strict.
If \eqref{eq:q_vs_ps} is an equality, then we need $q_2\leq t_2$,
since the embedding $\Bv[s,p_2,q_2](\Reals^n)\hookrightarrow L^{t_2}(\Reals^n)$ again becomes borderline.
Summing over $k$ yields
\begin{equation*}
\begin{split}
\sum_{k\ge 10} 2^{sqk} \|P_k\left((\widetilde P_{k}u)(P_{\le k+5} v)\right)\|_{L^p(\Reals^n)}^q
&\lesssim
\|v\|_{\Bv[s,p_2,r_2](\Reals^n)}^q \sum_{k\ge 10} 2^{sqk} \|\widetilde P_{k}u\|_{L^{p_1}(\Reals^n)}^q
\lesssim
\|v\|_{\Bv[s,p_2,r_2](\Reals^n)}^q \|u\|_{\Bv[s,p_1,q](\Reals^n)}^q .
\end{split}
\end{equation*}

Finally, turning to the high-high term, start with
\begin{equation*}
\begin{split}
\left\|P_k\sum_{k'\ge k+4}(P_{k'} u) (\widetilde P_{k'} v)\right\|_{L^p(\Reals^n)}
\lesssim
\left\|\sum_{k'\ge k+4}(P_{k'} u) (\widetilde P_{k'} v)\right\|_{L^p(\Reals^n)}
&\le
\sum_{k'\ge k+4}\| P_{k'}u\|_{L^{t_1}(\Reals^n)} \|\widetilde P_{k'}v\|_{L^{p_2}(\Reals^n)}\\ 
&\lesssim
\sum_{k'\ge k+4}\|Mu\|_{L^{t_1}(\Reals^n)} \|\widetilde P_{k'}v\|_{L^{p_2}(\Reals^n)} \\
&\lesssim
\|u\|_{\Bv[s,p_1,q_1](\Reals^n)} \sum_{k'\ge k+4} \|\widetilde P_{k'}v\|_{L^{p_2}(\Reals^n)} ,
\end{split}
\end{equation*}
and sum over $k$ to get
\begin{equation}\label{eq:pre-Jensen-v2}
\begin{split}
\sum_{k\ge 10} 2^{sqk} \left\|P_k\sum_{k'\ge k+4}( P_{k'} u) (\widetilde P_{k'} v)\right\|_{L^p(\Reals^n)}^q
\lesssim
\|u\|_{\Bv[s,p_1,q_1](\Reals^n)}^q \sum_{k\ge 10} 2^{sqk} \left( \sum_{k'\ge k+4} \|\widetilde P_{k'}v\|_{L^{p_2}(\Reals^n)} \right)^q.
\end{split}
\end{equation}
Moreover, the same argument as at the end of Lemma \ref{lem:mult-Besov-same-s-high} shows 
\[
\sum_{k\ge 10} 2^{sqk} \left( \sum_{k'\ge k+4} \|\widetilde P_{k'}v\|_{L^{p_2}(\Reals^n)} \right)^q	\lesssim ||v||_{\Bv[s,p_2,q](\Reals^n)}
\]
and it follows that the left-hand side of inequality \eqref{eq:pre-Jensen-v2} is controlled by 
$||u||_{\Bv[s,p_1,q](\Reals^n)}^q||v||_{\Bv[s,p_2,q](\Reals^n)}^q$.

At this point we have shown
\begin{equation*}
\|uv\|_{\Bv[s,p,q](\Reals^n)}
\lesssim
\|u\|_{\Bv[s,p_1,q]v} \|v\|_{\Bv[s,p_2,q](\Reals^n)} 
\end{equation*}
which, combined with the embeddings 
$\Bv[s,p_1,q_1](\Reals^n)\hookrightarrow \Bv[s,p_1,q](\Reals^n)$
and $\Bv[s,p_2,q_2](\Reals^n)\hookrightarrow \Bv[s,p_2,q](\Reals^n)$,
establishes the proof.
\end{proof}

The following result consolidates the previous two lemmas and 
applies to bounded smooth domains $\Omega$ rather than $\Reals^n$.

\begin{proposition}\label{prop:mult-Besov-same-s}
Assume the multiplication hypothesis \ref{as:mult-hyp}
and that $s_1=s_2=s>0$.
Pointwise multiplication of $C^\infty(\overline \Omega)$ 
functions extends to a continuous bilinear map 
$\Bv[s,p_1,q_1](\Omega)\times \Bv[s,p_2,q_2](\Omega)
\rightarrow \Bv[s,p,q](\Omega)$
so long as
\begin{align}
\label{eq:same-s-r1-r2-vs-r-Besov}
\max\left(\frac 1{r_1},\frac 1{r_2}\right) &\le \frac{1}{r}\\
\label{eq:same-s-r1-plus-r2-vs-r-Besov}
\frac 1{r_1}+\frac 1{r_2} &\le \frac{1}{r}\\
\label{eq:same-s-q1q2-vs-q-Besov}
\min\left(\frac 1 {q_1},\frac 1 {q_2}\right)&\ge \frac{1}{q}
\end{align}
with the following caveats:
\begin{itemize}
	\item Inequality \eqref{eq:same-s-r1-plus-r2-vs-r-Besov} is strict if $\min(1/r_1,1/r_2)=0$.
	\item If \eqref{eq:same-s-r1-plus-r2-vs-r-Besov} is an equality, then $1/r_1\le 1/q_1$ and $1/r_2\le 1/ q_2$.
\end{itemize}
\end{proposition}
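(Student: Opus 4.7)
The plan is to mirror the proof of Proposition \ref{prop:mult-same-s} exactly, using Lemmas \ref{lem:mult-Besov-same-s-high} and \ref{lem:mult-Besov-same-s-low} as the analytical core in place of their Triebel--Lizorkin counterparts. As a first step I would reduce to $\Reals^n$ with both factors supported in a common ball $B_R(0)$ via an extension/cutoff argument, which is legitimate thanks to the smoothness of $\partial\Omega$ and the quotient-space definition of $\Bv[s,p,q](\Omega)$. Swapping $u$ and $v$ if necessary, I then assume $p_2\le p_1$ so that $1/r_1=\min(1/r_1,1/r_2)$, and split into three cases according to the sign of $1/r_1$.

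\textbf{Case $1/r_1<0$.} Since $s>n/p_1$ the high lemma applies, and it suffices to embed the factors into spaces with a common fine parameter $q$. The embedding $\Bv[s,p_1,q_1](\Reals^n)\hookrightarrow \Bv[s,p_1,q](\Reals^n)$ is supplied by $1/q_1\ge 1/q$, while the bounded-domain embeddings of Proposition \ref{prop:embedding-Bsp} applied to $v$ give $\Bv[s,p_2,q_2]\hookrightarrow \Bv[s,p,q]$ after passing through an intermediate space with parameter $p$; the only nontrivial check occurs in the marginal sub-case $1/r_2=1/r$, where $p=p_2$ and the required inequality $q_2\le q$ is exactly the caveat hypothesis for $i=2$. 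Lemma \ref{lem:mult-Besov-same-s-high} then concludes this case.

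\textbf{Case $1/r_1>0$.} Here I apply Lemma \ref{lem:mult-Besov-same-s-low} directly. The strict ordering $p<p_2\le p_1$ it requires follows from the hypotheses: $p\le p_2$ was arranged, and equality would force $1/r=1/r_2$, hence $1/r_1\le 0$, contradicting $1/r_1>0$. The output lies in $\Bv[s,p,\max(q_1,q_2)]$, which embeds continuously into the target $\Bv[s,p,q]$ precisely when $\min(1/q_1,1/q_2)\ge 1/q$. In the boundary sub-case $1/r_1+1/r_2=1/r$, the marginal hypothesis of Lemma \ref{lem:mult-Besov-same-s-low} reads $1/q_i\ge 1/r_i$ for both $i$, which is exactly the caveat assumed in that sub-case.

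\textbf{Case $1/r_1=0$ and main obstacle.} The caveat forces $1/r_1+1/r_2<1/r$ strictly, so I would lower $p_1$ to some $\hat p_1<p_1$ chosen small enough that $1/\hat r_1>0$ remains consistent with $1/\hat r_1+1/r_2<1/r$. The bounded-domain embedding $\Bv[s,p_1,q_1](\Omega)\hookrightarrow \Bv[s,\hat p_1,q_1](\Omega)$ then reduces the problem to the previously handled subcase with strict Lebesgue inequality. The main obstacle throughout the argument is not analytic but bookkeeping: unlike in the Triebel--Lizorkin case, every Besov embedding either preserves or worsens the fine parameter, so I must verify at each marginal embedding and at the boundary application of Lemma \ref{lem:mult-Besov-same-s-low} that the stated fine-parameter caveats $1/q_i\ge 1/q$ (together with the borderline conditions $1/r_i\le 1/q_i$) are exactly what the underlying lemmas demand, with no slack available.
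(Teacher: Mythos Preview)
Your proposal is correct and follows essentially the same three-case strategy as the paper: reduce to compactly supported functions on $\Reals^n$, assume $p_2\le p_1$, and split on the sign of $1/r_1$, invoking Lemma \ref{lem:mult-Besov-same-s-high} when $1/r_1<0$, Lemma \ref{lem:mult-Besov-same-s-low} when $1/r_1>0$, and perturbing $p_1$ downward in the threshold case $1/r_1=0$. Two minor remarks: in Case $1/r_1<0$ the inequality $q_2\le q$ you invoke is not a ``caveat'' but the main hypothesis \eqref{eq:same-s-q1q2-vs-q-Besov}, and it is needed for the embedding regardless of whether $1/r_2=1/r$; and in the threshold case, if $p_1=p_2$ (so $1/r_1=1/r_2=0$) you should lower both $p_i$ rather than just $p_1$, as the Triebel--Lizorkin analogue Proposition \ref{prop:mult-same-s} does---otherwise $1/r_2=0$ persists and you have not escaped the marginal regime.
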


\begin{proof}
Using the argument at the start of Proposition \ref{prop:mult-same-s} 
it suffices to show
\begin{equation}\label{eq:same-s-est-Besov}
	||uv||_{\Bv[s,p,q](\Reals^n)} \lesssim 
	||u||_{\Bv[s_1,p_1,q_1](\Reals^n)}
	||v||_{\Bv[s_2,p_2,q_2](\Reals^n)}
\end{equation}
whenever $u\in \Bv[s_1,p_1,q](\Reals^n)$ and $v\in \Bv[s_2,p_2,q](\Reals^n)$ are supported in
some ball of radius $R$ large enough to contain $\overline \Omega$.
Since the conditions are symmetric with respect to the indices 1 and 2, without loss of generality, assume that 
$p_2\leq p_1$.
We split the proof into 3 cases.

First, consider the case $s>n/p_1$, that is, $1/r_1<0$.
Thus \eqref{eq:same-s-r1-r2-vs-r-Besov} becomes $p\leq p_2$.
Inequality \eqref{eq:same-s-est-Besov} then follows from Lemma \ref{lem:mult-Besov-same-s-high}
, the fact that $\|v\|_{\Bv[s_2,p,q](\Reals^n)}\lesssim \|v\|_{\Bv[s_2,p_1,q](\Reals^n)}$
with implicit constant depending on $R$, and the embeddings
$\Bv[s,p_i,q_i](\Reals^n)\hookrightarrow \Bv[s,p_i,q](\Reals^n)$ for $i=1,2$.

Now suppose that $0<s<n/p_1$, that is, $1/r_1>0$.
In this case, \eqref{eq:same-s-r1-plus-r2-vs-r-Besov} becomes 
\begin{equation*}
\frac{1}{p} \ge \frac{1}{p_1} + \frac{1}{p_2} -\frac{s}{n} ,
\end{equation*}
and we have assumed additionally that if this is an equality then
$q_i\le r_i$ for $i=1,2$.  These are exactly the hypotheses of
Lemma \ref{lem:mult-Besov-same-s-low} and estimate
\eqref{eq:same-s-est-Besov} follows.

Finally, we look at the case $s=n/p_1$, that is, $1/r_1=0$.
Then $\min(1/r_1,1/r_2)=0$ since $1/r_2\ge 1/r_1$ and we have therefore 
assumed \eqref{eq:same-s-r1-plus-r2-vs-r-Besov} is strict.  Hence we can pick
$\eta \in (1,p_1)$ such that
\begin{equation*}
\frac{1}{p} > \frac{1}{\eta} + \frac{1}{p_2} -\frac{s}{n} .
\end{equation*}
as well. Since $s<n/\eta$, estimate \eqref{eq:same-s-est-Besov} now follows 
from Lemma \ref{lem:mult-Besov-same-s-low} and the continuous embedding 
$\Bv[s,p_1,q_1](\Reals^n)\hookrightarrow \Bv[s,\eta,q_1](\Reals^n)$.
\end{proof}

The restriction $s=s_1=s_2$ of the previous result can easily be relaxed 
with the help of Sobolev embeddings.

\begin{proposition}\label{prop:mult-Besov-s-pos}
Assume the multiplication hypothesis \ref{as:mult-hyp}
and that $\min(s_1,s_2,s)>0$. 
Pointwise multiplication of $C^\infty(\overline \Omega)$ 
functions extends to a continuous bilinear map 
$\Bv[s_1,p_1,q_1](\Omega)\times \Bv[s_2,p_2,q_2](\Omega)
\to \Bv[s,p,q](\Omega)$ so long as
\begin{align}
\min({s_1,s_2})&\ge s\\
\label{eq:max-r1-r2-vs-r-Besov-baby}
\max\left(\frac{1}{r_1},\frac{1}{r_2}\right) & \le  \frac{1}{r}\\
\label{eq:r1-r2-vs-r-Besov-baby}
\frac{1}{r_1} + \frac{1}{r_2} & \le \frac{1}{r}
\end{align}
with the following caveats:
\begin{itemize}
	\item Inequality \eqref{eq:r1-r2-vs-r-Besov-baby}
	is strict if $\min(1/r_1,1/r_2)=0$.
	\item If $s=s_i$ or $1/r=1/r_i$ for some $i$ then $1/q\le 1/q_i$.
	\item If $1/r_1+1/r_2=1/r$ then $1/r_i\le 1/q_i$ for $i=1,2$ and $1/q\le\min(1/q_1,1/q_2)$.
\end{itemize}
\end{proposition}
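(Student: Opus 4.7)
The plan is to adapt the strategy of Proposition \ref{prop:mult-Fsp-s-pos}, reducing to the same-exponent result of Proposition \ref{prop:mult-Besov-same-s} via Sobolev embedding. The essential new difficulty in the Besov setting is that Proposition \ref{prop:embedding-Bsp} distinguishes between strict and non-strict degradation of the Lebesgue regularity: a strict drop in $1/p-s/n$ permits free choice of the fine parameter, whereas an embedding that preserves the Lebesgue regularity while strictly lowering the differentiability forces the fine parameter to remain the same. Accordingly, the bulk of the work is to choose intermediate spaces $\Bv[s,\tau_i,Q_i]$ sitting between $\Bv[s_i,p_i,q_i]$ and the target $\Bv[s,p,q]$ so that both the embedding $\Bv[s_i,p_i,q_i]\hookrightarrow \Bv[s,\tau_i,Q_i]$ and the same-$s$ multiplication $\Bv[s,\tau_1,Q_1]\times \Bv[s,\tau_2,Q_2]\to \Bv[s,p,q]$ are valid.

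Concretely, I split into three cases mirroring Proposition \ref{prop:mult-Fsp-s-pos}. First, when $\min(1/r_1,1/r_2)>0$, I introduce $\tau_i$ with $1/\tau_i-s/n=:\rho_i\in[1/r_i,1/r]$, arranging $\rho_i>1/r_i$ strictly whenever there is slack (i.e. when $s_i>s$ strictly and when $1/r_1+1/r_2<1/r$ strictly). In the generic situation, this slack combined with $s_i>s$ lets me invoke the strict Sobolev embedding of Proposition \ref{prop:embedding-Bsp} to set $Q_i=q$ arbitrarily, after which Proposition \ref{prop:mult-Besov-same-s} applies with all fine parameters equal to $q$. In the marginal configurations where $s_i=s$, or $1/r_i=1/r$, or $1/r_1+1/r_2=1/r$, only the Lebesgue-preserving embeddings are available, forcing $Q_i=q_i$; each caveat of the current proposition supplies precisely the inequality $q\ge q_i$ (respectively $1/r_i\le 1/q_i$ in the sum-equality case) that Proposition \ref{prop:mult-Besov-same-s} requires at the matching marginal configuration.

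Second, when $\min(1/r_1,1/r_2)<0$, say $1/r_1<0$, I exploit that $\Bv[s_1,p_1,q_1]$ embeds into $L^\infty$ and hence into $\Bv[s,\tau_1,Q_1]$ for $\tau_1$ chosen with $1/\tau_1-s/n<0$ via a strict Sobolev embedding, so $Q_1$ is free; I then embed the second factor into the target Lebesgue regularity (invoking its caveat contributions when $s_2=s$ or $1/r_2=1/r$) and conclude via the same-$s$ proposition. Third, the threshold case $\min(1/r_1,1/r_2)=0$ comes with the hypothesis that $1/r_1+1/r_2<1/r$ is strict, so a small decrease of one of the exponents $p_i$ preserves all hypotheses strictly and reduces the matter to the first case, exactly as in Proposition \ref{prop:mult-Fsp-s-pos}.

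The main obstacle I expect is the careful verification in marginal configurations where several caveats coincide (for instance $s_1=s$ together with $1/r_1+1/r_2=1/r$, or $1/r_1=1/r$ together with $s_2=s$). Here one must check that the combined caveats of the current proposition translate, under the Sobolev embedding, into precisely the chained fine-parameter hypotheses that Proposition \ref{prop:mult-Besov-same-s} demands along the critical lines $\rho_i=1/r_i$ and $\rho_1+\rho_2=1/r$. This is bookkeeping rather than conceptually deep, but it is where the proof is most easily derailed.
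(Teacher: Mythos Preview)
Your proposal is correct and follows essentially the same three-case strategy as the paper, keyed on the sign of $\min(1/r_1,1/r_2)$ and reducing to Proposition \ref{prop:mult-Besov-same-s} via Besov Sobolev embedding, with the caveats of the present proposition supplying exactly the fine-parameter inequalities needed at each marginal configuration.

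The one tactical difference worth noting is how you obtain freedom in the fine parameter when there is slack. In your Case 1 you push the target Lebesgue regularity $\rho_i$ strictly above $1/r_i$ and invoke the strict embedding (part 5 of Proposition \ref{prop:embedding-Bsp}) to set $Q_i$ arbitrarily. The paper instead keeps the Lebesgue regularity at $1/r_i$ but first lowers $s_i$ slightly (part 1 of Proposition \ref{prop:embedding-Bsp}), which already frees the fine parameter, and only then performs the Lebesgue-preserving embedding (part 4). These are equivalent two-step versus one-step realizations of the same idea. Your remark that $\Bv[s_1,p_1,q_1]\hookrightarrow L^\infty$ in Case 2 is true but not what you actually use; the operative embedding is directly $\Bv[s_1,p_1,q_1]\hookrightarrow\Bv[s,\tau_1,Q_1]$ via part 5 (or parts 3 and 2 when $s_1=s$, where the caveat $q\ge q_1$ is invoked), which is also how the paper proceeds. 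The paper phrases Case 2 slightly differently---it embeds the second factor all the way into the target $\Bv[s,p,q]$ and then proves $\Bv[s_1,p_1,q_1]\times\Bv[s,p,q]\to\Bv[s,p,q]$---but this is the same computation as yours with $\tau_2=p$.
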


\begin{proof}
The proof follows the outline of Proposition \ref{prop:mult-Fsp-s-pos}, with some extra care
with respect to the fine parameter.

\textbf{Case: $\min(1/r_1,1/r_2)>0.$}\\
Suppose first that $1/r_1+1/r_2 < 1/r$. Since each
$1/r_i>0$ we have $\max(1/r_1,1/r_2)<1/r$ as well.  For any $s_i>s$ we can then lower $s_i$
slightly while preserving these strict inequalities \eqref{eq:max-r1-r2-vs-r-Besov-baby}--\eqref{eq:r1-r2-vs-r-Besov-baby}
to set $q_i$ to any desired value.  Hence, without loss of generality, if $s_i>s$ we can assume $q_i\le q$.
Otherwise, if $s_i=s$ we have assumed $q_i\le q$.  Following the technique of the corresponding case of
Proposition \ref{prop:mult-Fsp-s-pos} we then embed $\Bv[s_i,p_i,q_i](\Omega)\hookrightarrow \Bv[s,t_i,q_i]$
where $1/t_i-s/n = 1/r_i$ and apply Proposition \eqref{prop:mult-Besov-s-pos} to obtain
continuous multiplication 
$\Bv[s,t_1,q_1](\Omega)\times\Bv[s,t_2,q_2](\Omega)\hookrightarrow \Bv[s,p,q](\Omega)$, noting that 
inequalities \eqref{eq:same-s-r1-r2-vs-r-Besov}--\eqref{eq:same-s-q1q2-vs-q-Besov} all hold
and that the caveats are irrelevant.

If instead $1/r_1+1/r_2 = 1/r$ the same technique applies, except we cannot adjust any $q_i$ and must
assume $q_i\le q$ and $q_i\le r_i$ in advance, as we have done.

\textbf{Case: $\min(1/r_1,1/r_2)=0.$}\\
We have assumed that inequality \eqref{eq:r1-r2-vs-r-Besov-baby}
is strict.  Since each $1/r_i>0$ we also know that inequality \eqref{eq:max-r1-r2-vs-r-Besov-baby} is also strict.
Hence we can lower each $p_i$ while leaving $s_i$ fixed and maintain these strict inequalities. The result now
follows from the case $\min(1/r_1,1/r_2)>0$.

\textbf{Case: $\min(1/r_1,1/r_2)<0.$}\\
Without loss of generality we can assume $1/r_1\le 1/r_2$ and hence
$1/r_1<0$.  Since $s_2\ge s$ and $1/r_2\le 1/r$ and since we have additionally assumed that $q_2\le q$
if either $s=s_2$ or $r=r_2$ we know $\Bv[s_2,p_2,q_2](\Omega)\hookrightarrow \Bv[s,p,q](\Omega)$.
Hence we need only demonstrate continuity of multiplication 
$\Bv[s_1,p_1,q_1](\Omega)\times \Bv[s,p,q](\Omega)\to \Bv[s,p,q](\Omega)$.
Proposition \ref{prop:mult-Besov-same-s} implies multiplication $B^{s,t}_{q}\times 
\Bv[s,p,q](\Omega)\to \Bv[s,p,q](\Omega)$ is continuous so long as $t\ge p$ and $1/t-s/n<0$.  
Hence we are done if we can show that $\Bv[s_1,p_1,q_1](\Omega)$ embeds into 
some $\Bv[s,t,q](\Omega)$ satisfying these two conditions.  The proof now follows the corresponding
case of Proposition \ref{prop:mult-Fsp-s-pos} except we must now assume (as we have done) 
that $q_1\le q$ if $r_1=r$ in addition to assuming $q_1\le q$ if $s_1=s$ in order for the
requisite Sobolev embeddings to be valid. 
\end{proof}

What remains is the case where one or more of the indices $s_1$, $s_2$ and $s$ is nonpositive.
The following 3 propositions then explore the full range of the parameters $s_1$, $s_2$ and $s$,
and correspond to the cases where $\min(s_1,s_2)$ is negative, positive, and zero.
The main tool we employ is duality.

\begin{proposition}\label{prop:mult-Besov-neg}
Assume the multiplication hypothesis \ref{as:mult-hyp}
and that $\min(s_1,s_2)<0$.
Pointwise multiplication of $C^\infty(\overline \Omega)$ 
functions extends to a continuous bilinear map 
$\Bv[s_1,p_1,q_1](\Omega)\times \Bv[s_2,p_2,q_2](\Omega)
\to \Bv[s,p,q](\Omega)$ so long as
\begin{align}
\min(s_1,s_2)&\ge s\\
s_1+s_2&\ge 0\\
\max\left(\frac 1{r_1},\frac 1{r_2}\right) & \le  \frac 1r\\
\label{eq:r1-r2-vs-1-Besov-baby-4}
\frac{1}{r_1} + \frac{1}{r_2} & \le 1\\
\label{eq:r1-r2-vs-r-Besov-baby-4}
\frac{1}{r_1} + \frac{1}{r_2} & \le \frac{1}{r}
\end{align}
with the following caveats:
\begin{itemize}
	\item If $s=s_i$ or $1/r=1/r_i$ for some $i$ then $1/q\le 1/q_i$.
	\item If $s_1+s_2=0$ or $1/r_1+1/r_2=1$ then $1/q_1+1/q_2\ge1$.
	\item If $1/r_1+1/r_2=1/r$ then:
	\begin{itemize}
	\item $\min(1/r_1,1/r_2,1-1/r)\neq0$.	
	\item $1/q_1+1/q_2\ge1$ and $1/q \le 1/r$.
    \item If $s_i<0$ for some $i$ then $1/q\le 1/q_i$.
    \item If $s_i>0$ for some $i$ then $1/r_i \le 1/q_i$.
	\end{itemize}
\end{itemize}
\end{proposition}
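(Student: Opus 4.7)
The plan is to mirror the duality argument used in Proposition~\ref{prop:mult-Fsp-s2-neg}. Without loss of generality assume $s_1<0$; then the hypothesis $s_1+s_2\ge 0$ forces $s_2>0$, and the hypothesis $s\le\min(s_1,s_2)$ gives $s\le s_1<0$, so $-s>0$ and $-s_1>0$. The strategy is to first prove the auxiliary multiplication
\[
\Bv[s_2,p_2,q_2](\Omega)\times \Bv[-s,p^*,q^*](\Omega)\to \Bv[-s_1,p_1^*,q_1^*](\Omega)
\]
via Proposition~\ref{prop:mult-Besov-s-pos} (which is available precisely because all three differentiability indices $s_2,-s,-s_1$ are strictly positive), and then recover the desired bilinear map through the duality pairing of Proposition~\ref{prop:dual-} on $\Reals^n$, exactly in the manner of the last paragraph of the proof of Proposition~\ref{prop:mult-Fsp-s2-neg}: extend factors to $\Reals^n$ with bounded supports, pair against a test element of $\Bv[-s,p^*,q^*](\Reals^n)$, and restrict to $\Omega$.

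The heart of the proof is then a bookkeeping check that the hypotheses of Proposition~\ref{prop:mult-Besov-s-pos} hold for the auxiliary triple. Setting
\[
\tfrac{1}{\tilde r_1}=\tfrac{1}{r_2},\qquad \tfrac{1}{\tilde r_2}=1-\tfrac{1}{r},\qquad \tfrac{1}{\tilde r}=1-\tfrac{1}{r_1},
\]
the conditions $\min(s_2,-s)\ge -s_1$, $\max(1/\tilde r_1,1/\tilde r_2)\le 1/\tilde r$, and $1/\tilde r_1+1/\tilde r_2\le 1/\tilde r$ translate directly into $s_1+s_2\ge 0$ and $s\le s_1$, into $\{1/r_1+1/r_2\le 1,\ 1/r_1\le 1/r\}$, and into $1/r_1+1/r_2\le 1/r$, respectively — all of which are hypothesized.

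The main obstacle, as expected, is the extensive dictionary between the fine-parameter caveats of Proposition~\ref{prop:mult-Besov-s-pos} applied to the triple $(s_2,-s,-s_1)$ and the caveats stated here. Specifically: the case $\tilde s_1=\tilde s$ in the target theorem corresponds to $s_2=-s_1$, i.e.\ $s_1+s_2=0$, and yields $1/q_1^*\le 1/q_2$, which is the hypothesis $1/q_1+1/q_2\ge 1$; the case $\tilde s_2=\tilde s$ corresponds to $s=s_1$ and yields $1/q\le 1/q_1$, as assumed. The coincidences $1/\tilde r_i=1/\tilde r$ likewise translate to $1/r_1+1/r_2=1$ and $1/r_1=1/r$, triggering the corresponding assumed caveats. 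The most delicate step is the borderline case $1/\tilde r_1+1/\tilde r_2=1/\tilde r$, which is $1/r_1+1/r_2=1/r$: there one must verify $\min(1/\tilde r_1,1/\tilde r_2,1-1/\tilde r)\ne 0$ (equivalent to $\min(1/r_1,1/r_2,1-1/r)\ne 0$, assumed); that $1/\tilde r_i\le 1/\tilde q_i$ for $i=1,2$, which reduces to $1/r_2\le 1/q_2$ (the caveat triggered by $s_2>0$ having the sign of $\max(s_1,s_2)$) and $1/q\le 1/r$ (assumed); and finally $1/\tilde q\le \min(1/\tilde q_1,1/\tilde q_2)$, which unpacks as $1/q_1+1/q_2\ge 1$ together with $1/q\le 1/q_1$, both on the caveat list.

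Once all caveats are matched, continuity of the auxiliary bilinear map follows and the duality step, which is identical to the one at the end of Proposition~\ref{prop:mult-Fsp-s2-neg} (using extensions of $u_1,u_2$ to $\Reals^n$ supported in a fixed ball and the pairing $w\mapsto \langle \tilde u_2 w,\tilde u_1\rangle$), delivers an element $z\in \Bv[s,p,q](\Omega)$ depending bilinearly and continuously on $(u_1,u_2)$, with $z=u_1u_2$ whenever both factors are smooth.
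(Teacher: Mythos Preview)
Your proof is correct and follows essentially the same route as the paper's: reduce by duality to the all-positive case (Proposition~\ref{prop:mult-Besov-s-pos}) applied to the triple $(s_2,-s,-s_1)$, then verify the translated conditions and fine-parameter caveats one by one. The only cosmetic difference is that the paper takes $s_2<0$ while you take $s_1<0$, which is harmless by symmetry; your bookkeeping of the caveats is accurate (in particular the observation that $1/r_1>0$ automatically, so the strictness condition $\min(1/\tilde r_1,1/\tilde r_2)\ne0$ is equivalent to the assumed $\min(1/r_1,1/r_2,1-1/r)\ne0$).
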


\begin{proof}
Without loss of generality, assume that $s_1>0$ and $s_2<0$.
Following the duality technique of Proposition \ref{prop:mult-Fsp-s2-neg},
continuity of multiplication 
$\Bv[s_1,p_1,q_1](\Omega)\times \Bv[s_2,p_2,q_2](\Omega)\to \Bv[s,p,q](\Omega)$ 
follows from continuity of multiplication 
$\Bv[s_1,p_1,q_1](\Omega)\times \Bv[s_2^*,p_2^*,q_2^*](\Omega)\to \Bv[s^*,p^*,q^*](\Omega)$,
with 
\begin{equation*}
s_2^*=-s, \quad \frac1{p_2^*}=1-\frac1{p},\quad \frac1{q_2^*}=1-\frac1{q},
\quad
s^*=-s_2, \quad \frac1{p^*}=1-\frac1{p_2},\quad \frac1{q^*}=1-\frac1{q_2} ,
\end{equation*} 
and hence the problem falls under Proposition \ref{prop:mult-Besov-s-pos}.
Note that
\begin{equation*}
\frac1{r_2^*}=1-\frac1{r},\qquad \textrm{and} \qquad
\frac1{r^*}=1-\frac1{r_2} .
\end{equation*} 
For clarity, let us display here what Proposition \ref{prop:mult-Besov-s-pos} becomes when we write the conditions in terms of the un-starred parameters:
multiplication $\Bv[s_1,p_1,q_1](\Omega)\times \Bv[s_2^*,p_2^*,q_2^*](\Omega)\to \Bv[s^*,p^*,q^*](\Omega)$ is continuous so long as
\begin{align}
-s_2&\le \min({s_1,-s})\\
\max\left(\frac{1}{r_1},1-\frac{1}{r}\right) & \le  1-\frac{1}{r_2}\\
\label{eq:r1-r2-vs-r-Besov-baby-dual}
\frac{1}{r_1} + 1 - \frac{1}{r} & \le 1- \frac{1}{r_2}
\end{align}
with the following caveats:
\begin{itemize}
	\item Inequality \eqref{eq:r1-r2-vs-r-Besov-baby-dual}
	is strict if $\min(1/r_1,1-1/r)=0$.
	\item If $-s_2=s_1$ or $1-1/r_2=1/r_1$ then $1-1/q_2\le 1/q_1$.
	\item If $-s_2=-s$ or $1-1/r_2=1-1/r$ then $1-1/q_2\le 1-1/q$.
	\item If $1/r_1+1-1/r=1-1/r_2$ then $q_1\le r_1$, $1-1/q\ge 1-1/r$, and $1-1/q_2\le\min(1/q_1,1-1/q)$.
\end{itemize}
Noting that $\min(1/r_1,1/r_2,1-1/r)=0$ if and only if $\min(1/r_1,1-1/r)=0$, all these conditions are
implied by the hypotheses of the proposition.
\end{proof}

Now we treat the case $\min(s_1,s_2)>0$.

\begin{proposition}\label{prop:mult-Besov-pos}
Assume the multiplication hypothesis \ref{as:mult-hyp}
and that $\min(s_1,s_2)>0$.
Pointwise multiplication of $C^\infty(\overline \Omega)$ 
functions extends to a continuous bilinear map 
$\Bv[s_1,p_1,q_1](\Omega)\times \Bv[s_2,p_2,q_2](\Omega)
\to \Bv[s,p,q](\Omega)$ so long as
\begin{align}
\min(s_1,s_2)&\ge s\\
\label{eq:ri-vs-r-Besov-baby-5-b}
\max\left(\frac{1}{r_1},\frac{1}{r_2}\right) & \le  \frac{1}{r}\\
\label{eq:r1-r2-vs-1-Besov-baby-5-b}
\frac{1}{r_1} + \frac{1}{r_2} & \le 1\\
\label{eq:r1-r2-vs-r-Besov-baby-5-b}
\frac{1}{r_1} + \frac{1}{r_2} & \le \frac{1}{r}
\end{align}
with the following caveats:
\begin{itemize}
	\item If $s=s_i$ or $1/r=1/r_i$ for some $i$ then $1/q\le 1/q_i$.
	\item If $1/r_1+1/r_2=1$ then $1/q_1+1/q_2\ge1$.
	\item If $1/r_1+1/r_2=1/r$ then:
	\begin{itemize}
		\item $\min(1/r_1,1/r_2,1-1/r)\neq0$.	
		\item $1/q\le 1/q_i$ and $1/r_i \le 1/q_i$ for $i=1,2$.
		\end{itemize}
\end{itemize}
\end{proposition}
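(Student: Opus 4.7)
The proof parallels its Triebel--Lizorkin analog Proposition \ref{prop:mult-Fsp-pos-pos-neg}, splitting into cases according to the sign of $s$ and the magnitude of $1/r$, with added care for the Besov fine parameters. The case $s>0$ reduces immediately to Proposition \ref{prop:mult-Besov-s-pos}, whose hypotheses and caveats are implied by those of the current statement; the new conditions $1/r_1+1/r_2\le 1$ and $1/q_1+1/q_2\ge 1$ (when $1/r_1+1/r_2=1$) play no role here.

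For $s\le 0$ and $1/r<1$, I would pick $0<\sigma<\min(s_1,s_2)$ and $t\in(1,\infty)$ with $1/t-\sigma/n=1/r$, apply Proposition \ref{prop:mult-Besov-s-pos} to obtain continuity of multiplication into $\Bv[\sigma,t,q](\Omega)$, and then embed $\Bv[\sigma,t,q](\Omega)\hookrightarrow\Bv[s,p,q](\Omega)$ via the marginal Sobolev embedding of Proposition \ref{prop:embedding-Bsp} (which preserves the fine parameter $q$). Choosing $\sigma$ strictly below $\min(s_1,s_2)$ ensures no caveat triggered by $\sigma=s_i$ arises; caveats triggered by $1/r=1/r_i$ or $1/r_1+1/r_2=1/r$ transfer directly from the assumed hypotheses.

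The case $s\le 0$ and $1/r>1$ (forcing $s<0$) I would handle via the embedding $L^1(\Omega)\hookrightarrow\Bv[s,p,q](\Omega)$, obtained by dualizing the strict Sobolev embedding $\Bv[-s,p^*,q^*](\Omega)\hookrightarrow L^\infty(\Omega)$ of Proposition \ref{prop:embedding-Bsp}, valid when $-s>n/p^*$, i.e., $1/r>1$, and insensitive to fine parameters because the embedding is strictly subcritical. It then suffices to show $uv\in L^1(\Omega)$. When $\min(1/r_1,1/r_2)\le 0$, one of $u,v$ lies in $L^\infty$ or a Lebesgue space indexed by the untouched $p_i$, and H\"older on the bounded domain concludes. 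When $\min(1/r_1,1/r_2)>0$ and $1/r_1+1/r_2<1$ strictly, I would lower $r_i$ slightly and invoke fine-parameter-free subcritical Sobolev embeddings $\Bv[s_i,p_i,q_i]\hookrightarrow L^{r_i'}$ followed by H\"older. The remaining case $s<0$ with $1/r=1$ is analogous: the caveat $\min(1/r_1,1/r_2,1-1/r)\neq 0$ forces $1/r_1+1/r_2<1=1/r$ strictly, so $L^a(\Omega)\hookrightarrow\Bv[s,p,q](\Omega)$ for some $a>1$ (via an intermediate $\Bv[0,a',2](\Omega)$ on the dual side), and the same H\"older argument applies.

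The main obstacle is the marginal case $1/r_1+1/r_2=1$ with $1/r>1$. Here the Sobolev embeddings $\Bv[s_i,p_i,q_i]\hookrightarrow L^{r_i}$ are borderline and would require the unavailable fine-parameter conditions $q_i\le r_i$; the only hypothesis at our disposal is $1/q_1+1/q_2\ge 1$. I would bridge this gap by real interpolation: perturb $p_1$ and $p_2$ symmetrically so that the Lebesgue inequality is strict at each of two endpoints (where the already-established subcritical case applies), and then interpolate back. Ensuring that the interpolated target remains inside $L^1(\Omega)$ rather than a strictly smaller Lorentz-type space is the delicate point, and this is precisely where the caveat $1/q_1+1/q_2\ge 1$ enters essentially.
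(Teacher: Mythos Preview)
Your treatment of $s>0$ and of $s\le 0$ with $1/r<1$ matches the paper. For $s\le 0$ with $1/r\ge 1$ and $1/r_1+1/r_2<1$ strictly, you follow the Triebel--Lizorkin template of Proposition \ref{prop:mult-Fsp-pos-pos-neg} (embed into $L^1$ or $L^a$ and use H\"older), whereas the paper stays entirely in the Besov scale: it picks $r'$ with $1/r_i<1/r'<1$ and $1/r_1+1/r_2<1/r'$, then $\sigma>0$ small and $\eta$ with $1/\eta=1/r'+\sigma/n$, applies Proposition \ref{prop:mult-Besov-s-pos} into $\Bv[\sigma,\eta,q](\Omega)$, and embeds into $\Bv[s,p,q](\Omega)$. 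Both routes work here; yours is closer in spirit to the Triebel--Lizorkin argument, the paper's avoids needing the Besov-to-Lebesgue embeddings.

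The real gap is the marginal case $1/r_1+1/r_2=1$ with $1/r>1$. Your interpolation sketch is not a proof: you acknowledge that symmetrically perturbing $p_1,p_2$ and interpolating lands the product in a Lorentz-type space rather than $L^1$, and you do not show how $1/q_1+1/q_2\ge1$ repairs this. The paper's device is entirely different and much cleaner: it embeds one factor \emph{downward} into a Besov space of negative order and then invokes Proposition \ref{prop:mult-Besov-neg}. Concretely, assuming $1/r_1\le 1/r_2$, choose $\sigma$ with $0>\sigma>\max(s,-s_1)$ close enough to zero that $1/\eta:=1/r_2+\sigma/n\in(0,1)$; since $s_2>0>\sigma$ and the Lebesgue regularity $1/r_2$ is preserved, $\Bv[s_2,p_2,q_2](\Omega)\hookrightarrow\Bv[\sigma,\eta,q_2](\Omega)$ with the fine parameter $q_2$ unchanged. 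Now apply Proposition \ref{prop:mult-Besov-neg} to $\Bv[s_1,p_1,q_1]\times\Bv[\sigma,\eta,q_2]\to\Bv[s,p,q]$. In that proposition the Lebesgue regularities are still $1/r_1,1/r_2$, so the equality $1/r_1+1/r_2=1$ triggers its caveat $1/q_1+1/q_2\ge1$, which is exactly the hypothesis you have. The remaining caveats of Proposition \ref{prop:mult-Besov-neg} are straightforward because $1/r_1+1/r_2=1<1/r$ is strict. This is where the condition $1/q_1+1/q_2\ge1$ enters, cleanly and without interpolation.
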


\begin{proof}
Suppose $s>0$. Then $1/r<1$, so $\min(1/r_1,1/r_2,1-1/r)=0$ if and only if $\min(1/r_1,1/r_2)=0$.
Using this observation it is easy to see that the hypotheses of the current result imply the hypotheses
of Proposition \ref{prop:mult-Besov-s-pos} and the desired continuity of multiplication when $s>0$ follows.

We split the remaining case $s\le0$ into the following 4 subcases.
\begin{itemize}
\item
If $1/r<1$, then let $0<\sigma<\min(s_1,s_2)$ be small enough so that
$\sigma/n+1/r < 1$ and define $\eta\in $ by $1/\eta-\sigma/n=1/r$.
Observe that $0<1/\eta<1$ since $1/r$ and $\sigma$ are both positive and 
since $\sigma$ is sufficiently small.
Using the fact that $\min(1/r_1,1/r_2)=0$ if and only if
$\min(1/r_1,1/r_2,1-1/r)=0$ when $1/r<1$ one  
then verifies that the hypotheses of Proposition \ref{prop:mult-Besov-s-pos}
are met to ensure multiplication is continuous $\Bv[s_1,p_1,q_1](\Omega)\times \Bv[s_2,p_2,q_2](\Omega)\rightarrow \Bv[\sigma,\eta,q](\Omega)$.
The result then follows from the embedding $\Bv[\sigma,\eta,q](\Omega)\hookrightarrow \Bv[s,p,q](\Omega)$.
\item
If $1/r=1$, we observe that inequality \eqref{eq:r1-r2-vs-r-Besov-baby-5-b} is strict. Indeed,
if $\min(1/r_1,1/r_2)\ge 0$ then $\min(1/r_1,1/r_2,1-1/r)=0$ and this holds by hypothesis,
and if $\min(1/r_1,1/r_2)< 0$ it is an easy consequence of inequalities 
\eqref{eq:ri-vs-r-Besov-baby-5-b} and \eqref{eq:r1-r2-vs-r-Besov-baby-5-b} and the fact that $1/r=1$.
Moreover each $1/r_i < 1/p_i < 1= 1/r$.  Hence we can pick $r'$ with $0<1/r'<1$ such that $1/r_i < 1/r'$
for $i=1,2$ and such that $1/r_1+1/r_2 < 1/r'$.  Now pick $0<\sigma<\min(s_1,s_2)$ such that
$\sigma$ is small enough so that $\eta$ defined by $1/\eta = 1/r'+\sigma/n$ lies in $(0,1)$.  The proof now
proceeds as in the previous subcase, verifying that the hypotheses of Proposition \ref{prop:mult-Besov-s-pos}
are met to get continuity of 
$\Bv[s_1,p_1,q_1](\Omega)\times \Bv[s_2,p_2,q_2](\Omega)\rightarrow \Bv[\sigma,\eta,q](\Omega)\hookrightarrow \Bv[s,p,q](\Omega)$.
Note that this verification benefits from the observation $1/r_1+1/r_2<1/r'$ strictly.

\item
If $1/r>1$ and $1/r_1+1/r_2<1$ we can again pick $r'>1$ with  $1/r_i<1/r'<1$ for $i=1,2$ and with
$1/r_1+1/r_1<1/r'<1$.  The proof now proceeds exactly as in the previous subcase.
\item
If $1/r>1$ and $1/r_1+1/r_2=1$, then we necessarily have the strict inequality $s<0$.
Without loss of generality we can assume $1/r_1\le 1/r_2$, and 
choose $\sigma$ such that $0>\sigma>\max(s,-s_1)$ and 
such that $1/\eta:=1/r_2+\sigma/n>0$. Observe $1/\eta<1$ as well.  One now verifies that
the hypotheses of Proposition \ref{prop:mult-Besov-neg} are met
to obtain continuity of multiplication 
$\Bv[s_1,p_1,q_1](\Omega)\times\Bv[\sigma,\eta,q_2](\Omega)\to \Bv[s,p,q](\Omega)$,
and indeed the check of its caveats is straightforward because we know $1/r_1+1/r_2<1/r$.
\end{itemize}
The proof is complete.
\end{proof}

Finally, we have the case $\min(s_1,s_2)=0$.

\begin{proposition}\label{prop:mult-Besov-zero}
Assume the multiplication hypothesis \ref{as:mult-hyp}
and that $\min(s_1,s_2)=0$.
Pointwise multiplication of $C^\infty(\overline \Omega)$ 
functions extends to a continuous bilinear map 
$\Bv[s_1,p_1,q_1](\Omega)\times \Bv[s_2,p_2,q_2](\Omega)
\to \Bv[s,p,q](\Omega)$ so long as
\begin{align}
s&\le 0\\
\max\left(\frac{1}{r_1},\frac{1}{r_2}\right) & \le  \frac{1}{r}\\
\label{eq:r1-r2-vs-1-Besov-baby-6}
\frac{1}{r_1} + \frac{1}{r_2} & \le 1\\
\label{eq:r1-r2-vs-r-Besov-baby-6}
\frac{1}{r_1} + \frac{1}{r_2} & \le \frac{1}{r}
\end{align}
with the following caveats:
\begin{itemize}
	\item If $s=s_i$ or $1/r=1/r_i$ for some $i$ then $1/q\le 1/q_i$.
	\item If $s_1+s_2=0$ or $1/r_1+1/r_2=1$ then $1/q_1+1/q_2\ge1$.
	\item If equality holds in \eqref{eq:r1-r2-vs-r-Besov-baby-6} then
	\begin{itemize}
	\item $\min(1/r_1,1/r_2,1-1/r)\neq0$.	
	\item $1/q_1+1/q_2\ge1$ and $1/q \le 1/r$.
    \item If $s_i=0$ for some $i$ then $1/q\le 1/q_i$.
    \item If $s_i$ has the same sign as $\max(s_1,s_2)$ for some $i$ then $1/r_i\le 1/q_i$.
	\item If $s=0$ then $1/q\le 1/q_i$ and $1/r_i\le 1/q_i$ for both $i=1,2$.
	\end{itemize}
	\item If $s_1=s_2=s=0$ then $\displaystyle \frac 1q\le \min\left(\frac 12,\frac 1r\right)$ and 
	$\displaystyle \max\left(\frac 12,\frac 1{r_i}\right)\le \frac 1{q_i}$ for both $i=1,2$.
\end{itemize}
\end{proposition}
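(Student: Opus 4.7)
The plan is to mirror the strategy used for Proposition \ref{prop:mult-Fsp-s2-zero} in the Triebel-Lizorkin setting. Without loss of generality assume $s_1 \geq s_2 = 0$, and split the argument into subcases according to whether $s_1$ is positive or zero and whether $s$ is negative or zero (the hypothesis $s \leq \min(s_1,s_2) = 0$ rules out $s > 0$).

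In the subcase $s_1 > 0$ and $s < 0$, apply a duality argument as at the end of the proof of Proposition \ref{prop:mult-Besov-neg}: continuity of
\[
\Bv[s_1,p_1,q_1](\Omega) \times \Bv[0,p_2,q_2](\Omega) \to \Bv[s,p,q](\Omega)
\]
follows from continuity of
\[
\Bv[s_1,p_1,q_1](\Omega) \times \Bv[-s,p^*,q^*](\Omega) \to \Bv[0,p_2^*,q_2^*](\Omega),
\]
which falls under Proposition \ref{prop:mult-Besov-pos} since both $s_1$ and $-s$ are strictly positive. The subcase $s_1 > 0$ and $s = 0$ is then treated by complex interpolation: pick $\epsilon > 0$ with $\epsilon < s_1$ and define $t_i, \tau_i$ by $1/t_i = 1/r_2 \pm \epsilon/n$ and $1/\tau_i = 1/r \pm \epsilon/n$, shrinking $\epsilon$ if necessary to keep these in $(0,1)$. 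Propositions \ref{prop:mult-Besov-s-pos} and \ref{prop:mult-Besov-neg} respectively supply continuity of
\[
\Bv[s_1,p_1,q_1](\Omega) \times \Bv[\epsilon,t_1,q_2](\Omega) \to \Bv[\epsilon,\tau_1,q](\Omega)
\quad\text{and}\quad
\Bv[s_1,p_1,q_1](\Omega) \times \Bv[-\epsilon,t_2,q_2](\Omega) \to \Bv[-\epsilon,\tau_2,q](\Omega),
\]
and interpolating with $\theta = 1/2$ (noting $\tfrac12(1/t_1 + 1/t_2) = 1/p_2$ and $\tfrac12(1/\tau_1 + 1/\tau_2) = 1/p$) recovers the claim. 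The subcase $s_1 = 0$ and $s < 0$ reduces by the same duality maneuver as in the first subcase to the $s_1 > 0$, $s = 0$ case just handled.

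What remains is the fully degenerate subcase $s_1 = s_2 = s = 0$, which is handled via the chain
\[
\Bv[0,p_1,q_1](\Omega) \times \Bv[0,p_2,q_2](\Omega) \hookrightarrow L^{p_1}(\Omega) \times L^{p_2}(\Omega) \to L^p(\Omega) \hookrightarrow \Bv[0,p,q](\Omega),
\]
where the outer embeddings come from Proposition \ref{prop:B-L-embedding} parts \eqref{part:B0-into-L} and \eqref{part:L-into-B0} (which require precisely $q_i \leq 2$ and $q_i \leq p_i = r_i$, and $q \geq 2$ and $q \geq p = r$), and the middle step is H\"older's inequality (requiring $1/p_1 + 1/p_2 \leq 1/p$, equivalent at $s=0$ to $1/r_1 + 1/r_2 \leq 1/r$).

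The main obstacle is not the core analytical steps, which by this stage are routine, but the careful bookkeeping of the fine-parameter constraints through each duality exchange and interpolation. Each invocation of Proposition \ref{prop:mult-Besov-pos}, \ref{prop:mult-Besov-neg}, or \ref{prop:mult-Besov-s-pos} generates its own battery of caveats, and under duality these are swapped via $1/q \mapsto 1 - 1/q^*$ and must be matched against the asymmetric list in the current statement. The most delicate verification will be on the edge $1/r_1 + 1/r_2 = 1/r$, where strict versus non-strict inequalities must propagate correctly; in the interpolation subcase one must also confirm that the fine parameters at the two endpoints can be chosen identically so that the interpolated space has precisely the claimed fine parameter rather than some convex combination that differs from $q$.
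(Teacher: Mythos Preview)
Your proposal is correct and matches the paper's proof almost exactly: the same four subcases, the same interpolation in the $s_1>0,\,s=0$ case, the same duality reduction for $s_1=0,\,s<0$, and the same H\"older-plus-embedding argument for $s_1=s_2=s=0$. The one deviation is in the subcase $s_1>0,\,s<0$: the paper lowers $s_2=0$ to some $\sigma<0$ via Sobolev embedding $\Bv[0,p_2,q_2](\Omega)\hookrightarrow\Bv[\sigma,\eta,q_2](\Omega)$ and then invokes Proposition~\ref{prop:mult-Besov-neg} directly, whereas you dualize to land in the target $\Bv[0,p_2^*,q_2^*]$ and invoke Proposition~\ref{prop:mult-Besov-pos}. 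Both routes work and require checking the same fine-parameter caveats in translated form; your route is the one the paper itself uses in the Triebel--Lizorkin analogue (Proposition~\ref{prop:mult-Fsp-s2-zero}), while the paper's Besov proof takes the embedding shortcut, which avoids one layer of duality bookkeeping.
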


\begin{proof}
First, consider the case $s_1>s_2=0>s$.
Pick $\sigma$ such that $0>\sigma>\max(s,-s_1)$ and such that $\sigma$ is close enough to zero
such that $1/\eta:=1/r_2+\sigma/n>0$. This is possible since $1/r_2=1/p_2>0$. 
Observe that $1/\eta<1$ as well since $1/r_2=1/p_2<1$ and since $\sigma<0$. 
By Sobolev embedding, $\Bv[s_2,p_2,q_2](\Omega)\hookrightarrow \Bv[\sigma,\eta,q_2](\Omega)$.
One now verifies that the hypotheses of Proposition \ref{prop:mult-Besov-neg}
are met to imply continuity of multiplication $\Bv[s_1,p_1,q_1](\Omega)\times \Bv[\sigma,\eta,q_2](\Omega)\rightarrow \Bv[s,p,q](\Omega)$.
The only interesting point in the verification is the fact that if \eqref{eq:r1-r2-vs-r-Besov-baby-6} is an equality 
then we have assumed $1/r_1\le 1/q_1$ and $1/q\le 1/q_i$, which are fine parameter requirements
needed to use Proposition \ref{prop:mult-Besov-neg} with $s_1>0$ and $\sigma<0$.

Next suppose that $s_1>s_2=s=0$.
Let $0<\sigma<s_1$ be small enough so that $\eta$, $\theta$, $\eta_2$ and $\theta_2$ defined by
$$
\frac1{\eta_2}-\frac\sigma{n} = \frac1{\theta_2}+\frac\sigma{n} = \frac1{r_2} ,
\qquad
\frac1{\eta}-\frac\sigma{n} = \frac1{\theta}+\frac\sigma{n} = \frac1{r}
$$
all lie in $(1,\infty)$. This is possible since $1/r=1/p\in (1,\infty)$ and similarly for $1/r_2$.
A computation verifies that multiplication is continuous
$\Bv[s_1,p_1,q_1](\Omega)\times \Bv[\sigma,\eta_2,q_2](\Omega)\rightarrow \Bv[\sigma,\eta,q](\Omega)$ due to Proposition \ref{prop:mult-Besov-pos},
and
$\Bv[s_1,p_1,q_1](\Omega)\times \Bv[-\sigma,\theta_2,q_2](\Omega)\rightarrow \Bv[-\sigma,\theta,q](\Omega)$ due to Proposition \ref{prop:mult-Besov-neg}.
Note that Proposition \ref{prop:mult-Besov-pos} requires $1/r_i\le 1/q_i$ and $1/q\le q_i$ for $i=1,2$ 
if \eqref{eq:r1-r2-vs-r-Besov-baby-6} is an equality
and that we have assumed this since $s=0$.
At this point, continuity of $\Bv[s_1,p_1,q_1](\Omega)\times \Bv[0,p_2,q_2](\Omega)\rightarrow \Bv[0,p,q](\Omega)$ is guaranteed by complex interpolation.

The next case we consider is $s_1=s_2=0>s$, which follows from the preceding case by the duality
argument of Proposition \ref{prop:mult-Fsp-s2-neg}.
Namely, continuity of $\Bv[0,p_1,q_1](\Omega)\times \Bv[0,p_2,q_2](\Omega)\rightarrow \Bv[s,p,q](\Omega)$ 
is implied by continuity of $\Bv[-s,p^*,q^*](\Omega)\times \Bv[0,p_2,q_2](\Omega)\rightarrow \Bv[0,p_1^*,q_1^*](\Omega)$,
with 
\begin{equation*}
\frac1{p_1^*}=1-\frac1{p},\quad \frac1{q_1^*}=1-\frac1{q},
\quad
\frac1{p^*}=1-\frac1{p_1},\quad \frac1{q^*}=1-\frac1{q_1} .
\end{equation*} 
A laborious but straightforward computation with these new parameters verifies that the interpolation 
technique of the previous case again applies.  One finds again that  
$1/r_i\le 1/q_i$ and $1/q\le q_i$ for $i=1,2$ are all required if \eqref{eq:r1-r2-vs-r-Besov-baby-6}
is an equality, and these are assumed since $s=0$.

Finally, suppose that $s_1=s_2=s=0$. 
We have assumed $\max(1/r_i,2)\le 1/q_i$ for $i=1,2$ and hence Proposition
\ref{prop:B-L-embedding} implies 
$\Bv[0,p_i,q_i](\Omega)\hookrightarrow L^{r_i}(\Omega)=L^{p_i}(\Omega)$.
We have also assumed $1/q\le \min(1/r,1/2)$ and hence
$L^p(\Omega)=L^r(\Omega)\hookrightarrow \Bv[0,p,q](\Omega)$. Since 
$1/p_1+1/p_2 = 1/r_1+1/r_2 \le 1/r = 1/p$ the continuous multiplication
is a consequence of H\"older's inequality.
\end{proof}

A routine verification shows that the 
hypotheses of Theorem \ref{thm:mult-Besov} imply the hypotheses of
Propositions \ref{prop:mult-Besov-neg} \ref{prop:mult-Besov-pos} and 
\ref{prop:mult-Besov-zero} in each of these special cases, which proves
Theorem \ref{thm:mult-Besov} in the event that $\Omega$ is a bounded smooth
domain.  As discussed following the statement of Assumption \ref{as:mult-hyp},
an extension/restriction argument then proves the result for an arbitrary
bounded domain.

\section*{Acknowledgements}
MH was supported in part by NSF Award DMS-2012857. 
DM was supported in part by NSF Award DMS-1263544. 

\bibliographystyle{amsalpha-abbrv}
\bibliography{InteriorEstimates,DM-auto-refs}

\providecommand{\bysame}{\leavevmode\hbox to3em{\hrulefill}\thinspace}
\providecommand{\MR}{\relax\ifhmode\unskip\space\fi MR }
\providecommand{\MRhref}[2]{%
  \href{http://www.ams.org/mathscinet-getitem?mr=#1}{#2}
}
\providecommand{\href}[2]{#2}
\providecommand{\doihref}[2]{\href{#1}{#2}}
\providecommand{\arxivfont}{\tt}
\begin{thebibliography}{ALM22}

\bibitem[ALM22]{allen_sobolevclass_2022}
P.~T. Allen, J.~M. Lee, and D.~Maxwell, \emph{Sobolev-class asymptotically
  hyperbolic manifolds and the {{Yamabe}} problem}, June 2022,
  \href{http://arxiv.org/abs/2206.12854}{{\arxivfont arxiv:2206.12854 [math]}}.

\bibitem[Am91]{amann_multiplication_1991}
H.~Amann, \emph{Multiplication in {{Sobolev}} and {{Besov}} spaces}, Nonlinear
  {{Analysis}}: {{A Tribute}} in {{Honour}} of {{Giovanni Prodi}}
  (A.~Ambrosetti, A.~Marino, and G.~Prodi, eds.), Pubblicazioni Della
  {{Classe}} Di Scienze: {{Quaderni}}, {Scuola normale superiore}, {Pisa},
  1991.

\bibitem[Be87]{besse_einstein_1987}
A.~L. Besse, \emph{Einstein {{Manifolds}}}, Classics in Mathematics,
  {Springer}, 1987.

\bibitem[BH21]{behzadan_multiplication_2021}
A.~Behzadan and M.~Holst, \emph{Multiplication in {{Sobolev}} spaces,
  revisited}, Arkiv f\"or Matematik \textbf{59} (2021), no.~2, 275--306.

\bibitem[CC81]{choquet-bruhat_elliptic_1981}
Y.~{Choquet-Bruhat} and D.~Christodoulou, \emph{Elliptic systems in
  {$H^s_\delta$} spaces on manifolds which are {{Euclidean}} at infinity}, Acta
  Mathematica \textbf{146} (1981), 129--150.

\bibitem[{Ch}04]{choquet-bruhat_einstein_2004}
Y.~{Choquet-Bruhat}, \emph{Einstein constraints on compact $n$-dimensional
  manifolds}, Classical and Quantum Gravity \textbf{21} (2004), no.~3,
  S127--S151.

\bibitem[Fr86]{franke_spaces_1986}
J.~Franke, \emph{On the spaces {$F^s_{pq}$} of {{Triebel-Lizorkin}} type:
  Pointwise multipliers and spaces on domains}, Mathematische Nachrichten
  \textbf{125} (1986), no.~1, 29--68.

\bibitem[FS71]{fefferman_maximal_1971}
C.~Fefferman and E.~M. Stein, \emph{Some maximal inequalities}, American
  Journal of Mathematics \textbf{93} (1971), no.~1, 107.

\bibitem[Gi93]{giaquinta_introduction_1993}
M.~Giaquinta, \emph{Introduction to regularity theory for nonlinear elliptic
  systems}, Lectures in Mathematics {{ETH Z\"urich}}, {Birkh\"auser}, 1993.

\bibitem[GT01]{gilbarg_elliptic_2001}
D.~Gilbarg and N.~S. Trudinger, \emph{Elliptic partial differential equations
  of second order}, Classics in {{Mathematics}}, vol. 224, {Springer}, 2001.

\bibitem[HNT09]{holst_rough_2009}
M.~Holst, G.~Nagy, and G.~Tsogtgerel, \emph{Rough solutions of the {{Einstein}}
  constraints on closed manifolds without near-{{CMC}} conditions},
  Communications in Mathematical Physics \textbf{288} (2009), no.~2, 547--613.

\bibitem[Is95]{isenberg_constant_1995}
J.~Isenberg, \emph{Constant mean curvature solutions of the {{Einstein}}
  constraint equations on closed manifolds}, Classical and Quantum Gravity
  \textbf{12} (1995), no.~9, 2249--2274.

\bibitem[Jo95]{johnsen_pointwise_1995}
J.~Johnsen, \emph{{Pointwise multiplication of Besov and Triebel-Lizorkin
  spaces}}, Mathematische Nachrichten \textbf{175} (1995), no.~1, 85--133.

\bibitem[KR05]{klainerman_rough_2005}
S.~Klainerman and I.~Rodnianski, \emph{Rough solutions of the
  {{Einstein-vacuum}} equations}, Annals of Mathematics \textbf{161} (2005),
  no.~3, 1143--1193.

\bibitem[Ma88]{marschall_pseudodifferential_1988}
J.~Marschall, \emph{Pseudo-differential operators with coefficients in
  {{Sobolev}} spaces}, Transactions of the American Mathematical Society
  \textbf{307} (1988), no.~1, 335.

\bibitem[Ma05]{maxwell_solutions_2005}
D.~Maxwell, \emph{Solutions of the {{Einstein}} constraint equations with
  apparent horizon boundaries}, Communications in Mathematical Physics
  \textbf{253} (2005), no.~3, 561--583.

\bibitem[Ma06]{maxwell_rough_2006}
\bysame, \emph{Rough solutions of the {{Einstein}} constraint equations},
  Journal fur die reine und angewandte Mathematik (Crelles Journal)
  \textbf{2006} (2006), no.~590, 1--29.

\bibitem[Mc00]{mclean_strongly_2000}
W.~C.~H. McLean, \emph{Strongly elliptic systems and boundary integral
  equations}, {Cambridge University Press}, 2000.

\bibitem[Pa68]{palais_foundations_1968}
R.~Palais, \emph{Foundations of global non-linear analysis}, {W. A. Benjamin},
  1968.

\bibitem[RS96]{runst_sobolev_1996}
T.~Runst and W.~Sickel, \emph{Sobolev spaces of fractional order, {{Nemytskij}}
  operators, and nonlinear partial differential equations}, {De Gruyter}, 1996.

\bibitem[St75]{stredder_natural_1975}
P.~Stredder, \emph{Natural differential operators on {{Riemannian}} manifolds
  and representations of the orthogonal and special orthogonal groups}, Journal
  of Differential Geometry \textbf{10} (1975), no.~4, 647--660.

\bibitem[ST95]{sickel_holder_1995}
W.~Sickel and H.~Triebel, \emph{H\"older inequalities and sharp embeddings in
  function spaces of {$B^s_{pq}$} and {$F^s_{pq}$} type}, Zeitschrift f\"ur
  Analysis und ihre Anwendungen \textbf{14} (1995), no.~1, 105--140.

\bibitem[ST05]{smith_sharp_2005}
H.~Smith and D.~Tataru, \emph{Sharp local well-posedness results for the
  nonlinear wave equation}, Annals of Mathematics \textbf{162} (2005), no.~1,
  291--366.

\bibitem[Ta91]{taylor_pseudodifferential_1991}
M.~E. Taylor, \emph{Pseudodifferential operators and nonlinear {{PDE}}},
  {Birkh\"auser}, 1991.

\bibitem[Ta01]{TaoPL}
T.~Tao, \emph{Harmonic analysis in the phase plane},
  \url{https://www.math.ucla.edu/~tao/254a.1.01w/}, 2001, Lecture notes.

\bibitem[Tr73]{trudinger_linear_1973}
N.~S. Trudinger, \emph{Linear elliptic operators with measurable coefficients},
  Annali della Scuola Normale Superiore di Pisa - Scienze Fisiche e Matematiche
  \textbf{Ser. 3, 27} (1973), no.~2, 265--308.

\bibitem[Tr76]{triebel_spaces_1976}
H.~Triebel, \emph{Spaces of {{Kudrjavcev}} type {{I}}. {{Interpolation}},
  embedding, and structure}, Journal of Mathematical Analysis and Applications
  \textbf{56} (1976), no.~2, 253--277.

\bibitem[Tr78]{triebel_interpolation_1978}
\bysame, \emph{Interpolation theory, function spaces, differential operators},
  North-{{Holland}} Mathematical Library, {North-Holland Pub. Co}, 1978.

\bibitem[Tr02]{triebel_function_2002}
\bysame, \emph{Function spaces in {{Lipschitz}} domains and on {{Lipschitz}}
  manifolds. {{Characteristic}} functions as pointwise multipliers}, Revista
  Matem\'atica Complutense \textbf{15} (2002), no.~2, 475--524.

\bibitem[Tr10]{triebel_theory_2010}
\bysame, \emph{Theory of function spaces}, Modern {{Birkh\"auser}} Classics,
  {Birkh\"auser}, 2010.

\bibitem[Zo77]{zolesio_multiplication_1977}
J.~L. Zolesio, \emph{Multiplication dans les espaces de {{Besov}}}, Proceedings
  of the Royal Society of Edinburgh: Section A Mathematics \textbf{78} (1977),
  no.~1-2, 113--117.

\end{thebibliography}

\end{document}